\def\dA{{\mathcal{A}}}
\def\dB{{\mathcal{B}}}
\def\dC{{\mathcal{C}}}
\def\dF{{\mathcal{F}}}
\def\dG{{\mathcal{G}}}
\def\dN{{\mathcal{N}}}
\def\dS{{\mathcal{S}}}
\def\bC{{\mathbb{C}}}
\newcommand{\LD}{\mathsf{LD}}
\newcommand{\Top}{\mathsf{Top}}
\newcommand{\Tr}{\mathrm{Tree}}
\newcommand{\Stop}{\mathsf{Stop}}
\newcommand{\End}{\mathsf{End}}
\def\cC{{\mathscr{C}}}
\def\cF{{\mathscr{F}}}
\def\one{\mathds{1}}
\newcommand{\Tree}{\mathsf{Tree}}
\def\ve{\epsilon} % needed to  unite between \epsiln and \ve because we were not consistent
\renewcommand{\d}{{\partial}}
\def\lec{\lesssim}
\def\spn{\mathop\mathrm{span}} 	
\DeclareMathOperator{\diam}{diam}
\def\dim{\mathop\mathrm{dim}} 					%dimension	
\def\dist{\mathop\mathrm{dist}} 						%distance
\def\id{\mathop\mathrm{id}}						%identity
\newcommand{\ps}[1]{\left( #1 \right)}
\newcommand{\ck}[1]{\left\{#1 \right\}}
\newcommand{\fr}[2]{\frac{#1}{#2}}
\def\XXint#1#2#3{{\setbox0=\hbox{$#1{#2#3}{\int}$ }
		\vcenter{\hbox{$#2#3$ }}\kern-.58\wd0}}
\newtheorem{theorem}{Theorem}[section]
\newtheorem{lemma}[theorem]{Lemma}
\theoremstyle{definition}
\newtheorem{definition}[theorem]{Definition}
\theoremstyle{remark}
\newtheorem{remark}[theorem]{Remark}
\numberwithin{equation}{section}
\newcommand{\R}{\mathbb{R}}
\newcommand{\N}{\mathbb{N}}
\newcommand{\nbd}{\mathcal{N}}
\newcommand{\Z}{\mathbb{Z}}
\newcommand{\hd}{\mathcal{H}^d}
\newcommand{\hdc}{\mathcal{H}^d_\infty}
\newcommand{\B}{\mathbb{B}}
\newcommand{\wt}{\widetilde}
\newcommand{\dD}{\mathcal{D}}
\newcommand{\DD}{\mathcal{D}}
\newcommand{\dH}{\mathcal{H}}
\newcommand{\Reg}{\mathsf{Reg}_*}
\DeclareMathOperator{\supp}{supp}
\DeclareMathOperator{\lip}{Lip}
\DeclareMathOperator{\Fav}{Fav}
\newcommand{\Next}{\mathsf{Next}}
\newcommand{\Child}{\mathsf{Ch}}
\newcommand{\Nei}{\mathsf{Nbd}}
\newcommand{\dL}{\mathcal{L}}
\theoremstyle{theorem}
\newtheorem{coro}[theorem]{Corollary}
\newtheorem{propo}[theorem]{Proposition}
\newtheorem*{claim*}{Claim}
\newtheorem*{theorem*}{Theorem}
\newtheorem*{lemma*}{Lemma}
\theoremstyle{definition}
\theoremstyle{remark}
\numberwithin{equation}{section}
\begin{document}

	\title[Analytic capacity and dimension of sets with PBP]{Analytic capacity and dimension\\ of sets with plenty of big projections}	
	
	\author[D. D\k{a}browski]{Damian D\k{a}browski}
	\address{Damian D\k{a}browski, University of Jyväskylä, P.O. Box 35 (MaD), 40014, Finland}	
	\email{damian.m.dabrowski@jyu.fi}
	
	\author[M. Villa]{Michele Villa}
	\address{Michele Villa, Research Unit of Mathematical Sciences, University of Oulu, P.O. Box 8000, FI-90014, University of Oulu, Finland}
	\email{michele.villa@oulu.fi}
	
	\keywords{analytic capacity, orthogonal projections, Vitushkin's conjecture, Besicovitch projection theorem, quantitative rectifiability, $\beta$-numbers, Analyst's Traveling Salesman Theorem, wiggly sets}
	
	\thanks{D.D. and M.V. were supported by the Academy of Finland via the project \textit{Incidences on Fractals}, grant No. 321896. M.V. was also supported by a starting grant of the University of Oulu}
	
	\subjclass{28A75 (primary), 28A80, 42B20 (secondary)} 

		\begin{abstract}
		Our main result marks progress on an old conjecture of Vitushkin. We show that a compact set in the plane with plenty of big projections (PBP) has positive analytic capacity, along with a quantitative lower bound. A higher dimensional counterpart is also proved for capacities related to the Riesz kernel, including the Lipschitz harmonic capacity. The proof uses a construction of a doubling Frostman measure on a lower content regular set, which may be of independent interest.
		
		Our second main result is the Analyst's Traveling Salesman Theorem for sets with plenty of big projections. As a corollary, we obtain a lower bound for the Hausdorff dimension of uniformly wiggly sets with PBP. The second corollary is an estimate for the capacities of subsets of sets with PBP, in the spirit of the quantitative solution to Denjoy's conjecture. 

	\end{abstract}

	\maketitle
	
	\tableofcontents
	
	\section{Introduction}
	The aim of this paper is to study sets with plenty of big projections.
	\begin{definition}
		We say that $E\subset\R^n$ has \emph{plenty of ($d$-dimensional) big projections} (abbreviated to PBP, or $d$-PBP) if there exists a constant $0<\delta\le 1$ such that the following holds. For all $x\in E$ and $0<r<\diam(E)$ there exists $V_{x,r}\in \dG(n,d)$ such that
		\begin{equation*}
		\hd(\pi_V(E\cap B(x,r)))\ge \delta r^d\quad\text{for all }V\in B(V_{x,r},\delta).
		\end{equation*}
	Here $\dG(n,d)$ is the Grassmannian manifold of $d$-dimensional (linear) planes in $\R^n$, $\pi_V:\R^n \to V$ is the orthogonal projection, and the ball $B(V_{x,r}, \delta)$ is defined with respect to the standard metric on $\dG(n,d)$ (see \secref{sec:notation}). 
	\end{definition}
	This definition first appeared in \cite{david1993quantitative}, although originally David and Semmes assumed additionally that sets with PBP are Ahlfors regular.
	\begin{definition}
		A set $E \subset \R^n$ is called \emph{Ahlfors $d$-regular} if there exists a constant $C \geq 1$ such that for all $x\in E$ and $0<r<\diam(E)$ we have
		\begin{equation*}
		C^{-1} r^d \leq \dH^d(E\cap B(x,r)) \leq C r^d.
		\end{equation*}
		The smallest constant $C$ for which the above holds is called the \emph{Ahlfors regularity constant} of $E$.
	\end{definition}
	In \cite{david1993quantitative} David and Semmes conjectured that Ahlfors regular sets with PBP satisfy the so-called \emph{big pieces of Lipschitz graphs condition}, a strong quantitative rectifiability property (see Definition \ref{def:BPLG}). Their conjecture was recently solved in a breakthrough work of Orponen \cite{orponen2020plenty} (see \thmref{thm:orponen} for the precise statement).
 
\vspace{0.5em}
The novelty of our article is twofold: on the level of results, we use Orponen's theorem to study the analytic capacity of sets with PBP, thus making progress on Vitushkin's and quantitative Denjoy's conjectures, and the dimension of wiggly sets. Beyond these results, our proof presents methodological interest, and this is the other level of novelty. Indeed, for the problems at hand we are required to work outside the Ahlfors regularity category: we thus propose a method that allows to move from this rather strong size condition to sets which might even have non-$\sigma$-finite $\dH^d$-measure. This method is highly flexible, and relies on three distinct steps: starting with a (compact) set $E$ with $d$-PBP (for which the $\dH^d$-measure is not necessarily $\sigma$-finite), we
\begin{itemize}\label{program}
    \item construct a Frostman measure supported on $E$, which is also doubling,
    \item give a multiscale approximation of this measure by Ahlfors regular sets,
    \item show that the PBP property is inherited by the approximating sets: this, by the result of Orponen, allow us to infer some key geometric facts about $E$.
\end{itemize}
In the finite measure case (but not necessarily Ahlfors regular), this method was layed out rather explicitly in \cite{azzam2019quantitative}. The first step above allows us to move this method to the general case, which is highly relevant for the Vitushkin's conjecture mentioned above, and, more generally, in contexts where the natural size to consider is capacity, rather than Hausdorff measure. We suspect that this method can be applied to other quantitative properties developed in the David-Semmes theory \cite{david1991singular,david1993analysis}, thus giving bounds for analytic and Lipschitz harmonic capacities for sets satisfying any one of these properties.

Now we move on to the precise statements of our results.
	%In this article we use Orponen's result to further study sets with PBP. The main novelty of our work stems from the fact that we do not assume our sets to be Ahlfors regular. In fact, some of our results remain valid for sets with non-$\sigma$-finite $\dH^d$-measure.
	
	\subsection{Analytic capacity: Vitushkin's conjecture and quantitative Denjoy's conjecture}
	Recall that a compact set $E\subset\bC\simeq \R^2$ is said to be \emph{removable for bounded analytic functions} if all bounded analytic functions $f$ defined on $\bC\setminus E$ are constant. In the 40s Ahlfors \cite{ahlfors1947bounded} managed to quantify the notion of removability by introducing \emph{analytic capacity}. Recall that the analytic capacity of a compact set $E\subset\bC$ is defined as
	\begin{equation*}
		\gamma(E) = \sup |f'(\infty)|,
	\end{equation*}
	where the supremum is taken over all analytic functions $f:\bC\setminus E\to\bC$ with $\|f\|_\infty\le 1$, and $f'(\infty) = \lim_{z\to\infty} z(f(z)-f(\infty))$. Ahlfors proved that a set is removable for bounded analytic functions if and only if $\gamma(E)=0$.
	
	In 1967 Vitushkin \cite{vitushkin1967analytic} conjectured a geometric characterization of removability in terms of orthogonal projections. He asked the following: is it true that for compact sets $E\subset\bC$ one has
	\begin{equation}\label{eq:Vitushkinsconj}
		\gamma(E)=0\quad\Leftrightarrow\quad \Fav(E)=0,
	\end{equation}
	where $\Fav(E)$ is the Favard length of $E$, defined as
	\begin{equation*}
	\Fav(E)=\int_0^\pi\mathcal{H}^1(\pi_\theta(E))\ d\theta.
	\end{equation*}
	In the above $\pi_\theta:\bC\to \ell_\theta$ is the orthogonal projection to $\ell_\theta = \spn((\sin\theta,\cos\theta))$.
	
	It has been known for a very long time that sets with $\dH^1(E)=0$ are removable, while sets with $\dim_H(E)>1$ are non-removable, so Vitushkin's conjecture holds for such sets. The case of sets $E$ satisfying $0<\mathcal{H}^1(E)<\infty$ was much more difficult to establish, but the answer to Vitushkin's question is also positive: the implication $(\Rightarrow)$ in \eqref{eq:Vitushkinsconj} is due to Calderón \cite{calderon1977cauchy}, while $(\Leftarrow)$ was shown by David \cite{david1998unrectifiable}. Finally, the case of sets with $\sigma$-finite $\dH^1$-measure follows from the finite measure case together with subadditivity of analytic capacity, which is due to Tolsa \cite{tolsa2003painleve}.
	
	In 1986 Mattila \cite{mattila1986smooth} showed that Vitushkin's conjecture fails for sets with Hausdorff dimension $1$ and non-$\sigma$-finite $\dH^1$-measure. It wasn't clear from Mattila's proof which of the implications in \eqref{eq:Vitushkinsconj} is false. Soon thereafter Jones and Murai \cite{jones1988positive} constructed an example of a set with $\gamma(E)>0$ and $\Fav(E)=0$. A simpler example was found later on by Joyce and M\"{o}rters \cite{joyce2000set}. Hence, the implication $(\Leftarrow)$ in \eqref{eq:Vitushkinsconj} is false. For more information on analytic capacity and Vitushkin's conjecture see the books \cite{tolsa2014analytic,dudziak2011vitushkin,pajot2002analytic}. See also recent surveys on related topics \cite{verdera2021birth,mattila2021rectifiability}.
	
	After all these extraordinary developments one question remains: what about the implication $(\Rightarrow)$ in \eqref{eq:Vitushkinsconj}? Equivalently, is it true that 
	\begin{equation}\label{eq:Vitushkin2}
		\Fav(E)>0\quad\Rightarrow\quad\gamma(E)>0?
	\end{equation}
	A quantitative version of this question is the following: is it true that
	\begin{equation}\label{Vituskin3}
		\gamma(E)\gtrsim \Fav(E)?
	\end{equation}
	
	As far as we know, the only partial result related to these open problems is due to Chang and Tolsa \cite{chang2017analytic} (more on that in \remref{rem:CT}). In this article we make further progress on \eqref{eq:Vitushkin2} and \eqref{Vituskin3}. If the assumption $\Fav(E)>0$ is replaced by the (significantly stronger) PBP assumption, then the analytic capacity is positive.
	\begin{theorem}\label{thm:main analytic cap}
		Let $E\subset\R^2$ be a compact set with $1$-PBP. Then,
		\begin{equation}\label{eq:thm analytic cap}
		\gamma(E)\gtrsim \diam(E),
		\end{equation}
		where the implicit constant depends only on the PBP constant.
	\end{theorem}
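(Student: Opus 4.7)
The plan is to reduce the theorem to the classical Melnikov--Verdera lower bound for analytic capacity: if a positive Radon measure $\mu$ supported on $E$ has linear growth $\mu(B(x,r))\le C_0 r$ and bounded Menger curvature $c^2(\mu)\le C_1\mu(E)$, then $\gamma(E)$ is bounded below by a constant depending on $C_0,C_1$ times $\mu(E)$. Thus it suffices to construct such a measure $\mu$ on $E$ with total mass comparable to $\diam(E)$.

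The difficulty is that $E$ is only assumed to have PBP, so its $\dH^1$-measure may be non-$\sigma$-finite and one cannot simply use $\dH^1|_E$. However, since $\pi_V$ is $1$-Lipschitz and projections do not increase Hausdorff content, the PBP hypothesis immediately yields the lower content regularity
\begin{equation*}
\dH^1_\infty(E\cap B(x,r))\gtrsim \delta r \quad\text{for all } x\in E,\ 0<r<\diam(E).
\end{equation*}
The first main step is to construct, on any such lower content regular compact set, a doubling Frostman measure $\mu$ with $\mu(E)\gtrsim \diam(E)$ and $\mu(B(x,r))\lesssim r$. I would carry this out via a Cantor-type dyadic stopping-time construction: distribute mass uniformly over children dyadic cubes that carry enough $\dH^1_\infty$-content, stop in regions that are too thin, and rebalance the mass across neighbouring cubes at each scale to enforce doubling.

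With such a $\mu$ in hand, the next step is to verify that $\spt\mu$ inherits a (possibly quantitatively weaker) PBP condition from $E$, by checking that a large enough portion of the PBP-projections of $E\cap B(x,r)$ is captured by $\spt\mu\cap B(x,r)$; this uses the Frostman upper bound on $\mu$ together with the linear growth to discard only a small mass at each scale. One may then apply Orponen's theorem, in a form adapted from the Ahlfors regular to the doubling setting, to conclude that $\mu$ has big pieces of Lipschitz graphs: for every ball $B(x,r)$ with $x\in\spt\mu$ one finds a Lipschitz graph $\Gamma$ with $\mu(\Gamma\cap B(x,r))\gtrsim r$.

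Finally, BPLG combined with the non-homogeneous $T(1)$ machinery of Nazarov--Treil--Volberg and Tolsa yields $L^2(\mu)$-boundedness of the Cauchy transform associated with $\mu$, which via Melnikov's identity translates into $c^2(\mu)\lesssim\mu(E)$. The Melnikov--Verdera bound then delivers $\gamma(E)\gtrsim\mu(E)\gtrsim\diam(E)$ as required. The main obstacle is the first step: standard Frostman measures need not be doubling, and on a set that is merely lower content regular (not Ahlfors regular) one cannot simply normalize $\dH^1|_E$; building a measure that is simultaneously Frostman, of linear growth, and doubling requires the delicate stopping-time construction that the authors single out as being of independent interest.
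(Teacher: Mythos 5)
Your overall architecture (build a doubling Frostman measure on the lower content regular set $E$, exploit PBP via Orponen's theorem, and finish with a capacity lower bound for measures with linear growth and a quantitative flatness/curvature estimate) matches the paper's strategy in outline, and your first step is exactly the construction the paper carries out in its appendix. However, there is a genuine gap at the centre of your argument: you propose to apply ``Orponen's theorem, in a form adapted from the Ahlfors regular to the doubling setting'' to conclude that $\mu$ has big pieces of Lipschitz graphs. No such adaptation exists, and the obstruction is not cosmetic. The support of the doubling Frostman measure is all of $E$, which is in general \emph{not} Ahlfors regular (this is the whole point of allowing non-$\sigma$-finite $\dH^1$-measure): the density $\theta_\mu(x,r)$ can decay to $0$ as $r\to 0$, so $\mu$ has no lower regularity and Orponen's theorem, which is genuinely a statement about Ahlfors regular sets, cannot be invoked. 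Likewise, in your last step, BPLG of the support of a non-AD-regular measure does not by itself give $L^2(\mu)$-boundedness of the Cauchy transform or the Menger curvature bound $c^2(\mu)\lesssim\mu(E)$; the $T(1)$/BPLG machinery you cite is again an AD-regular statement.

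The paper bridges precisely this gap with a corona decomposition: a stopping-time argument on low-density cubes splits the lattice into trees on each of which $\theta_\mu$ is comparable to a constant (varying from tree to tree, with geometric decay that makes the tree roots packable); for each tree one builds an honest Ahlfors regular approximating set $\Gamma$ out of $d$-dimensional dyadic skeletons, checks that $\Gamma$ inherits PBP from $E$, applies Orponen's theorem and the David--Semmes strong geometric lemma to $\Gamma$, and then transfers the resulting $\beta_{2}$-estimates back to $\mu$. The conclusion is the flatness condition $\iint\beta_{\mu,2}(x,r)^2\,\theta_\mu(x,r)\,\frac{dr}{r}\,d\mu(x)\lesssim\mu(E)$, which feeds into the Prat/Girela-Sarri\'on capacity bound (the higher-dimensional analogue of the Melnikov--Verdera route you have in mind). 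To repair your proposal you would need to supply this intermediate multi-scale decomposition, or some substitute for it; as written, the appeal to a doubling version of Orponen's theorem is the missing idea rather than a routine adaptation.
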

	\begin{remark}
		In the theorem above we do not assume $\dH^1(E)<\infty$. However, our result bears new information even in the $\dH^1(E)<\infty$ case. Recall that in this regime Vitushkin's conjecture is true, so in particular $\Fav(E)>0$ implies $\gamma(E)>0$. However, the existing proof does not offer any quantitative lower bound on $\gamma(E)$ in terms of $\Fav(E)$. Roughly speaking, the reason is the following: the Besicovitch projection theorem states that if $\Fav(E)>0$, then there exists a $1$-dimensional Lipschitz graph $\Gamma$ such that $\dH^1(E\cap\Gamma)>0$, and then $\gamma(E)\ge \gamma(E\cap\Gamma)>0$ follows from \cite{calderon1977cauchy}. However, the Besicovitch projection theorem does not say anything about the size of $\dH^1(E\cap\Gamma)$, and so deriving a quantitative lower bound for $\gamma(E)$ is impossible as long as one follows this strategy. In other words, the question \eqref{Vituskin3} remains wide open even in the $\dH^1(E)<\infty$ case. Although \thmref{thm:main analytic cap} is quite far from establishing \eqref{Vituskin3}, it is (as far as we know) the first quantitative lower bound for $\gamma(E)$ which depends only on the projections of $E$. 
	\end{remark}
		
		In the finite measure case, we are able to prove more: we show a quantitative bound for the analytic capacity of \textit{subsets} of sets with $1$-PBP. This is a form of quantitative Denjoy's conjecture, where rectifiable curves are substituted with sets with PBP of finite measure. Recall that Denjoy's conjecture, which predates Vitushkin's conjecture by about 60 years, stated that if $\Gamma\subset\R^2$ is a rectifiable curve, and a compact set $E\subset\Gamma$ satisfies $\mathcal{H}^1(E)>0$, then $\gamma(E)>0$. This statement can be seen as a special case of Vitushkin's conjecture, if one restricts attention to the implication \eqref{eq:Vitushkin2} and sets of finite length. Denjoy's conjecture was confirmed by Calderón in \cite{calderon1977cauchy}. A quantitative version of Calderón's result was established by Murai \cite{murai1987comparison}, who showed that if $\Gamma$ is a $1$-rectifiable graph, and $E\subset\Gamma$ is compact, then 
		\begin{equation*}
			\gamma(E)\gtrsim \frac{\mathcal{H}^1_\infty(E)^{\frac{3}{2}}}{\mathcal{H}^1(\Gamma)^{\frac{1}{2}}}.
		\end{equation*}
		Later on Verdera observed that Murai's estimate can be obtained for arbitrary rectifiable curves $\Gamma$ using Menger curvature and Jones' Travelling Salesman Theorem, see \cite[Theorem 4.31]{tolsa2014analytic}. The exponent $3/2$ above is optimal, see \cite{murai1990power} and \cite[\S 4.8]{tolsa2014analytic}.
		
		Using an Analyst's Travelling Salesman Theorem for sets with PBP (see Theorem \ref{t:main} below) we obtain the following.
	
\begin{theorem}\label{t:corollary-2}
	Let $\Sigma \subset \R^2$ be a compact set with $1$-PBP and $0<\mathcal{H}^1(\Sigma)<\infty$. Then, for any compact subset $E \subset \Sigma$
	\begin{align*}
		\gamma(E) \gtrsim \frac{\mathcal{H}^1_\infty(E)^{\frac{3}{2}}}{\mathcal{H}^1(\Sigma)^{\frac{1}{2}}},
	\end{align*}
	where the implicit constant depends only on the PBP constant.
\end{theorem}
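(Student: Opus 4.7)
The plan is to follow the Menger-curvature route to analytic capacity developed by Melnikov, David, Verdera and Tolsa, feeding into it the Analyst's Traveling Salesman Theorem for sets with PBP (Theorem \ref{t:main}). Concretely, I will produce a Frostman measure on $E$, control its Menger curvature by $\hm{1}(\Gamma)$, and then appeal to Tolsa's characterisation of analytic capacity in terms of curvature.

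By Frostman's lemma, choose a Radon measure $\mu$ supported on $E$ with linear growth $\mu(B(x,r))\le r$ and total mass $\mu(E)\gtrsim \hm{1}_\infty(E)$. The main task is to show the curvature bound $c^2(\mu)\lesssim \hm{1}(\Gamma)$. Fixing an auxiliary dyadic lattice $\cubes$ in $\R^2$, the Melnikov--David pointwise curvature-$\beta$ estimate combined with the linear growth of $\mu$ gives the classical bound
\[ c^2(\mu) \,\lesssim\, \sum_{Q\in\cubes}\beta_{\mu,2}(3Q)^2\,\ell(Q) \]
(see \cite[Chapter 3]{tolsa2014analytic}). Since $\supp\mu\subset\Gamma$, testing the $L^2$-$\beta$-number of $\mu$ against the best $L^\infty$-approximating line for $\Gamma\cap 3Q$ and using $\mu(3Q)\le 3\ell(Q)$ yields the pointwise comparison $\beta_{\mu,2}(3Q)\lesssim \beta_{\Gamma,\infty}(3Q)$. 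Applying Theorem \ref{t:main} to $\Gamma$, which is PBP and of finite length, then supplies the Jones-type Carleson packing
\[ \sum_{Q\in\cubes}\beta_{\Gamma,\infty}(3Q)^2\,\ell(Q) \,\lesssim\, \hm{1}(\Gamma), \]
and chaining the three displays gives $c^2(\mu)\lesssim \hm{1}(\Gamma)$.

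To finish, Tolsa's lower bound for analytic capacity in terms of Menger curvature (see \cite[Theorem 4.14]{tolsa2014analytic}) gives
\[ \gamma(E)\,\gtrsim\,\gamma_+(E)\,\gtrsim\,\frac{\mu(E)^{3/2}}{(\mu(E)+c^2(\mu))^{1/2}}. \]
Since $\hm{1}_\infty(E)\le \hm{1}(E)\le \hm{1}(\Gamma)$, both summands in the denominator are $\lesssim \hm{1}(\Gamma)$, and inserting the bounds $\mu(E)\gtrsim \hm{1}_\infty(E)$ and $c^2(\mu)\lesssim \hm{1}(\Gamma)$ yields $\gamma(E)\gtrsim \hm{1}_\infty(E)^{3/2}/\hm{1}(\Gamma)^{1/2}$.

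The only genuinely new input is the Carleson packing step, which is precisely the content of the PBP Analyst's Traveling Salesman Theorem (Theorem \ref{t:main}); the remainder is a direct transcription of Verdera's proof of Murai's estimate for subsets of rectifiable curves \cite[Theorem 4.31]{tolsa2014analytic}. The main obstacle is therefore absorbed into Theorem \ref{t:main} itself, whose proof must proceed without assuming $\Gamma$ is connected or contained in a rectifiable curve---an essential feature, since PBP sets of finite length need not enjoy either property.
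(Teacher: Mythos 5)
Your overall strategy is viable, and it is in fact the alternative route the paper itself flags in the introduction (``Theorem \ref{t:corollary-2} can be derived directly from Theorem \ref{t:main} together with the previous result of Murai and Verdera''). It is, however, not the paper's proof: the paper proves the general Theorem \ref{t:corollary-2b} for $\Gamma_{n,d}$ in all dimensions, replacing Menger curvature by the Prat/Girela-Sarri\'on criterion (Theorem \ref{thm:capacitybetas}), and compares $\beta_{\mu,2}$ directly with the Azzam--Schul content beta $\beta_\Gamma^{d,2}$ via a layer-cake argument using the refined Frostman bound $\mu(B)\le \mathcal{H}^d_\infty(E\cap B)$ (see \eqref{eq:betamubetasigma}); your curvature route is special to $d=1$.

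There is one genuine gap: the Carleson packing
\begin{equation*}
\sum_{Q}\beta_{\Gamma,\infty}(3Q)^2\,\ell(Q)\lesssim \mathcal{H}^1(\Gamma)
\end{equation*}
is not a direct application of Theorem \ref{t:main}. That theorem controls $\sum_Q \beta_\Gamma^{1,p}(C_0B_Q)^2\ell(Q)$ for the \emph{averaged content} betas, and the only pointwise comparison available in the other direction is $\beta_{\Gamma,\infty}(B)\lesssim \beta_\Gamma^{1,p}(2B)^{1/(d+1)}=\beta_\Gamma^{1,p}(2B)^{1/2}$ (Lemmas 2.12--2.13 of \cite{azzam2018analyst}), which only yields $\beta_{\Gamma,\infty}^2\lesssim\beta_\Gamma^{1,p}$; the resulting \emph{unsquared} sum $\sum_Q\beta_\Gamma^{1,p}(C_0B_Q)\,\ell(Q)$ is not controlled by Theorem \ref{t:main} (and can diverge while the squared sum converges). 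Two repairs are possible. (a) Use Theorem \ref{t:main} together with the $d=1$ Azzam--Schul TST to produce a rectifiable curve $\Gamma'\supset\Gamma$ with $\mathcal{H}^1(\Gamma')\lesssim_\delta \mathcal{H}^1(\Gamma)$, then apply Jones' classical TST \cite{jones1990rectifiable} to $\Gamma'$ to get the $\beta_\infty$ packing (this is the Murai--Verdera route, \cite[Theorem 4.31]{tolsa2014analytic}). (b) Bypass $\beta_\infty$ altogether: choose the Frostman measure so that $\mu(B(x,r))\le\mathcal{H}^1_\infty(E\cap B(x,r))$ and deduce $\beta_{\mu,2}(x,r)\lesssim\beta_\Gamma^{1,2}(x,r)$ by a Choquet-integral comparison, after which Theorem \ref{t:main} applies to the correct squared sum. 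With either fix, the remaining steps of your argument (the curvature--beta estimate for linear-growth measures, the normalisation $\lambda=(\mu(E)/(\mu(E)+c^2(\mu)))^{1/2}$, and Tolsa's $\gamma\gtrsim\gamma_+$) go through as you describe.
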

\noindent

Theorem \ref{t:corollary-2} is interesting for two reasons: first, as mentioned above, it applies to subsets of sets with PBP. These subsets could very well have vanishing density in many places, and have very small projections. Note that a similar statement cannot be true for sets $\Gamma$ with infinite length: for example, the unit square $[0,1]^2$ has PBP, but the 4-corner Cantor set $K\subset[0,1]^2$ satisfies $\mathcal{H}^1(K)\sim 1$ and $\gamma(K)=0$.

The second interesting aspect of Theorem \ref{t:corollary-2} is that it suggests that, for many geometric problems, sets with PBP and finite measure are just as good as rectifiable curves. 
Let us however make the following remark: from the Analyst's TST for sets with PBP (Theorem \ref{t:main} below) it follows that a set with PBP and finite $\dH^1$-measure can be covered by a rectifiable curve of comparable length. Thus, Theorem \ref{t:corollary-2} can be derived directly from Theorem \ref{t:main} together with the previous result of Murai and Verdera. However, in higher dimension (see the section below) the result is altogether new, since no result analogous to that of Murai was known for capacities associated to vector-valued Riesz kernel.

\subsection{Higher dimensional variants}

	Higher dimensional variants of \thmref{thm:main analytic cap} and Theorem \ref{t:corollary-2} are also true. Instead of analytic capacity one has to consider certain capacities associated to the vector-valued Riesz kernel. Let $0<d<n$ be integers. Given a compact set $E\subset\R^n$ the capacity $\Gamma_{n,d}(E)$ is defined as
	\begin{equation*}
		\Gamma_{n,d}(E) = \sup |\langle T, 1\rangle |,
	\end{equation*}
	where the supremum is taken over all real distributions $T$ supported in $E$ such that 
	\begin{equation*}
		\fr{x}{|x|^{d+1}}\ast T\in L^{\infty}(\R^n)\quad\text{and}\quad \bigg\| \fr{x}{|x|^{d+1}}\ast T\bigg\|_{L^{\infty}(\R^n)}\le 1.
	\end{equation*}
	
	We remark that by \cite{tolsa2003painleve} one has $\Gamma_{2,1}(E)\sim\gamma(E)$. In the codimension 1 case the capacity $\Gamma_{d+1,d}(E)$ is also called the \emph{Lipschitz harmonic capacity}, and it is often denoted by $\kappa(E)$. It was introduced by Paramonov, who observed that sets with $\kappa(E)=0$ are precisely the sets removable for Lipschitz harmonic functions, see \cite[Remark 2.4]{paramonov1990harmonic} and \cite{mattila1995geometric}. The counterpart of Vitushkin's conjecture for $\kappa(E)$ and sets with $\dH^d(E)<\infty$ was established by Nazarov, Tolsa and Volberg in \cite{nazarov2014,nazarov2014on}. More specifically, they showed that for a compact set $E\subset\R^{d+1}$ with $\dH^d(E)<\infty$ one has $\kappa(E)=0$ if and only if $E$ is purely $d$-unrectifiable (equivalently, $\dH^d(\pi_V(E))=0$ for a.e. $V\in\mathcal{G}(n,d)$). Whether an analogous statement is true for $\Gamma_{n,d}$ with $1<n<d-1$ is an open problem.

	The higher dimensional variants of \thmref{thm:main analytic cap} and Theorem \ref{t:corollary-2} are the following.
	\begin{theorem}\label{thm:main higher dim}
		Let $E\subset\R^n$ be a compact set with $d$-PBP. Then,
		\begin{equation}\label{eq:main higher dim}
		\Gamma_{n,d}(E)\gtrsim \diam(E)^d,
		\end{equation}
		where the implicit constant depends only on $n,d,$ and the PBP constant.
	\end{theorem}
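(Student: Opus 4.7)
The plan is to reduce \thmref{thm:main higher dim} to the construction of a positive Radon measure $\mu$ supported on $E$ with three key properties: polynomial growth $\mu(B(x,r)) \leq Cr^d$, large total mass $\mu(E) \gtrsim \diam(E)^d$, and bounded Riesz transform $\mathcal{R}^d\mu$ (equivalently, uniform $L^2(\mu)$ estimates on truncations). Such a measure immediately yields $\Gamma_{n,d}(E) \gtrsim \mu(E)$ by a standard duality argument for this capacity (pairing $\mu$ as a positive distribution against the test class defining $\Gamma_{n,d}$ and renormalizing by the $L^\infty$-bound on $\mathcal{R}^d\mu$).

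First I would observe that the $d$-PBP assumption forces lower $d$-content regularity of $E$: for each $x\in E$ and $0<r<\diam(E)$, the projection $\pi_{V_{x,r}}(E\cap B(x,r))$ has $\dH^d$-measure at least $\delta r^d$, and since orthogonal projections do not increase $\dH^d_\infty$, one gets $\dH^d_\infty(E\cap B(x,r)) \gtrsim r^d$. This is precisely the hypothesis under which the paper's key new tool applies: the construction of a \emph{doubling} Frostman measure on a lower content regular set. Classical Frostman lemma delivers a measure with polynomial growth and $\mu(E) \gtrsim \dH^d_\infty(E) \gtrsim \diam(E)^d$, but the doubling property (needed for what follows) is genuinely new, and the abstract flags this as the main technical contribution.

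With such a $\mu$ in hand, I would next use the PBP assumption to impose nontrivial rectifiable structure on $\supp\mu$. Specifically, since $\mu$ is doubling and has polynomial growth of degree $d$, one verifies that $\supp\mu$ inherits a quantitative version of PBP from $E$. This puts us in the setting where Orponen's theorem (\cite{orponen2020plenty}) can be adapted beyond the Ahlfors regular case: the doubling condition on $\mu$ is a sufficient substitute for regularity of $\dH^d$ in Orponen's tangent-measure and pigeonholing arguments, and the output is a corona-type decomposition of $\supp\mu$ with respect to $d$-dimensional Lipschitz graphs that carry a large $\mu$-proportion of mass at every scale.

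Finally, the BPLG structure w.r.t.\ $\mu$ feeds into the David--Semmes / Nazarov--Tolsa--Volberg machinery for Riesz transforms in the non-doubling (and here, doubling non-regular) setting: a standard corona argument passes from $L^2(\mu)$-boundedness of $\mathcal{R}^d$ on the Lipschitz graph pieces to $L^2(\mu)$-boundedness on $\supp\mu$, which completes the construction of $\mu$ and hence yields $\Gamma_{n,d}(E) \gtrsim \diam(E)^d$. The main obstacle is Step 2, the construction of the doubling Frostman measure on a lower content regular set; a secondary difficulty is the careful extension of Orponen's BPLG theorem from the Ahlfors regular to the merely doubling setting, where one must check that his projection-based increments survive when $\dH^d$ is replaced by a non-regular doubling $\mu$.
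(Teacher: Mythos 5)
Your high-level skeleton is aligned with the paper's: reduce to producing a measure on $E$ with polynomial growth, mass $\gtrsim\diam(E)^d$, and good Riesz-transform/flatness behaviour; observe that PBP forces lower content regularity; build the doubling Frostman measure of Lemma \ref{lem:Frostman measure}; and bring in Orponen's theorem. But there is a genuine gap at the crucial step. You propose to apply Orponen's theorem directly to $\supp\mu$, asserting that ``the doubling condition on $\mu$ is a sufficient substitute for regularity of $\dH^d$ in Orponen's tangent-measure and pigeonholing arguments.'' That is not an adaptation one can wave through: Theorem \ref{thm:orponen} is proved for Ahlfors $d$-regular sets, and extending PBP $\Rightarrow$ BPLG to merely doubling, non-AD-regular measures is an open-ended project, not a routine check (the paper even remarks that a single-scale version of Orponen's result is open). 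The entire architecture of the actual proof exists precisely to avoid this: one runs a low-density stopping-time argument producing a corona decomposition of $\dD$ into trees on which the Frostman measure \emph{is} approximately Ahlfors regular (Lemma \ref{lem:muADR}); on each regularized tree one builds an honest Ahlfors regular set $\Gamma$ out of $d$-dimensional dyadic skeletons, checks that $\Gamma$ inherits PBP from $E$ (Lemmas \ref{lem:GammaADR}, \ref{lem:GammaPBP}), applies Orponen plus David--Semmes as black boxes to $\Gamma$, and transfers the resulting $\beta_2$ estimates back to $\mu$, summing over trees via the geometric decay of densities. Without something playing the role of this coronization-plus-approximation, your Step 3 does not go through.

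A secondary, related issue: your last step asks for full $L^2(\mu)$-boundedness of the Riesz transform via a ``standard corona argument'' in the doubling non-regular setting, which is again far from standard for vector-valued Riesz kernels in general codimension. The paper sidesteps this too, by invoking Theorem \ref{thm:capacitybetas} (Prat, Girela-Sarri\'on), which lower-bounds $\Gamma_{n,d}(E)$ using only polynomial growth plus the flatness condition
\begin{equation*}
\iint_0^{\infty}\beta_{\mu,2}(x,r)^2\,\theta_\mu(x,r)\,\frac{dr}{r}\,d\mu(x)\le\mu(E),
\end{equation*}
so that the whole problem reduces to the square-function estimate of Proposition \ref{prop:main} rather than to operator boundedness. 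I would recommend restructuring your argument around that reduction and around the tree-by-tree AD-regular approximation.
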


\begin{theorem}\label{t:corollary-2b}
	Let $\Sigma\subset \R^n$ be a compact set with $d$-PBP and $0<\hd(\Sigma)<\infty$. Then for any compact subset $E \subset \Sigma$, 
	\begin{align*}
	\Gamma_{n,d}(E) \gtrsim \frac{\hdc(E)^{\frac{3}{2}}}{\hd(\Sigma)^{\frac{1}{2}}},
	\end{align*}
	where the implicit constant depends only on $n,d$, and the PBP constant.
\end{theorem}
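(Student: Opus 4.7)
The plan is to follow the Murai--Verdera scheme from the planar case, but with Menger curvature replaced by the $L^2(\mu)$-norm of the vector-valued $d$-dimensional Riesz transform $\cR\mu$, and with Jones' Travelling Salesman Theorem replaced by its PBP counterpart, Theorem \ref{t:main}.

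First, Frostman's lemma on $E$ yields a nonnegative Borel measure $\mu$ with $\supp\mu\subset E$, $\mu(E)\gtrsim\hdc(E)$, and linear growth $\mu(B(x,r))\le r^d$. Second, Theorem \ref{t:main} applied to $\Sigma$, which has $d$-PBP and finite $\hd$-measure, produces a Jones-type $\beta_2$-energy bound of the form $\sum_Q \beta_\Sigma(Q)^2\ell(Q)^d\lesssim \hd(\Sigma)$ over a suitable multiresolution family on $\Sigma$. Since $E\subset\Sigma$ and $\mu$ has linear growth, the geometric $\beta$-numbers of $\Sigma$ dominate the $L^2(\mu)$-$\beta$-numbers of $\mu$, yielding $\sum_Q\beta_{\mu,2}(Q)^2\mu(Q)\lesssim \hd(\Sigma)$. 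Plugging this into the known identities relating $\beta_2$-energies of linear-growth measures to the $L^2(\mu)$-norm of the vector-valued Riesz transform gives the central estimate
\[
\int|\cR\mu|^2\,d\mu\lesssim\hd(\Sigma).
\]

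Third, a standard rescaling/extraction argument closes the proof. Setting $\lambda:=(\mu(E)/\int|\cR\mu|^2\,d\mu)^{1/2}$, the measure $\lambda\mu$ still has linear growth and satisfies $\int|\cR(\lambda\mu)|^2\,d(\lambda\mu)=\lambda\mu(E)$. A Chebyshev/good-$\lambda$ step removes a subset of $\lambda\mu$-mass at most $\lambda\mu(E)/2$ on which $|\cR(\lambda\mu)|$ is large, leaving a measure $\nu$ supported on a subset of $E$ with $\nu(\R^n)\gtrsim\lambda\mu(E)$ and $\|\cR\nu\|_{L^\infty(\R^n)}\lesssim 1$. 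By the known comparability of $\Gamma_{n,d}$ with its positive variant $\Gamma_{n,d,+}$, one then obtains
\[
\Gamma_{n,d}(E)\gtrsim\Gamma_{n,d,+}(E)\gtrsim\nu(\R^n)\gtrsim\frac{\mu(E)^{3/2}}{\|\cR\mu\|_{L^2(\mu)}}\gtrsim\frac{\hdc(E)^{3/2}}{\hd(\Sigma)^{1/2}}.
\]

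The main obstacle is the second step: producing an $L^2(\mu)$-bound on $\cR\mu$ with the correct linear dependence on $\hd(\Sigma)$ and no loss in $\mu(E)$. In $\bC$, Melnikov's triple symmetrization directly converts Jones' $\beta_2$-energy into Menger curvature, and hence into $\|C\mu\|_{L^2(\mu)}^2$, with precisely this sharp dependence. The $d$-dimensional Riesz kernel admits no such symmetrization, so extracting the right $L^2$-estimate from the PBP-TST will require carefully matching the geometric $\beta$-numbers of $\Sigma$ with the analytic $\beta_2$-numbers of $\mu$ --- and this is exactly why the PBP version of the TST, rather than merely the covering by a rectifiable curve that sufficed in the planar case, becomes indispensable in higher dimensions.
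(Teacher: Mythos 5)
Your proposal is correct and follows essentially the same route as the paper: a Frostman measure $\mu$ on $E$, the domination $\beta_{\mu,2}(x,r)\lesssim\beta_\Sigma^{d,2}(x,r)$, the PBP Travelling Salesman Theorem (Theorem \ref{t:main}) to bound the $\beta_2$-energy of $\mu$ by $C(\delta)\hd(\Sigma)$, and the rescaling $\sigma=C_2\mu$ with $C_2\sim(\mu(E)/\hd(\Sigma))^{1/2}$ to produce the exponent $3/2$. The step you flag at the end as the main obstacle---converting the $\beta_2$-energy bound into a capacity lower bound in the absence of a Melnikov-type symmetrization for the $d$-dimensional Riesz kernel---is precisely the content of Theorem \ref{thm:capacitybetas} (Prat and Girela-Sarri\'on), which the paper invokes as a black box instead of passing explicitly through $\|\cR\mu\|_{L^2(\mu)}$, the Chebyshev truncation, and the comparability of $\Gamma_{n,d}$ with its positive variant as you outline.
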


	\begin{remark}\label{rem:CT}
		Theorems \ref{thm:main analytic cap} and \ref{thm:main higher dim} should be compared with the results of Chang and Tolsa \cite{chang2017analytic}, who proved the following. If $I\subset [0,\pi)$ is an interval and $E\subset\mathbb{C}$ is a compact set supporting a probability measure $\mu$ such that $\pi_\theta\mu\in L^2(\ell_\theta)$ for a.e. $\theta\in I$, then 
		\begin{equation*}
		\gamma(E)\gtrsim \fr{1}{\int_I \|\pi_\theta\mu\|^2_{L^2}\, d\theta},
		\end{equation*}
		with the implicit constant depending on $\dH^1(I)$. They also prove a higher dimensional analogue of this estimate for capacities $\Gamma_{n,d}$.
		
		The main advantage of \cite{chang2017analytic} over our results is that their ``$L^2$-projections'' assumption is single-scale, while the PBP condition is multi-scale. On the other hand, our result may be easier to apply as it derives a lower bound for $\gamma(E)$ directly from the information on the projections of $E$, without the need of constructing a measure supported on $E$ and studying its projections. 
		
		Even more importantly, note that the $L^2$-projections assumption $\pi_\theta\mu\in L^2(\ell_\theta)$ implies a big projection $\dH^1(\pi_\theta(E))\ge\|\pi_\theta\mu\|^{-2}_{2}$ using the Cauchy-Schwarz inequality
		\begin{equation*}
		1 = \mu(E) = \|\pi_\theta\mu\|_{1} \le \|\pi_\theta\mu\|_{2}\,\dH^1(\pi_\theta(E))^{1/2}.
		\end{equation*}		
		The difference between $L^2$-projections and big projections is fundamental. A characterization of Ahlfors regular sets with big pieces of Lipschitz graphs in terms of $L^2$-projections has been achieved by Martikainen and Orponen in \cite{martikainen2018characterising}, but it took another major breakthrough \cite{orponen2020plenty} to find an analogous characterization in terms of PBP. Moreover, a ``single-scale version'' of \cite{orponen2020plenty} is an open problem, see \cite[\S1.3]{orponen2020plenty}, whereas \cite{martikainen2018characterising} contains the relevant single-scale result, see \cite[Theorem 1.7]{martikainen2018characterising}.
	\end{remark} 
	
	\begin{remark}
		The proof of \thmref{thm:main higher dim} is robust, and our techniques are likely to be useful in the future work on problems \eqref{eq:Vitushkin2} and \eqref{Vituskin3}. 
		
		In particular, suppose that the following strengthening of Orponen's result (\thmref{thm:orponen}) was available: ``if a set $E\subset\R^n$ is Ahlfors $d$-regular and it has \emph{uniformly large Favard length} (there exists $C>0$ such that for all $x\in E$ and all $0<r<\diam(E)$ we have $\Fav(E\cap B(x,r))\ge Cr^d$), then $E$ has big pieces of Lipschitz graphs.'' With such result at hand, the proof of \thmref{thm:main higher dim} would immediately yield that any set $E\subset\R^d$ with uniformly large Favard length satisfies
		\begin{equation*}
		\Gamma_{n,d}(E)\gtrsim \diam(E)^d.
		\end{equation*}
	\end{remark}
	A related question is of course the following: suppose that $E \subset [0,1]^2$ is a $1$-Ahlfors regular set with ${\rm Fav}(E) \geq \delta$. Is it then true that there exists a Lipschitz graph $\Gamma$ so that $\mathcal{H}^1(E \cap \Gamma) \gtrsim \delta$? A recent related result by A. Chang, T. Orponen and the authors \cite{chang2022maxfav} shows that if $E$ has almost maximal Favard length, then it is contained in a Lipschitz graph with small constant, save for a tiny subset. A natural question is whether this result can be used to obtain a new estimate for analytic capacity.
	\subsection{Analyst's Traveling Salesman Theorem}
	Another main result of this article is the {Analyst's Travelling Salesman Theorem (TST) for sets with PBP}. Recall that the original Analyst's TST is due to Jones \cite{jones1990rectifiable}, and it is a characterization of sets $E\subset\R^2$ such that there exists a rectifiable curve $\Gamma$ containing $E$, along with a quite precise estimate on $\dH^1(\Gamma)$. Much work has been put into proving analogs of this result in other spaces; for example, Okikiolu \cite{okikiolu1992characterization} proved an Analyst's TST for curves in $\R^d$, and in \cite{schul2007subsets} Schul further generalized it to the Hilbert space setting. Another research direction consists in finding statements analogous to Jones' TST but for higher dimensional sets rather than curves. %
	While we are still lacking a theorem as complete as that of Jones, in recent years there has been much progress in this direction. Two major difficulties were: first, the coefficients used by Jones simply do not work for higher dimensional set; second, it is not obvious what to use as an analogue to curves (e.g. topological spheres do not work, see \cite[Introduction]{villa2019higher}). These issues have largely been overcome in the series of papers \cite{azzam2018analyst,azzam2019quantitative,villa2019higher,hyde2020ddimensional,hyde2021d}.  We refer the reader to Sections 1 and 3 of \cite{villa2019higher} for a more in-depth discussion of Analyst's TST and its relevance. Let us mention that while these results are related to the rectifiability of sets, they do not say much about whether these sets can be parameterised. This is a major open problem in the area.
	
	Our present result continues the line of work of Azzam, Schul, and the second author \cite{azzam2018analyst,azzam2019quantitative}. Let $\dD$ be a system of Christ-David cubes (see \lemref{theorem:christ}), and set 
	\begin{align}\label{e:BETA}
	\beta(Q_0) = \beta_{E, C_0, p, d} (Q_0): = \sum_{Q \in \dD(Q_0)} \beta_E^{d,p} (C_0 B_Q)^2 \ell(Q)^d,
	\end{align}
	where the coefficients $\beta_E^{d,p}$ are a variant of Jones' $\beta$-numbers \eqref{e:beta-jones} introduced by Azzam and Schul in \cite{azzam2018analyst}, see Definition \ref{d:content-beta}.
	
	\begin{theorem}\label{t:main}
		Let $E\subset \R^n$ be a set with $d$-PBP with constant $\delta>0$, $\dD$ be a system of Christ-David cubes, $Q_0 \in \dD$ and $C_0\geq 3$. Let $1 \leq p < p(d)$, where $p(d)=\tfrac{2d}{d-2}$ for $d>2$ and $p(d)=\infty$ if $d\le 2$.
		Then
		\begin{align}\label{e:main}
			\diam(Q_0)^d + \beta(Q_0) \sim \hd(Q_0),
		\end{align}
		where the implicit constant depends on $\delta$, $C_0$, $p$, $n,d$ and on the constants from the Azzam-Schul TST \textup{(}see Theorem A.1 in \cite{azzam2019quantitative}\textup{)}.
	\end{theorem}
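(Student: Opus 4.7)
The plan is to deduce Theorem \ref{t:main} from the Azzam--Schul TST (Theorem A.1 of \cite{azzam2019quantitative}). That theorem produces exactly the two-sided comparison $\diam^d + \beta \sim \hd$ once the underlying set is \emph{lower content $d$-regular}, i.e.\ satisfies $\hdc(E \cap B(x,r)) \gtrsim r^d$ for all $x \in E$ and $0 < r < \diam(E)$. Thus the task reduces to verifying that the $d$-PBP hypothesis implies lower content $d$-regularity with constant depending only on $\delta$ and $d$.

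The implication PBP $\Rightarrow$ lower content regular is essentially a one-line observation. Given $x \in E$ and $0 < r < \diam(E)$, let $V = V_{x,r} \in \dG(n,d)$ be a plane supplied by PBP, so that $\hd(\pi_V(E \cap B(x,r))) \geq \delta r^d$. Now $\pi_V(E \cap B(x,r))$ is a subset of the $d$-plane $V \simeq \R^d$, and on $d$-dimensional subsets of $\R^d$ the $d$-content and the $d$-dimensional Hausdorff measure are comparable (both being comparable to Lebesgue measure, via the trivial covering bound $\sum r_i^d \gtrsim \mathcal{L}^d(\bigcup B(x_i,r_i))$). Hence $\hdc(\pi_V(E \cap B(x,r))) \gtrsim \delta r^d$. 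Since $\pi_V$ is $1$-Lipschitz and Hausdorff content is monotone under Lipschitz maps,
\[
\hdc(E \cap B(x,r)) \;\geq\; \hdc(\pi_V(E \cap B(x,r))) \;\gtrsim\; \delta r^d,
\]
as required.

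With lower content regularity in hand, I would invoke Theorem A.1 of \cite{azzam2019quantitative} on the Christ--David cube $Q_0 \in \dD$: its two-sided conclusion $\diam(Q_0)^d + \beta(Q_0) \sim \hd(Q_0)$ follows directly, with constants depending on $\delta$, $C_0$, $p$, $n$, $d$ and the Azzam--Schul constants, exactly as stated. Both directions are provided by the TST: the construction direction $\hd(Q_0) \lesssim \diam(Q_0)^d + \beta(Q_0)$ is the covering/Reifenberg half, while the delicate direction $\beta(Q_0) \lesssim \hd(Q_0)$ is the one that truly uses the lower regularity obtained above.

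The only potential obstacle is purely technical: matching the precise hypotheses of Theorem A.1 of \cite{azzam2019quantitative} to our setting. If that statement requires a lower regular Radon measure on $E$ rather than just a lower content regular set, one feeds in a measure produced by the doubling Frostman measure construction advertised in the abstract, which takes as input exactly a lower content regular set and outputs a measure with the required lower regularity (and doubling). Either way, no genuinely new TST machinery is required beyond the PBP-to-regularity reduction and a careful invocation of the Azzam--Schul theorem.
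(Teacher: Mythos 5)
There is a genuine gap, and it is fatal to the whole strategy. Your reduction of PBP to lower content regularity is fine (it is Lemma \ref{l:low-reg-E} in the paper), but the Azzam--Schul TST for lower content regular sets does \emph{not} give the two-sided comparison \eqref{e:main}: it gives $\hd(Q_0)\lesssim \diam(Q_0)^d+\beta(Q_0)$ in one direction, and in the other direction it bounds $\beta(Q_0)$ by $\hd(Q_0)$ \emph{plus an additional error term} that cannot be dropped for general lower content regular sets. A concrete counterexample to your claimed implication is the four-corner Cantor set $K$: it is Ahlfors $1$-regular (hence lower content regular) with $\mathcal{H}^1(K)\sim 1$, yet it is purely unrectifiable, so by \cite[Theorem II]{azzam2018analyst} its $\beta$-sum must be infinite. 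Thus the inequality $\beta(Q_0)\lesssim \hd(Q_0)$ is false under lower content regularity alone, and the entire content of Theorem \ref{t:main} is that the PBP hypothesis removes the error term. Your proposal never uses PBP beyond extracting lower content regularity, so it cannot prove the hard direction.

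The actual argument (Proposition \ref{l:main-lemma}) has to use PBP in an essential, multi-scale way: one applies the corona decomposition of \cite{azzam2019quantitative} (Lemma \ref{l:corona}) to split $\dD(Q_0)$ into trees, each equipped with an Ahlfors $d$-regular approximating set $E_R$ built from dyadic skeletons; one checks that each $E_R$ inherits $d$-PBP from $E$ (Lemma \ref{l:approx-BPMD}); one then invokes Orponen's theorem that PBP plus Ahlfors regularity implies BPLG, and the David--Semmes strong geometric lemma, to get $\sum_P \beta_{E_R}^{d,2}(3B_P)^2\ell(P)^d\lesssim C(\delta)\ell(R)^d$; finally one transfers these estimates back from $E_R$ to $E$ via Lemma \ref{lemma:azzamschul}, controls the resulting distance error terms, and sums over the trees using the packing condition \eqref{e:ADR-packing}. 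None of these steps is present in your outline, and the "purely technical obstacle" you flag at the end (feeding in a Frostman measure) is not the issue.
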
	
	
	\begin{remark}
		What we really prove here is one direction of the inequality (bound on the $\beta$ sum with the measure), as the other one follows from \cite{azzam2018analyst}. See Proposition \ref{l:main-lemma}.
	\end{remark}
	Estimates similar to \eqref{e:main} have been proven in \cite{azzam2018analyst} and \cite{azzam2019quantitative} for general \emph{lower content regular sets} (see \secref{subsec:LCR} for the definition; sets with PBP are lower content regular), with the following crucial caveat: in general, an additional error term needs to be added to the right hand side of \eqref{e:main}. In \cite{villa2019higher} the second author proved that the error term may be omitted if one assumes that $E$ is a \emph{topologically stable $d$-surface} (a condition satisfied e.g. by Reifenberg flat sets, or Semmes surfaces), see \cite[Theorem 3.6]{villa2019higher}. In \thmref{t:main} we prove that the error term disappears also in the case of sets with PBP.
	In other words, both PBP and topological stability give the set enough rigidity so that one can estimate its $\beta$-numbers using the Hausdorff measure. 
	
	To see why the estimate \eqref{e:main} may be useful, let us mention that in \cite[Theorem II]{azzam2018analyst} it was shown that a lower content regular set with $\beta(E)<\infty$ is rectifiable. Thus, any class of lower content regular sets for which \eqref{e:main} holds satisfies
	\begin{equation}\label{form10000}
		\mathcal{H}^d(E)<\infty \, \implies\,  \mbox{ rectifiability.}
	\end{equation}
	In particular, \eqref{form10000} holds for topologically stable surfaces, and for sets with PBP (of course, for sets with PBP \eqref{form10000} easily follows from the Besicovitch projection theorem, without the need to refer to $\beta$-numbers and TST). Identifying classes of sets for which \eqref{form10000} holds is an interesting problem. For example, while \eqref{form10000} is true for connected one dimensional sets, there exist $2$-dimensional sets, homeomorphic to the $2$-sphere, with finite $\mathcal{H}^2$-measure, and containing a purely 2-unrectifiable set of positive measure (see \cite[Figure 1]{villa2019higher}) See also \cite{david2019rectifiability} and \cite{david2020note} in connection with \eqref{form10000} and quantitative topological conditions.
	
	Theorem \ref{t:main} is used in the proofs of Theorems \ref{t:corollary-2} and \ref{t:corollary-2b}. Another application is an estimate for the dimension of \textit{wiggly sets}.
	
	\subsection{Dimension of wiggly sets}
	
		A  set $E$ is said to be \textit{uniformly wiggly} of dimension $d$ and with parameter $\beta_0$ if for all balls $B$ centered on $E$ and with $0<r(B)<\diam(E)$ it holds that
		\begin{align}\label{e:beta-jones}
	\beta_{E, \infty}^d(B) > \beta_0.
		\end{align}

		Here $\beta_{E, \infty}^d$ is the $d$-dimensional version of the so-called $L^\infty$ $\beta$-number of Peter Jones \cite{jones1990rectifiable} (see Definition \ref{d:jones-beta}). Wiggly sets appear naturally in various contexts where some form of self-similarity is present. Examples include limit sets of certain Kleinian groups, some Julia sets of polynomials, and random sets (see \cite{bishop2017fractals}, pp. 340-341).  They were first studied via the Analyst's TST by Bishop and Jones in \cite{bishop1997wiggly}, were they proved a lower bound on the Hausdorff dimension of wiggly connected sets in the plane, assuming $d=1$. Their result was generalised to continua in metric spaces by Azzam \cite{azzam2015hausdorff}. A result in this vein was later proved by David \cite{david2004hausdorff} (and quantified in \cite{villa2019higher}), this time for uniformly non flat sets of \textit{any integer} dimension, satisfying a topological condition. David's work was motivated by a question of L. Potyagailo, concerning higher dimensional limit sets (see the introduction of \cite{david2004hausdorff}). The result below can be seen as a version of David's theorem with the topological condition replaced by the PBP assumption.

	\begin{theorem}\label{t:corollary}
		Let $n \geq 2$ and $1 \leq d \leq n-1$. Let $E \subset \R^n$ be a closed set with $d$-PBP \textup{(}with parameter $\delta>0$\textup{)} and which is uniformly wiggly of dimension $d$ and constant $\beta_0$. Then 
		\begin{equation}\label{e:wiggly-dim}
		{\rm dim}_H(E) \geq d + c\beta_0^{2(d+1)},
		\end{equation}
		where $c$ depends on $\delta, n,d,p$.
	\end{theorem}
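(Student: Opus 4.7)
The proof plan is to combine Theorem \ref{t:main} (the Analyst's Travelling Salesman Theorem for PBP sets) with the uniform wiggliness hypothesis to extract a dimension lower bound via a Frostman-type tree construction. I would first upgrade the $L^\infty$ wiggliness bound to a lower estimate on the $L^p$-content $\beta$-numbers appearing in Theorem \ref{t:main}. Note that sets with $d$-PBP are automatically lower $d$-content regular, i.e.\ $\hdc(E\cap B(x,r))\gtrsim\delta r^d$ for every $x\in E$ and $0<r<\diam(E)$ (since orthogonal projections do not increase Hausdorff content). The following ``bump'' argument then applies: for any ball $B=B(x,r)$ centered on $E$ and any best-fitting $d$-plane $V$, uniform wiggliness produces $y\in E\cap B$ with $\dist(y,V)\ge c\beta_0 r$, and lower content regularity applied to $B(y,c\beta_0 r)$ yields a portion of $E$ of $\hdc$-content $\gtrsim(\beta_0 r)^d$ sitting at distance $\gtrsim\beta_0 r$ from $V$. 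Evaluating the defining integral on this portion gives
\[
\beta_E^{d,p}(C_0 B)^{p}\gtrsim \beta_0^{d+p},
\qquad\text{and in particular,}\qquad
\beta_E^{d,p}(C_0 B_Q)^{2}\gtrsim \beta_0^{2(d+p)/p}
\]
for every Christ--David cube $Q$.

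Substituting this lower bound into Theorem \ref{t:main} gives $\hd(Q_0)\gtrsim\beta_0^{2(d+p)/p}\sum_{Q\in\dD(Q_0)}\ell(Q)^d=+\infty$ (each generation of Christ--David cubes covers $Q_0$ with total $\ell(\cdot)^d$-mass $\gtrsim\ell(Q_0)^d$ by lower content regularity), which only recovers $\dim_H(E)\ge d$. To sharpen this into a strict dimension gain, I would build a self-similar Cantor-type tree inside $E$: start with a ball $B_0$ centered on $E$ and, at each stage, split every ball $B$ of radius $r$ in the current generation into a maximal family of disjoint sub-balls of radius $\lambda r$ centered on $E$. Lower content regularity provides at least $\gtrsim\lambda^{-d}$ sub-balls accounting for the bulk of $E\cap B$, while the bump produces, for $\lambda\ll\beta_0$, an additional family of $\gtrsim(\beta_0/\lambda)^d$ sub-balls at distance $\gtrsim\beta_0 r$ from the best-fitting plane, necessarily disjoint from the former. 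With a suitable choice $\lambda\sim\beta_0^{\alpha(d)}$, the branching number satisfies
\[
N\;\ge\;\lambda^{-d}\bigl(1+c\,\beta_0^{2(d+1)}\bigr),
\]
and iterating this construction yields a probability measure $\mu$ supported on $E$ with $\mu(B(x,r))\lesssim r^{s}$ for $s=\log N/\log(1/\lambda)$. A short computation then gives
\[
s\;\ge\;d+\frac{\log\bigl(1+c\,\beta_0^{2(d+1)}\bigr)}{\log(1/\lambda)}\;\ge\;d+c'\,\beta_0^{2(d+1)},
\]
after optimizing in $\lambda$, and Frostman's lemma yields \eqref{e:wiggly-dim}.

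The main obstacle lies in executing the branching step with the correct quantitative exponent $2(d+1)$. The delicate points are (i) verifying that the ``bump'' sub-balls can be genuinely chosen disjoint from the ``bulk'' ones (this is where the choice $\lambda\ll\beta_0$, i.e.\ the transversal separation $\gtrsim\beta_0 r$ exceeding $2\lambda r$, enters crucially), and (ii) controlling the number of such bump sub-balls so that the additive gain $\beta_0^{2(d+1)}\lambda^{-d}$ survives the $\log(1/\lambda)$ loss in passing from branching number to dimension. To pin down the exponent, one may need to iterate Theorem \ref{t:main} at the scale of each bump, converting the single $L^\infty$-deviation guaranteed by wiggliness into a genuine $L^p$-content estimate for the bump portion of $E$. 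An alternative route, parallel to the treatment of topologically stable surfaces in \cite{villa2019higher}, is to directly extend Theorem \ref{t:main} to an estimate of the form $\hm{s}(Q_0)\gtrsim\sum_Q\beta_E^{d,p}(C_0 B_Q)^2\ell(Q)^s$ for $s=d+\tau$ with $\tau$ small, and derive a contradiction from $\hm{s}(E\cap Q_0)<\infty$ whenever $\tau<c\,\beta_0^{2(d+1)}$; this bypasses the tree construction entirely but requires pushing the TST machinery above the critical dimension.
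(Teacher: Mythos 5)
Your reduction from the $L^\infty$ coefficients to the Azzam--Schul $L^p$ coefficients is essentially the paper's: the ``bump'' argument (a point at distance $\gtrsim\beta_0 r$ from the best plane plus lower content regularity in a ball of radius $\sim\beta_0 r$) gives $\beta_E^{d,p}(B)\gtrsim \beta_0^{d+1}$ for $p=1$, which is exactly where the exponent $2(d+1)$ in \eqref{e:wiggly-dim} comes from; the paper then proves the cleaner statement $\dim_H(E)\ge d+c\beta_0^2$ for sets that are uniformly wiggly with respect to $\beta_E^{d,p}$ (Proposition \ref{prop-content-beta-wiggly}) and substitutes $\beta_0\mapsto\beta_0^{d+1}$. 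You also correctly observe that plugging the wiggliness lower bound directly into Theorem \ref{t:main} only recovers $\dim_H(E)\ge d$, and that a Frostman-type tree must be built. Up to this point you are on track.

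The gap is in the branching step, which is where all the work lies. Your claim that a maximal disjoint family of $\lambda r$-balls centered on $E\cap B$ (the ``bulk'') can be supplemented by $\gtrsim(\beta_0/\lambda)^d$ \emph{additional} disjoint ``bump'' balls is not tenable: a maximal disjoint family already has the property that every point of $E\cap B$ lies within $2\lambda r$ of one of its centers, so the bump balls are not disjoint from it -- they are already counted. A single scale of wiggliness simply cannot certify a branching number exceeding $C\lambda^{-d}$ (two parallel $d$-planes at mutual distance $\beta_0 r$ have a large bump but covering number $\sim 2\lambda^{-d}$), and even granting a per-generation gain of $1+c\beta_0^{2(d+1)}$ you would lose a factor $\log(1/\lambda)$ in passing to the dimension, as you note. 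The paper's mechanism is genuinely multi-scale and runs in the opposite direction from your first attempt: it applies the TST upper bound of Theorem \ref{t:main} to the \emph{skeleton approximations} $E_k=\bigcup_{I\in\Delta_k(E)}\partial_d I$ (which inherit PBP from $E$), obtaining
\begin{equation*}
\sum_{m=j}^{k}\ \sum_{Q\in\dD^E_m,\ Q\cap 3I\neq\varnothing}\beta_E^{d,p}(B_Q)^2\,\ell(Q)^d\ \le\ C\,\hd(E_k\cap AI),
\end{equation*}
while the wiggliness plus lower content regularity give the same sum a lower bound $\gtrsim (k-j)\,\delta\,\beta_0^2\,\rho^{jd}$. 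Comparing the two forces the covering number of $E\cap AI$ at scale $\rho^{k}$ to exceed $c(k-j)\beta_0^2\rho^{d(j-k)}$, i.e.\ to grow \emph{linearly} in the number of generations $k-j$. Waiting $\kappa\sim\beta_0^{-2}$ generations converts this linear excess into a multiplicative branching factor $\ge\rho^{-\kappa(d+c\beta_0^2)}$ with no logarithmic loss, and the usual bottom-to-top Frostman construction on the resulting separated tree finishes the proof. Without this use of the TST as the engine that accumulates wiggliness across scales, your tree construction does not close; your ``alternative route'' (a TST above the critical dimension $s=d+\tau$) is not what the paper does and would require new machinery.
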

	
	\begin{remark}
		We defined uniformly wiggly sets in terms of $\beta_{E,\infty}^d$, but, in principle, we could have used the $L^p$ versions of the coefficients instead. Indeed, if we take a set $E$ that satisfies the hypotheses of Theorem \ref{t:corollary} with respect to the Azzam-Schul $\beta_E^{d,p}$-numbers, assuming $1 \leq p < p(d)$ (where $p(d)$ is defined in Theorem \ref{t:main}), then we obtain a somewhat sharper dimension estimate
		\begin{equation}
			{\rm dim}_H (E) \geq d+  c \beta_0^{2},
		\end{equation}
	see Proposition \ref{prop-content-beta-wiggly}. This is not unexpected, since the correct $\beta$-coefficients to use when working with higher dimensional sets are the averaged ones.

	Another comment on definitions is in order: many interesting sets satisfy non-flatness hypotheses weaker than uniform wiggly. For example, attractors of many dynamical systems are only \textit{mean} wiggly, in the sense that the $\beta$ coefficients are large in many (but not all) scales and locations. See \cite{graczyk2012metric, graczyk2022optimal} for a definition. See also \cite{koskela1997hausdorff} for a result similar in spirit concerning mean porous sets. 
	\end{remark}
	
	\begin{remark}
	 The dependence of $\dim_H(E)$ on the PBP parameters is not a proof artifact: suppose we had a lower bound for Hausdorff diemnsion depending only on $\beta_0$ and \textit{not} on $\delta$. Consider the four corners Cantor set $E$, constructed in the usual way, except that we dilate by constant $\eta>1$ the squares in the construction. In the limit, we will obtain that a) $\dim_{H}(E) = 1 + C(\eta)$ with $C(\eta)\to 0$ as $\eta\to 1$, b) $\beta_E^{1,\infty}(B)\gtrsim 1 \geq \beta_0>0$ for any ball $B$ centered on $E$, and c) $E$ will have PBP with parameters depending on $\eta$. But then, our hypothetical (and false) theorem would tell us that $\dim_{H}(E) \geq 1+ c \beta_0^4$ independently of $\eta$. 
	  This example was pointed out by T. Orponen in a discussion on a first draft, where the exact dependence of the dimension estimate on the PBP constant had been overlooked. 
	  
	  In fact, after a minute's thinking, the dependence on $\delta$ of \eqref{e:wiggly-dim} appears altogether natural: the parameter $\delta$ regulates the ``rigidity'' of a set with PBP, where one should think of connected set as having `rigidity' 1.
	\end{remark}

	\subsection{Outline of the paper and proof ideas} In Section \ref{sec:prels} we establish notation and state some quantitative rectifiability results used in the paper. Here the reader will find Lemma \ref{lem:Frostman measure} where a doubling Frostman measure is constructed, and which represents the first step of the program outlined on p. \pageref{program}. While the idea of the proof is rather simple (a redistribution policy), its execution is lengthy, and thus we have deferred it to Appendix \ref{appendix}.

  In Section \ref{s:coronization} we carry out the second step of our program. In fact, a coronization of lower content regular sets with finite Hausdorff measure in terms of Ahlfors regular sets was given in \cite{azzam2019quantitative}. Here we instead coronize a general lower content regular set (that is, with possibly non-$\sigma$ finite Hausdorff measure) by coronizing the Frostmann measure coming from the first step. This means: given a lower content regular set $E$, we apply Lemma \ref{lem:Frostman measure} to obtain a doubling Frostman measure $\mu$ supported on $E$. Next, we run a stopping time algorithm to decompose the dyadic cubes of $\mu$ intro trees, where the density of $\mu$ is roughly constant. The lower regularity of $E$, and the fact that $\mu$ is doubling, allow us to say that the cubes where the algorithm stopped (i.e. those scale and locations where $\mu$ was not behaving like an Ahlfors regular measure) are few, in a precise sense: the stopped cubes satisfy a Carleson packing condition. At the level of each tree we construct an approximating Ahlfors regular measure. This completes the coronization procedure. 

  In Section \ref{s:PBP} we show that the property of having plenty of big projection behaves well with our type of approximation. We are able to show that if the set $E$ has PBP, then the support of each approximating measure also has PBP. By applying the aforementioned result of Orponen, we conclude that each of the approximating measures is uniformly rectifiable.
 
	In Section \ref{sec:analytic} we prove \thmref{thm:main higher dim}. Since $\Gamma_{2,1}(E)\sim\gamma(E)$ by \cite{tolsa2003painleve}, \thmref{thm:main analytic cap} follows from \thmref{thm:main higher dim} by taking $n=2,\, d=1$. 	
	To prove \thmref{thm:main higher dim}, we first use the results from Sections \ref{s:coronization}--\ref{s:PBP} to obtain a family of uniformly rectifiable measures approximating the Frostman measure $\mu$ on our set with PBP. Uniform rectifiability implies that the approximating measures satisfy a flatness condition involving $\beta$-numbers. We transfer the $\beta$-numbers estimates back to $\mu$, and then conclude the proof of Theorem \ref{thm:main higher dim} by using a lower bound on $\Gamma_{n,d}$ due to Prat \cite{prat2012semiadd} and Girela-Sarrión \cite{girela-sarrion2018}, see \thmref{thm:capacitybetas}. %We show that if $E$ has PBP, then the Frostman measure $\mu$ on $E$ satisfies the flatness condition, and so \thmref{thm:capacitybetas} yields the desired lower bound. To show that $\mu$ satisfies the flatness condition, we conduct a stopping time argument to construct a corona decomposition of the David-Christ lattice $\mathcal{D}$ into trees such that on each tree the measure $\mu$ is approximately Ahlfors regular. For each tree, we construct an approximating measure $\eta$ such that it is Ahlfors regular and $\supp\eta$ satisfies the PBP condition. 
   %thus concluding the proof .
	
	In Section \ref{sec:TST} we prove the TST for sets with PBP, \thmref{t:main}. The general strategy is similar to that from Section \ref{sec:analytic}. The two main differences are that, firstly, the flatness condition is expressed in terms of a different type of $\beta$-numbers, and secondly, we wish to obtain estimates in terms $\mathcal{H}^d(E)$, and not some capacity. In particular, here we use the coronization from \cite{azzam2019quantitative}, and not the coronization of a Frostman measure.

	The last two sections are dedicated to Theorem \ref{t:corollary} and \thmref{t:corollary-2b} (note that \thmref{t:corollary-2} follows from \thmref{t:corollary-2b} since $\gamma(E)\sim\Gamma_{2,1}(E)$). The strategy of the proof of \thmref{t:corollary} is that of Bishop and Jones \cite{bishop1997wiggly}, while the proof of \thmref{t:corollary-2b} follows Verdera's \cite[Theorem 4.31]{tolsa2014analytic}. A key ingredient of the original proofs is Jones' Analyst's TST theorem \cite{jones1990rectifiable}, which here is replaced by \thmref{t:main}.
	
	In Appendix \ref{appendix} we construct a doubling Frostman measure supported on a lower content regular set. We use this modified Frostman measure in the proof of \thmref{thm:main higher dim}. The idea of the proof is rather simple: the doubling condition is broken if two neighbouring cubes have very unequal amount of mass. We fix this by recursively redistributing ``the wealth'', that is, the mass, form the cubes which are too wealthy, to the poorer ones. We are grateful to Tuomas Orponen for helping us with the construction.
	
	\subsection*{Acknowledgments}
	We thank Alan Chang, Matthew Hyde, Sebastiano Nicolussi Golo, Tuomas Orponen, Polina Perstneva, and Xavier Tolsa for useful comments and discussions on the manuscript and related topics. In particular, T. Orponen suggested us to work on the problem
	\eqref{eq:Vitushkin2}, he pointed out that a previous version of Theorem \ref{t:corollary} was incorrectly stated, suggesting some counterexamples, and finally he helped us with the construction of a doubling Frostman measure from the appendix. We are grateful to A. Chang for helping us with the proof of \lemref{lem:skeletonsprojection}.
	
	Parts of this work were completed during the conference ``Rajchman, Zygmund, and Marcinkiewicz'' at IMPAN (Warsaw), and during the Trimester Programme ``Interactions between Geometric measure theory, Singular integrals, and PDE'' at HIM (Bonn). We thank the organisers of both events for their kind hospitality.
	
	Finally, the second author would like to thank the Mathematical Research unit at the University of Oulu for giving him full support to pursue his research during the first year spent at the university. 

	\section{Preliminaries}\label{sec:prels}
	\subsection{Notation} \label{sec:notation}
	We gather here some notation and some results which will be used later on.
	We write $a \lesssim b$ if there exists an constant $C$ such that $a \leq Cb$. If the constant $C$ depends on a parameter $t$, we write $a\lesssim_t b$. By $a \sim b$ we mean $a \lesssim b \lesssim a$.

	For two subsets $A,B \subset \R^n$, we let
	$
	\dist(A,B) := \inf_{a\in A, b \in B} |a-b|.
	$
	For a point $x \in \R^n$ and a set $A \subset \R^n$, 
	$
	\dist(x, A):= \dist(\{x\}, A)= \inf_{a\in A} |x-a|.
	$
	The cardinality of a set $A$ is denoted by $\#A$.
	
	We write 
	$
	B(x, r) := \{y \in \R^n \, :\,|x-y|<r\},
	$
	and, for $\lambda >0$,
	$
	\lambda B(x,r):= B(x, \lambda r).
	$
	At times, we may write $\B$ to denote $B(0,1)$. When necessary we write $B_n(x,r)$ to distinguish a ball in $\R^n$ from one in $\R^d$, which we may denote by $B_d(x, r)$. Given a ball $B$, we denote by $r(B)$ its radius. 
	
	If $\mu$ is a Radon measure on $\R^n$, then the $d$-dimensional density of $\mu$ in the ball $B=B(x,r)$ is
	\begin{equation*}
	\theta_\mu(B)=\theta_\mu(x,r)=\fr{\mu(B(x,r))}{r^d}.
	\end{equation*}
	
	We denote by $\dG(n,d)$ the Grassmannian, that is, the manifold of all $d$-dimensional linear subspaces of $\R^n$. A ball in $\dG(n,d)$ is defined with respect to the standard metric
	\begin{align*}
	d_{\dG}(V, W) = \|\pi_V - \pi_W\|_{{\rm op}}.
	\end{align*}
	Recall that $\pi_V: \R^n \to V$ is the standard orthogonal projection onto $V$.
	With $\dA(n,d)$ we denote the affine Grassmannian, the manifold of all affine $d$-planes in $\R^n$.

	\subsection{Dyadic lattice and Christ-David cubes}
	The family of dyadic cubes in $\R^n$ will be denoted by $\Delta$, and the family of dyadic cubes with sidelength $\ell(I)=2^{-k}$ by $\Delta_k$. The $d$-dimensional skeleton of $I\in\Delta$ (i.e. the union of the $d$-dimensional faces of $I$) will be denoted by $\d_d I$.
	
	The idea to consider \emph{generalized dyadic cubes}, that is, nested partitions of sets with nice properties, goes back to David \cite{david1988morceaux} and Christ \cite{christ1990tb}. In many contexts it is important that these generalized cubes have thin boundaries with respect to a given measure. Since we will not need this property, we may use for example the cubes from \cite{kaenmaki2012existence}. A special case of their construction gives the following.
	\begin{lemma}[\cite{kaenmaki2012existence}] \label{theorem:christ}
		Let $E\subset\R^n$, $\rho = 1/1000$ and $c_0 = 1/500.$ Then, for each $k \in \Z$, there is a collection $\dD_k$ of generalized cubes on $E$ such that the following hold.
		\begin{enumerate}
			\item For each $k \in \Z, \ E = \bigcup_{Q \in \dD_k}Q,$ and the union is disjoint.
			\item If $Q_1, Q_2 \in \bigcup_k \dD_k$ and $Q_1 \cap Q_2\neq \varnothing$, then either $Q_1 \subset Q_2$ or $Q_2 \subset Q_1$.
			\item For $Q \in \bigcup_k \dD_k$, let $k(Q)$ be the unique integer so that $Q \in \dD_k$ and set $\ell(Q)=5 \rho^k$. Then there is $x_Q \in Q$ such that 
			\begin{align*}
			B(x_Q,c_0\ell(Q))\cap E \subseteq Q \subseteq B(x_Q , \ell(Q)).
			\end{align*}
		\end{enumerate}
	\end{lemma}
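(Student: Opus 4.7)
This is a special case of the generalized dyadic cube construction of \cite{kaenmaki2012existence}, so in principle one may simply invoke their theorem. I will nonetheless sketch the underlying approach, which is the classical David-Christ scheme stripped of the measure-theoretic ingredients (used in the original constructions to obtain thin-boundary estimates, which are irrelevant here).

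First I would pick, for each $k \in \Z$, a maximal $\rho^k$-separated subset $X_k \subset E$. By maximality, the balls $\{B(x,\rho^k) : x \in X_k\}$ cover $E$, while the concentric half-radius balls are pairwise disjoint. For each $x \in X_k$ I assign a parent $\hat{x} \in X_{k-1}$ minimizing $|x-\hat{x}|$, so that $|x-\hat{x}| \leq \rho^{k-1}$, producing a forest on $\bigsqcup_k X_k$.

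I then construct the cubes top-down. Starting at a coarsest scale $k_0$ with $\#X_{k_0}=1$ (introducing an auxiliary basepoint if $E$ is unbounded) and setting $Q(x_0) := E$, at each finer scale $k>k_0$ I assign every $y \in E$ to the unique $Q(x)$, $x \in X_k$, chosen by a two-stage rule: first restrict to $\{x \in X_k : y \in Q(\hat{x})\}$, then pick the one minimizing $|y-x|$, breaking ties by a fixed enumeration of $X_k$. Setting $\ell(Q(x)) := 5\rho^k$ and $x_{Q(x)} := x$, the partition property (1) and the nesting property (2) are immediate from the construction.

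The crux is the two-sided ball inclusion (3), which I would prove by induction on the scale. The outer inclusion $Q(x) \subset B(x,\ell(Q))$ follows from the Voronoi-like minimality at scale $k$: this forces $|y-x| \leq |y-x'|$ for every admissible competitor $x'$, and combining with the covering property at scale $k$ (which guarantees a close competitor exists) yields $|y-x| \lec \rho^k$ once the induction hypothesis at the parent scale is applied. The inner inclusion $B(x, c_0\ell(Q))\cap E \subset Q(x)$ is the more delicate half: a competitor $x'$ could steal a point $y \in B(x, c_0\ell(Q))\cap E$ only if $y \in Q(\hat{x'})\setminus Q(\hat{x})$, which the outer-inclusion bound for parents rules out as soon as $c_0 \cdot 5\rho^k$ is much smaller than the gap created by the parent separation $|x-x'| \geq \rho^k$. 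The main obstacle is tuning the constants $\rho$ and $c_0$ simultaneously so that both inclusions close, and the specific choices $\rho = 1/1000$ and $c_0 = 1/500$ are calibrated precisely to make this balance work.
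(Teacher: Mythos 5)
The paper offers no proof of this lemma: it simply cites \cite{kaenmaki2012existence}, so invoking that theorem, as you do in your first sentence, is exactly what the authors do. The problem lies in the construction you then sketch: the top-down, two-stage assignment rule does not deliver the outer inclusion $Q \subseteq B(x_Q, 5\rho^k)$, and in fact does not force the cubes to shrink at all. Concretely, suppose $y$ lies in the parent cube $Q(w)$ with $w \in X_{k-1}$, and let $R_{k-1}$ denote the outer radius you can guarantee at scale $k-1$. Your admissible set at scale $k$ is $\{x \in X_k : \hat{x} = w\}$; this set is nonempty (the point of $X_k$ within $\rho^k$ of $w$ has parent $w$, by the $\rho^{k-1}$-separation of $X_{k-1}$), but nothing forces it to contain a point close to $y$: the net point of $X_k$ within $\rho^k$ of $y$ may perfectly well have a parent other than $w$, in which case it is discarded at the first stage. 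The only admissible competitor you can always exhibit is one within $\rho^k$ of $w$, which gives $|y - x_Q| \le R_{k-1} + \rho^k$. The resulting recursion only yields $R_k \le R_{k_0} + \sum_{j>k_0} \rho^j$, a bound that does not decay in $k$, so property (3) fails and with it the whole point of the lemma.

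The constructions that actually work (Christ, David, and the one in \cite{kaenmaki2012existence}) determine membership through the full ancestor chain in the forest on $\bigcup_k X_k$: a point $y$ is attached to a nearby net point at the finest scale, and $Q(x)$ for $x \in X_k$ is essentially the union of the pieces attached to the net-descendants of $x$. A point of $Q(x)$ is then reached from $x$ along a chain $x = z_k, z_{k+1}, \dots$ with $|z_j - z_{j+1}| \le \rho^j$, so the total displacement is controlled by the geometric series $\sum_{j \ge k}\rho^j \le \rho^k/(1-\rho)$; this is where the harmless factor $5$ in $\ell(Q)=5\rho^k$ comes from, rather than from a delicate calibration of $\rho$ and $c_0$. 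Your rule re-selects the nearest admissible center from scratch at every generation and thereby loses exactly this geometric-series control; the fix is to measure the displacement per generation against the child's own center rather than against the point $y$'s distance to the parent's center.
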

	
	We introduce some notation related to cubes. Given two integers $k<l$, we set $\DD_k^l = \bigcup_{i=k}^l \DD_i$. If $R\in\DD$, we will denote the descendants of $R$ by
	\begin{equation*}
	\DD(R)=\{Q\in\DD\ :\ Q\subset R \},
	\end{equation*}
	and $\DD_k(R)=\DD(R)\cap\DD_k$. On the other hand, $Q^1$ will denote the \emph{parent} of $Q$, i.e. the unique cube such that $\ell(Q)=\rho\ell(Q^1)$ and $Q\subset Q^1$. 
	
	The child-parent relation endows $\DD$ with a natural tree structure. We will say that a collection $\mathcal{T}\subset\DD$ is a \emph{tree} if
\begin{itemize}
	\item $\mathcal{T}\subset \DD(R)$ for some $R\in\mathcal{T}$. This maximal cube $R$ will be called the \emph{root} of $\mathcal{T}$.
	\item for any $Q\in\mathcal{T}$ we also have $P\in\mathcal{T}$ for all cubes $P\in\DD$ with $Q\subset P\subset R,$ where $R$ is the root of $\mathcal{T}$.
\end{itemize}
	The minimal cubes of $\mathcal{T}$ will be called its \emph{stopping cubes}.
	
	For every $Q\in\DD$ we set $B(Q):= B(x_Q, c_0\ell(Q))$ and $B_Q = B(x_Q , \ell(Q))$, so that
	\begin{equation*}
		B(Q)\cap E \subset Q\subset B_Q.
	\end{equation*}
	Note that if $P\subset Q$, then $2B_P\subset 2B_Q$.
	
	Given a Radon measure $\mu$ and a cube $Q\in\mathcal{D}$, we define the $d$-dimensional density as
	\begin{equation*}
	\theta_\mu(Q)=\frac{\mu(Q)}{\ell(Q)^d}.
	\end{equation*}
		
	\subsection{Lower content regular sets}\label{subsec:LCR}
	
	Recall that a set $E \subset \R^n$ is lower content $(d, c_1)$-regular if, for all balls $B$ centered on $E$, 
	\begin{align*}
	\hdc(E \cap B) \geq c_1 r(B)^d. 
	\end{align*}
	We show below that sets with $d$-PBP are lower content regular.
	\begin{lemma}\label{l:low-reg-E}
		Let $E \subset \R^n$ be a set with $d$-PBP with constants $\delta>0$. Then $E$ is lower content $d$-regular with constant $c\sim_d \delta$.
	\end{lemma}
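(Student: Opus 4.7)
The plan is to reduce directly to the PBP hypothesis using the fact that orthogonal projections are $1$-Lipschitz, combined with the elementary comparability between $\mathcal{H}^d$ and $\mathcal{H}^d_\infty$ for subsets of $\R^d$.

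First I would fix $x \in E$ and $0 < r < \diam(E)$ and set $B = B(x,r)$. By the PBP hypothesis, there exists some $V := V_{x,r} \in \mathcal{G}(n,d)$ (any element of $B(V_{x,r},\delta)$ will do; we simply take $V$ itself) such that
\begin{equation*}
\mathcal{H}^d\bigl(\pi_V(E \cap B)\bigr) \geq \delta r^d.
\end{equation*}

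Next, I would use the fact that on subsets of a $d$-dimensional Euclidean space, Hausdorff measure and Hausdorff content are comparable with a dimensional constant. Indeed, for any bounded $A \subset V \simeq \R^d$, by covering $A$ with balls $B(y_i, r_i)$ we obtain $\mathcal{L}^d(A) \leq \omega_d \sum r_i^d$, and taking the infimum over covers gives $\mathcal{H}^d(A) \sim_d \mathcal{L}^d(A) \lesssim_d \mathcal{H}^d_\infty(A)$. Applying this with $A = \pi_V(E \cap B)$ yields
\begin{equation*}
\mathcal{H}^d_\infty\bigl(\pi_V(E \cap B)\bigr) \gtrsim_d \mathcal{H}^d\bigl(\pi_V(E \cap B)\bigr) \geq \delta r^d.
\end{equation*}

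Finally, since $\pi_V$ is $1$-Lipschitz, Hausdorff content cannot increase under $\pi_V$, so
\begin{equation*}
\mathcal{H}^d_\infty(E \cap B) \;\geq\; \mathcal{H}^d_\infty\bigl(\pi_V(E \cap B)\bigr) \;\gtrsim_d\; \delta r^d,
\end{equation*}
which gives the claim with $c_1 \sim_d \delta$. There is no real obstacle here; the whole argument is a one-line Lipschitz-projection estimate once the $\mathcal{H}^d$-versus-$\mathcal{H}^d_\infty$ comparison on $\R^d$ is recorded. The only minor subtlety is ensuring the PBP hypothesis is invoked on a scale $r < \diam(E)$ (for $r \geq \diam(E)$ the conclusion at that scale is vacuous or follows from the smaller-scale estimate applied to any interior ball).
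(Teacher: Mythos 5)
Your proposal is correct and follows essentially the same route as the paper: invoke PBP to get $\mathcal{H}^d(\pi_V(E\cap B))\ge\delta r^d$, use the comparability of $\mathcal{H}^d$ and $\mathcal{H}^d_\infty$ on subsets of the $d$-plane (the paper does this via an explicit near-optimal ball cover), and then transfer the content bound back to $E\cap B$ via the $1$-Lipschitz property of $\pi_V$. No gaps.
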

	
	\begin{proof}
		Without loss of generality, we identify $V$ with $\R^d$. For an arbitrary $\ve_1>0$, let $\dB$ be a family of balls in $\R^d$ so that $\sum_{B' \in \dB} r(B')^d \leq  \hd (\pi_V(E \cap B)) - \ve_1$. Note that, since these are balls in a $d$-plane, $\hd(B' \cap \pi_V(B\cap E)) \lesssim_d r(B')^d$. Let $\delta$ be the parameter with which $E$ satisfied $d$-PBP. Fix a ball $B$ centered on $E$, with $r(B) \leq \diam(E)$, and a plane $V$ in $B(V_B, \delta)$. Then, 
		\begin{align*}
		\delta r(B)^d \leq & \hd(\pi_V(E \cap B))\\
		& \leq \sum_{B' \in \dB} \hd(\pi_V(E \cap B) \cap B') \lesssim_d \sum_{B' \in \dB} r(B')^d \leq C\hd(\pi_V(E \cap B)) + C\ve_1 \\
		& \quad \quad \leq C'\hdc(\pi_V(E \cap B)) + C\ve_1. 
		\end{align*}
		Now, since $\pi_V$ is $1$-Lipschitz and $\ve_1$ was arbitrary, we obtain the lemma. The lower content regularity constant $c$ depends only on $\delta$ and $d$, since $C$ in the above display only depends on $d$. 
	\end{proof}
	\subsection{Frostman measure associated to \texorpdfstring{$E$}{E}}
	In Appendix \ref{appendix} we construct a particularly nice Frostman measure supported on a lower content regular set $E$. Its properties are listed in the lemma below.
	\begin{lemma}\label{lem:Frostman measure}
		Let $E\subset\R^n$ be a compact lower content $(d, c_1)$-regular set. Then, there exists a measure $\mu$ with $\supp\mu\subset E$ satisfying the following properties:
		\begin{enumerate}
			\item $\mu(E) = \hdc(E)\gtrsim c_1\diam(E)^d$,
			\item $\mu$ has polynomial growth, that is, there exists a constant $C_1\ge 1$ such that for all $x\in E$ and $0<r<\diam(E)$ we have
			\begin{equation*}
			\mu(B(x,r))\le C_1 r^d,
			\end{equation*}
			\item $\mu$ is doubling, that is, there exists a constant $C_{db}>1$ such that for all $x\in E$ and $0<r<\diam(E)$ we have
			\begin{equation}\label{eq:doubling}
			\mu(B(x,2r))\le C_{db}\, \mu(B(x,r))
			\end{equation}
			\item \label{it:density}the $d$-dimensional density of $\mu$ is almost monotone, that is, there exists a constant $A\ge 1$ such that if $P, Q\in\DD$, and $P\subset Q$, then
			\begin{equation*}
			\theta_\mu(P)\le A\, \theta_\mu(Q).
			\end{equation*} 
		\end{enumerate}
		In the above, $C_1$ may depend only on $d,n$, while $C_{db}$ and $A$ may also depend on the LCR-constant $c_1$.
	\end{lemma}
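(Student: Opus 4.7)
The plan is to build $\mu$ by modifying a maximal Frostman measure on $E$ via an iterative mass-redistribution on the Christ-David lattice $\DD$. As a baseline I would take the Frostman measure $\mu_0$ of maximum total mass on $E$ subject to the upper bound $\mu_0(B)\le r^d$ for every ball $B$ of radius $r\le \diam(E)$; standard Frostman theory gives $\mu_0(E)=\hdc(E)$, and the lower content $(d,c_1)$-regularity of $E$ yields $\hdc(E)\gtrsim c_1\diam(E)^d$. Hence $\mu_0$ already satisfies properties (1) and (2) of the lemma.

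The baseline $\mu_0$ need not be doubling, nor need its density be controlled by ancestor densities, so I would enforce (3) and (4) through a recursive redistribution. Scan $\DD$ top-down, scale by scale: whenever two close-by cubes $Q,Q'\in\DD_k$ have substantially different masses, transfer a controlled portion of $\mu(Q)$ to $Q'$. The withdrawal is done by scaling $\mu$ down uniformly on the descendants of $Q$, which preserves the internal structure of $\mu$ inside $Q$ (in particular polynomial growth, doubling and density monotonicity inside $Q$ are not broken). The deposit into $Q'$ is spread across the descendants of $Q'$ in a balanced fashion so that at every sub-cube $P\subset Q'$ one has $\theta_\mu(P)\le A\,\theta_\mu(Q')$. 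Mass is conserved at each transfer, so $\mu(E)=\hdc(E)$ is preserved throughout and (1) survives intact. After one top-down pass the neighbouring Christ-David cubes at each scale have comparable density, which gives doubling (3) via a standard comparison between Christ-David cubes and arbitrary balls; the balanced deposit enforces the density cap (4) directly.

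The main obstacle is showing that the deposit step inside a receiving cube $Q'$ can be performed while preserving polynomial growth $\mu(B)\le r^d$ at every smaller scale inside $Q'$. The room to absorb the new mass is the slack between $\mu(Q')$ and the ceiling $\ell(Q')^d$, and I would need to verify that this slack is uniformly distributed enough across the descendants of $Q'$ to accommodate the deposit in a spread-out manner, without exceeding the ancestor density cap demanded by (4). A secondary technical point is the convergence of the scan: redistributions at finer scales must not substantially undo the equalization achieved at coarser scales, which requires a careful ordering of the pairwise transfers and an estimate showing that each cube is involved in only a controlled number of transfers before the procedure stabilizes into a measure satisfying all four properties simultaneously.
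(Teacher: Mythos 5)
Your architecture is genuinely different from the paper's, and it contains two gaps that I do not think can be patched without essentially abandoning the starting point. The paper does not take a pre-existing maximal Frostman measure and redistribute it; it builds the measure from scratch, top-to-bottom in the style of Tolsa's proof of Frostman's lemma: at scale $k$ the mass of each cube $R\in\DD_{k-1}$ is split among its children proportionally to $\hdc(\text{child})$ (yielding an auxiliary measure $\eta_k$, uniformly spread on the balls $B(Q)$), and only then is a \emph{small fixed fraction} $C_0^{-1}$ of mass moved between neighbouring cubes where the dyadic-doubling inequality fails; the final $\mu$ is a weak limit of the $\mu_k$. The first gap in your plan is the deposit step: a receiving cube $Q'$ may carry little or even no mass under the baseline measure $\mu_0$ (a mass-maximizing Frostman measure on a lower content regular set can vanish on whole cubes), so there is no ``internal structure'' to scale up, and ``spreading the deposit in a balanced fashion'' over the descendants of $Q'$ amounts to constructing the measure inside $Q'$ from scratch anyway — which is exactly why the paper never conditions on a pre-existing measure and instead always distributes a parent's mass proportionally to the Hausdorff contents of its children, using subadditivity of $\hdc$ to keep $\mu_k(B(Q))\le\hdc(Q)\le 2\ell(Q)^d$ at every stage.

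The second gap is the one you flag as ``secondary'' but which is in fact the heart of the matter: the stability of the scan. A transfer at scale $k$ changes the masses of all descendants of both cubes, so it can create new imbalances at scale $k+1$ and also destroy the comparability of $Q$ with its \emph{other} neighbours at scale $k$; you give no mechanism preventing an unbounded cascade. The paper's resolution is threefold: (a) only a fraction $C_0^{-1}$ of mass is ever moved, so $\tfrac12\eta_k\le\mu_k\lesssim\eta_k$; (b) the key combinatorial fact that no cube is simultaneously ``rich'' and ``poor'' (Lemma \ref{lem:poorrichdisjoint}) is proved \emph{using the doubling property already established for $\mu_{k-1}$}, which is what closes the induction; and (c) one accepts that mass does leak between cubes across scales, and instead proves that it never travels farther than a bounded enlargement ($\mu_j(2B_Q)\ge\mu_k(B(Q))$ and $\mu_j(B_Q)\le\mu_k(10B_Q)$ for $j\ge k$, Lemma \ref{lem:massstays}), which is what allows the polynomial growth, doubling, and density monotonicity to survive the passage to the weak limit. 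Relatedly, your plan to enforce $\theta_\mu(P)\le A\,\theta_\mu(Q')$ for \emph{all} $P\subset Q'$ directly at deposit time is stronger than what the construction can deliver at finite stages; the paper only proves that the density of $Q$ is dominated by the density of \emph{some} neighbour of its parent (Lemma \ref{lem:density parent}) and recovers the full statement in the limit via doubling. Without arguments for these points your proof is incomplete.
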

			
	Observe that thanks to \eqref{eq:doubling}, if $E$ is lower content regular and $\DD$ is the associated David-Christ lattice, then for all $Q\in\DD$
	\begin{equation}\label{eq:doubling cubes}
	\mu(Q)\le \mu(2B_Q)\lesssim_{c_1}\mu(B(Q))\le \mu(Q).
	\end{equation}
	
		\subsection{Quantitative rectifiability and \texorpdfstring{$\beta$}{beta}-numbers}
  	Recall that one of the quantitative notions of rectifiability introduced by David and Semmes in \cite{david1991singular} is given by the big pieces of Lipschitz graphs condition.
	\begin{definition}\label{def:BPLG}
		An Ahlfors $d$-regular set $E\subset\R^n$ has \emph{big pieces of Lipschitz graphs (BPLG)} if there exist constants $C_0, L> 0$ such that for any $x\in E$ and $0<r<\diam(E)$ there exists a Lipschitz graph $\Gamma\subset\R^n$ with $\lip(\Gamma)\le L$ and
		\begin{equation}\label{eq:BPLG}
		\mathcal{H}^d(E\cap B(x,r)\cap\Gamma)\ge C_0r^d.
		\end{equation}
	\end{definition}
%	The following quantitative notion of rectifiability was introduced by David and Semmes in \cite{david1991singular}.
 %   \begin{definition}
%        An Ahlfors $d$-regular set $E\subset\R^n$ is \emph{uniformly $d$-rectifiable} if there exist constants $C_0, L> 0$ such that for any $x\in E$ and $0<r<\diam(E)$ there exists a Lipschitz mapping $f:B_d(0,r)\subset\R^d\to\R^n$ with $\lip(f)\le L$ and
%		\begin{equation}\label{eq:BPLG}
%		\mathcal{H}^d(E\cap B(x,r)\cap f(B_d(0,r))\ge C_0r^d.
%		\end{equation}
%    \end{definition}
% Another condition introduced by David and Semmes is the big pieces of Lipschitz graphs condition.

 In \cite{david1993quantitative} David and Semmes conjectured that for Ahlfors regular sets, the PBP and BPLG conditions are equivalent. This was confirmed in a recent breakthrough result of Orponen \cite{orponen2020plenty}.
	\begin{theorem}[\cite{orponen2020plenty}]\label{thm:orponen}
		Suppose that a set $E\subset\R^n$ is Ahlfors $d$-regular. Then, it has $d$-PBP if and only if it has BPLG.
	\end{theorem}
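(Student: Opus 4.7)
The plan is to handle the two implications separately; only the forward direction (PBP $\Rightarrow$ BPLG) is substantive. For BPLG $\Rightarrow$ PBP, fix $x\in E$ and $0<r<\diam(E)$, and apply BPLG to produce a Lipschitz graph $\Gamma=\graph(f)$ over some $d$-plane $V_{x,r}$, with $\lip(f)\le L$ and $\mathcal{H}^d(E\cap B(x,r)\cap\Gamma)\ge C_0 r^d$. The projection $\pi_{V_{x,r}}$ is bi-Lipschitz on $\Gamma$ with constants depending only on $L$, so $\mathcal{H}^d(\pi_{V_{x,r}}(E\cap B(x,r)))\gtrsim_L r^d$. Any plane $W$ close enough to $V_{x,r}$ in $\dG(n,d)$ also sees $\Gamma$ as a Lipschitz graph with a slightly worse constant, so the same projection bound propagates to every $W\in B(V_{x,r},\delta)$ provided $\delta=\delta(L)$ is small enough, giving PBP.

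For PBP $\Rightarrow$ BPLG, my approach would be to reduce to the $L^2$-projection characterization of BPLG due to Martikainen and Orponen \cite{martikainen2018characterising}: it suffices to show that for each ball $B$ centered on $E$ there is a single $d$-plane $V$ with $\|\pi_V(\mathcal{H}^d|_{E\cap B})\|_{L^2(V)}^2\lesssim r(B)^d$. The idea is then to build up such an estimate by aggregating the PBP information across all scales inside $B$.

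Concretely, I would run a stopping-time construction on the Christ--David lattice restricted to $B$: fix the reference plane $V_0:=V_{x,r(B)}$ provided by PBP at the top scale, and descend through the cubes, stopping at $Q$ either when the PBP direction $V_Q$ has rotated away from $V_0$ beyond some small threshold, or when the local density of $\pi_{V_0}(\mathcal{H}^d|_{E\cap Q})$ in $V_0$ becomes too large. On cubes which never stop, every plane in $B(V_0,\delta/2)\cap B(V_Q,\delta)$ is a simultaneously good projection direction, and averaging in the Grassmannian converts the largeness of the projection into an $L^2$ bound for the projection of $\mathcal{H}^d|_{E\cap Q}$ into $V_0$.

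The hard part will be the Carleson packing estimate for the first family of stopping cubes, those where the PBP direction has drifted too far from $V_0$. A priori the directions $V_Q$ at different cubes are unrelated, and the strength of the PBP hypothesis must be used to rule out rapid rotation across scales on a Carleson-abundant subfamily. This is Orponen's central innovation and, in my expectation, the step that genuinely uses the \emph{full} $\delta$-ball of good projection directions rather than a single direction. Once the packing estimate is in place, summing the local $L^2$ bounds on the surviving cubes gives the required global projection estimate in $V_0$, and \cite{martikainen2018characterising} upgrades it to BPLG on a fixed fraction of $E\cap B$.
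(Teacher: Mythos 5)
The paper does not prove this statement at all: Theorem \ref{thm:orponen} is imported wholesale from \cite{orponen2020plenty}, which is a long and difficult paper resolving the David--Semmes conjecture. Your converse direction (BPLG $\Rightarrow$ PBP) is fine and routine. But your forward direction is not a proof: you explicitly defer ``the Carleson packing estimate for the first family of stopping cubes'' and label it ``Orponen's central innovation.'' That step is not a technical detail to be filled in later --- it \emph{is} the theorem. A proposal that reduces the statement to its own hardest ingredient and then stops has not proved anything.

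Beyond the incompleteness, the reduction you set up is itself flawed. You claim it suffices to exhibit, for each ball $B$, a \emph{single} plane $V$ with $\|\pi_V(\mathcal{H}^d|_{E\cap B})\|_{L^2(V)}^2\lesssim r(B)^d$. That is false: the four-corner Cantor set is Ahlfors $1$-regular, purely unrectifiable (so it has no BPLG), and yet by self-similarity the projection of its natural measure in the fixed direction of slope $1/2$ has uniformly bounded density at every scale and location. The Martikainen--Orponen criterion \cite{martikainen2018characterising} genuinely requires the $L^2$ bound on a positive-measure family of directions, not one. Moreover, the passage from ``$\mathcal{H}^d(\pi_V(E\cap Q))\gtrsim \ell(Q)^d$ for a $\delta$-ball of planes'' to an $L^2$ bound on $\pi_{V_0}(\mathcal{H}^d|_{E\cap Q})$ via ``averaging in the Grassmannian'' is precisely the gap the whole subject turns on: Cauchy--Schwarz gives $L^2$ control $\Rightarrow$ big projections, but the reverse implication is the fundamental difficulty (see Remark \ref{rem:CT} in this paper), and no amount of single-scale averaging is known to bridge it. If you want to use this theorem, cite \cite{orponen2020plenty}; reproving it is a different project.
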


	We recall different variants of $\beta$-numbers that we will use.
	\begin{definition}[Jones]\label{d:jones-beta}
		Let $E \subseteq \R^n$ and $B$ a ball. Define
		\begin{align*}
		\beta_{E,\infty}^d(B) = \frac{1}{r(B)}\inf_{L\in\mathcal{A}(n,d)} \sup\{\text{dist}(y,L) : y \in E \cap B\}.
		\end{align*}
		For $x\in\R^n$ and $r>0$ we set also $\beta_{E,\infty}^d(x,r) = \beta_{E,\infty}^d(B(x,r))$, and we will use the same notation for other types of $\beta$-numbers, defined below. We will also usually omit the superscript $d$.
	\end{definition}
	\begin{definition}[David-Semmes]
		Let $\mu$ be a Radon measure on $\R^n$, $B\subset\R^n$ a ball, and $1\le p<\infty$. The $L^p$ variant of Jones' $\beta$-numbers is defined as
		\begin{equation*}		
		\beta_{\mu,p}^{d}(B) = \inf_{L\in\mathcal{A}(n,d)}\left( \frac{1}{r(B)^d} \int_B \left( \frac{\dist(y, L)}{r(B)}\right)^p \, d\mu(y) \right)^{\frac{1}{p}}.
		\end{equation*} 
	\end{definition}
	The following is a special case of a classical result of David and Semmes.
 \begin{theorem}[\cite{david1991singular}]
		Let $E\subset\R^n$ be a bounded Ahlfors regular set with BPLG, and let $\mu=\mathcal{H}^d|_E$. Then, there exists a constant $C$, depending only on $n,d,$ and the BPLG and Ahlfors regularity constants of $E$, such that
		\begin{equation*}
		\int_E\int_0^{\diam(E)} \beta_{\mu,2}(x,r)^2\, \fr{dr}{r}d\mu(x)\le C\mu(E).
		\end{equation*}
	\end{theorem}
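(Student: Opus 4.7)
The plan is to deduce the geometric lemma from BPLG via a corona-type decomposition whose tops satisfy a Carleson packing condition, and to control the $\beta^2$-sum inside each tree by the $L^2$-$\beta$ estimates of a single Lipschitz graph. The core observation is that BPLG gives, at every location and scale, a Lipschitz graph swallowing a definite fraction of the mass, which can be iterated into a stopping-time tree structure.

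First I would construct, starting from a top cube $R\in\mathcal{D}$, a tree $\mathcal{T}(R)$ as follows. Apply BPLG to $B_R$ to obtain a Lipschitz graph $\Gamma_R$ with $\mathcal{H}^d(E\cap B_R\cap\Gamma_R)\ge C_0\ell(R)^d$. Declare a descendant $Q\subsetneq R$ a \emph{stopping cube} (and hence not in $\mathcal{T}(R)$) if either (i) $\mu(Q\cap\Gamma_R)\le (1-\eta)\mu(Q)$ for a fixed small $\eta>0$, or (ii) the plane $L_R$ containing the graph piece becomes a bad approximation of $E$ in $Q$. All remaining cubes are added to $\mathcal{T}(R)$. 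Iterating on the stopping cubes produces a family of tops $\mathcal{F}\subset\mathcal{D}$. A standard argument, using Ahlfors regularity and the fact that at each top a fixed fraction $C_0$ of the mass enters $\Gamma_R$, shows that $\mathcal{F}$ packs: $\sum_{R\in\mathcal{F},\,R\subset Q_0}\mu(R)\lesssim\mu(Q_0)$ for every $Q_0\in\mathcal{D}$.

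Next I would estimate the $\beta$-sum tree by tree. Fix $R\in\mathcal{F}$ and $Q\in\mathcal{T}(R)$. Using the defining stopping condition, a definite fraction of the mass of $E$ inside $B_Q$ sits on $\Gamma_R$, and the remainder is small. Choosing the approximating plane for $\beta_{\mu,2}(x_Q,\ell(Q))$ to be a suitable $d$-plane tangent to $\Gamma_R$, one gets
\begin{equation*}
\beta_{\mu,2}(x_Q,\ell(Q))^2\,\mu(Q)\lesssim \int_{B_Q\cap\Gamma_R}\bigg(\frac{\dist(y,L_{R,Q})}{\ell(Q)}\bigg)^2 d\mathcal{H}^d(y)+\text{(error from mass off }\Gamma_R\text{)}.
\end{equation*}
Summing over $Q\in\mathcal{T}(R)$, the first term is bounded by the standard geometric-lemma estimate on a single Lipschitz graph (which is elementary: for a graph of a Lipschitz function $A$, one has $\beta_{\mathcal{H}^d|_{\Gamma},2}(x,r)^2\lesssim \fint_{2B}|\nabla A-(\nabla A)_{2B}|^2$, whose Carleson sum is controlled by $\|\nabla A\|_\infty^2\cdot r(B)^d$). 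The error terms are controlled, after summing, by the total mass exiting $\Gamma_R$ through the stopping cubes, which is $\le\mu(R)$. Hence
\begin{equation*}
\sum_{Q\in\mathcal{T}(R)}\beta_{\mu,2}(x_Q,\ell(Q))^2\,\mu(Q)\lesssim \mu(R).
\end{equation*}

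Finally I would sum over $\mathcal{F}$ using the packing estimate and convert the cubewise sum into the continuous integral by a standard comparison (for each $(x,r)$ there is a unique scale-comparable cube containing $x$ with $\ell(Q)\sim r$, and $\beta_{\mu,2}(x,r)^2\lesssim\beta_{\mu,2}(x_Q,\ell(Q'))^2$ for a bounded number of nearby cubes $Q'$). This yields
\begin{equation*}
\int_E\int_0^{\diam(E)}\beta_{\mu,2}(x,r)^2\,\frac{dr}{r}\,d\mu(x)\lesssim \sum_{R\in\mathcal{F}}\mu(R)\lesssim \mu(E).
\end{equation*}
The main obstacle is the tree-level estimate: one must show that the deviations of $E$ from $\Gamma_R$ inside $\mathcal{T}(R)$ are genuinely controlled by the mass that eventually leaves $\Gamma_R$ at stopping cubes. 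This is essentially the geometric content of the David--Semmes corona construction, and requires care in choosing the stopping parameters $\eta$ and the thresholds for the flatness condition, so that the two stopping criteria do not interact badly and so that every stopping generation loses a definite fraction of mass, guaranteeing packing.
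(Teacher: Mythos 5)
The paper does not actually prove this statement: it is imported verbatim from David--Semmes \cite{david1991singular}, so there is no in-house argument to compare against. Judged on its own merits, your outline has the right architecture (a stopping-time corona built from BPLG, Dorronsoro's $L^2$ estimate on each Lipschitz graph, a Carleson packing of the tops), and the tree-level estimate can indeed be closed: writing $\dist(y,L_{R,Q})\lesssim \dist(y,\Gamma_R)+\beta_{\Gamma_R,2}\text{-type flatness of }\Gamma_R\text{ in }CB_Q$ (via the comparison lemma, Lemma 2.21 of Azzam--Schul, quoted in the paper), the graph contribution is summable by the geometric lemma for a single Lipschitz graph, and the $\dist(\cdot,\Gamma_R)$ contribution sums to $\lesssim\mu(R)$ by a geometric series over scales, because every non-stopped $Q$ meets $\Gamma_R$ and hence every $y\in B_Q$ satisfies $\dist(y,\Gamma_R)\lesssim\ell(Q)$. (One caveat: for $d\ge 2$ the $L^\infty$ geometric lemma fails even for Lipschitz graphs, so you must keep the $L^2$ form throughout; your appeal to $\fint|\nabla A-(\nabla A)_{2B}|^2$ is the correct tool.)

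The genuine gap is in the stopping rule, and it is not just a matter of tuning. With $\eta$ small, the rule ``stop when $\mu(Q\cap\Gamma_R)\le(1-\eta)\mu(Q)$'' fires as soon as a tiny fraction $\eta$ of the mass of $Q$ leaves $\Gamma_R$. Since BPLG only guarantees that a fraction $C_0$ of $\mu|_{B_R}$ (possibly much less than all of it) lies on $\Gamma_R$, it can happen that every child of $R$ already triggers this condition, the trees are one generation deep, and $\sum_{R\in\mathcal{F}}\mu(R)$ grows like the number of generations times $\mu(E)$; the bound $\sum_{\mathrm{stop}}\mu(Q)\le\eta^{-1}\mu(B_R\setminus\Gamma_R)$ is useless because $\mu(B_R\setminus\Gamma_R)$ need not be small. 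The correct rule is the opposite one: stop only when the graph's share of $Q$ drops below a small threshold $\epsilon$ much smaller than the BPLG fraction. Then the retained set $G_R=(R\cap\Gamma_R)\setminus\bigcup\mathrm{Stop}(R)$ satisfies $\mu(G_R)\gtrsim\mu(R)$, and since the sets $G_R$ are pairwise disjoint across the forest, $\sum_R\mu(R)\lesssim\sum_R\mu(G_R)\le\mu(E)$, which is the packing you need; the tree estimate above survives this weaker stopping. Finally, your second stopping criterion (stop when $L_R$ approximates $E$ badly in $Q$) should be dropped: packing those stopping cubes would require already knowing a weak geometric lemma for $E$, which is circular, and as explained it is not needed to control the trees.
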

	Together with \thmref{thm:orponen} this gives the following.
	\begin{coro}\label{cor:PBPbetas}
		If an Ahlfors regular set $E$ has PBP, then the surface measure $\mu=\mathcal{H}^d|_E$ satisfies
		\begin{equation}\label{eq:PBPbetas}
		\int_E\int_0^{\diam(E)} \beta_{\mu,2}(x,r)^2\, \fr{dr}{r}d\mu(x)\le C\mu(E),
		\end{equation}
		where $C>0$ depends only on $n,d,$ and the PBP and Ahlfors regularity constants of $E$.
	\end{coro}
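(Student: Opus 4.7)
The statement is a direct corollary of the two preceding results, so the plan is essentially a two-line composition rather than a genuine proof. First I would invoke Theorem \ref{thm:orponen} of Orponen: since $E$ is assumed Ahlfors $d$-regular and to have $d$-PBP, it follows that $E$ has big pieces of Lipschitz graphs, with BPLG constants $C_0, L$ depending only on $n, d$, the PBP constant, and the Ahlfors regularity constant of $E$.

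Next, with BPLG in hand, I would apply the David--Semmes theorem quoted just above (the special case of \cite{david1991singular}) to the surface measure $\mu = \mathcal{H}^d|_E$. This immediately yields
\begin{equation*}
\int_E\int_0^{\diam(E)} \beta_{\mu,2}(x,r)^2\, \fr{dr}{r}\,d\mu(x)\le C\mu(E),
\end{equation*}
with the constant $C$ depending only on $n,d$ and the BPLG and Ahlfors regularity constants of $E$. Tracking the dependencies from the previous step, $C$ in turn depends only on $n,d$, the Ahlfors regularity constant, and the PBP constant, which is precisely the dependence asserted in the corollary.

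There is no real obstacle here: the entire content of the corollary is the combination of the two black-box results cited above, and no further quantitative estimate or geometric argument needs to be introduced.
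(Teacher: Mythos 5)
Your proposal is correct and is exactly the paper's argument: the corollary is stated immediately after the David--Semmes theorem with the remark that it follows ``together with Theorem \ref{thm:orponen},'' i.e.\ PBP plus Ahlfors regularity gives BPLG, and then the David--Semmes estimate applies to $\mu=\mathcal{H}^d|_E$ with the stated dependence of constants. Nothing further is needed.
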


	The estimate \eqref{eq:PBPbetas} is extremely useful for estimating $\Gamma_{n,d}(E)$ due to the following result.
	\begin{theorem}[\cite{prat2012semiadd,girela-sarrion2018}]\label{thm:capacitybetas}
		Let $E\subset\R^n$ be compact. Then,
		\begin{equation}\label{eq:capacitybetas}
		\Gamma_{n,d}(E)\gtrsim \sup \{\mu(E)\, :\, \mu\in\mathcal{F}(E)\},
		\end{equation}
		where $\mathcal{F}(E)$ is the set of Radon measures with $\supp\mu\subset E$ satisfying the polynomial growth condition
		\begin{equation*}
		\mu(B(x,r))\le r^d\quad \text{for all $x\in\supp\mu$ and $r>0$},
		\end{equation*}
		and the flatness condition
		\begin{equation*}
		\iint_0^{\infty} \beta_{\mu,2}(x,r)^2\,\theta_\mu(x,r)\, \fr{dr}{r}d\mu(x)\le \mu(E).
		\end{equation*}
	\end{theorem}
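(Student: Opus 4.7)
The plan is the following. Given $\mu\in\mathcal{F}(E)$, I aim to produce a positive measure $T$ with $\supp T\subset E$, $\|K\ast T\|_{L^\infty(\R^n)}\le 1$ for the Riesz kernel $K(x)=x/|x|^{d+1}$, and $T(E)\gtrsim \mu(E)$; taking the supremum over $\mu\in\mathcal{F}(E)$ then gives \eqref{eq:capacitybetas}. The natural candidate is $T=c\,\mu|_G$ for a well-chosen subset $G\subset\supp\mu$ of large $\mu$-measure and a small absolute constant $c>0$.

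First I would upgrade the single flatness inequality to uniform $L^2(\mu)$-boundedness of the truncated Riesz transforms $\mathcal{R}_{\mu,\varepsilon}f(x)=\int_{|x-y|>\varepsilon}K(x-y)f(y)\,d\mu(y)$. For $d=1$ this is Melnikov's curvature identity: the integrand $\beta_{\mu,2}(x,r)^2\theta_\mu(x,r)$ is comparable (modulo polynomial growth) to the Menger curvature density, and the Melnikov--Verdera identity then controls $\|\mathcal{R}_\mu 1\|_{L^2(\mu)}^2$ by this quantity. For general $d$, one combines polynomial growth with the non-homogeneous $T(1)$ theorem of Nazarov--Treil--Volberg: the flatness hypothesis is used to verify that $\mathcal{R}_\mu 1$ lies in $\BMO(\mu)$ (in the non-homogeneous sense) and that the weak boundedness property holds, with constants depending only on $n,d$.

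Second, given the $L^2(\mu)$-boundedness with some dimensional constant $M$, I would extract a large subset $G\subset\supp\mu$ on which the Riesz transform is pointwise bounded. A standard suppression/good-$\lambda$ argument using the non-homogeneous Cotlar inequality yields
\begin{equation*}
\mu\bigl(\bigl\{x\in\supp\mu\, :\, \mathcal{R}_\mu^{*}\mu(x)>\lambda\bigr\}\bigr)\le \frac{C M^2}{\lambda^2}\,\mu(E),
\end{equation*}
where $\mathcal{R}_\mu^{*}$ is the maximal truncation. Choosing $\lambda$ large and $G:=\{\mathcal{R}_\mu^{*}\mu\le\lambda\}$, we have $\mu(G)\ge \mu(E)/2$. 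Classical arguments from the theory of Riesz capacities then promote this restriction to a pointwise bound $\|K\ast(\mu|_G)\|_{L^\infty(\R^n)}\lesssim\lambda$ off the support (and via principal values on it), so $T=c\,\mu|_G$ with $c\sim 1/\lambda$ achieves all three desired properties.

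The main obstacle is the first step: the flatness condition is an averaged, multi-scale hypothesis on $\mu$, and without Ahlfors regularity there is no way to reduce to the classical (homogeneous) David--Semmes theory. One must run the non-homogeneous $T(1)$ machinery carefully, and the quadratic nature of the flatness integrand is essential for matching the $L^2$-estimate on the Riesz transform---no weaker $L^p$-version of the flatness hypothesis would close the argument. The promotion from $L^2(\mu)$ to a pointwise $L^\infty(\R^n)$ bound on $K\ast(\mu|_G)$ also requires some care, since the capacity $\Gamma_{n,d}$ is sensitive to behavior of the Riesz transform off the support of $\mu$, not only on it.
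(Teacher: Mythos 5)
Your two-step architecture is exactly how the paper obtains this statement: it is not proved here but quoted as the combination of Prat's lower bound for $\Gamma_{n,d}$ in terms of measures with polynomial growth whose Riesz transform is bounded in $L^2(\mu)$ \cite{prat2012semiadd}, and Girela-Sarri\'on's theorem that the flatness condition implies that $L^2(\mu)$ bound \cite{girela-sarrion2018}. Your second step (a Cotlar/good-$\lambda$ argument producing $G$ with $\mu(G)\ge\mu(E)/2$ on which the maximal transform is bounded, followed by the maximum principle for Riesz transforms of positive measures to upgrade a bound on $\supp\mu$ to a bound on all of $\R^n$) is a fair sketch of the first ingredient.

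The genuine gap is in your first step. Deducing $\|\mathcal{R}_{\mu}1\|_{L^2(\mu)}^2\lesssim\mu(E)$ from the flatness condition is not a routine application of the non-homogeneous $T(1)$ theorem: there is no direct computation converting the multi-scale $\beta_{\mu,2}$-square-function hypothesis into the $\BMO(\mu)$ condition on $\mathcal{R}_\mu 1$ or into the weak boundedness property. That implication is the entire content of \cite{girela-sarrion2018} (and of Azzam--Tolsa \cite{azzam2015characterization} for $d=1$, $n=2$), and its proof runs through a corona decomposition of $\supp\mu$ into trees on which $\mu$ is approximately Ahlfors regular, approximation on each tree by a uniformly rectifiable measure for which the Riesz transform is known to be bounded, and a transference/good-$\lambda$ argument --- machinery comparable in weight to the rest of this paper. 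Likewise, for $d=1$ the Melnikov--Verdera identity only converts Menger curvature into the $L^2(\mu)$ norm of the Cauchy transform; you still need the inequality $c^2(\mu)\lesssim\iint\beta_{\mu,2}(x,r)^2\,\theta_\mu(x,r)\,\frac{dr}{r}\,d\mu(x)+\mu(E)$ for non-Ahlfors-regular measures with linear growth, which is itself a nontrivial theorem rather than a pointwise comparison. So the proposal correctly locates the difficulty but supplies no argument for it; as written, the first step does not close.
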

	The theorem above is a combination of two results. Prat \cite{prat2012semiadd} related the capacity $\Gamma_{n,d}$ with the supremum over measures whose Riesz transform is in $L^2$, whereas Girela-Sarrión \cite{girela-sarrion2018} proved that this Riesz transform condition is true for measures $\mu\in\mathcal{F}(E)$. Prat's result was first proved by Tolsa \cite{tolsa2003painleve} for $d=1,\, n=2$, and by Volberg \cite{volberg2003calderon} in the case $d=n-1$. The result of Girela-Sarrión was first shown for $d=1,\, n=2,$ by Azzam and Tolsa \cite{azzam2015characterization}. Finally, let us mention that while \eqref{eq:capacitybetas} holds for all $1\le d<n$, in the codimension-1 case $d=n-1$ an estimate converse to \eqref{eq:capacitybetas} is also known to be true. This was shown for $d=1$ by Azzam and Tolsa \cite{tolsa2005bilipschitz,azzam2015characterization} and for general $d\in\mathbb{N}$ by Tolsa and the first author \cite{dkabrowski2021measures,tolsa2021measures}. Whether the same is true in codimension larger than 1 is an open problem.

	In the statement of \thmref{t:main} we used the content $\beta$-numbers of Azzam and Schul, which we recall below. For $1 \leq p<\infty$ and $A \subset \R^n$ Borel, we define the $p$-Choquet integral as 
	\begin{align*}
	\int_A f(x)^p\, d \hdc(x) := \int_0^\infty \hdc(\{x \in A\, :\, f(x)>t\}) \, t^{p-1}\, dt.
	\end{align*}
	We refer the reader to \cite{mattila} for more details on Hausdorff measures and content and to Section 2 and the Appendix of \cite{azzam2018analyst} for more details on Choquet integration.  
	
	\begin{lemma}[{\cite[Lemma 2.3]{azzam2018analyst}}]\label{l:jensen}
		Let $E \subseteq \R^n$ be either compact or bounded and open so that $\hd(E) >0,$ and let $f \geq 0$ be continuous on $E$. Then for $1 < p \leq \infty,$
		\[ \frac{1}{\hd_\infty(E)} \int_E f \, d\hd_\infty \lesssim_n \left( \frac{1}{\hd_\infty(E)} \int_E f^p \, d\hd_\infty \right)^\frac{1}{p} \] 
	\end{lemma}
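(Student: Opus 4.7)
My plan is to reduce everything to the layer-cake representation of the $p$-Choquet integral. Set $m := \hd_\infty(E)$ and, for $t \geq 0$, define the distribution function $D(t) := \hd_\infty(\{x \in E : f(x) > t\})$; continuity of $f$ guarantees that these sets are (relatively) open in $E$, and nestedness of sublevel sets makes $D$ automatically nonincreasing with $D(0) \leq m$. By the very definition of the $p$-Choquet integral recalled just above the lemma,
\[
\int_E f\, d\hd_\infty = \int_0^\infty D(t)\, dt, \qquad \int_E f^p\, d\hd_\infty = \int_0^\infty D(t)\, t^{p-1}\, dt.
\]
The driving observation is the elementary but universal bound $D(t) \leq m$.

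Next I would run a one-line Chebyshev/truncation argument. Let $I := \int_E f\, d\hd_\infty$ and assume $0 < I < \infty$ (the extremes are trivial: $I = 0$ makes the inequality immediate, while $I = \infty$ combined with $D(t) \leq m$ forces $\int_0^\infty D(t) t^{p-1}\, dt = \infty$ as well). Put $a := I/m$. The bound $D \leq m$ gives $\int_0^{a/2} D(t)\, dt \leq I/2$, so the tail carries mass $\int_{a/2}^\infty D(t)\, dt \geq I/2$. Using $t^{p-1} \geq (a/2)^{p-1}$ on this tail,
\[
\int_E f^p\, d\hd_\infty \;\geq\; \Bigl(\frac{a}{2}\Bigr)^{p-1}\cdot\frac{I}{2} \;=\; 2^{-p}\, a^p\, m.
\]
Dividing by $m$, taking $p$-th roots, and substituting back $a = m^{-1}\int_E f\, d\hd_\infty$ yields the claimed inequality with explicit constant $2$. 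The case $p = \infty$ reduces to the trivial bound $m^{-1}\int_E f\, d\hd_\infty \leq \sup_E f$, which follows by letting $p \to \infty$ in the computation above.

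I do not anticipate any genuine obstacle here: the proof is standard layer-cake bookkeeping, and it is insensitive to the unconventional absence of the factor $p$ in the excerpt's definition of the $p$-Choquet integral (that convention only perturbs the constant by a factor $p^{1/p}\leq e^{1/e}$). In particular the constant I obtain is universal, independent of $n$; the stated $n$-dependence is presumably an artifact of the route used in \cite{azzam2018analyst}, perhaps one that passes through a covering argument to produce an actual Radon measure on $E$ before invoking classical Jensen.
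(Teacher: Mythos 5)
Your argument is correct. Note first that the paper under review does not prove this lemma at all: it is quoted verbatim from \cite[Lemma 2.3]{azzam2018analyst}, so the only available comparison is with the proof there. Your route is genuinely different and strictly more elementary. The Azzam--Schul argument (and the reason their constant carries an $n$-dependence and their hypotheses require $E$ compact or bounded open and $f$ continuous) passes through Frostman's lemma: one produces a Radon measure $\mu$ with $\mu \leq \hd_\infty$ setwise and $\mu(E) \gtrsim_n \hd_\infty(E)$, applies classical Jensen to $\mu$, and then compares the two Choquet/measure integrals in both directions; the comparison $\int_E f\, d\hd_\infty \lesssim_n \int_E f \, d\mu$ is where the topological and continuity hypotheses are actually consumed. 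Your proof instead works purely at the level of the distribution function $D(t) = \hd_\infty(\{f>t\})$, using only $D \leq \hd_\infty(E)$ and the layer-cake formulas that the paper adopts as the \emph{definition} of the $p$-Choquet integral. The bookkeeping is right: with $a = I/m$ one has $\int_0^{a/2} D \leq am/2 = I/2$, hence $\int_{a/2}^\infty D \geq I/2$ and $\int_0^\infty D(t)t^{p-1}dt \geq (a/2)^{p-1}(I/2) = 2^{-p}a^p m$, which after dividing by $m$ and taking $p$-th roots gives the inequality with constant $2$. Your handling of the degenerate cases $I \in \{0,\infty\}$ and of $p=\infty$ is also fine, as is the remark that the missing factor of $p$ in the paper's layer-cake convention only perturbs the constant by $p^{1/p}$.

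What your approach buys: a universal, dimension-free constant, no appeal to Frostman's lemma, and validity for arbitrary $E$ and arbitrary nonnegative $f$ (monotonicity of content is all that is used; continuity and the topological hypotheses on $E$ become superfluous). What it gives up: nothing for the purposes of this paper, where the lemma is only invoked as a black box with an unspecified constant $\lesssim_n$.
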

	\begin{definition}[Azzam-Schul]\label{d:content-beta}
		Let $1 \leq p < \infty,$ $E \subseteq \R^n$ and $B$ a ball. For a $d$-dimensional plane $L$ define
		\begin{align} \label{e:beta-content} \beta^{d,p}_E(B,L) = \left( \frac{1}{r(B)^d} \int_{E \cap B} \left( \frac{\dist(y,L)}{r(B)} \right)^p \, d\mathcal{H}^d_\infty(y) \right)^\frac{1}{p}.
		\end{align}
		Then $\beta_{E}^{d,p}(B) = \inf_{L\in \dA(n,d)} \beta_{E}^{d,p}(B,L)$. 
	\end{definition}

%\section{Continuity of projections}
%In this section we prove a key proposition, which will allow us to infer the geometric facts we need from the PBP property: if a set $E$ is well approximated by another set $E'$ at some given scale and location, and if $E$ has large projection in direction $\theta$ at these scale and location, then so does the approximating set. In the terminology laid out in \cite[Definition 4.2]{azzam2019quantitative}, having (quantitatively) large projection in some direction is a \textit{continuous} quantitative property. The following is the precise statement.

\section{Coronization of lower content regular sets by Ahlfors regular sets}\label{s:coronization}
In this section we consider multiscale approximations of lower content regular sets in terms of Ahlfors regular sets. We distinguish two cases: sets of finite measure, and general sets, possibly with non $\sigma$-finite $\mathcal{H}^d$-measure. A coronization of sets with finite measure was proven in \cite{azzam2019quantitative}; the general case is new. %We note that in order to prove the Analyst's Travelling Salesman f

\begin{remark}
    The key difference between the two approximation results is the following. In the finite measure case, the packing estimate \eqref{e:ADR-packing} involves diameters of cubes and the Hausdorff measure of the set. In the general case, we use the Frostman measure $\mu$ from Lemma \ref{lem:Frostman measure}, and the packing estimate \eqref{e:packing-corona-nonsigmafinite} involves the $\mu$-measure of cubes and the $\mu$-measure of the set. It seems unlikely either of the results implies the other.
%  The two propositions, \ref{l:corona} and \ref{prop:coronization}, differ also in another aspect: the first one does not assume the existence of a measure, while the second one does. This is reflected is the usage of these propositions: the former is needed to prove the Analyst's TST, which is an estimates on $\beta$-coefficients in terms of \textit{Hausdorff content of $E$}; the second one, is used to prove an estimate involving $\beta$-coefficients \textit{of a measure supported on $E$} and its densities. This is the reason why we keep the two proposition separate. 
\end{remark}

Throughout this section, $\dD$ will denote the Christ-David cubes from \lemref{theorem:christ} on a fixed lower content regular set $E$. Recall that $\Delta$ denotes the family of usual dyadic cubes on $\R^n$, and $\partial_d I$ is the $d$-dimensional skeleton of $I\in\Delta$.
\subsection{Finite measure case} \label{s:ER}    
	 The following approximation result will be used in the proof of Theorem \ref{t:main}.
	\begin{propo}[\cite{azzam2019quantitative}, Main Lemma] \label{l:corona}
		Let $N>0$ be an integer, and $E \subset\R^n$ be a compact lower content $(d, c_1)$-regular set. Let $Q_{0}\in \dD_{0}$ and $\dD_0^N=\bigcup_{k=0}^{N}\{Q\in \dD_{k}\, : \,Q\subseteq Q_0\}$. Then, there exists a family $\Top=\Top(N)\subseteq\dD_0^N$ such that for any $R \in \Top$ we have a tree of cubes denoted by $\Tree(R)$ with root $R$, the trees partition $\dD_0^N$
		\begin{equation*}
		\dD_0^N = \bigcup_{R\in\Top}\Tree(R),
		\end{equation*}
		and this partition has the following properties:
		\begin{enumerate}[leftmargin=0.8cm]
			\item There exists a constant $\eta=\eta(d, c_1)$ so that
			\begin{equation}
			\label{e:ADR-packing}
			\sum_{R \in \Top} \ell(R)^{d} \leq \eta^{-1} \dH^{d}(Q_0),
			\end{equation}
			and $\eta\to 0$ as $c_1 \to 0$\footnote{This is not explicitly stated in \cite{azzam2019quantitative}, but it can be deduced from the proof, specifically see (3.4), (3.5) and (3.10) there.}.
			\item Given $R\in \Top$ set
   %let  $\Stop=\Stop(R)$ denote the minimal cubes of $\Tree(R)$ and set
			\begin{align}\label{e:d_F}
			d_{R}(x) := \inf_{Q \in \Tree(R)} \ps{ \ell(Q) + \dist(x,Q)}.
			\end{align}
			For any $A>4$ and $\tau>0$, there is a collection $\cC_R \subset \Delta$ of disjoint dyadic cubes covering $AB_{R}\cap E$ such that the approximating set
			\[
			\Gamma_R:=\bigcup_{I\in \cC_R} \d_{d} I,\]
			satisfies:
			\begin{enumerate}[label=\textup{(}\alph*\textup{)}, leftmargin=0.8cm]
				\item $\Gamma_R$ is Ahlfors $d$-regular with constants depending on $A,\tau,d,$ and $c_1$.
				\item We have
				\begin{equation}
				\label{e:contains}
				AB_{R}\cap E \subseteq \bigcup_{I\in \cC_R} I\subseteq 2AB_{R}.
				\end{equation}
				
				\item $E$ is close to $\Gamma_R$ in $AB_{R}$ in the sense that
				\begin{equation}
				\label{e:adr-corona}
				\dist(x,\Gamma_R)\lec  \tau d_{R}(x) \;\; \mbox{ for all }x\in E\cap AB_{R}.
				\end{equation}
				\item The dyadic cubes in $\cC_R$ satisfy
				\begin{equation}
				\label{e:whitney-like}
				\ell(I)\sim \tau \inf_{x\in I} d_{R}(x) \mbox{ for all }I\in \cC_R.
				\end{equation}
			\end{enumerate}
		\end{enumerate}
	\end{propo}

\vspace{0.5cm}

\subsection{The general case}

In the following subsections we prove the following proposition, which is similar to Proposition \ref{l:corona}, with the key difference that we do not assume our set to have finite Hausdorff $d$-dimensional measure\footnote{Remark that, although $\mathcal{H}^d(E)<\infty$ is not explicitly assumed in Proposition \ref{l:corona}, in the infinite measure case that proposition gives no information, since the packing condition \eqref{e:ADR-packing} is vacuous.}
\begin{propo}\label{prop:coronization}
    Let $N>0$ be an integer and $E \subset \R^n$ be a lower content $(d,c_1)$-regular set. Let $\mu$ be the Frostman measure on $E$ constructed in Lemma \ref{lem:Frostman measure}. Let $Q_0 \in \dD_0$ and $\dD_0^N:=\cup_{k=0}^N \{ Q \in \dD_k \, : \, Q \subset Q_0\}$. Then, there exists a family $\Top=\Top(N) \subset \dD_0^N$ such that for any $R \in \Top$ there is a tree of cubes denoted by $\Tr(R)$ with root $R$, the trees partition $\dD_0^N$
    \begin{equation*}
		\dD_0^N = \bigcup_{R\in\Top}\Tr(R),
		\end{equation*}
    and this partition has the following properties:
    \begin{enumerate}
        \item We have
        \begin{equation}\label{e:packing-corona-nonsigmafinite}
            \sum_{R \in \Top} \theta_\mu(R) \mu(R) \leq 2 \theta_\mu(Q_0)\mu(Q_0).
        \end{equation}
        \item For every $R\in\Top$ and $Q\in\Tree(R)$ we have $\mu(Q)\sim_{c_1} \theta_\mu(R) \ell(Q).$
        \item Given $R \in \Top$, there is a collection $\cC_R$ of dyadic cubes covering $2 B_R \cap E$  such that the approximating set
        \begin{equation*}
            \Gamma_R := \bigcup_{I \in \cC_R} \partial_d I,
        \end{equation*}
        satisfies:
        \begin{enumerate}
            \item $\Gamma_R$ is Ahlfors $d$-regular, with Ahlfors regularity constants depending on $n,d$ and $c_1$.
            \item We have
            \begin{equation*}
                2B_R \cap E \subset \bigcup_{I \in \cC_R} I \subset 6 B_R.
            \end{equation*}
            \item $E$ is close to $\Gamma_R$ in $2B_R$ in the sense that if $x \in E\cap 2B_R$, then
            \begin{equation*}
                \dist(x, \Gamma_R) \lesssim d_R(x),
            \end{equation*}
            where $d_R(x)$ is as in \eqref{e:d_F}.
            \item The dyadic cubes $I\in\cC_R$ satisfy
				\begin{equation*}
				\ell(I)\sim \inf_{x\in I} d_{R}(x).
				\end{equation*}
        \end{enumerate}
        \item For every $R\in\Top$ there exists an Ahlfors regular measure $\nu$ such that $\nu = g \mathcal{H}^d|_{\Gamma_R}$ with $g\sim \theta_\mu(R)\one_{\Gamma_R}$, and we have
        \begin{equation}\label{eq:betasss}
            \sum_{Q\in\Tree(R)}\beta_{\mu,2}(2B_Q)^2\mu(Q)\lesssim_{c_1} \int_{\Gamma_R}\int_0^{\ell(R)} \beta_{\nu,2}(x,r)^2\, \fr{dr}{r}d\nu(x) + \theta_\mu(R)\mu(R).
        \end{equation}
    \end{enumerate}
\end{propo}
We will prove this proposition in the next few subsections. We fix $N>0$, $E\subset\R^n$, and $Q_0\in\DD_0$ as above. For simplicity, we allow the implicit constants in the estimates below to depend on $n,d$ and $c_1$, without further mentioning it.

	\subsection{Stopping time argument}	
	We begin by conducting a stopping time argument which will be used later on to define the collection $\Top$.
 
	Let $R\in\DD_0^{N}$. We will say that $Q\in\LD_0(R)$ (here $\LD$ stands for ``low density'') if $Q\in \DD_0^N,\ Q\subset R$, and
	\begin{equation*}
	\theta_\mu(Q)\le \tau\,\theta_\mu(R),
	\end{equation*}
	where $\tau = \rho^2 = 10^{-6}$. The refined family $\LD(R)$ consists of the maximal cubes from $\LD_0(R)$ (i.e., cubes $Q\in\LD_0(R)$ which are not properly contained in any other cube from $\LD_0(R)$). 
 We define also
	\begin{equation*}
	\End(R) = \big\{Q\in\DD_N\,:\, Q\subset R,\ Q\cap \bigcup_{P\in\LD(R)}P = \varnothing \big\}.
	\end{equation*}
	Finally, we set 
	\begin{equation*}
	\Stop(R) = \LD(R)\cup \End(R),
	\end{equation*}
	and
	\begin{equation*}
	\Tr(R) = \{Q\in\DD_0^N\, :\, Q\subset R,\ \text{there exists $P\in\Stop(R)$ such that } P\subset Q \}.
	\end{equation*}
	
	It follows immediately from the definition that $\Stop(R)$ is a family of pairwise disjoint cubes covering $R$. Moreover, $R\notin\LD(R)$ by the definition of $\LD_0(R)$, and $R\in\End(R)$ if and only if $R\in\DD_N$. Observe also that $\Stop(R)\subset \Tr(R)$.

 \vspace{0.5cm}
	
	In the lemma below we show that $\mu$ is $d$-Ahlfors regular at the scales and locations of $\Tr(R)$. This will be useful later on in the construction of an Ahlfors regular approximating measure $\nu$.
	\begin{lemma}\label{lem:muADR}
		Let $R\in\DD_0^{N}$. Then, for all $Q\in\Tr(R)$
		\begin{equation*}
			\tau\theta_\mu(R)\lesssim \theta_\mu(Q)\lesssim \theta_\mu(R).
		\end{equation*}
	\end{lemma}
	\begin{proof}
		The upper estimate $\theta_\mu(Q)\lesssim \theta_\mu(R)$ follows immediately from property (\ref{it:density}) in \lemref{lem:Frostman measure}, and the fact that $Q\subset R$.
		
		To see the lower bound $\tau\theta_\mu(R)\lesssim \theta_\mu(Q)$, note that this is obvious for $Q\in\Tr(R)\setminus\LD(R)$: for such cubes we have
		\begin{equation*}
			\theta_\mu(Q)>\tau\theta_\mu(R),
		\end{equation*}
		by the stopping time condition of $\LD_0(R)$. 
		
		Assume now that $Q\in\LD(R)$, and denote the parent of $Q$ by $Q^1$. Then, $Q^1\in\Tr(R)\setminus\LD(R)$, so that $\theta_\mu(Q^1)>\tau\theta_\mu(R)$. But now we see by the doubling property of $\mu$ \eqref{eq:doubling} that $\theta_\mu(Q)\gtrsim\theta_\mu(Q^1)$, so that
		\begin{equation*}
			\theta_\mu(Q)\gtrsim\theta_\mu(Q^1)\ge \tau\theta_\mu(R).
		\end{equation*}
	\end{proof}

\subsection{Coronization}
	We are ready to decompose $\DD_0^N$ into trees using the stopping time argument of the previous subsection.

    We define the family $\Top$ by induction. Set $\Top_0 = \{Q_0\}$. Assume that $\Top_k$ has already been defined for some $k\ge 0$, and let $R\in\Top_k.$ If $R\in\DD_N$, we set $\Next(R)=\varnothing.$ Otherwise, we define
    \begin{equation*}
        \Next(R) = \{Q\in\DD_0^N : Q^1\in\LD(R)\}.
    \end{equation*}
    An equivalent definition is that $\Next(R)$ consists of cubes $Q\in \DD_0^N$ satisfying $Q\notin\Tree(R)$ but $Q^1\in\Tree(R)$.
    
    We define
	\begin{equation*}
	\Top_{k+1} = \bigcup_{R\in\Top_k} \Next(R).
	\end{equation*}
	Note that for each $k$ the family $\Top_k$ consists of pairwise disjoint cubes. Observe also that there exists $1\le k_0\le N$ such that $\Top_{k_0}\neq\varnothing$, and then $\Top_k=\varnothing$ for all $k> k_0$. We define
	\begin{equation*}
		\Top = \bigcup_{k=0}^{k_0}\Top_k.
	\end{equation*}
	Remark that
	\begin{equation*}
		\DD_0^N = \bigcup_{R\in\Top}\Tr(R),
	\end{equation*}
	and the sum above is disjoint.

\subsection{Packing condition}
Now we prove the packing condition \eqref{e:packing-corona-nonsigmafinite}.
  First, we claim that the $\mu$-densities of cubes in $\Top$ decay geometrically: 
  \begin{equation*}
      \theta_\mu(R)\le \rho^{k}\theta_\mu(Q_0)\quad\text{for $R\in\Top_k,\ 0\le k\le k_0.$}
  \end{equation*}  
  Indeed, this follows from a simple induction argument: it is true for $k=0$ because $\Top_0=\{Q_0\}$. If $R\in\Top_{k+1}$, then $R^1\in\LD(P)$ for some $P\in\Top_k$. Since $\theta_\mu(P)\le \rho^{k}\theta_\mu(Q_0)$ by the inductive assumption, we get from the definition of $\LD(P)$ that
	\begin{equation*}
		\theta_\mu(R)=\frac{\mu(R)}{\ell(R)^d}\le\rho^{-1}\frac{\mu(R^1)}{\ell(R^1)^d} = \rho^{-1}\theta_\mu(R^1)\le\rho^{-1}  \tau\theta_\mu(P)
  = \rho\theta_\mu(P)\le \rho^{k+1}\theta_\mu(Q_0),
	\end{equation*}
	which closes the induction.
	
	Consequently,
	\begin{equation*}
		\sum_{k=0}^{k_0}\sum_{R\in\Top_k}\theta_\mu(R)\mu(R) \le \sum_{k=0}^{k_0}\sum_{R\in\Top_k}\rho^k\theta_\mu(Q_0)\mu(R) \le \sum_{k=0}^{k_0} \rho^k \theta_\mu(Q_0)\mu(Q_0),
	\end{equation*}
	where in the last estimate we used the fact that for each $k$ the cubes in $\Top_k$ are pairwise disjoint and contained in $Q_0$. Recalling that $\rho=0.001$, this concludes the proof of \eqref{e:packing-corona-nonsigmafinite}.
	
	\subsection{Regularizing trees}\label{subsec:regularizing}
	Fix $R\in\DD_0^{N}$. In what follows, it will be more convenient to work with \emph{regular} trees, in the sense that nearby stopping cubes have comparable sidelengths. In order to regularize $\Tr(R)$ we will use a technique from \cite{david1991singular} which is nowadays considered fairly standard. It will also be useful to work with certain cubes neighboring $R$. We provide the details below.
	
	We define the set of cubes neighboring $R$ as
	\begin{equation}\label{eq:defnbd}
	\nbd(R) = \{R'\in\DD\ :\ \ell(R')=\ell(R),\ R'\cap 2B_R\neq\varnothing \}.
	\end{equation}
	Note that $R\in\nbd(R)$. We define also an extended version of $\DD(R)$:
	\begin{equation*}
	\DD_*(R) = \bigcup_{R'\in\nbd(R)}\DD(R'),
	\end{equation*}
    and
    \begin{equation*}
        \DD_*^N = \{Q\in\DD_*(Q_0): Q\in\DD_k,\, 0\le k\le N\}.
    \end{equation*}
	
	Now we perform the regularization algorithm. Given $x\in\R^n$ set
	\begin{equation*}
	d_R(x) = \inf_{Q\in\Tr(R)} \dist(x,Q)+\ell(Q)
	\end{equation*}
	and for $Q\in\DD$ set
	\begin{equation*}
	d_R(Q)=	\max\big(\fr{1}{20}\inf_{x\in Q} d_R(x), 5\rho^N\big),
	\end{equation*}
	where the parameter $\rho=1/1000$ comes from the definition of $\DD$. Observe that the quantity $d_R(Q)$ is monotone in the sense that if $P\subset Q$, then $d_R(P)\ge d_R(Q)$. 	
	
	We define $\Reg(R)$ to be the family of maximal cubes $Q\in\DD_*(R)$ satisfying
	\begin{equation}\label{eq:defreg}
	\ell(Q)\le d_R(Q).
	\end{equation}	
	Note that the "$5\rho^N$" term in the definition of $d_R(Q)$ ensures that the inequality above is satisfied by all $Q\in\DD_N$, so that $\Reg(R)\subset\DD_*(R)\cap\DD_*^N$. Observe that the cubes in $\Reg(R)$ are pairwise disjoint, by maximality, and also
	\begin{equation}\label{eq:Regcovers}
	\bigcup_{Q\in\Reg(R)}Q=\bigcup_{R'\in\nbd({R})}R'\supset 2B_R\cap E.
	\end{equation} 
	 
	\begin{lemma}\label{lem:regularprop}
		 If $Q\in\Reg(R)$ and $x\in 5B_Q$, then $d_R(x)\sim \ell(Q)$. Consequently, if $Q,P\in\Reg(R)$, and $5B_Q\cap 5B_P\neq\varnothing$, then $\ell(Q)\sim\ell(P)$.
	\end{lemma}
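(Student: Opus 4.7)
The plan is to establish the first part of the lemma directly; the second part then follows immediately, since for any $x\in 5B_Q\cap 5B_P$ we will have $\ell(Q)\sim d_R(x)\sim \ell(P)$. Two structural observations will drive everything. First, $d_R$ is $1$-Lipschitz: it is an infimum of the $1$-Lipschitz functions $x\mapsto \dist(x,Q')+\ell(Q')$ over $Q'\in\Tr(R)$. Second, every $Q'\in\Tr(R)\subset \DD_0^N$ satisfies $\ell(Q')\ge 5\rho^N$, so $d_R\ge 5\rho^N$ pointwise.

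For the lower bound $d_R(x)\gtrsim \ell(Q)$ with $x\in 5B_Q$, I will unpack the defining inequality $\ell(Q)\le d_R(Q)$ into its two cases. If $\ell(Q)\le \tfrac{1}{20}\inf_{y\in Q}d_R(y)$, then $d_R(y)\ge 20\ell(Q)$ for every $y\in Q$; since $|x-y|\le 6\ell(Q)$ for any $y\in Q$, the $1$-Lipschitz property yields $d_R(x)\ge 14\ell(Q)$. Otherwise $\ell(Q)\le 5\rho^N$, forcing $Q\in\DD_N$ and $\ell(Q)=5\rho^N$, and the universal lower bound $d_R\ge 5\rho^N$ finishes this case.

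For the upper bound $d_R(x)\lesssim \ell(Q)$ the key ingredient is the maximality of $Q$ in $\Reg(R)$. If the parent $Q^1$ still lies in $\DD_*(R)$, then $\ell(Q^1)>d_R(Q^1)$, which in particular forces $\inf_{y\in Q^1}d_R(y)<20\ell(Q^1)=20\rho^{-1}\ell(Q)$. Picking such a $y$, the triangle inequality gives $|x-y|\lesssim \ell(Q)$ (since $Q\subset Q^1$), and the Lipschitz property produces $d_R(x)\lesssim \ell(Q)$. The remaining edge case is $Q\in\nbd(R)$, where the parent escapes $\DD_*(R)$. Here $\ell(Q)=\ell(R)$ and $Q\cap 2B_R\ne\varnothing$; using $R\in\Tr(R)$ directly one has $d_R(x)\le \dist(x,R)+\ell(R)$, and a short triangle-inequality computation bounds $\dist(x,R)\lesssim \ell(R)=\ell(Q)$ for $x\in 5B_Q$.

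The main bookkeeping obstacle is the edge case $Q\in\nbd(R)$, where the maximality argument has no parent to apply to; one handles it by directly invoking $R$ as a competitor in the infimum defining $d_R$. A secondary nuisance is the $5\rho^N$ floor in the definition of $d_R(Q)$, which is cleanly absorbed by the universal lower bound on $d_R$. Otherwise the whole argument is a textbook David--Semmes regularization in the style of \cite{david1991singular}, resting only on the $1$-Lipschitz property of $d_R$ and the two-alternative structure built into the definitions of $\Reg(R)$ and $d_R(Q)$.
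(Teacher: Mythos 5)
Your argument is correct and follows essentially the same route as the paper: the $1$-Lipschitz property of $d_R$, a two-case unpacking of $\ell(Q)\le d_R(Q)$ for the lower bound, and maximality of $Q$ in $\Reg(R)$ applied to the parent for the upper bound. Your explicit treatment of the edge case $\ell(Q)=\ell(R)$ (where the parent leaves $\DD_*(R)$) is a careful addition the paper glosses over, though one can check this case is in fact vacuous since $d_R(R')<\ell(R')$ for every $R'\in\nbd(R)$ when $R\in\DD_0^{N-1}$.
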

	\begin{proof}
		Let $Q\in\Reg(R)$ and $x\in 5B_Q$. First we show that $d_R(x)\ge \ell(Q)$. By the definition of $\Reg(R)$, we have $\ell(Q)\le d_R(Q)$, so it suffices to show that $d_R(x)\ge d_R(Q)$, that is
		\begin{equation*}
		d_R(x)\ge \max\big(\fr{1}{20}\inf_{y\in Q} d_R(y), 5\rho^N\big).
		\end{equation*}
		
		Suppose the maximum above is achieved by $5\rho^N$, i.e. $d_R(Q)=5\rho^N$. Then, the estimate $d_R(x)\ge 5\rho^N$ is clear by the definition of $d_R(x)$, since $\Tr(R)\subset\DD_0^N$.
		
		Assume now that $d_R(Q)=\fr{1}{20}\inf_{y\in Q} d_R(y)$. Since the function $d_R$ is $1$-Lipschitz, we have
		\begin{multline*}
		\fr{1}{20}d_R(x)\ge \fr{1}{20}d_R(x_Q) - \fr{1}{20}|d_R(x)-d_R(x_Q)|\ge \fr{1}{20}d_R(x_Q) - \fr{5}{20}\ell(Q)\\
		\ge \fr{1}{20}\inf_{y\in Q} d_R(y) - \fr{1}{4}\ell(Q) = d_R(Q) -  \fr{1}{4}\ell(Q) \ge \fr{3}{4}\ell(Q).
		\end{multline*}
		Hence, $d_R(x)\ge 15\,\ell(Q)$.
		
		We move on to the estimate $d_R(x)\lesssim \ell(Q)$. Recall that, by the definition of $\Reg(R)$, $Q$ is a maximal cube satisfying $\ell(Q)\le d_R(Q)$. In particular, $Q^1$ satisfies
		\begin{equation*}
		\ell(Q^1)> d_R(Q^1) = \max\big(\fr{1}{20}\inf_{y\in Q^1} d_R(y), 5\rho^N\big)\ge \fr{1}{20}\inf_{y\in Q^1} d_R(y).
		\end{equation*}
		Let $y\in Q^1$ be such that $\ell(Q^1)\ge 1/20\, d_R(y)$. Since $x,y\in 2B_{Q^1}$, we may use the $1$-Lipschitz property of $d_R$ to conclude that
		\begin{equation*}
		\ell(Q^1)\ge \fr{1}{20} d_R(y) \ge \fr{1}{20} d_R(x) - \fr{1}{20} |d_R(y)  - d_R(x)|\ge \fr{1}{20} d_R(x) - \fr{4}{20} \ell(Q^1).
		\end{equation*}
		Thus, $\ell(Q)=\rho^{-1}\ell(Q^1)\gtrsim d_R(x)$.
	\end{proof}

	For every $R\in\Top$ we define the extended, regularized ``tree'' as
	\begin{equation*}
	\Tr_*(R) = \{Q\in\DD_*(R)\, :\, \text{there exists $P\in\Reg(R)$ such that } P\subset Q \}.
	\end{equation*}
	The family $\Tr_*(R)$ might not be a ``true'' tree since we cannot guarantee that all $Q\in\Tr_*(R)$ are contained in $R$. Nevertheless, it is a union of a bounded number of trees, each of the form $\Tr_*(R)\cap \DD(R'),\ R'\in\nbd(R)$.
	
	Remark that since $\Reg(R)\subset \DD_*^N$, we also have $\Tr_*(R)\subset\DD_*^N$. Below we prove that $\Tr_*(R)$ is larger than the original tree $\Tr(R)$.
	\begin{lemma}
		We have $\Tr(R)\subset\Tr_*(R)$.
	\end{lemma}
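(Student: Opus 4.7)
The plan is to take an arbitrary $Q\in\Tr(R)$ and verify the two requirements for membership in $\Tr_*(R)$: namely, that $Q\in\DD_*(R)$, and that $Q$ contains some $P\in\Reg(R)$. The first is cheap: since $Q\subset R$ and $R\in\nbd(R)$, we have $Q\in\DD(R)\subset\DD_*(R)$.

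For the second requirement, I would pick any point $x\in Q$ (say the center $x_Q$). Because $Q\subset R\subset 2B_R\cap E$, the covering property \eqref{eq:Regcovers} guarantees that $x$ belongs to some $P\in\Reg(R)$. By the dyadic nesting in Lemma \ref{theorem:christ}, either $P\subset Q$ (in which case we are done) or $Q\subsetneq P$. The heart of the argument is to rule out the second alternative by using that $Q$ itself lies in $\Tr(R)$, which forces $d_R$ to be small on $P$, hence $P$ fails the regularization condition $\ell(P)\le d_R(P)$ and cannot be in $\Reg(R)$.

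More precisely, suppose $Q\subsetneq P$. Then $\ell(P)\ge \rho^{-1}\ell(Q)=1000\,\ell(Q)$. For any $y\in Q\subset P$, using $Q\in\Tr(R)$ in the infimum defining $d_R$,
\[
d_R(y)\le \dist(y,Q)+\ell(Q)=\ell(Q),
\]
so $\inf_{y\in P}d_R(y)\le \ell(Q)$ and consequently
\[
d_R(P)=\max\bigl(\tfrac{1}{20}\inf_{y\in P}d_R(y),\,5\rho^N\bigr)\le\max\bigl(\tfrac{1}{20}\ell(Q),\,5\rho^N\bigr).
\]
Since $Q\in\DD_0^N$ we have $\ell(Q)\ge 5\rho^N$, so both terms in this max are at most $\ell(Q)$. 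Combined with $\ell(P)\ge 1000\,\ell(Q)$, this yields $d_R(P)<\ell(P)$, contradicting $P\in\Reg(R)$.

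The only mildly delicate point is handling the endpoint term $5\rho^N$ in the definition of $d_R(P)$, which is why I would split (implicitly or explicitly) into the subcases $\ell(Q)\ge 100\rho^N$ and $5\rho^N\le \ell(Q)<100\rho^N$; in both the bound $d_R(P)\le \ell(Q)\ll \ell(P)$ is immediate. Everything else is a direct application of the definitions and of the dyadic nesting property, so I do not anticipate any serious obstacle beyond this bookkeeping.
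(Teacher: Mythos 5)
Your proof is correct and rests on the same key mechanism as the paper's: since $Q\in\Tr(R)$ competes in the infimum defining $d_R$, one gets $d_R\le\ell(Q)$ on $Q$ and hence $d_R(P)\le\max(\ell(Q)/20,5\rho^N)\le\ell(Q)$ for any $P\supset Q$, which is incompatible with the regularization condition \eqref{eq:defreg} once $\ell(P)>\ell(Q)$. The only (cosmetic) difference is organizational: the paper first reduces to $\Stop(R)$ and splits into the cases $Q\in\DD_N$ and $Q\notin\DD_N$, whereas you argue directly for an arbitrary $Q\in\Tr(R)$ by ruling out $Q\subsetneq P$ for the covering cube $P\in\Reg(R)$.
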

	\begin{proof}
		It suffices to show that each $Q\in\Stop(R)$ contains some $P\in\Reg(R)$. To this end, observe that if $x\in Q$, then from the definition of $d_R$ we have $d_R(x)\le \ell(Q)$. It follows that
		\begin{equation*}
		\fr{1}{20}\inf_{x\in Q} d_R(x)\le \fr{\ell(Q)}{20}.
		\end{equation*}
		There are two cases to consider.
		
		\emph{Case} $Q\in\Stop(R)\cap\DD_N$. Then $d_R(Q) = 5\rho^N = \ell(Q)$, so $Q$ satisfies \eqref{eq:defreg}, while $\ell(Q^1)>5\rho^N=d_R(Q^1)$, so that $Q^1$ does not satisfy \eqref{eq:defreg}. Consequently, $Q\in\Reg(R)$.
		
		\emph{Case} $Q\in\Stop(R)\cap\DD_0^{N-1}$. Then we have
			\begin{equation*}
			\ell(Q)>\max\bigg(\fr{\ell(Q)}{20}, 5\rho^N\bigg)\ge d_R(Q),
			\end{equation*}
			so that $Q$ does not satisfy \eqref{eq:defreg}. Taking into account \eqref{eq:Regcovers} we get that there exists $P\in\Reg(R)$ such that $P\subset Q$.
	\end{proof}

	Recall that in \lemref{lem:muADR} we proved that $\mu$ is Ahlfors regular at the scales and locations of $\Tr(R)$. In the lemma below we show that despite enlarging $\Tr(R)$ to its regularized version $\Tr_*(R)$ we did not lose this property.
	\begin{lemma}\label{lem:muADR2}
		For each $Q\in\Tr_*(R)$ we have
		\begin{equation}\label{eq:densest}
		\tau\,\theta_\mu(R)\lesssim \theta_\mu(Q)\lesssim \theta_\mu(R).
		\end{equation}
	\end{lemma}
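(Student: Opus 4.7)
My plan is to prove the two inequalities in \eqref{eq:densest} separately, reducing each to Lemma~\ref{lem:muADR} (which already handles cubes in $\Tr(R)$) via the doubling property (3) and the almost-monotone density property~(\ref{it:density}) of the Frostman measure $\mu$.

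For the upper bound, any $Q \in \Tr_*(R) \subset \DD_*(R)$ satisfies $Q \subset R'$ for some $R' \in \nbd(R)$. Property~(\ref{it:density}) of Lemma~\ref{lem:Frostman measure} gives $\theta_\mu(Q) \le A \theta_\mu(R')$, and since $R'$ shares the sidelength of $R$ and is contained in $K B_R$ for a constant $K$ depending only on the cube-construction constants, iterating \eqref{eq:doubling} a bounded number of times yields $\mu(R') \lesssim \mu(R)$; hence $\theta_\mu(R') \sim \theta_\mu(R)$, and the upper bound follows.

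The lower bound is the heart of the lemma. Given $Q \in \Tr_*(R)$, fix $P \in \Reg(R)$ with $P \subset Q$; by property~(\ref{it:density}) it suffices to show $\theta_\mu(P) \gtrsim \tau \theta_\mu(R)$. My plan is to locate a cube $Q'' \in \Tr(R)$ with $\ell(Q'') \sim \ell(P)$ lying inside $B(x_P, C\ell(P))$ for some absolute $C$. Maximality of $P$ in $\Reg(R)$ forces $\ell(P^1) > d_R(P^1)$ (with a minor case split when $P$ has no parent in $\DD_*(R)$, i.e.\ $P \in \nbd(R)$, where one simply takes $Q''=R$). Unwinding the definition of $d_R$ yields some $x \in P^1$ with $d_R(x) \lesssim \ell(P)$, hence a cube $Q' \in \Tr(R)$ with $\dist(x_P, Q') + \ell(Q') \lesssim \ell(P)$. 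Since $\ell(R) \ge \ell(P)$, one of the $\DD(R)$-ancestors of $Q'$---all of which belong to $\Tr(R)$---has sidelength in $[\ell(P), \rho^{-1}\ell(P))$; call it $Q''$. A straightforward geometric check gives $Q'' \subset B(x_P, C\ell(P))$. Lemma~\ref{lem:muADR} then gives $\theta_\mu(Q'') \gtrsim \tau \theta_\mu(R)$, and combining $B(x_P, c_0\ell(P)) \cap E \subset P$, $\supp\mu \subset E$, and $Q'' \subset B(x_P, C\ell(P))$ with doubling,
\begin{equation*}
\mu(Q'') \le \mu(B(x_P, C\ell(P))) \lesssim \mu(B(x_P, c_0 \ell(P))) \le \mu(P).
\end{equation*}
Dividing by $\ell(Q'')^d \sim \ell(P)^d$ produces $\theta_\mu(Q'') \lesssim \theta_\mu(P)$, so $\theta_\mu(P) \gtrsim \tau \theta_\mu(R)$, as required.

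The main obstacle is the selection of $Q''$: because $P$ may sit in a neighbor $R' \ne R$, the smallest common $\DD$-ancestor of $P$ and any cube of $\Tr(R)$ can be much larger than $\ell(P)$, so property~(\ref{it:density}) alone cannot directly relate $P$ to a tree cube of comparable scale. The workaround is to bypass the common-ancestor move entirely and instead exploit the doubling of $\mu$ on Euclidean balls, which is insensitive to how $P$ and $Q''$ sit relative to each other within the cube structure.
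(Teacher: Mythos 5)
Your proof is correct and follows essentially the same strategy as the paper's: use the definition of $d_R$ (equivalently Lemma \ref{lem:regularprop}) to locate a cube of $\Tr(R)$ of comparable scale at bounded distance, apply Lemma \ref{lem:muADR} to it, and transfer the density estimate via the doubling of $\mu$. The only (harmless) difference is that you make the comparison at the scale of the regularized cube $P\subset Q$ and then lift back to $Q$ using the almost-monotonicity of the density, whereas the paper compares directly at the scale of $Q$.
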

	\begin{proof}
		If $Q\in\Tr(R)$, then this was already shown in \lemref{lem:muADR}. Suppose that $Q\in\Tr_*(R)\setminus\Tr(R)$. Let $P\in\Reg(R)$ be such that $P\subset Q$. By \lemref{lem:regularprop} we have $d_R(x_P)\sim\ell(P)$. By the definition of $d_R(x_P)$ there exists $P'\in\Tr(R)$ such that $d_R(x_P)\sim \dist(x_P,P')+\ell(P')$. Hence,
		\begin{equation}\label{eq:5}
		\ell(P)\sim \dist(x_P,P')+\ell(P').
		\end{equation} 
		
		If $\ell(P')<\ell(Q)$ let $Q'\in\Tr(R)$ be the ancestor of $P'$ with $\ell(Q')=\ell(Q)$, otherwise set $Q'=P'$. We claim that
		\begin{equation*}
		\ell(Q')\sim\ell(Q).
		\end{equation*}
		This is clearly the case if $\ell(P')<\ell(Q)$. On the other hand, if $Q'=P'$, then $\ell(Q')=\ell(P')\ge \ell(Q)$, and at the same time $\ell(P')\lesssim \ell(P)\le\ell(Q)$. This shows that $\ell(Q')\sim\ell(Q)$.
		
		Recalling \eqref{eq:5} we get that
		\begin{equation*}
		\dist(Q,Q')\lesssim\ell(Q)\sim\ell(Q').
		\end{equation*}
		Hence, $B_Q\subset CB_{Q'}\subset 2C B_{Q}$ for some $C\sim 1$. The estimate \eqref{eq:densest} then follows from the fact that it is satisfied by $Q'$ (which was shown in \lemref{lem:muADR}) and from the doubling property of $\mu$.

	\end{proof}
	The following auxiliary result will be useful later on.
	\begin{lemma}\label{lem:PcapQ}
		There exists $C_0>1$, with $C_0\sim 1$, such that for any $Q\in\Tr_*(R)$ and $P\in\Reg(R)$ satisfying $2B_P\cap 2B_Q\neq\varnothing$ we have 
		\begin{equation*}
		P\subset 2B_P\subset C_0B_Q.
		\end{equation*}
	\end{lemma}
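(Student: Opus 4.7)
The inclusion $P \subset 2B_P$ is immediate from $P \subset B_P$, so the content of the lemma lies in the second inclusion $2B_P \subset C_0 B_Q$. This inclusion reduces, via the triangle inequality and the hypothesis $2B_P \cap 2B_Q \neq \varnothing$, to a size comparison $\ell(P) \lesssim \ell(Q)$: indeed, if $\ell(P) \le C\ell(Q)$ and $y \in 2B_P \cap 2B_Q$, then for any $x \in 2B_P$ we have $|x-x_Q| \le |x - x_P| + |x_P - y| + |y - x_Q| \le (4C+2)\ell(Q)$, which gives $2B_P \subset (4C+2)B_Q$.

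The plan for proving $\ell(P) \lesssim \ell(Q)$ is a contradiction argument driven by Lemma \ref{lem:regularprop}. Suppose $\ell(P) \ge K\ell(Q)$ for some large absolute constant $K$ to be fixed later. Because $Q \in \Tr_*(R)$, by definition there is some $P' \in \Reg(R)$ with $P' \subset Q$, so in particular $\ell(P') \le \ell(Q)$ and $x_{P'} \in Q \subset 2B_Q$. Picking any $y \in 2B_P \cap 2B_Q$, a one-line triangle inequality gives, for every $x \in 2B_Q$,
\begin{equation*}
|x - x_P| \le |x-y| + |y - x_P| \le 4\ell(Q) + 2\ell(P) \le 5\ell(P),
\end{equation*}
as long as $K$ is large enough that $4\ell(Q) \le \ell(P)$. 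Consequently $2B_Q \subset 5B_P$, and in particular $x_{P'} \in 5B_P$.

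Now I would invoke Lemma \ref{lem:regularprop} in two different ways. Applied to $P \in \Reg(R)$ at the point $x_{P'} \in 5B_P$ it gives $d_R(x_{P'}) \sim \ell(P)$; applied to $P' \in \Reg(R)$ at its own center it gives $d_R(x_{P'}) \sim \ell(P')$. Comparing the two estimates forces $\ell(P) \sim \ell(P') \le \ell(Q)$, contradicting $\ell(P) \ge K\ell(Q)$ as soon as $K$ exceeds the implicit constants from Lemma \ref{lem:regularprop}. This fixes $K$ as an absolute constant and hence yields $\ell(P) \lesssim \ell(Q)$, from which $2B_P \subset C_0 B_Q$ follows as above. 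The only subtlety is tracking the two implicit constants from Lemma \ref{lem:regularprop} carefully enough to choose $K$ (and then $C_0$) so that the argument closes; no other obstacle is expected.
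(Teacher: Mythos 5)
Your proof is correct and follows essentially the same route as the paper: reduce to $\ell(P)\lesssim\ell(Q)$, use the definition of $\Tr_*(R)$ to produce a cube of $\Reg(R)$ inside $Q$, and compare sidelengths via Lemma \ref{lem:regularprop}. The only cosmetic difference is that you re-derive the two-sided comparison by applying the first part of Lemma \ref{lem:regularprop} at the point $x_{P'}$ twice, whereas the paper directly invokes the ``consequently'' clause of that lemma (which is proved in exactly this way).
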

	\begin{proof}
		To prove the lemma it suffices to show that $\ell(P)\lesssim\ell(Q)$. If $\ell(Q)\ge\ell(P),$ we are done. Assume $\ell(Q)\le \rho\, \ell(P)$, so that $Q\subset 3B_P$. 
		
		By the definition of $\Tr_*(R)$, there exists $Q'\in\Reg(R)$ such that $Q'\subset Q$. In particular, we have $B_{Q'}\cap 3B_P\neq\varnothing$, and since $Q', P\in\Reg(R)$, we get from \lemref{lem:regularprop} that $\ell(P)\sim\ell(Q')\le \ell(Q)$.
	\end{proof}
	
	\subsection{Construction of $\Gamma_R$ and the approximating measure $\nu$}\label{subsec:approx}
	In this subsection we construct a $d$-Ahlfors regular set $\Gamma_R$ that approximates $E$ at the level of $\Tr_*(R)$. \\
 
	Recall that $\Delta$ denotes the family of usual half-open dyadic cubes in $\R^n$. For each $Q\in\Tr_*(R)$ set
	\begin{equation*}
	\Delta_Q = \{I\in\Delta\ :\ \ell(Q)/2\le  \ell(I)<\ell(Q),\, I\cap Q\neq\varnothing\}.
	\end{equation*}
	In particular, 
	\begin{equation}\label{eq:DQsubset2BQ}
	Q\subset \bigcup_{I\in\Delta_Q} I\subset 2B_Q.
	\end{equation}
	We define also
	\begin{equation*}
	Q_{\Gamma} = \bigcup_{I\in\Delta_Q}\d_d I,
	\end{equation*}
	where $\d_d I$ denotes the $d$-dimensional skeleton of $I\in\Delta_Q$. Observe that
	\begin{equation}\label{eq:QGamma2BQ}
	Q_\Gamma\subset 2B_Q,
	\end{equation}
	and
	\begin{equation}\label{eq:QGammameas}
	\hd(Q_{\Gamma}) \sim \ell(Q)^d
	\end{equation}
	because $\hd(\d_d I)\sim \ell(Q)^d$ for each $I\in\Delta_Q$, and $\#\Delta_Q\lesssim 1$.
	
	Finally, we set
    \begin{equation*}
        \cC_R = \bigcup_{Q\in\Reg(R)}\Delta_Q
    \end{equation*}
    and
	\begin{equation*}
	\Gamma_R = \bigcup_{Q\in\Reg(R)} Q_{\Gamma} = \bigcup_{I\in\cC_R}\d_d I.
	\end{equation*}
	Note that 
	\begin{equation}\label{eq:Gamma6BR}
	\Gamma_R\subset \bigcup_{R'\in\nbd(R)}2B_{R'}\subset 6B_{R},
	\end{equation}
	by \eqref{eq:QGamma2BQ} and the definition of $\nbd(R)$ \eqref{eq:defnbd}. At the same time, by \eqref{eq:Regcovers} we have
    \begin{equation*}
        2B_R\cap E\subset \bigcup_{I\in\cC_R}I.
    \end{equation*}
    This gives property (b) from Proposition \ref{prop:coronization}.

    Property (c) is also easy to establish: if $x\in E\cap 2B_R$, then there is a unique $Q\in\Reg(R)$ with $x\in Q$. Hence,
    \begin{equation*}
        \dist(x,\Gamma_R)\le \dist(x,Q_\Gamma) \lesssim \ell(Q) \sim d_R(x),
    \end{equation*}
    where in the last estimate we used Lemma \ref{lem:regularprop}. 
    
    Property (d) follows immediately from Lemma \ref{lem:regularprop}.
    
 Now we turn to property (a): Ahlfors regularity of $\Gamma_R$. To simplify notation, below we drop the subscript and we write $\Gamma$ instead of $\Gamma_R$.

    We begin by showing that although the family $\cC_R$ is not necessarily disjoint, it is not too far off: it has bounded intersection.
	\begin{lemma}
		We have
		\begin{equation}\label{eq:sumoneQ}
		\sum_{Q\in\Reg(R)}\one_{Q_\Gamma} \sim \one_\Gamma.
		\end{equation}
	\end{lemma}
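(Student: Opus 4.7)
The lower bound $\sum_{Q\in\Reg(R)}\one_{Q_\Gamma}\ge \one_\Gamma$ is immediate: by definition of $\Gamma$, every $x\in\Gamma$ lies in at least one $Q_\Gamma$, so the left hand side is at least $1$ on $\Gamma$, while both sides vanish off $\Gamma$.

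For the upper bound the plan is to show that for every $x\in\R^n$ the cardinality of the set
\[
\Reg_x(R):=\{Q\in\Reg(R)\ :\ x\in Q_\Gamma\}
\]
is bounded by a constant depending only on $n$ (and the fixed geometric constants). I will establish this in three steps:

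First, I note that by \eqref{eq:QGamma2BQ}, if $Q\in\Reg_x(R)$ then $x\in 2B_Q$. In particular, if $Q,Q'\in\Reg_x(R)$ then $2B_Q\cap 2B_{Q'}\ni x$, hence $5B_Q\cap 5B_{Q'}\neq\varnothing$, so \lemref{lem:regularprop} gives $\ell(Q)\sim\ell(Q')$. Thus all cubes in $\Reg_x(R)$ share a common scale $\ell\sim\ell(Q_0)$ for some (any) $Q_0\in\Reg_x(R)$, and their centres all lie in $B(x,C\ell)$ for a constant $C\sim 1$.

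Second, I use that the cubes of $\Reg(R)$ are pairwise disjoint (by maximality in the definition of $\Reg(R)$) together with the Christ--David property $B(x_Q,c_0\ell(Q))\cap E\subset Q$ to see that the centres are well separated: if $Q\ne Q'$ are both in $\Reg(R)$, then $x_{Q'}\in Q'\subset E$ cannot belong to $Q$, hence cannot belong to $B(x_Q,c_0\ell(Q))$, so
\[
|x_Q-x_{Q'}|\ge c_0\max\bigl(\ell(Q),\ell(Q')\bigr).
\]

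Third, combining the two previous points, the centres $\{x_Q\}_{Q\in\Reg_x(R)}$ form a $c\ell$-separated subset of $B(x,C\ell)$, so a trivial volume packing argument in $\R^n$ yields $\#\Reg_x(R)\lesssim_n 1$. This gives the desired pointwise bound $\sum_{Q\in\Reg(R)}\one_{Q_\Gamma}(x)\lesssim \one_\Gamma(x)$.

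I do not anticipate a real obstacle: the argument is essentially bounded-overlap from the almost-monotonicity of sidelengths in $\Reg(R)$ (\lemref{lem:regularprop}) plus the disjointness of Christ--David cubes at a common scale. The only care needed is in tracking the constants to ensure that $2B_Q\cap 2B_{Q'}\neq\varnothing$ triggers the hypothesis $5B_Q\cap 5B_{Q'}\neq\varnothing$ of \lemref{lem:regularprop}, which is clear.
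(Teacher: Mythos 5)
Your proposal is correct and follows essentially the same route as the paper: the paper also reduces the upper bound to noting that $Q_\Gamma\cap P_\Gamma\neq\varnothing$ forces $2B_Q\cap 2B_P\neq\varnothing$, and then invokes \lemref{lem:regularprop} together with the disjointness of the cubes in $\Reg(R)$ to get bounded overlap. You have merely spelled out the packing argument (comparable sidelengths plus $c_0\ell$-separation of centres) that the paper leaves implicit.
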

	\begin{proof}
		Observe that if $Q,P\in\Reg(R)$ are such that $2B_Q\cap 2B_P=\varnothing$, then $Q_\Gamma\cap P_\Gamma=\varnothing$, by \eqref{eq:QGamma2BQ}. 
		
		It may happen that $Q_\Gamma\cap P_\Gamma\neq\varnothing$ for $Q,P\in\Reg(R)$ such that $2B_Q\cap2B_P\neq\varnothing$. However, for any fixed $Q\in\Reg(R)$ there is only a bounded number of $P\in\Reg(R)$ where this can happen, by \lemref{lem:regularprop}.
	\end{proof}
	
	The set $\Gamma$ approximates $E$ at the scales and locations of $\Tr_*(R)$. Now we also define an Ahlfors regular measure $\nu$ approximating $\mu$. To that end, we define a density $g:\Gamma\to\R$, so that $\nu=g\,\hd|_\Gamma$.
	
	For each $Q\in\Reg(R)$ we define $g_Q:\Gamma\to \R$ as
	\begin{equation*}
	g_Q = \fr{\mu(Q)}{\hd(Q_\Gamma)}\one_{Q_\Gamma},
	\end{equation*}
	so that
	\begin{equation}\label{eq:gQmeas}
	\int_\Gamma g_Q\ d\hd = \mu(Q).
	\end{equation}
	Note also that 
	\begin{equation}\label{eq:gQsimtheta}
	g_Q\sim \theta_\mu(Q)\one_{Q_\Gamma}\sim \theta_\mu(R)\one_{Q_\Gamma},
	\end{equation}
	by \eqref{eq:QGammameas} and \lemref{lem:muADR2}.
	
	Set
	\begin{equation*}
	g=\sum_{Q\in\Reg(R)}g_Q
	\end{equation*}
	and
	\begin{equation}\label{eq:nu-def}
	\nu = g\,\hd|_\Gamma.
	\end{equation}
	Remark that for any $x\in\Gamma$
	\begin{equation*}
		g(x) = \sum_{Q\in\Reg(R)} g_Q(x) \overset{\eqref{eq:gQsimtheta}}{\sim} \theta_\mu(R)\sum_{Q\in\Reg(R)} \one_{Q_\Gamma}(x)\sim \theta_\mu(R),
	\end{equation*}
	where in the last estimate we used \eqref{eq:sumoneQ}. It follows that
	\begin{equation}\label{eq:nuGammarel}
	\nu=  \theta_\mu(R)\cdot\, h\hd|_\Gamma,
	\end{equation}
	where $h(x)=g(x)/\theta_\mu(R)\sim \one_\Gamma$.

 %\begin{remark}
 %    If we now put $\Gamma_R :=\Gamma$, then we see that $\Gamma_R$ approximates $\Gamma_R$ at the level of $\Tr_*(R)$, and the measure $\nu$ is supported on $\Gamma_R$; this corresponds to Proposition \ref{prop:coronization}\textit{(2)}. To prove statement \textit{(a)} in Proposition \ref{prop:coronization}\textit{(2)} we are left to show that $\Gamma$ Ahlfors $d$-regular - this should not come as a surprise given Lemma \ref{lem:muADR2}.
 %\end{remark}
 
	%In the next lemma we show that $ \$ This will allow us to use Corollary \ref{cor:PBPbetas} and get estimates for the $\beta_{\nu,2}$-numbers in the following subsection.
	
	\begin{lemma}\label{lem:GammaADR}
		The measure $\nu$ constructed above is $d$-Ahlfors regular. More precisely, for $x\in\Gamma$ and $0<r<\diam(\Gamma)$ we have
		\begin{equation}\label{eq:nuADR}
		\nu(B(x,r))\sim \theta_\mu(R)\, r^d.
		\end{equation}
		In consequence, $\Gamma$ is a $d$-Ahlfors regular set, with constants depending only on $n,d,$ and $c_1$.%the $PBP$ constants of $E$.	
	\end{lemma}
	\begin{proof}
		Fix $x\in\Gamma$, and let $Q\in\Reg(R)$ be a cube such that $x\in Q_\Gamma$; if $Q$ is non-unique, just choose one.
		
		First we check that $\nu$ is upper Ahlfors regular. Assume that $0<r\le \ell(Q)$. Then, 
		\begin{multline*}
		\nu(B(x,r)) = \sum_{P\in\Reg(R)}\int_{B(x,r)} g_P\ d\hd\overset{\eqref{eq:gQsimtheta}}{\lesssim} \sum_{P\in\Reg(R),\, P_{\Gamma}\cap B(x,r)\neq\varnothing} \int_{B(x,r)}\theta_\mu(R)\one_{P_\Gamma}\ d\hd\\
		 = \theta_\mu(R)\sum_{P\in\Reg(R),\, P_{\Gamma}\cap B(x,r)\neq\varnothing} \hd({P_\Gamma\cap B(x,r)})\lesssim \theta_\mu(R)\sum_{P\in\Reg(R),\, P_{\Gamma}\cap B(x,r)\neq\varnothing} r^d,
		\end{multline*}
		where in the last estimate we used the fact that each $P_\Gamma$ is upper Ahlfors regular. To estimate the last term above, note that $B(x,r)\subset 2B_Q$ and so $B(x,r)$ may intersect only a bounded number $P_\Gamma,\ P\in\Reg(R),$ by \eqref{eq:QGamma2BQ} and \lemref{lem:regularprop}. Hence,
		\begin{equation*}
		\nu(B(x,r)) \lesssim \theta_\mu(R)\sum_{P\in\Reg(R),\, P_{\Gamma}\cap B(x,r)\neq\varnothing} r^d \sim \theta_\mu(R)\, r^d.
		\end{equation*}
		
		Now suppose that $\ell(Q)<r<\diam(\Gamma)$.
		\begin{multline*}
		\nu(B(x,r)) = \sum_{P\in\Reg(R)}\int_{B(x,r)} g_P\ d\hd \le \sum_{P\in\Reg(R),\, P_{\Gamma}\cap B(x,r)\neq\varnothing} \int g_P\ d\hd\\ \overset{\eqref{eq:gQmeas}}{=} \sum_{P\in\Reg(R),\, P_{\Gamma}\cap B(x,r)\neq\varnothing} \mu(P).
		\end{multline*}
		Let $Q'\in\Tr_*(R)$ be the minimal cube satisfying $Q\subset Q'$ and $\ell(Q')>r$. Note that $\ell(Q')\sim r$, and $B(x,r)\subset 2B_{Q'}$. If $P\in\Reg(R)$ satisfies $P_{\Gamma}\cap B(x,r)\neq\varnothing$, then in particular we have $2B_P\cap 2B_{Q'}\neq\varnothing$. Thus, by \lemref{lem:PcapQ}, we have $P\subset C_0B_{Q'}$.	
		In consequence,
		\begin{equation*}
		\nu(B(x,r)) \le  \sum_{P\in\Reg(R),\, P_{\Gamma}\cap B(x,r)\neq\varnothing} \mu(P)\le \mu(C_0B_{Q'})\sim \mu(Q'),
		\end{equation*}
		where in the last estimate we used the doubling property of $\mu$. We have $\mu(Q')\sim \theta_\mu(R)\ell(Q')^d$ by \lemref{lem:muADR2}. Since $\ell(Q')\sim r$, we get the desired upper bound
		\begin{equation*}
		\nu(B(x,r))\lesssim \theta_\mu(R)\, r^d.
		\end{equation*}
		
		We turn to the lower regularity of $\nu$. The proof at scales $0<r\le20\,\ell(Q)$ is very similar to the one before. We have
		\begin{multline*}
		\nu(B(x,r)) = \sum_{P\in\Reg(R)}\int_{B(x,r)} g_P\ d\hd\overset{\eqref{eq:gQsimtheta}}{\gtrsim}  \int_{B(x,r)}\theta_\mu(R)\one_{Q_\Gamma}\ d\hd\\
		= \theta_\mu(R)\ \hd({Q_\Gamma\cap B(x,r)})\sim \theta_\mu(R)\, r^d,
		\end{multline*}
		where in the last line we used Ahlfors regularity of $Q_\Gamma$.
		
		Concerning scales $20\,\ell(Q)\le r<\diam(\Gamma)$, we proceed as follows:
		\begin{multline*}
		\nu(B(x,r)) = \sum_{P\in\Reg(R)}\int_{B(x,r)} g_P\ d\hd \ge \sum_{P\in\Reg(R),\, P_{\Gamma}\subset B(x,r)} \int g_P\ d\hd\\ \overset{\eqref{eq:gQmeas}}{=} \sum_{P\in\Reg(R),\, P_{\Gamma}\subset B(x,r)} \mu(P).
		\end{multline*}
		Let $Q'\in\Tr_*(R)$ be the maximal cube satisfying $Q\subset Q'$ and $2B_{Q'}\subset B(x,r)$ (such cube exists because $2B_Q\subset B(x,r)$). Note that $\ell(Q')\sim r$. 
		
		We claim that if $P\in\Reg(R)$ and $P\subset Q'$, then $P_\Gamma\subset B(x,r)$ (so in particular, $P$ appears in the sum on the right hand side above). Indeed, we have $P_\Gamma\subset 2B_P\subset 2B_{Q'}\subset B(x,r)$, by the definition of $Q'$. Thus,
		\begin{multline*}
		\nu(B(x,r)) \ge \sum_{P\in\Reg(R),\, P_{\Gamma}\subset B(x,r)} \mu(P)\\
		\ge \sum_{P\in\Reg(R),\, P\subset Q'} \mu(P)=\mu(Q')\sim\theta_\mu(R)\,\ell(Q')^d\sim \theta_\mu(R)\, r^d,
		\end{multline*}
		where we have used \lemref{lem:muADR2} and the fact that $\ell(Q')\sim r$. This finishes the proof of Ahlfors egularity of $\nu$.
		
		As a consequence of \eqref{eq:nuGammarel} and \eqref{eq:nuADR} we get that $\Gamma$ is a $d$-Ahlfors regular set, with Ahlfors regularity constants depending only on $n,d,$ and the the lower content regularity constant $c_1$. %PBP constants.	
	\end{proof}
    We have established property (a) from Proposition \ref{prop:coronization}.
    \subsection{$\beta$-numbers estimate}
    To complete the proof of Proposition \ref{prop:coronization} it remains to prove the estimate \eqref{eq:betasss}, namely
    \begin{equation*}
            \sum_{Q\in\Tree(R)}\beta_{\mu,2}(2B_Q)^2\mu(Q)\lesssim \int_{\Gamma}\int_0^{\ell(R)} \beta_{\nu,2}(x,r)^2\, \fr{dr}{r}d\nu(x) + \theta_\mu(R)\mu(R).
        \end{equation*}
    We begin by showing the following auxiliary estimate.
    \begin{lemma}
		For any $Q\in\Tr(R)$ we have
		\begin{equation}\label{eq:betaestmu}
			\beta_{\mu,2}(2B_Q)^2\lesssim \beta_{\nu,2}(C_0B_Q)^2 + \ell(Q)^{-d}\sum_{P\in\Reg(R),\, 2B_P\subset C_0B_Q} \fr{\ell(P)}{\ell(Q)}\mu(P),
		\end{equation}
		where $C_0\sim 1$ is the constant from \lemref{lem:PcapQ}.
	\end{lemma}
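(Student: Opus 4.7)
The idea is to use the minimizer $L\in\dA(n,d)$ of $\beta_{\nu,2}(C_0B_Q)^2$ as a trial plane for $\beta_{\mu,2}(2B_Q)^2$, and then pay an error term for swapping integration against $\nu$ (living on the ``artificial'' set $\Gamma$) with integration against $\mu$ (living on $E$).

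First I would note that any $y\in 2B_Q\cap\supp\mu\subset 2B_Q\cap E$ lies in some $P\in\Reg(R)$: since $Q\in\Tr(R)\subset\DD(R)$ and $\ell(Q)\le\rho\,\ell(R)$ whenever $Q\neq R$ (and the case $Q=R$ follows directly from \eqref{eq:Regcovers}), one checks that $2B_Q\cap E\subset 2B_R\cap E\subset \bigcup_{P\in\Reg(R)}P$. Moreover, if $P\cap 2B_Q\neq\varnothing$ then \lemref{lem:PcapQ} gives $2B_P\subset C_0 B_Q$. So the relevant $P$'s all satisfy $2B_P\subset C_0 B_Q$, and in particular $\ell(P)\lesssim\ell(Q)$ and $P_\Gamma\subset 2B_P\subset C_0 B_Q$.

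The main estimate is a ball-by-ball comparison. Fix $P\in\Reg(R)$ with $P\cap 2B_Q\neq\varnothing$. For any $y\in P$ and $x\in P_\Gamma$ both points lie in $2B_P$, so $|y-x|\lesssim\ell(P)$. Thus
\begin{equation*}
\dist(y,L)^2 \le 2\dist(x,L)^2 + 2|y-x|^2 \lesssim \dist(x,L)^2 + \ell(P)^2.
\end{equation*}
Averaging in $x$ against $\hd|_{P_\Gamma}$ (well-defined since $\hd(P_\Gamma)\sim\ell(P)^d$ by \eqref{eq:QGammameas}) and integrating in $y$ against $\mu$, while recalling that $g_P=\mu(P)\hd(P_\Gamma)^{-1}\one_{P_\Gamma}$, yields
\begin{equation*}
\int_P\dist(y,L)^2\,d\mu(y)\lesssim \int_{P_\Gamma}\dist(x,L)^2\,g_P(x)\,d\hd(x)+\ell(P)^2\,\mu(P).
\end{equation*}

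Summing over the $P\in\Reg(R)$ with $P\cap 2B_Q\neq\varnothing$, using $\sum_P g_P\le g$ and $P_\Gamma\subset C_0B_Q$, one obtains
\begin{equation*}
\int_{2B_Q}\dist(y,L)^2\,d\mu(y)\lesssim \int_{C_0B_Q}\dist(x,L)^2\,d\nu(x)+\sum_{\substack{P\in\Reg(R)\\ 2B_P\subset C_0 B_Q}}\ell(P)^2\,\mu(P).
\end{equation*}
Dividing by $\ell(Q)^{d+2}$ and using that $L$ was chosen to minimize $\beta_{\nu,2}(C_0B_Q)$, the first term on the right becomes $\lesssim\beta_{\nu,2}(C_0B_Q)^2$. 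For the remaining sum, $\ell(P)\lesssim\ell(Q)$ gives $\ell(P)^2/\ell(Q)^2\lesssim\ell(P)/\ell(Q)$, and then
\begin{equation*}
\beta_{\mu,2}(2B_Q)^2\le\beta_{\mu,2}(2B_Q,L)^2\lesssim\beta_{\nu,2}(C_0B_Q)^2+\ell(Q)^{-d}\sum_{\substack{P\in\Reg(R)\\ 2B_P\subset C_0 B_Q}}\fr{\ell(P)}{\ell(Q)}\,\mu(P),
\end{equation*}
which is \eqref{eq:betaestmu}. The only potentially delicate point is the covering statement in the first step (confirming that $P\in\Reg(R)$ cover all of $2B_Q\cap E$), but it follows painlessly from \eqref{eq:Regcovers} together with the fact that descendant cubes in $\Tr(R)$ are much smaller than $R$.
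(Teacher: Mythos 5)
Your proof is correct and follows the same overall strategy as the paper's: take the plane minimizing $\beta_{\nu,2}(C_0B_Q)$ as a competitor, cover $2B_Q\cap E$ by the cubes of $\Reg(R)$ via \eqref{eq:Regcovers} (all relevant $P$ landing inside $C_0B_Q$ by \lemref{lem:PcapQ}), and compare $\mu|_P$ with $g_P\,\hd|_\Gamma$ cube by cube. The one place where you diverge is the discrepancy term: the paper uses the exact mass matching $\int_\Gamma g_P\,d\hd=\mu(P)$ to subtract the constant $(\dist(x_P,L_Q)/\ell(Q))^2$ from the integrand and obtains the first-order error $\tfrac{\ell(P)}{\ell(Q)}\mu(P)$ by a Lipschitz estimate, whereas your squared triangle inequality plus averaging over $P_\Gamma$ needs no cancellation and yields the second-order error $(\ell(P)/\ell(Q))^2\mu(P)$, which is dominated by the stated bound since $\ell(P)\lesssim\ell(Q)$. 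Both routes are valid; yours is marginally more elementary and in fact gives a slightly stronger intermediate estimate, though nothing in the sequel exploits the improvement.
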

	\begin{proof}
		Fix $Q\in\Tr(R)$, and let $L_Q$ be a $d$-plane minimizing $\beta_{\nu,2}(C_0B_Q)$.
		
		Observe that for any $Q\in \Tr(R)$, the set $2B_Q\cap E$ is fully covered by $P\in\Reg(R)$ satisfying $P\cap 2B_Q\neq\varnothing$. Indeed, this is true for $R$ because by the definition of $\nbd(R)$ and $\Reg(R)$ we have 
		\begin{equation*}
		2B_R\cap E\subset \bigcup_{P\in\Reg(R),\, P\cap 2B_R\neq\varnothing}P.
		\end{equation*}
		Since $2B_Q\subset 2B_R$ for any $Q\in\Tr(R)$, the claim follows.
		
		The observation above gives
		\begin{multline*}
		\beta_{\mu,2}(2B_Q)^2\ell(Q)^d \lesssim \int_{2B_Q}\bigg(\fr{\dist(x,L_Q)}{\ell(Q)}\bigg)^2\, d\mu(x)\\
		\le\sum_{P\in\Reg(R),\, P\cap 2B_Q\neq\varnothing} \int_{P}\bigg(\fr{\dist(x,L_Q)}{\ell(Q)}\bigg)^2\, d\mu(x)\\
		\le \sum_{P\in\Reg(R),\, 2B_P\subset C_0B_Q} \int_{P}\bigg(\fr{\dist(x,L_Q)}{\ell(Q)}\bigg)^2\, d\mu(x),
		\end{multline*}
		where in the last line we used \lemref{lem:PcapQ}.
		
		Let $P\in\Reg(R),\, 2B_P\subset C_0B_Q$. Then,
		\begin{multline*}
		\int_{P}\bigg(\fr{\dist(x,L_Q)}{\ell(Q)}\bigg)^2\, d\mu(x)
		 = \int_{\Gamma}\bigg(\fr{\dist(x,L_Q)}{\ell(Q)}\bigg)^2g_P(x)\, d\hd(x)\\
		  + \int \bigg(\fr{\dist(x,L_Q)}{\ell(Q)}\bigg)^2\, (\one_P(x)d\mu(x)-g_P(x)d\hd|_\Gamma(x))
		 \eqqcolon I_1(P)+I_2(P).
		\end{multline*}		
		It follows that
		\begin{equation*}
		\beta_{\mu,2}(2B_Q)^2\ell(Q)^d\lesssim \sum_{P\in\Reg(R),\, 2B_P\subset C_0B_Q}I_1(P) + \sum_{P\in\Reg(R),\, 2B_P\subset C_0B_Q}I_2(P) \eqqcolon S_1+S_2.
		\end{equation*}
		
		Estimating $S_1$ is straightforward. Recall that for $P\in\Reg(R)$ we have $\supp g_P\subset P_\Gamma\subset 2B_P$. Hence,
		\begin{multline*}
		S_1 = \sum_{P\in\Reg(R),\, 2B_P\subset C_0B_Q}\int_{\Gamma}\bigg(\fr{\dist(x,L_Q)}{\ell(Q)}\bigg)^2g_P(x)\, d\hd(x)\\
		 \le\sum_{P\in\Reg(R)}\int_{\Gamma\cap C_0B_Q}\bigg(\fr{\dist(x,L_Q)}{\ell(Q)}\bigg)^2g_P(x)\, d\hd(x)\\
		  = \int_{\Gamma\cap C_0B_Q}\bigg(\fr{\dist(x,L_Q)}{\ell(Q)}\bigg)^2g(x)\, d\hd(x)\\
		 = \int_{C_0B_Q}\bigg(\fr{\dist(x,L_Q)}{\ell(Q)}\bigg)^2\, d\nu(x)\sim\beta_{\nu,2}(C_0B_Q)^2\ell(Q)^d,
		\end{multline*}
		where in the last part we used our choice of $L_Q$. This gives us the first term from the right hand side of \eqref{eq:betaestmu}.
		
		We turn to estimating $S_2$. Let $P\in\Reg(R)$. Using the fact that $\int_\Gamma g_P\, d\hd = \mu(P)$ we get
		\begin{multline*}
		I_2(P)=\int \bigg(\fr{\dist(x,L_Q)}{\ell(Q)}\bigg)^2\, (\one_P(x)d\mu(x)-g_P(x)d\hd|_\Gamma(x))\\
		 = \int \bigg(\bigg(\fr{\dist(x,L_Q)}{\ell(Q)}\bigg)^2 - \bigg(\fr{\dist(x_P,L_Q)}{\ell(Q)}\bigg)^2\bigg)\, (\one_P(x)d\mu(x)-g_P(x)d\hd|_\Gamma(x)).
		\end{multline*}
		For $x\in P\cup\supp g_P\subset 2B_P\subset C_0 B_Q$ we have
		\begin{multline*}
		\bigg|\bigg(\fr{\dist(x,L_Q)}{\ell(Q)}\bigg)^2 - \bigg(\fr{\dist(x_P,L_Q)}{\ell(Q)}\bigg)^2\bigg|
		\le \fr{|x-x_P|}{\ell(Q)}\cdot \fr{\dist(x,L_Q)+\dist(x_P,L_Q)}{\ell(Q)}\\
		\lesssim \fr{\ell(P)}{\ell(Q)}\cdot\fr{\ell(Q)}{\ell(Q)}=\fr{\ell(P)}{\ell(Q)}.
		\end{multline*}
		Thus,
		\begin{equation*}
		I_2(P)\lesssim \fr{\ell(P)}{\ell(Q)}\mu(P).
		\end{equation*}
		Summing over $P\in\Reg(R)$ with $2B_P\subset C_0B_Q$ yields
		\begin{equation*}
		S_2\lesssim\sum_{P\in\Reg(R),\, 2B_P\subset C_0B_Q} \fr{\ell(P)}{\ell(Q)}\mu(P).
		\end{equation*}
		This corresponds to the second term from the right hand side of \eqref{eq:betaestmu}.
	\end{proof}
	\begin{lemma}\label{lem:betasumest}
		We have
		\begin{equation}\label{eq:betasumest}
			\sum_{Q\in\Tr(R)}\beta_{\mu,2}(2B_Q)^2\mu(Q)\lesssim \sum_{Q\in\Tr(R)}\beta_{\nu,2}(C_0B_Q)^2\mu(Q) + \theta_\mu(R)\,\mu(R).
		\end{equation}
	\end{lemma}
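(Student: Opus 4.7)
The plan is to combine the pointwise bound \eqref{eq:betaestmu} with a change of summation order, and then use the quasi-Ahlfors regularity of $\mu$ on $\Tr_*(R)$ (Lemma \ref{lem:muADR2}) together with a geometric series argument to control the resulting double sum.

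First, I would multiply \eqref{eq:betaestmu} by $\mu(Q)$ and sum over $Q \in \Tr(R)$. The $\beta_{\nu,2}$ contribution yields the first term on the right-hand side of \eqref{eq:betasumest} directly, so it only remains to estimate the error
\begin{equation*}
\mathcal{E} := \sum_{Q\in\Tr(R)} \mu(Q)\,\ell(Q)^{-d}\sum_{\substack{P\in\Reg(R)\\ 2B_P\subset C_0B_Q}} \frac{\ell(P)}{\ell(Q)}\,\mu(P).
\end{equation*}
Since $Q\in\Tr(R)$, Lemma \ref{lem:muADR} gives $\mu(Q)\sim \theta_\mu(R)\,\ell(Q)^d$, hence $\mu(Q)\ell(Q)^{-d}\sim \theta_\mu(R)$. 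Swapping the order of summation yields $\mathcal{E}\lesssim \theta_\mu(R)\,\mathcal{S}$ with
\begin{equation*}
\mathcal{S} := \sum_{P\in\Reg(R)} \mu(P) \sum_{\substack{Q\in\Tr(R)\\ 2B_P\subset C_0 B_Q}} \frac{\ell(P)}{\ell(Q)}.
\end{equation*}

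Next I would bound the inner sum uniformly in $P$. If $2B_P\subset C_0 B_Q$, then the triangle inequality forces $\ell(Q)\gtrsim \ell(P)$ and $|x_Q - x_P|\le C_0\ell(Q)$. At each dyadic scale $\ell(Q)=\rho^{-k}\ell(P)$ (with $k\ge 0$ bounded below by a constant), only $O(1)$ Christ--David cubes can have their center in $B(x_P,C_0\ell(Q))$, since the balls $B(Q)=B(x_Q,c_0\ell(Q))$ have bounded overlap at each fixed scale. Consequently,
\begin{equation*}
\sum_{\substack{Q\in\Tr(R)\\ 2B_P\subset C_0 B_Q}} \frac{\ell(P)}{\ell(Q)} \;\lesssim\; \sum_{k\ge 0} \rho^{k} \;\lesssim\; 1.
\end{equation*}

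Finally, the cubes in $\Reg(R)$ are pairwise disjoint and by \eqref{eq:Regcovers} they tile $\bigcup_{R'\in\nbd(R)} R'$. Since $\#\nbd(R)\lesssim 1$ and each $R'\in\nbd(R)$ satisfies $R'\subset B(x_R,C\ell(R))$ for some $C\sim 1$, the doubling property \eqref{eq:doubling} gives
\begin{equation*}
\sum_{P\in\Reg(R)} \mu(P) \;=\; \sum_{R'\in\nbd(R)} \mu(R') \;\lesssim\; \mu(R).
\end{equation*}
Putting the three estimates together yields $\mathcal{E}\lesssim \theta_\mu(R)\,\mu(R)$, which is exactly \eqref{eq:betasumest}. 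The only nontrivial step is the geometric series estimate for the inner sum, but it follows directly from the bounded-overlap property of Christ--David cubes at comparable scales, so no real obstacle is expected.
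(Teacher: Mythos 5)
Your proposal is correct and follows essentially the same route as the paper: multiply \eqref{eq:betaestmu} by $\mu(Q)$, sum, use $\theta_\mu(Q)\sim\theta_\mu(R)$ from Lemma \ref{lem:muADR}, swap the order of summation, bound the inner sum as a geometric series via bounded overlap of Christ--David cubes at each scale, and control $\sum_{P\in\Reg(R)}\mu(P)$ by $\mu(R)$ using disjointness and doubling. The only cosmetic difference is that the paper bounds $\sum_{P\in\Reg(R)}\mu(P)\le\mu(6B_R)\sim\mu(R)$ directly, whereas you pass through the tiling \eqref{eq:Regcovers}; both are fine.
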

	\begin{proof}
		In the previous lemma we showed that for any $Q\in\Tr(R)$ we have \eqref{eq:betaestmu}. Multliplying by $\mu(Q)$ and summing over $Q\in\Tr(R)$ yields
		\begin{multline*}
			\sum_{Q\in\Tr(R)}\beta_{\mu,2}(2B_Q)^2\mu(Q)\lesssim \sum_{Q\in\Tr(R)}\beta_{\nu,2}(C_0B_Q)^2\mu(Q)\\
			 + \sum_{Q\in\Tr(R)}\theta_\mu(Q)\sum_{P\in\Reg(R),\, 2B_P\subset C_0B_Q} \fr{\ell(P)}{\ell(Q)}\mu(P).
		\end{multline*}
		Thus, in order to obtain \eqref{eq:betasumest} it suffices to prove that
		\begin{equation*}
			\sum_{Q\in\Tr(R)}\theta_\mu(Q)\sum_{P\in\Reg(R),\, 2B_P\subset C_0B_Q} \fr{\ell(P)}{\ell(Q)}\mu(P)\lesssim \theta_\mu(R)\mu(R).
		\end{equation*}
	First, recall that by \lemref{lem:muADR} we have $\theta_\mu(Q)\sim\theta_\mu(R)$ for all $Q\in\Tr(R)$. So we only need to show
	\begin{equation}\label{eq:goal1}
		\sum_{Q\in\Tr(R)}\sum_{P\in\Reg(R),\, 2B_P\subset C_0B_Q} \fr{\ell(P)}{\ell(Q)}\mu(P)\lesssim \mu(R).
	\end{equation}
	Changing the order of summation transforms the left hand side to
	\begin{equation*}
		\sum_{P\in\Reg(R)}\mu(P)\sum_{Q\in\Tr(R),\, 2B_P\subset C_0B_Q} \fr{\ell(P)}{\ell(Q)}.
	\end{equation*}
	The inner sum is essentially a geometric series: if $P\in\Reg(R)$ is fixed, then for any generation $k\in [0, N]$, there is at most a bounded number of $Q\in\Tr(R)\cap\DD_k$ such that $2B_P\subset C_0B_Q$ (the bound depends on $C_0\sim 1$). Moreover, all such $Q$ necessarily satisfy $C_0\ell(Q)\ge \ell(P)$. Hence,
	\begin{equation*}
		\sum_{P\in\Reg(R)}\mu(P)\sum_{Q\in\Tr(R),\, 2B_P\subset C_0B_Q} \fr{\ell(P)}{\ell(Q)}\lesssim_{C_0}\sum_{P\in\Reg(R)}\mu(P)\le \mu(6B_R)\sim \mu(R),
	\end{equation*}
	where in the last two estimates we used the fact that the cubes in $\Reg(R)$ are pairwise disjoint and contained in $6B_R$, and also the doubling property of $\mu$. This finishes the proof of \eqref{eq:betasumest}.
	\end{proof}

 To complete the proof of \eqref{eq:betasss} it remains to show the following standard estimate.
 \begin{lemma}
     We have
      \begin{equation*}
     \sum_{Q\in\Tr(R)}\beta_{\nu,2}(C_0B_Q)^2\mu(Q) \lesssim \int_{\Gamma_R}\int_0^{\ell(R)} \beta_{\nu,2}(x,r)^2\, \fr{dr}{r}d\nu(x) + \theta_\mu(R)\mu(R).
 \end{equation*}
 \end{lemma}
\begin{proof}
     We only sketch the proof. First, observe that for every $Q\in\Tree(R)$, $x\in \Gamma_R\cap 5B_Q$, and $10C_0\ell(Q)<r<20C_0\ell(Q)$ we have $C_0B_Q\subset B(x,r)$ and $r\sim r(C_0B_Q).$ By the definition of $\beta$-numbers, this gives
 \begin{equation*}
     \beta_{\nu,2}(C_0B_Q)^2\lesssim \beta_{\nu,2}(x,r)^2.
 \end{equation*}
 Integrating over $x\in \Gamma_R\cap 5B_Q$ and $10C_0\ell(Q)<r<20C_0\ell(Q)$ implies
 \begin{equation*}
     \beta_{\nu,2}(C_0B_Q)^2\nu(\Gamma_R\cap 5B_Q)\lesssim \int_{\Gamma_R\cap 5B_Q}\int_{10C_0\ell(Q)}^{20C_0\ell(Q)} \beta_{\nu,2}(x,r)^2\frac{dr}{r}\, d\nu(x).
 \end{equation*}
 Recalling that $\nu(\Gamma_R\cap 5B_Q)\sim\theta_\mu(R)\hd(\Gamma_R\cap 5B_Q)\sim\theta_\mu(R)\ell(Q)\sim\mu(Q)$ and summing over all $Q\in\Tree(R)$ yields
 \begin{equation*}
     \sum_{Q\in\Tree(R)}\beta_{\nu,2}(C_0B_Q)^2\mu(Q)\lesssim \sum_{Q\in\Tree(R)}\int_{\Gamma_R\cap 5B_Q}\int_{10C_0\ell(Q)}^{20C_0\ell(Q)} \beta_{\nu,2}(x,r)^2\frac{dr}{r}\, d\nu(x).
 \end{equation*}
 Observe that every pair $(x,r)\in\Gamma_R\times (0,\ell(R))$ belongs to at most a bounded number of sets from 
 \begin{equation*}
     \big\{\big(\Gamma_R\cap 5B_Q, (10C_0\ell(Q),20C_0\ell(Q))\big)\big\}_{Q\in\Tree(R)}.
 \end{equation*}
 Thus,
 \begin{equation*}
     \sum_{Q\in\Tree(R)}\beta_{\nu,2}(C_0B_Q)^2\mu(Q)\lesssim \int_{\Gamma_R}\int_{0}^{20C_0\ell(R)} \beta_{\nu,2}(x,r)^2\frac{dr}{r}\, d\nu(x).
 \end{equation*}
 Finally, it follows from the definition of $\beta$-numbers that
 \begin{equation*}
     \int_{\Gamma_R}\int_{\ell(R)}^{20C_0\ell(R)} \beta_{\nu,2}(x,r)^2\frac{dr}{r}\, d\nu(x) \lesssim \frac{\nu(\Gamma_R)}{\ell(R)^d}\nu(\Gamma_R)\sim \theta_\mu(R)\mu(R),
 \end{equation*}
 and so
 \begin{equation*}
     \sum_{Q\in\Tr(R)}\beta_{\nu,2}(C_0B_Q)^2\mu(Q) \lesssim \int_{\Gamma_R}\int_0^{\ell(R)} \beta_{\nu,2}(x,r)^2\, \fr{dr}{r}d\nu(x) + \theta_\mu(R)\mu(R).
 \end{equation*}
\end{proof}
Together with \lemref{lem:betasumest} this concludes the proof of \eqref{eq:betasss}, and of Proposition \ref{prop:coronization}.

\section{The PBP property of approximating sets}\label{s:PBP}
In this section we show that when the set $E$ is approximated as in Proposition \ref{prop:coronization}, then the approximating sets inherits the PBP property, with comparable constants. 

	\begin{propo}\label{lem:GammaPBP}
		Let $E \subset \R^n$ be compact and with the $d$-PBP. Let $N$ be an integer and let $\Top$ be a family of cubes as in Proposition \ref{l:corona} or Proposition \ref{prop:coronization} \footnote{Note that we may apply either proposition to $E$ thanks to Lemma \ref{l:low-reg-E} which says that $d$-PBP implies lower content $d$-regularity with constant depending on that of the PBP property.}. Then, for every $R\in\Top$ the set $\Gamma_R$ has $d$-PBP, with PBP constants depending only on $n, d$, and the PBP constants of $E$.
	\end{propo}

We first need to prove the following technical lemma.
 
	\begin{lemma}\label{lem:skeletonsprojection}
		Let $I\in\Delta$, and $V\in\dG(n,d)$. Then,
		\begin{equation}\label{eq:skeletonproj}
			\pi_V({I}) \subset \pi_V(\d_d I).
		\end{equation}
	\end{lemma}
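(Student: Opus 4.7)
The plan is to reduce the statement to a simple fact about extreme points of convex polytopes. Fix $x \in I$ and consider the affine subspace
\[
L := x + V^\perp,
\]
which has dimension $n-d$. A point $y \in I$ satisfies $\pi_V(y) = \pi_V(x)$ if and only if $y \in L$. Hence, to prove \eqref{eq:skeletonproj} it suffices to exhibit $y \in L \cap \d_d I$.

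The set $K := L \cap I$ is a nonempty (it contains $x$), convex, compact subset of $L$, so by the Krein–Milman theorem (or simply by compactness in finite dimensions) $K$ has at least one extreme point $y$. I will take this extreme point as my candidate and show it lies on a face of $I$ of dimension at most $d$. Let $F$ denote the unique (relatively open) face of $I$ of minimal dimension with $y \in F$; equivalently, $F$ is obtained by fixing exactly those coordinates of $I$ in which $y$ attains an endpoint value.

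The main step is to rule out $\dim F \ge d+1$. If $\dim F \geq d+1$, then the affine hull $\mathrm{aff}(F)$ has dimension $\geq d+1$, and a standard dimension count gives
\[
\dim\bigl(\mathrm{aff}(F) \cap L\bigr) \;\ge\; \dim F + \dim L - n \;\ge\; (d+1) + (n-d) - n \;=\; 1.
\]
Thus $\mathrm{aff}(F) \cap L$ contains a line $\ell$ through $y$. Since $y$ lies in the relative interior of $F$, a short segment of $\ell$ around $y$ is contained in $F \subset I$, and of course in $L$; this produces two distinct points of $K$ whose midpoint is $y$, contradicting extremality. Hence $\dim F \le d$. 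Since the $d$-dimensional skeleton of a cube contains every face of dimension $\le d$ (any such face is a subface of some $d$-face, obtained by ``un-fixing'' enough coordinates), we conclude $y \in F \subset \d_d I$, which yields \eqref{eq:skeletonproj}.

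The only mildly subtle point is the dimension count $\dim(\mathrm{aff}(F) \cap L) \ge \dim F + \dim L - n$, which holds provided $\mathrm{aff}(F) \cap L \ne \varnothing$ (this is clear since $y$ lies in both) and which is just the standard formula for the intersection of affine subspaces in $\R^n$; no genericity hypothesis is needed once the intersection is known to be nonempty. Everything else is routine.
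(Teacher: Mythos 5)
Your proof is correct and takes essentially the same route as the paper's: both pick an extreme point (vertex) of the fiber polytope $I\cap\pi_V^{-1}(p)$ and show that it lies on a face of $I$ of dimension at most $d$, hence in $\d_d I$. The paper executes this by an explicit count of active constraints in coordinates, while your coordinate-free dimension count $\dim(\mathrm{aff}(F)\cap L)\ge \dim F+\dim L-n$ reaches the same conclusion a bit more cleanly.
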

	\begin{proof}
		Let $p \in \pi_V(I)$. Then $W\coloneqq \pi^{-1}(\{p\})$ is an $(n-d)$-dimensional plane intersecting $I$. We are going to show that
		\begin{equation}\label{eq:planeskeleton}
		W\cap\partial_d I\neq\varnothing,
		\end{equation}
		so that $p\in \pi_V(\d_d I)$, and in consequence \eqref{eq:skeletonproj} holds.
		
		By rotating and translating, we may assume that
		\begin{equation*}
		W=\spn(e_1,\dots,e_{n-d})=\{(x_1,\dots,x_{n-d},0,\dots,0)\,:\, (x_1,\dots,x_{n-d})\in\R^{n-d} \}.
		\end{equation*}
		In this coordinates, the cube $I$ can be described by a system of inequalities
		\begin{equation*}
		I = \{x\in\R^n\,:\, a\le Lx\le b\},
		\end{equation*}
		for some orthogonal matrix $L\in SO(n)$ and some vectors $a=(a_i)_{i=1,\dots,n}\in\R^n$ and $b=(a_i+\ell(I))_{i=1,\dots,n}\in\R^n$ (note that before the rotation and translation, we would have $L=\id_n$, and $a$ a vector consisting of dyadic numbers). 
		
		Recall that all $d$-dimensional faces of $I$ are of the following form: for any choice of indices $A,B\subset\{1,\dots,n\}$ with $A\cap B=\varnothing$ and $\#(A\cup B)=n-d$ we set
		\begin{equation*}
		F(A,B) = \{x\in\R^n\,:\, a\le Lx\le b,\, a_i=L_i\cdot x \text{ for $i\in A$, } L_i\cdot x = b_i \text{ for $i\in B$}\}.
		\end{equation*}
		Above, $L_i$ denotes the $i$-th row of $L$. This is a one-to-one representation, in the sense that for any such $A,B$ we get a $d$-dimensional face of $I$, and each $d$-dimensional face $F$ satisfies $F=F(A,B)$ for a unique choice of $A$ and $B$.
		
		Recalling $W=\spn(e_1,\dots,e_{n-d})$, we get that
		\begin{equation*}
		I\cap W = \{x=(x',\textbf{0})\in\R^{n-d}\times\{0\}^d\,:\, a\le Lx\le b\}\neq\varnothing.
		\end{equation*}
		Let $L'$ be a rectangular $(n-d)\times d$-matrix obtained by taking the first $(n-d)$ columns of $L$. Then,
		\begin{equation*}
		I\cap W = \{x=(x',\textbf{0})\in\R^{n-d}\times\{0\}^d\,:\, a\le L'x'\le b\},
		\end{equation*}
		and so $I\cap W$ might be identified with the (non-empty) polytope 
		\begin{equation*}
		P = \{x'\in\R^{n-d}\,:\, a\le L'x'\le b\}.
		\end{equation*}
		
		Let $v'\in P$ be a vertex of $P$, i.e., a $0$-dimensional face of $P$. We claim that the point $v=(v',\textbf{0})\in\R^{n-d}\times\{0\}^d$ belongs to some $d$-dimensional face of $I$. Since it clearly lies on $W$, this will give \eqref{eq:planeskeleton}.
		
		Since $v'$ is a vertex of $P$, we get that there exist sets of indices $A,B\subset\{1,\dots,n\}$ with $A\cap B=\varnothing$ and $\#(A\cup B)=n-d$ such that
		\begin{equation*}
		a_i=L'_i\cdot v' \text{ for $i\in A$,}\quad L'_i\cdot v' = b_i \text{ for $i\in B$},
		\end{equation*}
		where $L_i'$ denotes the $i$-th row of $L'$. Note that the property $\#(A\cup B)=n-d$ follows from the fact that $v'$ should be a unique solution to the equations above (a $0$-dimensional face is a single point), and we are in $\R^{n-d}$. We do not claim that the choice of $A$ and $B$ is unique, but we do not care. Finally, observe that the remaining inequalities $a_i\le L'_i\cdot v' \le b_i,\ i\in \{1,\dots,n\}\setminus(A\cup B)$ must also be satisfied since $v\in P$.
		
		It remains to observe that $v=(v',\textbf{0})\in F(A,B)\cap W$ because $Lv = L'v'$. In particular, $v\in \d_d I\cap W$, which gives \eqref{eq:planeskeleton}.
	\end{proof}

 \begin{lemma}\label{lem:aa}
     Let $E \subset \R^n$ be compact and lower content $(d,c_1)$-regular. Let $N$ be an integer and let $\Top$ be a family of cubes as in Proposition \ref{prop:coronization} or Proposition \ref{l:corona}. For every $R\in\Top$, $Q\in\Tree(R)$ and $V\in\dG(n,d)$ we have
     \begin{equation*}
         \pi_V(Q)\subset \pi_V(\Gamma_R\cap B(x_Q, C\ell(Q)),
     \end{equation*}
     where $C>1$ is absolute.
 \end{lemma}
 \begin{proof}
     By property (b) from Proposition \ref{l:corona} or \ref{prop:coronization}, we have $Q\subset \bigcup_{I\in\cC_R}I$. Denote by $\cC(Q)$ the subfamily of $\cC_R$ consisting of cubes intersecting $Q$. 
     
     Fix $I\in \cC(Q)$ and let $y\in I\cap Q$. Note that by the property (d) of our approximation we have
     \begin{equation*}
         \ell(I)\sim \inf_{x\in I}d_R(x)\le d_R(y) = \inf_{P\in\Tree(R)}(\ell(P)+\dist(y,P)\le \ell(Q),
     \end{equation*}
     where we used that $y\in Q\in\Tree(R)$. In particular, $I\subset B(x_Q,C\ell(Q))$ for some absolute $C>1$, and so
     \begin{equation*}
         \bigcup_{I\in\cC(Q)}\d_d I\subset \Gamma_R\cap B(x_Q, C\ell(Q)).
     \end{equation*}     
     Together with \lemref{lem:skeletonsprojection} we get
     \begin{equation*}
         \pi_V(Q)\subset \bigcup_{I\in\cC(Q)}\pi_V(I)\subset \bigcup_{I\in\cC(Q)}\pi_V(\d_d I)\subset \pi_V(\Gamma_R\cap B(x_Q, C\ell(Q))).
     \end{equation*}     
 \end{proof}
 We are ready to prove Proposition \ref{lem:GammaPBP}.

	\begin{proof}[Proof of Proposition \ref{lem:GammaPBP}]
		Fix $x\in\Gamma_R$, and let $I\in\cC_R$ be such that $x\in \d_d I$ (if there are many such $I$, just choose one). We will show that for any $0<r<\diam(\Gamma)$ there exists $V_{x,r}^\Gamma\in\dG(n,d)$ such that for all $V\in B(V_{x,r}^\Gamma,\delta)$ we have
		\begin{equation}\label{eq:projgoal}
		\hd(\pi_V(\Gamma\cap B(x,r)))\gtrsim r^d.
		\end{equation}
		
		Let $0<r<C'\ell(I)$ for some constant $C'>1$ chosen below. Since $x\in \d_d I$, it is immediate to see that for \emph{any} $V\in\dG(n,d)$ we have
		\begin{equation*}
			\hd(\pi_V(\Gamma_R\cap B(x,r)))\ge \hd(\pi_V(\d_dI\cap B(x,r)))\gtrsim_{C'} r^d.
		\end{equation*}
		So the PBP property holds trivially at small scales. 
		
		Assume now that $C'\ell(I)<r<\diam(\Gamma_R)\sim\ell(R)$. By property (d) of the coronization, we have
        \begin{equation*}
            \ell(I)\sim \inf_{y\in I}\inf_{P\in\Tree(R)}\ell(P)+\dist(y,P).
        \end{equation*}
  Since $I\subset 6B_R$, this estimate lets us choose a cube $Q\in\Tree(R)$ such that $\ell(I)\sim\ell(Q)$ and $I\subset 6B_Q$. Let $P\in\Tree(R)$ be the maximal cube satisfying $Q\subset P$ and $B(x_P,C\ell(P))\subset B(x,r)$, where $C>1$ is the constant from Lemma \ref{lem:aa}. Such cube exists assuming $C'>1$ is large enough depending on $C$ and the implicit constants above. 
  
  Clearly, $\ell(P)\sim r$. Recall that there exists a ball $B(P)$ centered at $x_{P}\in E$ with $r(B(P))\sim \ell(P)$ and such that $B(P)\cap E\subset P$ (see \lemref{theorem:christ}). Set 
		\begin{equation*}
			V^{\Gamma}_{x,r} \coloneqq V^E_{B(P)},
		\end{equation*}
		where $V^E_{B(P)}$ is the $d$-plane coming from the PBP property of $E$ in the ball $B(P)$. Fix $V\in B(V^{\Gamma}_{x,r}, \delta)$. 

  By Lemma \ref{lem:aa} we have
  \begin{equation*}
      \pi_V(E\cap B(P))\subset \pi_V(P)\subset \pi_V(\Gamma_R\cap x_P,C\ell(P))\subset\pi_V(\Gamma_R\cap B(x,r)).
  \end{equation*}
  Hence, by the PBP property of $E$ we have
  \begin{equation*}
      \mathcal{H}^d(\pi_V(\Gamma_R\cap B(x,r)))\ge \mathcal{H}^d(\pi_V(E\cap B(P)))\gtrsim \ell(P)^d\sim r^d.
  \end{equation*}
  This shows \eqref{eq:projgoal}, and so $\Gamma_R$ has $d$-PBP.
	\end{proof}

	\section{Vitushkin's conjecture}\label{sec:analytic}
	In this section we use Propositions \ref{prop:coronization} and \ref{lem:GammaPBP} to prove \thmref{thm:main higher dim}. 
 
 \vspace{0.5em}
 Without loss of generality, we may assume that $\diam(E)=1$ (this follows from scaling $\Gamma_{n,d}(\lambda E) = \lambda^d\,\Gamma_{n,d}(E)$ for $\lambda>0$). By \thmref{thm:capacitybetas}, in order to show that $\Gamma_{n,d}(E)\gtrsim 1$, it suffices to construct a measure with polynomial growth supported on $E$ such that $\mu(E)\sim1$ and
	\begin{equation}\label{eq:beta est 1}
	\iint_0^{\infty} \beta_{\mu,2}(x,r)^2\,\theta_\mu(x,r)\ \frac{dr}{r}d\mu(x) \lesssim \mu(E).
	\end{equation}
	Let $\mu$ be the measure from \lemref{lem:Frostman measure}. Then it has polynomial growth, $\supp\mu\subset E$, and $\mu(E)\sim1$. We are going to show that the estimate \eqref{eq:beta est 1} is satisfied by $\mu$. In fact, we will prove a somewhat stronger estimate.
	
	\begin{propo}\label{prop:main}
		Let $E\subset\R^n$ be a compact set with $d$-PBP. If $\mu$ is the measure from \lemref{lem:Frostman measure}, then
		\begin{equation}\label{eq:main prop}
		\iint_0^{\infty} \beta_{\mu,2}(x,r)^2\ \frac{dr}{r}d\mu(x) \lesssim \mu(E),
		\end{equation}
		where the implicit constant depends only on $n,d$ and the PBP constants.
	\end{propo}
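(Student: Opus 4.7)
My plan is to discretize the integral using the Christ-David cubes from Lemma~\ref{theorem:christ}, perform a corona-type decomposition of $\DD$ into trees on which the density $\theta_\mu$ is essentially constant, approximate $\mu$ on each tree by an \emph{honestly} Ahlfors regular measure $\eta_\mathcal{T}$ whose support still has PBP, apply Corollary~\ref{cor:PBPbetas} to $\eta_\mathcal{T}$, and finally transfer the resulting $\beta$-sum back to $\mu$. Throughout I may assume $\diam(E)=1$.

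First, a standard integration-in-polar-coordinates argument together with the doubling property \eqref{eq:doubling} and \eqref{eq:doubling cubes} reduces \eqref{eq:main prop} to the dyadic estimate
\begin{equation*}
\sum_{Q\in\DD}\beta_{\mu,2}(C_0 B_Q)^2\,\mu(Q)\ \lesssim\ \mu(E)
\end{equation*}
for a suitable large constant $C_0$. Next I build a corona decomposition using the almost-monotone density property \ref{it:density} of Lemma~\ref{lem:Frostman measure}. Starting from a top cube $R_0$, I form the tree $\mathcal{T}(R_0)$ by descending and stopping at a minimal cube $Q$ as soon as $\theta_\mu(Q) > \lambda\,\theta_\mu(R_0)$ for a fixed $\lambda>A$. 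The stopping cubes become roots of new trees, and the process is iterated. Because $\mu$ has polynomial growth ($\theta_\mu(Q)\le C_1$) and because the lower content regularity of $E$ combined with doubling forces $\theta_\mu(Q)\gtrsim c_1$, at most $N\sim\log(C_1/c_1)/\log\lambda$ generations occur along any chain, and I obtain the packing estimate
\begin{equation*}
\sum_{\mathcal{T}} \mu(R_\mathcal{T})\ \le\ N\,\mu(E)\ \lesssim\ \mu(E).
\end{equation*}
Inside each tree $\mathcal{T}$ with root $R$, by construction every $Q\in\mathcal{T}$ satisfies $\theta_\mu(R)\le\theta_\mu(Q)\le A\lambda\,\theta_\mu(R)$, so $\mu$ restricted to $\mathcal{T}$ is comparable to $\theta_\mu(R)$ times the $d$-Hausdorff measure on the cubes of the tree.

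The main technical step is now to manufacture an honestly Ahlfors $d$-regular measure $\eta_\mathcal{T}$, supported on a set $E_\mathcal{T}$ with $d$-PBP, that approximates $\mu$ in the tree. The idea is to take $E_\mathcal{T}$ equal to $E\cap R$ on the "good" (non-stopping) region and to fill each stopping cube $Q\in\Stop(\mathcal{T})$ by a small piece of the original set $E$ picked so as to preserve PBP; concretely, inside each $Q$ I can insert a copy of a suitably rescaled ball's worth of $E$ at the scale of $\ell(Q)$ with mass $\theta_\mu(R)\ell(Q)^d$. Because $E$ itself has PBP at all scales and locations, each inserted piece carries the PBP property at its own scales, while at scales above $\ell(Q)$ the PBP of $E_\mathcal{T}$ is inherited from $E$ via the PBP of $E$ (the projections needed at a given scale are witnessed by the same directions $V_{x,r}$ that work for $E$). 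Standard transference then yields a measure $\eta_\mathcal{T}$ with $\eta_\mathcal{T}\sim\theta_\mu(R)\,\dH^d|_{E_\mathcal{T}}$, Ahlfors regular with constant depending only on $\lambda$ and $\delta$, with $\eta_\mathcal{T}(E_\mathcal{T})\sim\mu(R)$.

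Applying Orponen's theorem in the form of Corollary~\ref{cor:PBPbetas} to $E_\mathcal{T}$ gives
\begin{equation*}
\int\int_0^{\diam(E_\mathcal{T})}\beta_{\eta_\mathcal{T},2}(y,r)^2\,\frac{dr}{r}\,d\eta_\mathcal{T}(y)\ \lesssim\ \eta_\mathcal{T}(E_\mathcal{T})\ \sim\ \mu(R).
\end{equation*}
To transfer this back to $\mu$, I compare $\beta_{\mu,2}(C_0 B_Q)$ with $\beta_{\eta_\mathcal{T},2}(C_0' B_Q)$ for each $Q\in\mathcal{T}$. Inside the tree, $\mu$ and $\eta_\mathcal{T}$ differ only on stopping cubes strictly smaller than $Q$, and on such a stopping cube $Q'\subset C_0 B_Q$ both measures carry mass $\sim\theta_\mu(R)\ell(Q')^d$ supported in $Q'$; bounding the difference of the two $\beta$-integrals by an $L^2$-estimate for the displacement of this mass on the scale $\ell(Q')$ produces an error term that is summable thanks to the packing of stopping cubes and Carleson-type estimates familiar from \cite{david1991singular}. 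Summing the tree estimates and using the root packing yields \eqref{eq:main prop}.

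The step I expect to be the main obstacle is the construction of $E_\mathcal{T}$ and $\eta_\mathcal{T}$ that is simultaneously Ahlfors regular, carries PBP (so that Orponen's theorem applies), and stays close enough to $\mu$ that the $\beta$-numbers transfer with summable error. The almost-monotone density and the doubling property from Lemma~\ref{lem:Frostman measure} are precisely what make this balancing act possible, and they are the reason for constructing the modified Frostman measure in Appendix~\ref{appendix}.
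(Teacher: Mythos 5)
Your high-level architecture (reduce to a dyadic sum, corona decomposition into trees of roughly constant density, approximate by an Ahlfors regular measure with PBP, apply Corollary~\ref{cor:PBPbetas}, transfer back) is exactly the paper's, but the corona decomposition as you set it up does not work, and this is a genuine gap rather than a presentational one. You stop at the first cube $Q$ with $\theta_\mu(Q)>\lambda\,\theta_\mu(R_0)$ for $\lambda>A$. By the almost-monotonicity property (\ref{it:density}) of \lemref{lem:Frostman measure}, every $Q\subset R_0$ satisfies $\theta_\mu(Q)\le A\,\theta_\mu(R_0)<\lambda\,\theta_\mu(R_0)$, so your stopping condition never triggers and you get a single tree on which the density is \emph{not} bounded below. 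Your attempted rescue, ``lower content regularity combined with doubling forces $\theta_\mu(Q)\gtrsim c_1$,'' is false: lower content regularity bounds $\hdc(E\cap B)$ from below, not $\mu(B)$. For example, if $E=[0,1]^2$ and $d=1$, the Frostman measure of \lemref{lem:Frostman measure} has $\theta_\mu(Q)\sim\rho^{k}$ for $Q\in\DD_k$, which tends to $0$; this is unavoidable whenever $\hd(E)=\infty$, which is precisely the regime the proposition must cover. Consequently your bound $N\lesssim 1$ on the number of generations fails, and with it the packing estimate $\sum_{\mathcal{T}}\mu(R_{\mathcal{T}})\le N\mu(E)$: each generation of disjoint roots can contribute mass up to $\mu(E)$, and there may be infinitely many generations, so a per-tree bound of the form $\lesssim\mu(R_{\mathcal{T}})$ cannot be summed.

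The correct decomposition goes in the opposite direction: one stops at cubes of \emph{low} density, $\theta_\mu(Q)\le\tau\,\theta_\mu(R)$ with $\tau=0.1$ (this is where almost-monotonicity is actually used, to show the density is comparable to $\theta_\mu(R)$ throughout the tree), and the per-tree estimate must be the stronger $\sum_{Q\in\Tr(R)}\beta_{\mu,2}(2B_Q)^2\mu(Q)\lesssim\theta_\mu(R)\,\mu(R)$, with the extra factor $\theta_\mu(R)$ coming from converting both $d\eta\sim\theta_\mu(R)\,d\hd|_\Gamma$ and $\ell(Q)^d\sim\theta_\mu(R)^{-1}\mu(Q)$. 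Since roots at generation $k$ satisfy $\theta_\mu(R)\lesssim\tau^k$, the sum over roots becomes a convergent geometric series $\sum_k\tau^k\mu(E)$, which is what replaces your bounded-generations argument. A secondary issue: taking $E_{\mathcal{T}}=E\cap R$ on the good region and inserting rescaled copies of $E$ in the stopping cubes does not produce an Ahlfors regular set (neither $E$ nor its rescaled copies need be ADR, and $\hd(E\cap R)$ may be infinite); the construction has to replace $E$ at the (regularized) stopping scales by unions of $d$-dimensional skeletons of dyadic cubes, whose PBP is verified via the fact that $\pi_V(I)\subset\pi_V(\partial_d I)$ for every $V\in\dG(n,d)$.
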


	To see that \eqref{eq:main prop} implies \eqref{eq:beta est 1}, recall that $\theta_\mu(x,r)\lesssim 1$ by the polynomial growth condition.

	Observe that since we assume that $\diam(E)=1$, we have that $\DD_0$ consists of a single cube, namely $\DD_0 = \{E\}$. A standard computation shows that \eqref{eq:main prop} is equivalent to
	\begin{equation}\label{eq:main prop sum}
	\sum_{Q\in\DD}\beta_{\mu,2}(2B_Q)^2\,\mu(Q)\lesssim \mu(E).
	\end{equation}
	For the reader's convenience, we sketch the proof of $\eqref{eq:main prop sum}\Rightarrow\eqref{eq:main prop}$. Observe that if $B_1\subset B_2$ are balls, and $r(B_1)\sim r(B_2)$, then it follows from the definition of $\beta_{\mu,2}$-numbers that
	\begin{equation*}
	\beta_{\mu,2}(B_1)\lesssim\beta_{\mu,2}(B_2).
	\end{equation*}
	Given $x\in E$ and $0<r<\infty$ let $Q\in\mathcal{D}$ be the unique cube with $x\in Q$ and $\rho\ell(Q)<r\le \ell(Q)$, so that $B(x,r)\subset 2B_Q$ and $r(2B_Q)\sim r$. Then,
	\begin{equation*}
	\beta_{\mu,2}(x,r)\lesssim\beta_{\mu,2}(2B_Q),
	\end{equation*}
	and it follows easily that
	\begin{equation*}
	\iint_0^{\infty} \beta_{\mu,2}(x,r)^2\ \frac{dr}{r}d\mu(x)\lesssim \sum_{Q\in\DD}\beta_{\mu,2}(2B_Q)^2\,\mu(Q).
	\end{equation*}
	
	Thus, our goal is to prove \eqref{eq:main prop sum}. Recalling that $\mathcal{D}_k=\{E\}$ for $k\le 0$ and $\mu(E)\sim 1$, it follows immediately that
	\begin{equation*}
	\sum_{k\le 0}\sum_{Q\in\mathcal{D}_k} \beta_{\mu,2}(2B_Q)^2\,\mu(Q)\lesssim \sum_{k\le 0}\sum_{Q\in\mathcal{D}_k} \ell(Q)^{-d}\,\mu(E)^3 \sim\sum_{k\le 0} \rho^{-kd}\,\mu(E)^3\lesssim \mu(E).
	\end{equation*}
	So when proving \eqref{eq:main prop sum}, we may concentrate on $Q\in\mathcal{D}_k$ for $k\ge 0$. Fix some large integer $N>1$. We will show that
	\begin{equation}\label{eq:main prop sum2}
	\sum_{Q\in\DD_0^N}\beta_{\mu,2}(2B_Q)^2\,\mu(Q)\lesssim \mu(E),
	\end{equation}
	with the estimate independent of $N$. Letting $N\to\infty$ gives the desired bound \eqref{eq:main prop sum}. 
	
	%We are going to prove \eqref{eq:main prop sum2} using an appropriate coronization.
    %\begin{remark}
    %    ADD
    %\end{remark}

	%The following key lemma is the only point in the proof where we use the PBP property of $E$ (except for the fact that PBP implies lower content regularity of $E$, which we also use multiple times throughout the proof).

 We apply Proposition \ref{prop:coronization} to $E$ (recall that PBP implies lower content regularity) to obtain the decomposition
 \begin{equation*}
     \DD_0^N=\bigcup_{R\in\Top}\Tree(R).
 \end{equation*}
 To get \eqref{eq:main prop sum2} we need the following estimate.
	\begin{lemma}\label{lem:key lemma}
		For any $R\in\Top$ we have
		\begin{equation}\label{eq:beta sum tree}
			\sum_{Q\in\Tr(R)}\beta_{\mu,2}(2B_Q)^2\,\mu(Q)\lesssim \theta_\mu(R)\mu(R),
		\end{equation}
		with the implicit constant depending only on $n,d$ and the PBP constants of $E$.
	\end{lemma}
    \begin{proof}
        Fix $R\in\Top$, and set $\Gamma\coloneqq \Gamma_R.$
    
    In Lemma \ref{lem:GammaADR} and Proposition \ref{lem:GammaPBP} we showed that $\Gamma$ is an Ahflors regular set satisfying $d$-PBP, with Ahlfors regularity and PBP constants depending only on $n, d,$ and the PBP constants of $E$. Thus, we get from Corollary \ref{cor:PBPbetas} that $\Gamma$ satisfies
	\begin{equation*}
	\int_{\Gamma}\int_0^{\ell(R)} \beta_{\Gamma,2}(x,r)^2\, \fr{dr}{r}d\hd(x)\lesssim \ell(R)^d.
	\end{equation*}
	In the above we also use that $\Gamma\subset 6B_R$, by \eqref{eq:Gamma6BR}.

    Recall that $\nu$ was an Ahlfors regular measure supported on $\Gamma$ which was approximating $\mu$, see \eqref{eq:nu-def}.
	Recalling \eqref{eq:nuGammarel}, the estimate above implies
	\begin{equation*}
	\int_{\Gamma}\int_0^{\ell(R)} \beta_{\nu,2}(x,r)^2\, \fr{dr}{r}d\nu(x)\lesssim \theta_\mu(R)\,\ell(R)^d.
	\end{equation*}
	Together with \eqref{eq:betasss}, this gives the desired estimate \eqref{eq:beta sum tree}.
	\end{proof}

	\begin{proof}[Proof of \eqref{eq:main prop sum2}]
		It immediately follows from \eqref{eq:beta sum tree} that
		\begin{equation*}
			\sum_{Q\in\DD_0^N}\beta_{\mu,2}(2B_Q)^2\,\mu(Q) = \sum_{R\in\Top}\sum_{Q\in\Tr(R)}\beta_{\mu,2}(2B_Q)^2\,\mu(Q)
			\lesssim \sum_{R\in\Top}\theta_\mu(R)\mu(R).
		\end{equation*}
Using the packing estimate for $\Top$ cubes \eqref{e:packing-corona-nonsigmafinite} we get
	\begin{equation*}
		\sum_{Q\in\DD_0^N}\beta_{\mu,2}(2B_Q)^2\,\mu(Q)\lesssim \sum_{R\in\Top}\theta_\mu(R)\mu(R)\lesssim
  \mu(E).
	\end{equation*}
	This finishes the proof of \eqref{eq:main prop sum2}.
 \end{proof}

	\section{Traveling Saleseman Theorem}\label{sec:TST}
	In this section we prove the Analyst's Traveling Salesman Theorem for sets with PBP, Theorem \ref{t:main}. To do this, we follow the same method layed out in the previous sections. 
 \begin{itemize}
     \item First, we apply Proposition \ref{l:corona} to the set $E$, recalling that the $d$-PBP property implies lower content $(d,c_1)$ regularity with constant $c_1$ depending on the PBP constants (Lemma \ref{l:low-reg-E}). Hence we obtain a coronization of $E$ by approximating Ahlfors $d$-regular sets $\Gamma_R$, for $R \in \Top$ (see Proposition \ref{l:corona}.
     \item Second, we apply Proposition \ref{lem:GammaPBP}, and transfer the $d$-PBP property to each of the $\Gamma_R$. 
     %For future reference, let us write this out here as a lemma.
%
	%\begin{lemma}\label{l:approx-BPMD}
		%Let $E \subset \R^n$ be a set with $d$-PBP. Let $\Gamma_R$ be one of the sets from Lemma \ref{l:corona}. Then $\Gamma_R$ also has $d$-PBP \textup{(}with possibly a slightly different constant $\delta'$  which equals to $\delta$ up to a multiplicative constant depending only on $d$\textup{)}.
	%\end{lemma}
\item The third step is to notice that, by the result of Orponen, each $\Gamma_R$ is uniformly $d$-rectifiable with uniform constants only depending on the PBP constant.

\item The final step are the $\beta$ number estimates. Since we want to prove the Analyst's ATST Theorem \ref{eq:atst}, the estimates needs to be done with the $\beta$-coefficients defined in terms of Hausdorff content. Hence the estimates below \textit{do not} follow from those carried out in Section \ref{sec:analytic}.
  \end{itemize}
%The proof is analogous to that of Lemma \ref{lem:GammaPBP}.

	%\subsection{\texorpdfstring{$\beta$}{Beta}-numbers estimates}
	\begin{remark}
		We will keep careful track of the dependence of the various constants on $\delta$ (the parameters from the PBP condition). On the other hand, we will usually not keep track of the dependence on $n,d$.
	\end{remark}
	Our main estimate is the following.
	\begin{propo}\label{l:main-lemma}
		If $E \subset \R^n$ has $d$-PBP with parameters $\delta>0$, then for any $Q_0 \in \dD$, we have
		\begin{align}\label{eq:atst}
		\ell(Q_0)^d + \sum_{Q \in \dD(Q_0)} \beta_E^{d,2}(3B_Q)^2 \ell(Q)^d \lesssim_{n,d} C( \delta) \, \hd(Q_0),
		\end{align}
		where $C( \delta) \to \infty$ as $\delta\to 0$.
	\end{propo}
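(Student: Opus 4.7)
\medskip

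\noindent\textbf{Proof plan for Proposition \ref{l:main-lemma}.}

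My plan is to run a corona-style argument that mirrors the one in Section \ref{sec:analytic}, but with the Frostman measure replaced by the set $E$ itself and $L^2$--$\beta$-numbers replaced by the Azzam--Schul content $\beta$-numbers. First I would reduce to proving a uniform bound
\[
\ell(Q_0)^d + \sum_{Q\in\dD(Q_0,\,k_0)}\beta_E^{d,2}(3B_Q)^2\,\ell(Q)^d \lesssim C(\delta)\,\hd(Q_0),
\]
for every truncation level $k_0$, and then let $k_0\to\infty$. Since $E$ has $d$-PBP it is lower content $(d,c_1)$-regular with $c_1\sim_d\delta$ by Lemma \ref{l:low-reg-E}, so Lemma \ref{l:corona} applies and produces a family $\Top(k_0)$ of roots with the packing estimate \eqref{e:ADR-packing} (with $\eta\sim\eta(\delta)$), together with Ahlfors regular approximating sets $E_R$, one per tree $\Tree(R)$. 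By Lemma \ref{l:approx-BPMD}, each $E_R$ inherits $d$-PBP with constant $\sim\delta$, hence by Theorem \ref{thm:orponen} it has BPLG, and Corollary \ref{cor:PBPbetas} applied to $\mu_R:=\hd|_{E_R}$ gives
\[
\sum_{Q\in\Tree(R)} \beta_{\mu_R,2}(C B_Q)^2\,\ell(Q)^d \lesssim C(\delta)\,\ell(R)^d,
\]
after the standard passage from the integral form to the dyadic form (as in the argument below \eqref{eq:main prop sum}), and using AD-regularity of $E_R$ to convert $\mu_R(Q)$ into $\ell(Q)^d$.

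Next I would transfer these bounds from $E_R$ back to $E$ by invoking Lemma \ref{lemma:azzamschul}: for each $Q\in\Tree(R)$, choosing $x\in E\cap Q$ and a point $y\in E_R$ close to $x$ (which exists by \eqref{e:contains}--\eqref{e:adr-corona} provided $\tau$ is small enough),
\[
\beta_E^{d,2}(3B_Q) \lesssim \beta_{E_R}^{d,2}(C B_Q) + \left(\frac{1}{\ell(Q)^d}\int_{E\cap C B_Q}\left(\frac{\dist(z,E_R)}{\ell(Q)}\right)^2 d\hdc(z)\right)^{1/2}.
\]
Since $E_R$ is AD-regular, the content and surface-measure $\beta$-numbers of $E_R$ are comparable, so $\beta_{E_R}^{d,2}(CB_Q) \lesssim \beta_{\mu_R,2}(CB_Q)$. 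Squaring, multiplying by $\ell(Q)^d$ and summing over $Q\in\Tree(R)$ then gives the main term controlled by $C(\delta)\ell(R)^d$, plus an error term of the form
\[
\Err(R):=\sum_{Q\in\Tree(R)} \ell(Q)^{-2}\int_{E\cap C B_Q}\dist(z,E_R)^2\,d\hdc(z).
\]

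For the error, \eqref{e:adr-corona} and \eqref{e:whitney-like} give $\dist(z,E_R)\lesssim\tau\,d_R(z)$ for $z\in E\cap C_0 B_R$. Swapping the order of summation and using the fact that for fixed $z$ the sum $\sum_{Q\ni z}\ell(Q)^{-2}(d_R(z))^2$ is essentially geometric (since $d_R$ is Lipschitz and the relevant $Q$ have $\ell(Q)\gtrsim d_R(z)$), I expect a bound $\Err(R)\lesssim \tau^2\,\hd_\infty(E\cap C B_R)\lesssim \tau^2\,\ell(R)^d$, where the last estimate uses lower content regularity and the fact that $E$ is locally $\hd_\infty$-finite at scale $\ell(R)$ (one can either use a Vitali-type covering bound or invoke that $\hd_\infty(C B_R \cap E)\lesssim \ell(R)^d$ by the upper bound from PBP/AD-regularity of $E_R$). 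Choosing $\tau$ small enough absorbs this error into the main term. Summing over $R\in\Top(k_0)$ and using the packing \eqref{e:ADR-packing} yields
\[
\sum_{Q\in\dD(Q_0,k_0)}\beta_E^{d,2}(3B_Q)^2\,\ell(Q)^d \lesssim C(\delta)\sum_{R\in\Top(k_0)}\ell(R)^d \lesssim C(\delta)\,\hd(Q_0),
\]
and adding $\ell(Q_0)^d\lesssim \hd(Q_0)$ (which follows from lower content regularity of $E$) completes the estimate.

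The main obstacle I anticipate is the careful handling of the error term $\Err(R)$: one needs to sum $\ell(Q)^{-2}\dist(z,E_R)^2$ over nested cubes without losing powers of $\delta$, and to show that the Choquet integral against $\hd_\infty$ behaves as if it were a genuine measure. The packing \eqref{e:ADR-packing} is only useful once the geometric constants inside each tree are under control, so it is crucial that $\tau$ can be chosen depending only on $d$ (and not on $\delta$) while $\eta(\delta)$ carries all the $\delta$-dependence into the final $C(\delta)$. A second subtlety is that the $\beta$-number on the right of the Azzam--Schul transfer is at scale $CB_Q$, not $3B_Q$, so one must either enlarge constants in the application of Corollary \ref{cor:PBPbetas} to $E_R$ or redo the dyadic discretisation with the correct radii; both are routine but require care.
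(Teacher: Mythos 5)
Your plan is essentially the paper's proof: reduce to the truncated lattice, apply the coronization of Lemma \ref{l:corona}, use Lemma \ref{l:approx-BPMD} plus Corollary \ref{cor:PBPbetas} on each $E_R$ to control the main term by $C(\delta)\ell(R)^d$, transfer back to $E$ via Lemma \ref{lemma:azzamschul}, and sum over roots with the packing estimate \eqref{e:ADR-packing}. The one step that does not work as written is the error term: you cannot ``swap the order of summation'' between $\sum_{Q}$ and the Choquet integral $\int(\cdot)\,d\hdc$, because $\hdc$ is only subadditive and the inequality $\sum_i\int f_i\,d\hdc\lesssim\int\sum_i f_i\,d\hdc$ can lose a factor comparable to the number of summands (here, the number of scales). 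The paper's fix is exactly the device you are implicitly missing: first cover $6B_Q\cap E$ by the stopping-scale cubes $S\in\wstop(R)$ on which $\dist(\cdot,E_R)\lesssim\ell(S)$ is \emph{constant}, so that each Choquet integral is bounded pointwise by $\ell(S)^2/\ell(Q)^2$ times $\hdc(S)\lesssim\ell(S)^d$ before any interchange; only then does one swap the two genuine sums $\sum_Q\sum_S$, using that $\tau$ small forces $\ell(S)\le\ell(Q)$ whenever $6B_Q\cap S\neq\varnothing$ so the inner sum is geometric. With that substitution your argument matches the paper's; the remaining points you raise (the scale $CB_Q$ versus $3B_Q$, and the trivial bound $\hdc(E\cap CB_R)\lesssim\ell(R)^d$) are indeed routine.
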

	Theorem \ref{t:main} follows immediately from this proposition.
	\begin{proof}[Proof of Theorem \ref{t:main}]
		First, by \cite[(A.3)]{azzam2019quantitative}, we have that 
		\begin{equation}\label{eq:WLOGp2}
		\ell(Q_0)^d+\sum_{Q \subset Q_0} \beta_E^{d,p}(C_0 B_Q)^2 \,\ell(Q)^d \sim_{p, C_0} \ell(Q_0)^d+ \sum_{Q \subset Q_0} \beta_E^{d,2}(3B_Q)^2 \ell(Q)^d
		\end{equation}
		whenever $C_0 >1$ and $1 \leq p \leq p(d)$. Together with \propref{l:main-lemma} this establishes one of the estimates from \eqref{e:main}. The converse inequality in \eqref{e:main} follows from \cite[Theorem II]{azzam2018analyst}, see also \cite[Theorem A.1(1)]{azzam2019quantitative} for a more transparent statement. 
	\end{proof}

 We will need the following lemma.
	\begin{lemma}[{\cite{azzam2018analyst}, Lemma 2.21}] \label{lemma:azzamschul}
		Let $1 \leq p < \infty$ and $E_1, E_2 \subset \R^n$. Let $x \in E_1$ and fix $r>0$. Take some $y \in E_2$ so that $B(x,r) \subset B(y, 2r)$. Assume that $E_1, E_2$ are both lower content $d$-regular with constant $c$. Then
		\begin{align*}
		\beta_{E_1}^{d,p} (x,r) \lesssim_c \beta_{E_2}^{d,p} (y, 2r) + \left( \frac{1}{r^d} \int_{E_1 \cap B(x, 2r)} \left(\frac{\dist (y, E_2)}{r} \right)^p \, d \hdc(y)\right)^{\frac{1}{p}}.
		\end{align*}
	\end{lemma}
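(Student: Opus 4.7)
The plan is to compare $E_1$ to $E_2$ through a near‑optimal plane for $E_2$, controlling the discrepancy by a pointwise triangle inequality and then converting an integral over $E_1$ against $\dist(\pi(z), L)$ into an integral over $E_2$ against $\dist(\cdot, L)$ via a Vitali‑type covering that uses the lower content regularity of $E_2$.

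\textbf{Step 1 (choice of plane and triangle inequality).} First I would pick an affine $d$-plane $L \in \dA(n,d)$ with $\beta_{E_2}^{d,p}(y, 2r, L) \leq 2\,\beta_{E_2}^{d,p}(y, 2r)$, and use $L$ as a trial plane for $\beta_{E_1}^{d,p}(x,r)$. For each $z \in E_1 \cap B(x,r)$, choose (measurably) $\pi(z) \in E_2$ with $|z - \pi(z)| \leq 2\,\dist(z,E_2)$; setting $\pi(z)=z$ when $\dist(z,E_2)=0$. The triangle inequality gives $\dist(z,L) \leq 2\,\dist(z,E_2) + \dist(\pi(z),L)$, and since $p \geq 1$,
\begin{align*}
\dist(z,L)^p \lesssim_p \dist(z,E_2)^p + \dist(\pi(z),L)^p.
\end{align*}
Integrating against $\hdc$ over $E_1 \cap B(x,r)$, the first term is directly absorbed into the error term on the right-hand side of the lemma (using $B(x,r) \subset B(x,2r)$).

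\textbf{Step 2 (transferring the integral via layer cake and Vitali).} The main task is to show
\begin{align*}
\frac{1}{r^d} \int_{E_1 \cap B(x,r)} \dist(\pi(z), L)^p \, d\hdc(z) \lesssim_c r^p \, \beta_{E_2}^{d,p}(y, 2r)^p + \text{error}.
\end{align*}
I would use the layer cake formula, writing the left side as $p\int_0^\infty t^{p-1} \hdc(G_t)\, dt$ with $G_t = \{z \in E_1 \cap B(x,r) : \dist(\pi(z),L) > t\}$. I split $G_t = G_t^{\mathrm{near}} \cup G_t^{\mathrm{far}}$ where $G_t^{\mathrm{far}} = \{z \in G_t : \dist(z,E_2) \geq t/10\}$; integrating the $G_t^{\mathrm{far}}$ part against $t^{p-1}$ yields (up to a constant) $\int_{E_1 \cap B(x,r)} \dist(z,E_2)^p \, d\hdc(z)$, which is controlled by the error term.

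\textbf{Step 3 (Vitali covering using LCR of $E_2$).} For $z \in G_t^{\mathrm{near}}$, $\pi(z)$ lies in $F_t := \{w \in E_2 \cap B(y, Cr) : \dist(w,L) > t\}$ with $|z - \pi(z)| < t/5$ (the constant $C$ arises because $|\pi(z)-y|\leq |\pi(z)-z|+|z-y| \leq 4r$, and for $t$ comparable to or larger than $r$ the set $G_t$ is empty up to the error contribution). Thus $G_t^{\mathrm{near}}$ lies in the $(t/5)$-neighborhood of $F_t$. Using a standard Vitali $5r$-covering of $F_t$ by balls $B(w_i, t/500)$ with disjoint cores, lower content $d$-regularity of $E_2$ gives $N(t/500)^d \lesssim_c \hdc(F_t^*)$ where $N$ is the number of balls and $F_t^*$ is a slight enlargement of $F_t$ contained in $E_2 \cap B(y, C'r)$. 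Hence $\hdc(G_t^{\mathrm{near}}) \lesssim \sum_i (t/5)^d \lesssim_c \hdc(F_t^*)$. Integrating against $t^{p-1}$ and undoing the layer cake returns $\int_{E_2 \cap B(y, C'r)} \dist(w,L)^p \, d\hdc(w)$, and then a standard comparison (splitting $E_2 \cap B(y,C'r)$ into $O(1)$ balls of radius $2r$ and using subadditivity of $\beta$-numbers, or simply enlarging the optimizing constant) lets me absorb $B(y,C'r)$ back into $B(y,2r)$ up to the constant $\lesssim_c$.

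\textbf{Main obstacle.} The delicate point is step 3: managing the radii so that the covering of $F_t$ by balls centered in $E_2$ really captures the $(t/5)$-neighborhood that contains $G_t^{\mathrm{near}}$, while keeping the enlarged ball $B(y, C'r)$ comparable to $B(y,2r)$. One must also ensure the selection $z \mapsto \pi(z)$ is measurable so that $G_t$ is a measurable set, and verify the layer-cake/Choquet identities carefully in light of Lemma~\ref{l:jensen} and the non-$\sigma$-additive nature of $\hdc$. Once these bookkeeping issues are handled, taking $p$-th roots and dividing by $r$ yields the claimed bound on $\beta_{E_1}^{d,p}(x,r)$.
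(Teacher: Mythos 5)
The paper does not actually prove this lemma; it is imported verbatim from Azzam--Schul, so there is no in-paper argument to compare against. Judged on its own, your Steps 1 and 2 (near-optimal plane for $E_2$, triangle inequality through a near-nearest point $\pi(z)$, layer cake, near/far splitting) are sound and follow the natural strategy. The gap is the central claim of Step 3: the inequality $N(t/500)^d \lesssim_c \hdc(F_t^*)$, where $N$ is the number of disjoint balls $B(w_i,t/500)$ centered on $F_t \subset E_2$, is an instance of superadditivity of Hausdorff content over disjoint balls centered on a lower content regular set, and that is false. Lower content $d$-regularity gives $\hdc(E_2 \cap B(w_i,t/500)) \gtrsim_c t^d$ for each $i$, but $\hdc$ is only subadditive, so these lower bounds do not sum against $\hdc(F_t^*)$. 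Concretely, a lower content $d$-regular set $E_2$ may contain a full $n$-dimensional cube $K$ of sidelength $\eta$ at distance $\sim \eta$ from $L$, with $t \ll \eta \ll r$; such an $E_2$ (say, a $d$-plane union $K$) is uniformly lower content $d$-regular and has $\beta_{E_2}^{d,p}(y,2r)^p \lesssim (\eta/r)^{d+p}$, so the lemma is far from vacuous for it, yet $K \subset F_t$ carries $N \sim (\eta/t)^n$ disjoint $t$-balls, whence $Nt^d \sim \eta^n t^{d-n} \to \infty$ as $t \to 0$ while $\hdc(F_t^*) \lesssim r^d$ stays bounded. Thus in the chain $\hdc(G_t^{\mathrm{near}}) \le C N t^d \lesssim_c \hdc(F_t^*)$ the second inequality is false, and the first, while valid, is an enormously lossy over-estimate (here $\hdc(\{x : \dist(x,K) < t/5\}) \sim \eta^d \ll Nt^d$), so the loss cannot be recovered afterwards.

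The repair needs an additional idea, not just bookkeeping. What is true is that the $(t/5)$-neighborhood of $F_t$ has content controlled by the content of $F_t$ computed using only covers by sets of diameter at least $t$ (enlarging such a cover by $t/5$ changes diameters by a bounded factor), and that this restricted content is in turn $\lesssim_c \hdc\big(\{w \in E_2 : \dist(w,F_t)<t\}\big) \le \hdc(F_{t/2}^*)$. Proving the latter comparison is where lower regularity of $E_2$ genuinely enters, and the disjoint-ball counting must be run against an honest (finitely additive) measure rather than against $\hdc$: take a near-optimal cover $\{U_j\}$ of $E_2 \cap \{w : \dist(w,F_t)<t\}$, form the atomic measure $\sum_j \diam(U_j)^d \delta_{c_j}$ with $c_j \in U_j$, observe that each ball $B(w,2t)$ with $w \in F_t$ carries mass $\gtrsim_c t^d$ of this measure (because $\{U_j\}$ covers $E_2 \cap B(w,t)$, whose content is $\ge c\,t^d$), and then use additivity of the atomic measure over a Vitali subfamily; the $U_j$ of diameter larger than $t$ are handled separately since their $t$-enlargements have comparable diameter. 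Separately, you should also address the localization: for $z \in B(x,r)$ the point $\pi(z)$ need only lie in $B(y,4r)$, while $L$ is efficient only on $B(y,2r)$ and the right-hand side of the lemma contains $\beta_{E_2}^{d,p}(y,2r)$, not $\beta_{E_2}^{d,p}(y,C'r)$; the points $z$ whose nearest $E_2$-points escape $B(y,2r)$ must be absorbed into the $\dist(\cdot,E_2)$ error term, not into the $\beta$ term. As written, Step 3 is a genuine gap.
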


	We now focus on proving Proposition \ref{l:main-lemma}. Fix $E\subset\R^n$ with PBP and let $Q_0\in\DD_0$. Recall that for $N\ge 0$ we defined the truncated dyadic lattice as $\dD_0^N=\bigcup_{k=0}^{N}\{Q\in \dD_{k}\, : \,Q\subseteq Q_0\}$. Observe that to prove \eqref{eq:atst} it suffices to show
	\begin{equation*}
	\sum_{Q \in \dD_0^N} \beta_E^{d,2}(3B_Q)^2 \ell(Q)^d \lesssim_{n,d} C( \delta) \, \hd(Q_0),
	\end{equation*}
	with bounds independent of $N\ge 0$. The estimate $\ell(Q_0)^d\lesssim \delta^{-1} \hd(Q_0)$ follows immediately from lower content regularity of $E$. 

	Consider the coronization from \lemref{l:corona} applied with $A=6$ and sufficiently small $\tau$, to be fixed later. Let $R\in \Top(N)$. We start off by applying Lemma \ref{lemma:azzamschul} with $E_1=E$ and $E_2 = \Gamma_R$. For $Q \in \dD$, recall that $x_Q$ denotes the center of $Q$. By the definition of $ \Tree(R)$, we see that if $Q \in \Tree(R)$, then there exists a dyadic cube $I \in \cC_R$ with $x_Q\subset I$ (by Lemma \ref{l:corona}(2.b)). By \eqref{e:whitney-like}, $\ell(I) \lesssim \tau \ell(Q)$. Hence, we may find a point
	\begin{align}\label{e:XQ}
	y_Q \in \Gamma_R \quad \mbox{ such that } \quad |x_Q - y_Q| \leq \ell(Q),
	\end{align}
	and we obtain that
	\begin{align}\label{e:form500}
	3B_Q = B(x_Q, 3\ell(Q)) \subset 6B(y_Q, 6\ell(Q))=: 6B'_Q.
	\end{align}
	This implies that for each cube $Q \in  \Tree(R)$ the hypotheses of Lemma \ref{lemma:azzamschul} are satisfied (with $E_1 = E,\ E_2 = \Gamma_R,\ 3B_Q,$ and $6B_Q'$). Thus,
	\begin{multline}\label{eq:boh}
	\sum_{Q \in \Tree(R)} \beta_{E}^{d, 2} (3 B_Q)^2 \ell(Q)^d  \lesssim \sum_{Q \in \Tree(R)} \beta_{\Gamma_R}^{d,2} (6 B_Q')^2 \, \ell(Q)^d \\
	 + \sum_{Q \in  \Tree(R)} \ps{\frac{1}{\ell(Q)^d} \int_{6 B_Q\cap E} \ps{\frac{\dist(y, \Gamma_R)}{\ell(Q)}}^2 \, d \hdc(y) }\, \ell(Q)^d 
	 \eqqcolon I_1 + I_2.
	\end{multline}
	
	We estimate $I_1$. We denote by $\dD^{\Gamma_R}$ a family of Christ-David cubes for $\Gamma_R$ obtained by applying Lemma \ref{theorem:christ} to $\Gamma_R$. Using \eqref{e:XQ} it is immediate to see that for each $Q\in\Tree(R)$ there exists $P\in\dD^{\Gamma_R}$ with $\ell(P)\sim\ell(R)$ and $6B_Q'\subset 3B_P$, and that any $P\in \dD^{\Gamma_R}$ corresponds to at most a bounded number of $Q\in\Tree(R)$. Thus, we get
	\begin{align*}
	I_1 \lesssim \sum_{P \in \dD^{\Gamma_R}} \beta_{\Gamma_R}^{d,2}(3 B_P)^2 \ell(P)^d.
	\end{align*}
	
	Recall that $\Gamma_R$ is Ahlfors $d$-regular (Lemma \ref{l:corona}) with constant $C$ depending on $A, \tau, d$ and $c_1$ (and thus, by Lemma \ref{l:low-reg-E}, on $\delta$), and that it has $d$-PBP  with parameter $\sim\delta$ by Proposition \ref{lem:GammaPBP}. It follows from Corollary \ref{cor:PBPbetas} that for $\mu_R=\mathcal{H}^d|_{\Gamma_R}$ we have	
	\begin{equation*}
	\sum_{P \in \dD^{\Gamma_R}} \beta_{\mu_R,2}(3 B_P)^2 \ell(P)^d\lesssim_\tau C(\delta)\ell(R)^d.
	\end{equation*}
	Since $\Gamma_R$ is Ahlfors regular, we have $\beta_{\mu_R,2}(x,r)\sim \beta_{\Gamma_R}^{d,2}(x,r)$ (see \S 1.2 in \cite{azzam2018analyst}), and so the two estimates above give
	\begin{equation}
	I_1 \lesssim_\tau C(\delta) \ell(R)^d,
	\end{equation}
	where $C(\delta) \to \infty$ as $\delta\to 0$. 
	\newcommand{\wstop}{\wt{\Stop}}
	
	We move on to estimating $I_2$. Recall from Proposition \ref{l:corona}, \eqref{e:contains}, that
	\begin{equation}\label{e:form1000}
		 6 B_R \cap E = A B_R \cap E \subset \bigcup_{I\in\cC_R} I.
	\end{equation} 
	For each $I \in \cC_R$, set 
	\begin{equation}\label{e:form1002}
		\cF(I):= \{S \in \dD \ :\ S \cap I\neq \varnothing \mbox{ and } \rho \ell(I) < \ell(S) \leq \ell(I)\}.
	\end{equation}
	Then define $\cF(R)= \cup_{I \in \cC_R} \cF(I)$ and set $\wstop(R)$ to be the subfamily of maximal cubes in $\cF(R)$ (by maximality, this family is disjoint and covers $\cF(R)$).
	Note that
	\begin{equation}\label{e:form1001}
		6B_R \cap E \subset \bigcup_{S \in \wstop(R)} S. 
	\end{equation}
	Indeed, if $x \in 6B_R \cap E$, then by \eqref{e:form1000} there is an $I \in \cC_R$ so that $x \in I$. But $x$ is also contained in some Christ-David cube $S$ such that $\rho\ell(I) \leq \ell(S) \leq \ell(I)$, and thus \eqref{e:form1001} holds. Note that \eqref{e:form1001} trivially implies that for any $Q\in\Tree(R)$ we have $6B_Q \cap E \subset \bigcup_{S \in \wstop(R)} S$, since $6B_Q \cap E \subset 6B_R\cap E$.
	
	For any $Q\in\Tree(R)$ and $x \in 6B_Q \cap E$, we claim that there exists a unique cube $S \in \wstop(R)$ containing $x$, and it satisfies
	\begin{equation}\label{eq:blaaaa}
		\dist(x, \Gamma_R) \lesssim \ell(S).
	\end{equation}
	Indeed, by \eqref{e:form1001}, $x \in S$ for some $S \in \wstop(R)$, and uniqueness follows from the maximality of $\wstop(R)$. By definition, there is an $I \in \cC_R$ so that $S \cap I \neq \varnothing$ and $\ell(I)\sim \ell(S)$. But then
	\begin{equation*}
		\dist(x, \Gamma_R) \leq \diam(S) + \diam(I) \lesssim \ell(S).
	\end{equation*}
	
	We can now estimate
	\begin{multline}
	I_2= \sum_{Q \in \Tree(R)} \int_{6B_Q \cap E} \left( \frac{\dist(y, \Gamma_R)}{\ell(Q)} \right)^2	\, d \hdc(y) \\
	\stackrel{\eqref{e:form1001},\eqref{eq:blaaaa}}{\lesssim} \sum_{Q \in \Tree(R)} \sum_{\substack{S \in \wstop(R)\\ S \cap 6B_Q \neq \varnothing}} \int_S \frac{\ell(S)^{2}}{\ell(Q)^{2}} \, d \hdc
	 \lesssim \sum_{S \in \wstop(R)} \ell(S)^{d+2} \sum_{\substack{Q \in \Tree(R) \\ 6B_Q \cap S \neq \varnothing}} \frac{1}{\ell(Q)^2},\label{e:form1003}
		\end{multline}
	where we may switch the order of summation since both sums are finite.
	
	Fix $S \in \wstop(R)$. We claim that for any $k\ge 0$ the number of cubes $Q \in \Tree(R)\cap\DD_k$ with $6B_Q\cap S\neq\varnothing$ is bounded by some dimensional constant. Indeed, this is clear as soon as $\ell(S)\le \ell(Q)=\rho^k$. On the other hand, if $\ell(S)> \rho^k$, then there is no $Q \in \Tree(R)\cap\DD_k$ with $6B_Q\cap S\neq\varnothing$. To see this, observe that $6B_Q\cap S\neq\varnothing$ implies $\dist(S, Q) \lesssim \ell(Q)$. Also, if $I \in \cC_R$ is such that $S \in \cF(I)$, then $\dist(I, Q) \lesssim \ell(I) + \dist(S, Q) \lesssim \ell(S) + \ell(Q)$. Hence
	\begin{equation*}
		\ell(S) \stackrel{\eqref{e:form1002}} {\sim} \ell(I)  \stackrel{\eqref{e:whitney-like}}{\sim} \tau \inf_{x\in I} d_R(x) \stackrel{\eqref{e:d_F}}{\lesssim}  \tau (\ell(Q) + \dist(I, Q)) \lesssim \tau \ell(Q) + \tau \ell(S).
	\end{equation*}
	Choosing $\tau>0$ sufficiently small we get that $\ell(S) \le \ell(Q)$. 
	
	The observation above allows us to estimate the interior sum in \eqref{e:form1003} as a geometric sum:
	\begin{equation*}
		I_2\lesssim \sum_{S \in \wstop(R)} \ell(S)^{d+2} \sum_{\substack{Q \in \Tree(R) \\ 6B_Q \cap S \neq \varnothing}} \frac{1}{\ell(Q)^2}\lesssim \sum_{S \in \wstop(R)} \ell(S)^{d}.
	\end{equation*}
	
	Now, given $S \in \wstop(R)$, we claim that
	\begin{equation}\label{e:form1005}
		\# \{I \in \cC_R \ :\ S \in \cF(I)\} \sim 1.
	\end{equation}
	This follows immediately from the fact that for dyadic cubes $I,J$ as above we have $\ell(I)=\ell(J)$ and $\dist(I,J)\lesssim \ell(I)$. Hence, for each $S \in \wstop(R)$ there is a bounded number of cubes in $\cC_R$ of sidelength $\sim \ell(S)$ which intersect it.
	Finally, we may conclude that
	\begin{align*}
		I_2 & \lesssim \sum_{S \in \wstop(R)} \ell(S)^d \sim  \sum_{S \in \wstop(R)} \sum_{\substack{I \in \cC_R \\ S \in \cF(I)}} \ell(I)^d\\
		& = \sum_{I \in \cC_R} \ell(I)^d \#\{ S \in \wstop(R) \ : \ S \in \cF(I)\} \lesssim \sum_{I \in \cC_R} \ell(I)^d,
	\end{align*}
	and hence, using the Ahlfors $d$-regularity of $\Gamma_R$, 
	\begin{equation}
	I_2 \lesssim \sum_{I \in \cC_R} \ell(I)^d \sim \sum_{I \in \cC_R} \hd(\partial_d I) \sim \hd(\Gamma_R) \lesssim_{\tau,\delta} \ell(R)^d.
	\end{equation}

	Putting the estimates for $I_1$ and $I_2$ together with \eqref{eq:boh} gives
		\begin{align}\label{e:TreeEst}
		\sum_{Q \in \Tree(R)} \beta_E^{d,2}(3B_Q)^2 \ell(Q)^d \lesssim_\tau C(\delta) \ell(R)^d,
		\end{align}
		where $C(\delta) \to \infty$ as $\delta\to 0$.
	We conclude that
	\begin{multline}
	\sum_{Q \in \dD_0^N} \beta_{E}^{d,2}(3B_Q)^2 \ell(Q)^d \stackrel{{\rm Lemma}\, \ref{l:corona}}{\lesssim} \sum_{R \in \Top(N)} \sum_{Q\in \Tree(R)} \beta_{E}^{d,2}(3B_Q)^2 \ell(Q)^d  \\ \stackrel{\eqref{e:TreeEst}}{\lesssim_\tau}  C(\delta) \sum_{R \in \Top(N)} \ell(R)^d  \stackrel{\eqref{e:ADR-packing}}{\lesssim}  C(\delta)\eta^{-1} \hd(Q_0) \sim C(\delta) \hd(Q_0). \label{e:upper-bound}
	\end{multline}
	This finishes the proof of Proposition \ref{l:main-lemma}. 
	
	\section{Dimension of wiggly sets}\label{sec:wiggly}
	In this section we establish our estimate for the dimension of wiggly sets with PBP, \thmref{t:corollary}.
	
	Through this section we will work with David-Christ cubes (\lemref{theorem:christ}) associated to different sets. To avoid confusion we will write $\DD^E$ to denote the system of cubes associated to a given set $E\subset\R^n$. 
	
	Recall that $p(d)=\tfrac{2d}{d-2}$ for $d>2$ and $p(d)=\infty$ if $d\le 2$. Theorem \ref{t:corollary} will follow as an easy corollary of the following proposition. 
	\begin{propo}\label{prop-content-beta-wiggly}
		Let $E \subset \R^n$ be a closed set with $d$-PBP with constant $\delta>0$ and which is uniformly wiggly of dimension $d$, constant $\beta_0$, and with respect to the Azzam-Schul $\beta_E^{d,p}$-numbers, where $1 \leq p < p(d)$. That is, $E$ satisfies 
		\begin{equation*}
			\beta_{E}^{d,p}(x,r) \geq \beta_0 \mbox{ for all } x\in E,\ 0<r<\diam(E).
		\end{equation*}
		Then
		\begin{equation}\label{e:wiggly-dim2}
		\mathrm{dim}_{H}(E) \geq d + c \beta_0^2
		\end{equation}
		with $c$ depending on $\delta,p,n,d$.
	\end{propo}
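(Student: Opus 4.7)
I would follow the strategy of Bishop-Jones \cite{bishop1997wiggly}, adapted to the $d$-dimensional PBP setting via our Theorem \ref{t:main}. The goal is to produce a Frostman-type measure $\mu$ supported on $E$ with $\mu(E) \gtrsim 1$ and $\mu(B(x,r)) \lesssim r^{s}$ for $s = d + c\beta_0^2$, where $c$ depends only on $\delta,p,n,d$. By Frostman's lemma this implies $\hm{s}(E)>0$ and hence $\dim_H(E) \geq s$.

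Since every set with PBP is lower content $d$-regular (Lemma \ref{l:low-reg-E}), Proposition \ref{l:main-lemma} applies to every cube $Q \in \DD^E$. Combined with the uniform wiggliness $\beta_E^{d,p}(C_0 B_Q) \geq \beta_0$, this yields the basic inequality
\begin{equation*}
\beta_0^2 \sum_{P \in \DD^E(Q)} \ell(P)^d \,\lesssim_{\delta,p}\, \hd(Q),
\end{equation*}
which I would group generation by generation as $\hd(Q) \gtrsim_{\delta} \beta_0^2 \sum_{k \geq 0} \rho^{kd} N_k(Q)$, where $N_k(Q)$ denotes the number of generation-$k$ descendants of $Q$ in the Christ-David lattice of $E$.

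The heart of the argument is to upgrade this bound into super-polynomial growth of $N_k$. Reapplying the same inequality to each child $Q'$ of $Q$ and using the partition $\hd(Q) = \sum_{Q' \text{ child}} \hd(Q')$ together with the baseline $N_1(Q) \geq c_1 \rho^{-d}$ coming from lower content regularity, an induction on generations yields
\begin{equation*}
N_k(Q_0) \,\gtrsim\, (1 + c'\beta_0^2)^k \, \rho^{-kd}.
\end{equation*}
Distributing unit mass evenly over a very deep generation of $Q_0$ and passing to a weak limit then produces a Frostman measure with exponent $s = d + c'' \beta_0^2$, where $c'' \sim \log(1 + c'\beta_0^2)/\log(\rho^{-1}) \sim \beta_0^2$. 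The $L^p$ nature of the $\beta$-numbers (as opposed to $L^\infty$) is what allows the exponent to be $\beta_0^2$ and not $\beta_0^{2(d+1)}$, since no Jensen-type conversion is needed.

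\textbf{The main obstacle} is that Theorem \ref{t:main} involves $\hd(Q_0)$, which is infinite precisely in the regime of interest ($\dim_H(E) > d$); the key inequality must therefore be applied at a finite, truncated scale to remain meaningful, and I would work with the partial sum $\sum_{k=0}^{K} \rho^{kd} N_k$ and let $K \to \infty$ only after extracting the geometric growth. Organizing the inductive step across the Christ-David tree so that the multiplicative boost $(1+c'\beta_0^2)$ accrues at \emph{every} generation without double-counting, and transferring the discrete growth of $N_k$ into a Frostman bound at arbitrary (not only dyadic) scales via the doubling properties built into the lattice, are the most delicate parts of the argument.
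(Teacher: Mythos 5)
Your overall architecture (uniform wiggliness plus the TST forces super-polynomial growth of the number of Christ--David descendants, which yields a Frostman measure with exponent $d+c\beta_0^2$) is the same as the paper's, and your remarks about the $L^p$ versus $L^\infty$ exponents and about passing from dyadic to arbitrary scales are correct. But there is a genuine gap at the central step. Your key inequality
\begin{equation*}
\beta_0^2 \sum_{k\ge 0}\rho^{kd}N_k(Q) \,\lesssim_{\delta,p}\, \hd(Q)
\end{equation*}
is obtained by applying Proposition \ref{l:main-lemma} to $E$ itself, and its right-hand side is $+\infty$ precisely when $\dim_H(E)>d$, i.e.\ in the only regime where the proposition has any content. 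You flag this obstacle, but the remedy you propose --- truncating the sum on the left at generation $K$ --- does not address it: the truncated version of Proposition \ref{l:main-lemma} still has $\hd(Q)$ on the right (via the packing bound \eqref{e:ADR-packing} for the top cubes), so the inequality remains vacuous. Likewise the partition $\hd(Q)=\sum_{Q'}\hd(Q')$ that drives your induction is meaningless when every term is infinite.

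What is missing is the replacement of $E$ by a finite-measure approximant at the truncation scale. The paper forms the skeleton sets $E_k=\bigcup_{I\in\Delta_k(E)}\partial_d I$, shows they inherit PBP and Ahlfors regularity (so Theorem \ref{t:main} applies to \emph{them}, with $\hd(E_k\cap AI)$ finite and $\lesssim \rho^{dk}\,\#D_k(I)$ --- exactly the counting function you call $N_k$), and transfers the $\beta_E$-numbers to $\beta_{E_k}$-numbers via Lemma \ref{lemma:azzamschul}; controlling the resulting error term $\sum_Q \ell(Q)^{d}\int_{4B_Q\cap E}\big(\dist(y,E_k)/\ell(Q)\big)^2\, d\hdc(y)$ by $\hd(E_k\cap AI)$ is a substantive computation that your plan does not account for. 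Once this is in place one gets $\#D_k(I)\gtrsim_\delta (k-j)\beta_0^2\rho^{d(j-k)}$, and the rest of your outline (jumping $\kappa\sim\beta_0^{-2}$ generations at a time, which is what the paper does and is essentially equivalent to your per-generation multiplicative boost, followed by the bottom-to-top Frostman construction) goes through. As it stands, the proposal is missing the one idea that makes the TST usable here.
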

	\begin{proof}[Proof of Theorem \ref{t:corollary} using Proposition \ref{prop-content-beta-wiggly}]
		Let $E$ be as in the statement of the theorem. In particular, $\beta_{E, \infty}^d(B_Q) > \beta_0$. By \cite[Lemma 2.12, Lemma 2.13]{azzam2018analyst}, we have that
		\begin{equation*}
		\beta_{E,\infty}^d (B_Q) \lesssim_{d} \beta_{E}^{d,p}(B_Q)^{\frac{1}{d+1}}.
		\end{equation*}
		Hence $E$ is uniformly wiggly of dimension $d$ with respect to $\beta_{E}^{d,p}$ with constant $\beta_0'\sim\beta_0^{d+1}$, and \eqref{e:wiggly-dim} follows immediately from \eqref{e:wiggly-dim2} with $\beta_0$ replaced by $\beta_0'$.
	\end{proof}

	We begin the proof of Proposition \ref{prop-content-beta-wiggly}. Let $E \subset \R^n$ be a closed set with $d$-PBP, with constant $\delta>0$. Without loss of generality we assume $E$ is compact, $\diam(E)=1/2$, so that $\mathcal{D}_0=\{E\}$, and $E\subset [0,1)^n$.
	
	Recall that $\Delta$ is the standard dyadic lattice on $\R^n$, and $\Delta_k = \{I\in\Delta\ :\ \ell(I)=2^{-k}\}$. Given a dyadic cube $I\in\Delta$ and $A\ge 1$, we will denote by $AI$ the cube with the came center as $I$ and sidelength $A\ell(I)$.

	Recall that $\rho=1/1000$ and that for $Q\in\DD^E_m$ we have $\ell(Q)=5\rho^m$, see \lemref{theorem:christ}. Given $k\ge0$ let $j(k)$ be the unique integer such that $2^{-j(k)}\le 5\rho^k < 2^{-j(k)+1}$. We set
	\begin{align} \label{e:Delta-ck}
	\Delta_{k}(E) :=  \ck{ I \in \Delta_{j(k)} \, :\, I \cap E \neq \varnothing}.
	\end{align}
	We defined $j(k)$ in such a way that if $I\in\Delta_k(E)$ and $Q\in \dD^E_k$ then $\ell(I)\le \ell(Q)\le 2\ell(I)$. In particular, $\ell(I)\sim \rho^k$.
	
	Set
	\begin{equation}\label{eq:Ekdef}
	E_k := \bigcup_{I \in \Delta_{k}(E)} \partial_d I,
	\end{equation}
	where $\partial_d I$ is the $d$-dimensional dyadic skeleton of $I$.
 \begin{lemma}
     For every $k\ge 0$ the set $E_k$ has $d$-PBP (with constant $\delta'\sim \delta$).
 \end{lemma}
 \begin{proof}
     The proof is similar to that of \propref{lem:GammaPBP}, although here the situation is much simpler. Let $x\in E_k$ and $0<r<\diam(E)$. Let $I\in \Delta_k(E)$ be such that $x\in \d_d I$. If $0<r\lesssim 2^{-j(k)}\sim\diam(I)$, then for all $V\in \dG(n,d)$
 \begin{equation*}
     \mathcal{H}^d(\pi_V(E_k\cap B(x,r)))\ge \mathcal{H}^d(\pi_V(\d_d I\cap B(x,r)))\gtrsim r^d.
 \end{equation*}
On the other hand, if $r\gtrsim 2^{-j(k)}$ then there exists $y\in E$ with $B(y,r/C)\subset B(x,r)$, and such that all $J\in\Delta_k(E)$ with $J\cap B(y,r/C)=\varnothing$ satisfy $J\subset B(x,r)$. Denote the family of such $J$ by $\Delta_k(x,r)$. Then, by Lemma \ref{lem:skeletonsprojection}
 \begin{multline*}
     \mathcal{H}^d(\pi_V(E_k\cap B(x,r)))\ge \mathcal{H}^d\bigg(\pi_V\bigg(\bigcup_{J\in\Delta_k(x,r)}\d_dJ\bigg)\bigg)\\
     \ge \mathcal{H}^d\bigg(\pi_V\bigg(\bigcup_{J\in\Delta_k(x,r)}J\bigg)\bigg)\ge \mathcal{H}^d(\pi_V(E\cap B(y,C/r))).
 \end{multline*}
 So for $V\in\dG(n,d)$ such that $\mathcal{H}^d(\pi_V(E\cap B(y,C/r)))\gtrsim r^d$ we also have $\mathcal{H}^d(\pi_V(E_k\cap B(x,r)))\gtrsim r^d$. This gives the PBP property for $E_k$.
 \end{proof}

	\begin{lemma} 
		Let $k\ge j \ge 1$ be integers, and let $I\in\Delta_j(E)$. We have
		\begin{equation} \label{e:BJ-a}
		\sum_{m=j}^{k} \sum_{\substack{Q \in \dD^E_m \\ Q \cap 3I \neq \varnothing}} \beta_{E}^{d,p}(B_Q)^2 \ell(Q)^d  \leq C\hd(E_k\cap AI),
		\end{equation}
		where $A=24$, and $C$ depends on $\delta, n, d$ and $p$.
	\end{lemma}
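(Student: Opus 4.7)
The strategy mirrors the proof of Proposition \ref{l:main-lemma}: reduce the $\beta$-sum on $E$ to the corresponding sum on the skeleton approximation $E_k$ via Lemma \ref{lemma:azzamschul}, and then invoke Proposition \ref{l:main-lemma} on $E_k$ itself, which by the argument of Lemma \ref{l:approx-BPMD} inherits $d$-PBP from $E$ with constant $\sim \delta$. Both $E$ (by Lemma \ref{l:low-reg-E}) and $E_k$ (directly from its construction) are lower content $d$-regular, which is what Azzam-Schul requires.

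Given $Q\in \dD^E_m$ with $Q\cap 3I\ne\varnothing$ and $j\le m\le k$, the fact that every point of $E$ lies in some dyadic cube of $\Delta_{j(k)}(E)$ of diameter $\sim\rho^k$ forces $\dist(x_Q,E_k)\lesssim \rho^k\le \ell(Q)$, so one can pick $y_Q\in E_k$ with $|x_Q-y_Q|\le \ell(Q)$ and $B_Q\subset B(y_Q, 2\ell(Q))$. Lemma \ref{lemma:azzamschul} then gives
\begin{equation*}
\beta_E^{d,p}(B_Q)^2 \lesssim \beta_{E_k}^{d,p}(B(y_Q,2\ell(Q)))^2 + \Bigl(\frac{\rho^k}{\ell(Q)}\Bigr)^2,
\end{equation*}
using $\dist(z,E_k)\lesssim\rho^k$ for $z\in E$ together with the trivial bound $\hdc(E\cap B(x_Q,2\ell(Q)))\lesssim \ell(Q)^d$. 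Summing the error term over the pairwise disjoint $Q\in \dD^E_m$ with $Q\cap 3I\ne\varnothing$ (for which lower content regularity gives $\sum_Q\ell(Q)^d\lesssim \hdc(E\cap 4I)\lesssim \ell(I)^d$), and then geometrically over $m=j,\dots,k$, produces a total contribution $\lesssim \ell(I)^d$, which is in turn $\lesssim \hd(E_k\cap AI)$ since $E_k$ contains the $d$-skeleton of at least one cube from $\Delta_{j(k)}(E)$ inside $I$.

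For the main term, assign to each $Q$ a Christ-David cube $P=P(Q)\in \dD^{E_k}$ containing $y_Q$ with $\ell(P)\sim\ell(Q)$; then $B(y_Q,2\ell(Q))\subset 3B_P$, the assignment is boundedly many-to-one, and with $A=24$ all such $P$ fit inside a single Christ-David cube $R\in \dD^{E_k}$ of sidelength $\sim \ell(I)$ with $R\subset AI$. Applying Proposition \ref{l:main-lemma} to $E_k$, combined with the equivalence \eqref{eq:WLOGp2} between the $\beta^{d,p}$- and $\beta^{d,2}$-sums, gives
\begin{equation*}
\sum_{P\in\dD^{E_k}(R)} \beta_{E_k}^{d,p}(3B_P)^2\,\ell(P)^d \lesssim C(\delta)\,\hd(R)\le C(\delta)\,\hd(E_k\cap AI),
\end{equation*}
which combines with the error estimate to yield \eqref{e:BJ-a}. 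The most delicate point is the bookkeeping required to ensure that all the approximants $B(y_Q,2\ell(Q))$, across every scale from $m=j$ down to $m=k$, fit into a single Christ-David cube of $E_k$ contained in $AI$; this is what fixes the constant $A=24$.
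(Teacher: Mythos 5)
Your treatment of the main term is essentially the paper's argument (transfer to $E_k$ via Lemma \ref{lemma:azzamschul}, then apply the PBP Traveling Salesman estimate to $E_k$, which inherits PBP and is Ahlfors regular); the only inaccuracy there is that the relevant cubes $P\in\dD^{E_k}$ need not all sit inside a \emph{single} Christ--David root of $E_k$ — one needs a bounded family $\mathcal{R}$ of roots at scale $j$ whose union is contained in $AI$, which is harmless.

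The error term, however, contains a genuine gap. You claim that lower content regularity gives $\sum_{Q\in\dD^E_m,\,Q\cap 3I\neq\varnothing}\ell(Q)^d\lesssim \hdc(E\cap 4I)\lesssim\ell(I)^d$. This inequality is backwards: since the cubes $Q$ cover $E\cap 3I$, they give an \emph{upper} bound for the content, i.e.\ $\hdc(E\cap 3I)\lesssim\sum_Q\ell(Q)^d$, and without upper Ahlfors regularity of $E$ (which is not assumed anywhere in this part of the paper) there is no control of $\sum_Q\ell(Q)^d$ from above by $\ell(I)^d$. Concretely, take $E=[0,1]^2\subset\R^2$ with $d=1$: at scale $m$ there are $\sim\rho^{-2(m-j)}$ cubes $Q$ meeting $3I$, so your error sum $\sum_{m=j}^{k}\sum_Q \rho^{2k}\ell(Q)^{d-2}$ is of order $\rho^{2j-k}=\ell(I)^2\rho^{-k}$, which is $\gg\ell(I)^d=\rho^j$ once $k\gg j$. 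Your asserted bound ``total error $\lesssim\ell(I)^d$'' is therefore false. Note that the right-hand side of the lemma, $\hd(E_k\cap AI)\sim\sum_{J\in\Delta_k(E),\,J\subset AI}\ell(J)^d$, is itself typically much larger than $\ell(I)^d$, and the correct argument must produce exactly this quantity: one bounds $\dist(y,E_k)\lesssim\ell(J)$ for the small dyadic cube $J\in\Delta_k(E)$ containing $y$, uses subadditivity of $\hdc$ to write the error of each $Q$ as $\sum_{J\cap 4B_Q\neq\varnothing}\ell(J)^{d+2}/\ell(Q)^2$, and then switches the order of summation so that for each fixed $J$ the sum over $Q$ of $\ell(Q)^{-2}$ is geometric and $\lesssim\ell(J)^{-2}$, yielding $\sum_{J\subset AI}\ell(J)^d\sim\hd(E_k\cap AI)$. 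Your uniform bound $\dist(y,E_k)\lesssim\rho^k$ followed by the trivial content estimate discards precisely the cancellation (the proportionality of each $Q$'s error to the number of $J$'s it actually meets) that makes this work.
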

	\begin{proof}
		Without loss of generality we may assume that $p=2$ (see \cite[(A.3)]{azzam2019quantitative}). Consider a cube $Q \in \dD^E_m$ with $j\le m\le k$ and $Q\cap 3I\neq\varnothing$. Let $P\in\DD^{E_k}$ be a cube with $\ell(P)=\ell(Q)$ and such that $B_Q\cap B_P\neq\varnothing$ (such a cube exists because $\dist(x_Q,E_k)\le 2^{-j(k)}\le\ell(Q)$). In particular, we have $2B_Q\subset 4B_{P}$ and we may apply Lemma \ref{lemma:azzamschul} with $E_1=E$ and $E_2 = E_k$ to obtain
		\begin{align*}
		\beta_{E}^{d,2}(B_Q)\lesssim \beta_{E}^{d,2}(2B_Q) \lesssim \beta^{d,2}_{E_k}(4B_{P}) + \ps{ \frac{1}{\ell(Q)^d} \int_{4 B_{Q}\cap E} \ps{\frac{\dist(y, E_k)}{\ell(Q)}}^2 \, d \mathcal{H}_\infty^d(y)}^{\frac{1}{2}}.
		\end{align*}
		Note that we have $B_P\cap 11I\neq\varnothing$ because $\ell(Q)\le 2\ell(I),\ Q\cap 3I\neq\varnothing$ and $B_P\cap B_Q\neq\varnothing$.
		
		Multiplying by $\ell(Q)^d$ and summing over $Q \in \dD^E_m$ with $j\le m\le k$ and $Q\cap 3I\neq\varnothing$ yields
		\begin{multline}\label{e:form101}
		\sum_{m=j}^{k} \sum_{\substack{Q \in \dD^E_m \\ Q \cap 3I \neq \varnothing}} \beta_{E}^{d,2} (B_Q)^2\ell(P)^d 
		\lesssim \sum_{m=j}^{k}\sum_{\substack{P \in \dD_m^{E_k}\\ B_P \cap 12I \neq \varnothing}} \beta^{d,2}_{E_k}(4 B_{P})^2\ell(P)^d \\
		+ \sum_{m=j}^{k} \sum_{\substack{Q \in \dD^E_m \\ Q \cap 3I \neq \varnothing}} \ps{ \frac{1}{\ell(Q)^d} \int_{4 B_Q\cap E} \ps{\frac{ \dist(y, E_k)}{\ell(Q)}}^{2} \, \hdc(y) }\ell(P)^d \eqqcolon S_1+S_2.
		\end{multline}
		
		We begin by estimating $S_1$. Let 
		\begin{equation*}
		\mathcal{R}=\{ R\in \dD^{E_k}_j\ :\ B_R\cap 11I\neq\varnothing\}.
		\end{equation*}
		Note that for each cube $P$ appearing in $S_1$ there exists a unique $R\in\mathcal{R}$ such that $P\subset R$.
		Observe also that all the cubes $R\in\mathcal{R}$ are contained in $19I$. Applying these observations together with Theorem \ref{t:main} to the set $E_k$ and cubes $R\in\mathcal{R}$ yields
		\begin{equation}\label{eq:useATST}
		S_1\lesssim \sum_{R\in\mathcal{R}}\sum_{P\in\dD(R)} \beta^{d,2}_{E_k}(4 B_{P})^2\ell(P)^d  \lesssim_\delta \sum_{R\in\mathcal{R}} \mathcal{H}^d(R) \le \hd(E_k \cap 19I). 
		\end{equation}
		
		We move on to estimating $S_2$. This estimate is similar to that of "$I_2$" in \eqref{eq:boh}. First, we use the definition of $E_k$ and the subadditivity of Hausdorff content to get
		\begin{equation*}
			\int_{4B_Q\cap E} \left(\frac{\dist(y, E_k)}{\ell(P)} \right)^2 \, d \hdc(y) \lesssim \sum_{\substack{J \in \Delta_k(E) \\ J \cap 4B_Q \neq \varnothing}} \frac{\ell(J)^{2}}{\ell(Q)^2} \hdc(J \cap 4B_Q \cap E) \lesssim\sum_{\substack{J \in \Delta_k(E) \\ J \cap 4B_Q \neq \varnothing}} \frac{\ell(J)^{d+2}}{\ell(Q)^2}.
		\end{equation*} 
		It is easy to check that if $Q\in\dD^E_m$ with $j\le m\le k$ and $Q\cap 3I\neq\varnothing$, then for $J$ as above we have $J\subset 24I$. It follows that
		\begin{multline*}
			S_2\le \sum_{m=j}^{k} \sum_{\substack{Q \in \dD^E_m \\ Q \cap 3I \neq \varnothing}} \sum_{\substack{J \in \Delta_k(E) \\ J \cap 4B_Q \neq \varnothing}} \frac{\ell(J)^{d+2} }{\ell(Q)^2}  \lesssim \sum_{\substack{J \in \Delta_k(E) \\ J \subset 24I}} \ell(J)^{d+2} \sum_{\substack{Q \in \dD^{E} \\ \ell(Q)\ge \rho^{k}: \,  4B_Q \cap J \neq \varnothing}} \frac{1}{\ell(Q)^2} \\
			 \lesssim \sum_{\substack{J \in \Delta_k(E)  \\ J \subset 24I}} \ell(J)^{d}.
		\end{multline*}
		We also have
		\begin{equation*}
		\sum_{\substack{J \in \Delta_k(E) \\ J \subset 24I}} \ell(J)^d\sim \sum_{\substack{J \in \Delta_k (E)\\ J \subset 24I}} \mathcal{H}^d(\partial_d J) \sim \dH^d(E_k \cap 24I).
		\end{equation*}
		Hence, $S_2\lesssim \dH^d(E_k \cap 24I)$. Putting this together with \eqref{e:form101} and \eqref{eq:useATST} gives \eqref{e:BJ-a}.

	\end{proof}
	
	\begin{lemma}
		Let $k\ge j >0$ be integers and $I\in\Delta_j(E)$. We have
		\begin{equation}\label{e:form102}
		\sum_{m=j}^k \sum_{Q \in \dD^E_m \atop Q \cap 3I \neq \varnothing} \beta_{E}^{d,p}(B_Q)^2 \ell(Q)^d  \geq (k-j) C \delta  \beta_0^2 \rho^{jd},
		\end{equation}
		where $C$ is a constant depending only on $n, d$, but not on $\delta$.
	\end{lemma}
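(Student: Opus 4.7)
The plan is to exploit the two ingredients in the hypothesis separately: the uniform wiggly bound gives a pointwise lower bound on each individual $\beta_E^{d,p}(B_Q)^2$, and the PBP condition (via Lemma \ref{l:low-reg-E}) gives lower content regularity, which provides a quantitative lower bound on $\sum_Q \ell(Q)^d$ for each fixed scale $m$. Combining them scale-by-scale and summing over $m \in \{j,\ldots,k\}$ yields a linear dependence on $k-j$.

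First I would fix a scale $m \in \{j,\ldots,k\}$ and analyze the inner sum. Since $I \in \Delta_j(E)$ there exists $x \in I \cap E$, and $B(x,\ell(I)/2) \subset 3I$. Because $\ell(I) \sim \rho^j$ and $\ell(I)/2 < \diam(E)$ (here I use $j \ge 1$ and that $\diam(E)$ is fixed), Lemma \ref{l:low-reg-E} gives
\[
\mathcal{H}^d_\infty(E \cap 3I) \;\ge\; \mathcal{H}^d_\infty\bigl(E \cap B(x,\ell(I)/2)\bigr) \;\gtrsim\; \delta\,\ell(I)^d \;\sim\; \delta\,\rho^{jd}.
\]
On the other hand, the Christ--David cubes in $\dD^E_m$ partition $E$, so the sub-family of those $Q$ with $Q \cap 3I \ne \varnothing$ covers $E \cap 3I$. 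Using $\mathcal{H}^d_\infty(Q) \lesssim \diam(Q)^d \sim \ell(Q)^d$ and subadditivity of Hausdorff content, I get
\[
\sum_{\substack{Q \in \dD^E_m \\ Q \cap 3I \neq \varnothing}} \ell(Q)^d \;\gtrsim\; \mathcal{H}^d_\infty(E \cap 3I) \;\gtrsim\; \delta\,\rho^{jd}.
\]

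Next I apply the uniform wiggly hypothesis. For any $Q \in \dD^E_m$ with $m \ge j \ge 1$, the center $x_Q$ lies in $E$ and $\ell(Q) = 5\rho^m < \diam(E)$, so $\beta_{E}^{d,p}(B_Q)^2 \ge \beta_0^2$. Hence
\[
\sum_{\substack{Q \in \dD^E_m \\ Q \cap 3I \neq \varnothing}} \beta_{E}^{d,p}(B_Q)^2\,\ell(Q)^d \;\ge\; \beta_0^2 \sum_{\substack{Q \in \dD^E_m \\ Q \cap 3I \neq \varnothing}} \ell(Q)^d \;\ge\; C\,\delta\,\beta_0^2\,\rho^{jd},
\]
where $C$ depends only on $n,d$. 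Summing this scale-uniform bound over $m = j,\ldots,k$ gives at least $(k-j+1)$ copies of the right-hand side, which is more than enough to conclude \eqref{e:form102}.

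There is really no obstacle here: the whole point is that the wiggly hypothesis was already built into the statement, and lower content regularity of $E$ (with constant $\sim \delta$) was proved in Lemma \ref{l:low-reg-E}. The only small care needed is to verify that the radii involved stay below $\diam(E)$ so that both hypotheses apply, and this is automatic once we assume $j \ge 1$ (so that $\ell(I) \sim \rho^j$ and $\ell(Q) \sim \rho^m \le \rho < \diam(E)$).
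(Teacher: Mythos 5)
Your proposal is correct and follows essentially the same route as the paper: lower content regularity (with constant $\sim\delta$, from Lemma \ref{l:low-reg-E}) gives $\sum_{Q\cap 3I\neq\varnothing}\ell(Q)^d\gtrsim\delta\rho^{jd}$ at each scale $m$, the uniform wiggly hypothesis bounds each $\beta_E^{d,p}(B_Q)^2$ from below by $\beta_0^2$, and one sums over $m$. The only cosmetic difference is that the paper covers $E\cap 3I$ by the balls $B_Q$ rather than the cubes $Q$ themselves.
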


	\begin{proof}
		Recall first that $E$ is lower $(d, c_1)$-regular (with $c_1 \sim \delta$, see \lemref{l:low-reg-E}), so that in particular $\hdc(E\cap 3I)\gtrsim \delta \rho^{dj}$. Let $m\ge j$. Since the balls $\{B_Q\ :\ Q\in \DD^E_m,\, Q\cap 3I\neq\varnothing\}$ are a cover of $E\cap 3I$, we get
		\begin{equation*}
		\sum_{Q \in \dD^E_m \atop Q \cap 3I\neq\varnothing} \ell(Q)^d \gtrsim \delta \rho^{dj}.
		\end{equation*}		
		Recalling that $E$ is uniformly wiggly we arrive at
		\begin{equation*}
		\sum_{Q \in \dD^E_m \atop Q \cap 3I \neq \varnothing} \beta_{E}^{d,p}(B_Q)^2 \ell(Q)^d 
		\geq  \beta_0^2 \sum_{Q \in \dD^E_m \atop Q \cap 3I\neq\varnothing} \ell(Q)^d
		 \gtrsim \delta \beta_0^2 \rho^{dj}.
		\end{equation*}
		Summing over $j\le m\le k$ gives \eqref{e:form102}. 
	\end{proof}
	
	Recall that our aim was to prove that
	\begin{align*}
	{\rm dim}_H(E) \ge d+ c\beta_0^2.
	\end{align*}
	This estimate follows immediately from Theorem 8.8 in \cite{mattila} and the next lemma.
	\begin{lemma}
		There exists a non-zero measure $\mu$ with $\supp\mu\subset E$ and such that
		\begin{equation*}
		\mu(B(x,r))\lesssim r^{d+ c\beta_0^2}\quad\text{for $x\in E$ and $r>0$},
		\end{equation*}
		where $c$ depends on $\delta, p, n, d$.
	\end{lemma}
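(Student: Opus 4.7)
The strategy is the mass distribution principle via an iterative Frostman-type construction. Combining \eqref{e:BJ-a} and \eqref{e:form102} immediately yields
\begin{equation*}
\hd(E_k \cap 24 I) \gtrsim (k-j)\,\delta\, \beta_0^2\, \rho^{jd}
\end{equation*}
for every integer $j \geq 0$, every $I \in \Delta_j(E)$ and every $k \geq j$, with implicit constant depending only on $n, d, p$. Since $E_k$ is a union of $d$-skeletons $\partial_d J$, $J \in \Delta_k(E)$, each of $\hd$-measure $\sim \rho^{kd}$, this content bound translates into a count lower bound
\begin{equation*}
N_k(I) := \#\{J \in \Delta_k(E) : J \cap 24 I \ne \varnothing\} \gtrsim (k-j)\,\delta\, \beta_0^2\, \rho^{-(k-j)d}.
\end{equation*}

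I will construct $\mu$ as a weak-$*$ subsequential limit of iteratively defined probability measures $(\mu_m)_{m \ge 0}$. Fix a step size $j_0 := \lceil C_1/(\delta \beta_0^2) \rceil$ for a large constant $C_1 = C_1(n,d,p)$; the count bound above then reads $N_{(m+1) j_0}(I) \geq C_2\, \rho^{-j_0 (d + c \beta_0^2)}$ for all $I \in \Delta_{m j_0}(E)$, where $c = c(n,d,p,\delta) \sim \delta/\log(1/\rho)$. Pick $x_I \in I \cap E$ for each $I$, set $\mu_0 := \delta_{x_E}$, and, writing $\mu_m(I) := \mu_m(\{x_I\})$, define inductively
\begin{equation*}
\mu_{m+1}(J) := \sum_{\substack{I \in \Delta_{m j_0}(E) \\ J \cap 24 I \ne \varnothing}} \frac{\mu_m(I)}{N_{(m+1) j_0}(I)}, \qquad J \in \Delta_{(m+1) j_0}(E).
\end{equation*}
Switching the summation order gives $\sum_J \mu_{m+1}(J) = \sum_I \mu_m(I) = 1$, so each $\mu_m$ is a probability measure on $E$; and because every $J$ has at most $C_n$ admissible parents, an easy induction yields the cube-wise Frostman bound
\begin{equation*}
\mu_m(I) \leq \ell(I)^{d + c \beta_0^2}, \qquad I \in \Delta_{m j_0}(E),
\end{equation*}
upon slightly shrinking $c$ to absorb $C_n$.

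Along a subsequence $m_k \to \infty$, $(\mu_{m_k})$ converges weakly-$*$ to a probability measure $\mu$ supported on $E$, since every $\supp \mu_m \subset E$ and $E$ is compact. The redistribution rule moves mass by at most $C \rho^{m j_0}$ at step $m$, hence $\mu_m$ lies within a $C \rho^{m' j_0}$-neighbourhood of $\supp \mu_{m'}$ for every $m \ge m'$. Given $x \in E$ and $r > 0$, I pick $m'$ with $\rho^{(m'+1) j_0} \leq r < \rho^{m' j_0}$; then $B(x, (1+2C)r)$ meets only $O_n(1)$ cubes of $\Delta_{m' j_0}(E)$, so
\begin{equation*}
\mu_m(B(x,r)) \leq \mu_{m'}(B(x, (1+2C)r)) \lesssim \max_{I \in \Delta_{m' j_0}(E)} \mu_{m'}(I) \lesssim r^{d + c \beta_0^2}
\end{equation*}
uniformly in $m \ge m'$, and the estimate passes to the weak-$*$ limit to give $\mu(B(x,r)) \lesssim r^{d + c \beta_0^2}$.

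The main technical point will be that the content estimate lives on the inflated cube $24 I$ rather than on $I$ itself, forcing a multi-parent redistribution which costs a dimensional factor $C_n$ per level. This cost is absorbed into $c$ through a logarithm, which is why the calibration $j_0 \sim 1/(\delta \beta_0^2)$ is the right one: it is precisely the scale at which the wigglyness-produced factor $j_0 \delta \beta_0^2 = O(1)$ appearing in $N$ can be amplified, through iteration, into a genuine exponential improvement $\rho^{-j_0 c \beta_0^2}$ over the baseline $d$-dimensional count $\rho^{-j_0 d}$ that lower content regularity alone would provide.
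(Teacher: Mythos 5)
Your proof is correct in substance and shares the paper's key ideas: combining \eqref{e:BJ-a} and \eqref{e:form102} into a count lower bound, calibrating the step size $j_0\sim \beta_0^{-2}$ so that the linear-in-$(k-j)$ wiggliness gain becomes a definite multiplicative gain $\rho^{-j_0 c\beta_0^2}$ over the baseline count, and finishing with the mass distribution principle. Where you genuinely diverge is in the construction of the measure. The paper first disjointifies: it passes from $D_k(I)$ to a maximal subfamily $D'_k(I)$ whose $A'$-dilates are pairwise disjoint (losing only a bounded factor in the count), which produces a genuine tree $\dS_j$ in which every cube has a unique parent; the measure is then built bottom-to-top à la Mattila, and the Frostman bound $\mu_j(A'I)\le \ell(I)^s$ comes out exactly, with no loss per level. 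You instead keep the overlapping families, redistribute mass top-down allowing each child up to $C_n$ parents, and absorb the multiplicity into a slight shrinking of $c$. Both work; your version avoids the disjointification lemma at the price of a per-level loss that must be dominated by the per-level gain (which your calibration of $j_0$ does ensure).

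Two small inaccuracies to fix. First, the implicit constant in your opening display depends on $\delta$ as well as $n,d,p$, since the constant in \eqref{e:BJ-a} does; this only means $C_1$ and hence $c$ depend on $\delta$, which is consistent with the statement. Second, in the final ball estimate your choice of $m'$ gives $r<\rho^{m'j_0}$, while the accumulated displacement of mass from step $m'$ onward is of order $\rho^{m'j_0}$, which can be as large as $\rho^{-j_0}r$; so the relevant enlarged ball is $B(x,C\rho^{m'j_0})$ rather than $B(x,(1+2C)r)$. It still meets only $O_n(1)$ cubes of $\Delta_{m'j_0}(E)$, so the bound $\mu(B(x,r))\lesssim \rho^{m'j_0 s}\le \rho^{-j_0 s}r^s$ survives, but with an implicit constant depending on $j_0$ (hence on $\delta$ and $\beta_0$). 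This is harmless for the application, since the mass distribution principle is insensitive to the multiplicative constant.
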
 
\begin{proof}
	The construction of $\mu$ will is similar to the proof of Frostman's lemma, as in \cite[Theorem 8.8]{mattila}. First, we need to define a tree-like collection of dyadic cubes intersecting $E$ which we will use in our construction.
	
	Let $k> j \ge 1$ be integers, and let $I\in\Delta_j(E)$. Using \eqref{e:BJ-a} and \eqref{e:form102} we get that
	\begin{align}\label{eq:blabbla}
	\hd(E_k\cap AI) \gtrsim_\delta (k-j) \, \beta_0^2 \, \rho^{dj}.
	\end{align}	
	Let 
	\begin{equation*}
	D_k(I)\coloneqq\Delta_{k}(E\cap AI) = \{J\in\Delta_{k}\ :\ J\cap AI\cap E\neq\varnothing  \}.
	\end{equation*}
	It follows easily from the definition of $E_k$ \eqref{eq:Ekdef} that 
	\begin{equation*}
	E_k\cap AI \subset \bigcup_{J\in D_k(I)}\d_d J.
	\end{equation*}
	Hence,
	\begin{align*}
	\hd(E_k \cap AI) \lesssim \rho^{dk}\, \#D_k(I).
	\end{align*}
	Together with \eqref{eq:blabbla} this gives
	\begin{align} \label{e:CardA}
	\#D_k(I) \gtrsim_\delta (k-j) \, \beta_0^2\, \rho^{d(j-k)}.
	\end{align}
	
	Observe that $\Delta(E)=\bigcup_{k\ge 0}\Delta_k(E)$ can be endowed with a natural tree structure, and this structure is used in the usual proof of the Frostman's lemma. We would like to define a similar structure for the collections $D_k(I)$ for $I\in \Delta_j(E), k\ge j\ge 0$, but we need to take extra care due to possible overlaps between collections $D_k(I), D_k(J)$ when $I,J\in\Delta_j(E)$. 
	
	To overcome this technicality, for any integers $j,k$ with $k\ge j\ge 1$ and $I\in\Delta_j(E)$ let $D'_k(I)\subset D_k(I)$ be a maximal subfamily of $D_k(I)$ such that for any $J_1, J_2\in D'_k(I)$ we have $A'J_1\cap A'J_2=\varnothing,$ where $A'=A+1$. Note that since for each $J\in D'_k(I)$ we have $J\subset AI$, we get
	\begin{equation*}
	\bigcup_{J\in D'_k(I)}A'J\subset A'I.
	\end{equation*} 
	Moreover, since all the cubes in $D_k(I)$ have equal sidelength, the cubes $\{A'J\}_{J\in D_k(I)}$ have bounded overlap, and so it follows that 
	\begin{equation*}
	\#D'_k(I)\gtrsim \#D_k(I).
	\end{equation*}	
	Recalling \eqref{e:CardA}, we get that
	\begin{equation}\label{e:CardA-c}
	\#D'_k(I) \ge C_1 (k-j) \, \beta_0^2\, \rho^{d(j-k)}.
	\end{equation}
	for some $0<C_1<1$ depending on $\delta,p,n,d$. 
	
	Let
	\begin{align*}
	\kappa\coloneqq \lceil 10^6\, C_1^{-1} \beta_0^{-2}\rceil,
	\end{align*}
	and observe that for $c \coloneqq C_1/10^6$ we have
	\begin{equation*}
	\kappa\, C_1 \beta_0^2 \geq 10^6=\rho^{-2}\ge \rho^{-\kappa\,  c \beta_0^2},
	\end{equation*}
	where in the last inequality we use also the fact that $C_1,\beta_0\in (0,1)$.	
	Hence, if $k$ is such that $k =j+ \kappa$, the inequality \eqref{e:CardA-c} gives
	\begin{align} \label{e:CardA-b}
	\#D'_{j+\kappa}(I) \geq \rho^{-\kappa(d+ c\beta_0^2)}.
	\end{align}
	
	We are ready to define the tree structure we will use to construct $\mu$. Set
	\begin{align*}
	\dS_0 = \Delta_{0}(E)=\{[0,1)^n\},
	\end{align*}
	where we used our assumption $E\subset [0,1)^n$. Let $j\ge 0$ and assume that $\dS_j$ has already been defined, that $\dS_j\subset \Delta_{j\kappa}(E)$, and that for every $I,J\in\dS_{j},\ I\neq J,$ we have $A'I\cap A'J=\varnothing$. For each $I\in\dS_j$ we set
	\begin{equation*}
	\dS(I) = D'_{(j+1)\kappa}(I),
	\end{equation*}
	and
	\begin{equation*}
	\dS_{j+1} = \bigcup_{I\in\dS_j}\dS(I).
	\end{equation*}
	
	Note that for every $J\in\dS_{j+1}$ there is a unique ``parent'' $I\in\dS_j$ such that $J\subset AI$ (and $A'J\subset A'I$). Moreover, for every $I,J\in\dS_{j+1},\ I\neq J, $ we have $A'I\cap A'J=\varnothing$ (either $I$ and $J$ have distinct parents, or they have the same parent $I'$ in which case we use the definition of $D'_{(j+1)\kappa}(I')$). Finally, observe that if $s\coloneqq d+c\beta_0^2$, then by \eqref{e:CardA-b}
	\begin{equation}\label{eq:cardSI}
	\#\dS_j\ge \rho^{-j\kappa s}\quad\text{and}\quad \#\dS(I)\ge \rho^{-\kappa s} \text{ for each $I\in\dS_j$}.
	\end{equation}

	We are ready to construct the measure $\mu$. It is obtained as a weak limit of measures $\mu_j,\, j\ge0$. The definition of $\mu_j$ follows the usual ``bottom-to-top'' construction of the Frostman measure, as in Theorem 8.8 of \cite{mattila}.
	
	 First, we set
	\begin{equation*}
	\mu_j^j = \rho^{j\kappa s}\sum_{I\in \dS_j}\fr{\mathcal{L}^n|_{A'I}}{\mathcal{L}^n(A'I)}.
	\end{equation*}
	Assume that the measure $\mu_j^i$ with $1\le i\le j$ has already been defined, and that for each $I\in\dS_i$ we have $\mu_j^i(A'I)= \rho^{i\kappa s}$. We define $\mu_j^{i-1}$ by modifying $\mu_j^i$ at the level of $\dS_{i-1}$: for each $I\in \dS_{i-1}$ we set
	\begin{equation*}
	\mu_j^{i-1}|_{A'I} \coloneqq \rho^{(i-1)\kappa s}  \mu_j^{i}(A'I)^{-1} \mu_j^{i}|_{A'I} .
	\end{equation*}
	Finally, we define $\mu_j=\mu_j^0$. Clearly, $\mu_j([0,1)^n)=1$.
	
	Observe that the quantity used in the definition of $\mu_j^{i-1}|_{A'I}$ satisfies
	\begin{equation*}
	\rho^{(i-1)\kappa s}  \mu_j^{i}(A'I)^{-1} = \rho^{\kappa s}  (\#\dS(I))^{-1} \overset{\eqref{eq:cardSI}}{\le }1.
	\end{equation*}
	It follows that for every $i\le k\le j$ and $I\in\dS_k$ we have
$
	\mu_j^i(A'I)\le \ell(I)^s.
$
	In particular,
	\begin{equation}\label{eq:Frcond}
	\mu_j(A'I)\le \ell(I)^s\quad \text{for all $I\in\dS_k,\, 0\le k\le j$.}
	\end{equation} 
	
	Let $\mu_{j_k}$ be a subsequence of $\mu_j$ converging weakly, and define $\mu$ to be the weak limit of $\mu_{j_k}$. Since $\mu_j(\R^n)=1$ for all $j$, we also have $\mu(\R^n)=1$. Using \eqref{eq:Frcond}, a standard argument gives
	\begin{equation*}
	\mu(B(x,r))\lesssim r^s\quad\text{for $x\in\R^n,\,r>0$},
	\end{equation*}
	see p.114 in \cite{mattila} for details. Finally, $\supp\mu\subset E$ because
	\begin{equation*}
	\supp\mu_j\subset \bigcup_{I\in\Delta_{j\kappa}(E)}I.
	\end{equation*}	
	\end{proof}
	This completes the proof of Proposition \ref{prop-content-beta-wiggly}. 	
	
	\section{Denjoy's conjecture}\label{s:cap}
	In this section we prove \thmref{t:corollary-2b}. 
	Let $\Sigma \subset \R^{d+1}$ be a compact set with $d$-PBP with parameter $\delta>0$, and let $E \subset \Sigma$ be compact. Let $\DD$ denote the Christ-David cubes on $\Sigma$, as in \lemref{theorem:christ}. We are going to use Theorem \ref{thm:capacitybetas} to get a lower bound on $\Gamma_{n,d}(E)$.
	
	By Frostman's lemma (see \cite[Lemma 1.23]{tolsa2014analytic}), there exists a measure $\mu$ supported on $E$ such that $\mu(B(x,r)) \leq r^d$ for all $x\in \R^n,\, r>0$, and $\mu(E) \sim \dH_\infty^d(E)$. In fact, it follows immediately from the proof of \cite[Lemma 1.23]{tolsa2014analytic} that one even has $\mu(B(x,r)) \leq \hdc(E\cap B(x,r))$. 
	
	Recall that $\Sigma$ is lower content regular with constant $\sim\delta$ (see Lemma \ref{l:low-reg-E}). Using the fact that for any ball $x\in E$ and $0<r<\infty$ we have
	\begin{equation*}
	\mu(B(x,r)) \leq \hdc(E\cap B(x,r))\le \hdc(\Sigma \cap B(x,r))
	\end{equation*}
	it follows from the definitions of $\beta_{\mu,2}$ and $\beta_\Sigma^{d,2}$ that 
	\begin{align}\label{eq:betamubetasigma}
	\beta_{\mu,2}(x,r) \lesssim \beta_\Sigma^{d,2}(x,r).
	\end{align}
 Indeed, given a plane $L\in\mathcal{A}(n,d)$ infimising $\beta_E^{d,2}(x,r)$ set 
 \begin{equation*}
 E_t=E_t(x,r):=\{y \in B(x,r) \cap E \, : \, \dist(y, L)^2 > t r^2 \}.
 \end{equation*}
 Let $\dC$ be a covering of $E_t$ such that $\sum_{B \in \dC} \diam(B)^d \leq 2\hdc(E_t)$. We may assume that the covering consists of balls (see \cite{mattila}, Chapter 5). Since $\dC$ covers $E_t$, and $\mu$ is a Frostman measure, we have
 \begin{equation*}
 	\mu(E_t) \leq \sum_{B \in \dC} \mu(B) \le  \sum_{B \in \dC} r(B)^d \lesssim  \hdc(E_t).
 	\end{equation*}
 Hence, 
 \begin{equation*}
 	\beta_{\mu,2}(x,r)  \leq \int_0^\infty \mu(E_t)\,t \,dt
 	 \lesssim \int_0^\infty \hdc(E_t)\,t \, dt \sim \beta_E^{d,2}(x,r),
 \end{equation*}
and this gives \eqref{eq:betamubetasigma} since $E \subset \Sigma$.
	
	Now recall that $\theta_\mu(x,r)= \mu(B(x,r))\,r^{-d}$ is the $d$-density of $\mu$ in the ball $B(x,r)$. Since $\mu(B(x,r)) \le r^d$, we have $\theta_\mu(x,r) \le 1$. Arguing as we did below \eqref{eq:main prop sum} and using \eqref{eq:betamubetasigma} we get
	\begin{multline*}
	\beta^2(\mu) := \iint_0^{\infty} \beta_{\mu,2}(x,r)^2 \theta_\mu(x,r) \, \frac{dr}{r}d\mu(x)\\
	 \le \iint_0^{1} \beta_{\mu,2}(x,r)^2 \, \frac{dr}{r}d\mu(x)+\iint_1^{\infty} \beta_{\mu,2}(x,r)^2 \, \frac{dr}{r}d\mu(x) \\
	 \lesssim \sum_{Q\in \dD} \beta_{\mu,2}(3B_Q)^2 \mu(Q) + \iint_1^{\infty} \theta_\mu(x,r)^2 \, \frac{dr}{r}d\mu(x)\\
	 \lesssim \sum_{Q\in \dD} \beta_\Sigma^{d, 2}(3B_Q)^2 \mu(Q) + \mu(E) \lesssim \sum_{Q\in \dD_\Sigma} \beta_\Sigma^{d,2}(3B_Q)^2 \ell(Q)^d + \mu(E).
	\end{multline*}
	Applying Theorem \ref{t:main} to estimate the last sum above yields
	\begin{equation*}
	\beta^2(\mu)\lesssim\sum_{Q\in \dD} \beta_\Sigma^{d,2}(3B_Q)^2 \ell(Q)^d + \mu(E) \lesssim C(\delta)\hd(\Sigma)+\hdc(E)\lesssim C(\delta)\hd(\Sigma),
	\end{equation*}
	so that $\beta^2(\mu)\le C_1(\delta)\hd(\Sigma)$ for some $C_1(\delta)>1$.
	
	Set
	\begin{equation*}
	C_2:= \left( \frac{\mu(E)}{C_1(\delta) \hd(\Sigma)} \right)^{\frac{1}{2}},
	\end{equation*}	
	and define the measure $\sigma := C_2 \mu$. Since $C_2\in (0,1)$, we have $\sigma(B(x,r))\le\mu(B(x,r))\le r^d$. Moreover,
	\begin{equation*}
	\beta^2(\sigma) = C_2^3 \beta^2(\mu)  \leq  \left( \frac{\mu(E)}{C_1(\delta) \hd(\Sigma)} \right)^{\frac{3}{2}} C_1(\delta) \hd(\Sigma) = C_2\, \mu(E)=\sigma(E).
	\end{equation*}
	Then, by Theorem \ref{thm:capacitybetas}, 
	\begin{align*}
	&\Gamma_{n,d} (E) \geq \sigma(E) = C_2 \mu(E) = \left( \frac{\mu(E)}{C_1(\delta) \hd(\Sigma)} \right)^{\frac{1}{2}} \mu(E) \\
	& \qquad \qquad \qquad \qquad \qquad = C_1(\delta)^{-\frac{1}{2}} \frac{\mu(E)^{\frac{3}{2}}}{\hd(\Sigma)^{\frac{1}{2}}} \sim C_1( \delta)^{-\frac{1}{2}} \frac{\hdc(E)^{\frac{3}{2}}}{\hd(\Sigma)^{\frac{1}{2}}}.
	\end{align*}
	This finishes the proof of \thmref{t:corollary-2b}.

\appendix

\section{Frostman's lemma for lower content regular sets}\label{appendix}
We prove the following version of the classical Frostman's lemma.
\begin{theorem}\label{theorem:Frostman measure}
	Let $E\subset [0,1]^n$ be a compact, lower content $(d,c_1)$-regular set. Then, there exists a measure $\mu$ satisfying the following properties:
	\begin{enumerate}
		\item \label{i:spt} $\supp\mu = E$,
		\item \label{i:mu-diam} $\mu(E) = \hdc(E)$,
		\item \label{i:poly-growth} $\mu$ has polynomial growth, that is, there exists a constant $C_1\ge 1$ such that for all $x\in E$ and $0<r<\diam(E)$ we have
		\begin{equation*}
			\mu(B(x,r))\le C_1 r^d,
		\end{equation*}
		\item \label{i:doubling2} $\mu$ is doubling, that is, there exists a constant $C_{db}\ge 1$ such that for all $x\in E$ and $0<r<\diam(E)$ we have
		\begin{equation}
			\mu(B(x,2r))\le C_{db}\, \mu(B(x,r)).
		\end{equation}
		\item \label{it:density2} the $d$-dimensional density of $\mu$ is almost monotone, that is, there exists a constant $A\ge 1$ such that if $P, Q\in\DD$, and $P\subset Q$, then
		\begin{equation*}
			\theta_\mu(P)\le A\, \theta_\mu(Q).
		\end{equation*} 
	\end{enumerate}
	In the above, $C_1$ depends only on $d,n$, while $C_{db}$ and $A$ also depend on the lower regularity constant $c_1$.
\end{theorem}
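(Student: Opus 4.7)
My plan is to follow the sketch the authors provide in the introduction: start with a standard Frostman measure and recursively redistribute mass from ``wealthy'' cubes to their ``poorer'' siblings or descendants. All constructions will take place on the Christ-David lattice $\DD$ on $E$ from Lemma \ref{theorem:christ}. First, invoke the classical content version of Frostman's lemma (see \cite[Theorem 8.8]{mattila}) to obtain a Radon measure $\mu_0$ with $\supp \mu_0 = E$, $\mu_0(E) = \hdc(E)$, and $\mu_0(B(x,r)) \le r^d$ for all $x \in \R^n$ and $r > 0$. This measure already satisfies (\ref{i:spt}), (\ref{i:mu-diam}) and (\ref{i:poly-growth}) with $C_1$ depending only on $n, d$, but in general fails both (\ref{i:doubling2}) and (\ref{it:density2}).

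The plan is to modify $\mu_0$ iteratively to enforce (\ref{it:density2}). Fix a sufficiently large constant $A$, to be determined in terms of $n, d$, and $c_1$. Starting from $\mu := \mu_0$, as long as there exists a pair $Q \subsetneq P$ in $\DD$ with $\theta_\mu(Q) > A\, \theta_\mu(P)$, choose such a pair with $\ell(P)$ maximal, and replace $\mu|_P$ by a new measure that agrees with $\mu|_P$ outside $Q$, caps the mass on $Q$ at $A\, \theta_\mu(P) \ell(Q)^d$, and spreads the excess $\mu(Q) - A\, \theta_\mu(P) \ell(Q)^d$ over $P \setminus Q$ proportionally to $\mu_0$. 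The modification preserves $\mu(R)$ for every ancestor $R$ of $P$ and, by always processing the top violation first, it does not create new violations at scales strictly larger than $\ell(P)$. The lower content regularity of $E$, namely $\hdc(P') \gtrsim c_1 \ell(P')^d$ for every Christ-David cube $P'$, ensures that $P \setminus Q$ carries enough $\mu_0$-mass to absorb the excess and that the iteration converges to a limit measure $\mu$.

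Once this limit is obtained, one verifies the five properties. Properties (\ref{i:spt}), (\ref{i:mu-diam}) and (\ref{it:density2}) are immediate from the construction: full support is preserved because the excess is always spread using $\mu_0$; the total mass is preserved because each redistribution is mass-preserving within a single $P$; and (\ref{it:density2}) holds with constant $A$ at the limit. For (\ref{i:poly-growth}) with $C_1$ depending only on $n,d$, one argues that $\mu(B(x,r)) \lesssim \mu_0(B(x, Cr))$ for a universal constant $C$, because the redistribution moves mass from any small cube only to the complement of that cube within bounded ancestors, hence spatially by a bounded factor, and then invokes the polynomial growth of $\mu_0$. The doubling property (\ref{i:doubling2}) then follows from (\ref{i:poly-growth}) combined with a matching lower bound $\mu(B(x,r)) \gtrsim r^d$, obtained by picking a Christ-David cube $P \subset B(x,r)$ of sidelength $\sim r$, applying (\ref{it:density2}) with the pair $(P, E)$, and using $\mu(E) = \hdc(E) \gtrsim c_1 \diam(E)^d$.

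The main obstacle is to formalize the redistribution algorithm so that it converges and simultaneously respects all five properties; in particular, polynomial growth with constant depending only on $n, d$ is the most delicate point, since a naive redistribution could inflate this constant by a factor depending on $c_1$. The correct procedure must spread the excess proportionally to $\mu_0$ (rather than, say, uniformly across siblings) so that the redistributed measure dilates balls by only a universal bounded factor compared to $\mu_0$.
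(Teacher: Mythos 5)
Your route to the doubling property (\ref{i:doubling2}) does not work, and this is the heart of the theorem. You propose to deduce doubling from polynomial growth together with a matching lower bound $\mu(B(x,r))\gtrsim r^d$, obtained by ``applying (\ref{it:density2}) with the pair $(P,E)$.'' Two things go wrong. First, the density monotonicity $\theta_\mu(P)\le A\,\theta_\mu(E)$ is an \emph{upper} bound on $\theta_\mu(P)$; it says nothing from below, so no lower Ahlfors bound follows from it. Second, and more fundamentally, the lower bound $\mu(B(x,r))\gtrsim r^d$ for all $x\in E$ and all scales is simply false in general for a finite measure on a lower content regular set: take $E=[0,1]^n$ with $d<n$ (which is lower content $(d,c_1)$-regular). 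Packing $\sim r^{-n}$ disjoint balls of radius $r$ centered in $E$, each of mass $\gtrsim r^d$, would force $\mu(E)\gtrsim r^{d-n}\to\infty$. So the measure you are after is doubling but \emph{not} Ahlfors regular, and doubling must be proved directly rather than by sandwiching.

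This exposes a structural gap in your redistribution scheme: you only move mass vertically, within a single cube $P$, from an over-dense descendant $Q$ to $P\setminus Q$. Such an operation can never repair the situation where two \emph{adjacent cubes of the same generation} carry wildly different masses (one of mass $\ell^d$, its neighbour of mass $\ell^{100d}$, say); this configuration violates no instance of $\theta_\mu(Q)\le A\,\theta_\mu(P)$ for $Q\subset P$, yet it destroys doubling for balls centered near the common boundary. The paper's construction is built precisely to prevent this: it proceeds top-to-bottom level by level, and at each level it compares every cube with its \emph{neighbours} of the same sidelength, transferring a fixed fraction of mass from ``rich'' cubes to their ``poor'' neighbours so that the ratio $\mu_k(B(Q))/\mu_k(B(P))$ stays bounded for all $P\in\Nei(Q)$ (see \eqref{eq:neigh} and the $\Rich/\Poor$ decomposition in Appendix \ref{appendix}); doubling of the weak limit is then deduced from this same-scale neighbour comparability together with the parent--child comparability \eqref{eq:child}. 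Any correct proof must contain some horizontal (same-scale, neighbour-to-neighbour) mass comparison of this kind; your algorithm has none. As a lesser point, the convergence of your ``fix the top violation, repeat'' procedure is also not justified — spreading excess over $P\setminus Q$ raises the density of other descendants of $P$ and can create new violations indefinitely — whereas the paper sidesteps this by a single pass per generation followed by a weak limit.
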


The usual proof of Frostman's lemma is ``bottom-to-top'': it starts from small scales and goes up (see \cite[Theorem 8.8]{mattila}). An alternative, simpler proof is due to Tolsa \cite[Theorem 1.23]{tolsa2014analytic}, who came up with a ``top-to-bottom'' construction. To prove \thmref{theorem:Frostman measure} we will modify Tolsa's construction so that the resulting Frostman measure is doubling. Roughly speaking, at each step we are going to modify the measure by redistributing the mass between cubes where the doubling condition fails.

\subsection{Construction of a sequence of measures}
In this subsection we construct a sequence of measures $\mu_k$. The measure $\mu$ from \thmref{theorem:Frostman measure} is going to be the weak limit of $\mu_k$.

Let $\DD$ be the David-Christ lattice on $E$, as in Lemma \ref{theorem:christ}.
For a cube $R \in \dD_k$, we denote by $\Child(R)$ the cubes $Q$ in $\dD_{k+1}$ with $Q \subset R$. By $\Nei(Q)$ we denote the cubes $P \in \dD_k$ so that $\dist(P,Q) \leq \ell(Q)$. Note that there exists a dimensional constant $c_n>1$ such that
\begin{equation}\label{eq:childneicard}
\#\Child(Q)+\#\Nei(Q)\le c_n.
\end{equation}
Recall that $Q^1$ denotes the parent cube of $Q$, and also that the balls $B(Q) = B(x_Q, c_0 \ell(Q))$ and $B_Q= B(x_Q, \ell(Q))$ satisfy $B(Q) \cap E \subset Q \subset B_Q$, where $\ell(Q)=\rho^k$ for $Q\in\DD_k$.

\begin{propo}\label{propo-frostman}
	Let $E \subset [0,1]^n$ be a compact, lower content $(d,c_1)$-regular set. There exists a constant $C_0=C_0(n,d,c_1)>1$ and sequence of Radon measures $\mu_k$, $k \ge 0$, absolutely continuous with respect to $\dL^n$, such that, for each $k$, we have
	\begin{gather}
		\mu_k(\R^n) = \hdc(E) \label{e:D},\\
		\supp\mu_k=\bigcup_{Q\in\DD_k}B(Q)\label{eq:supp},
	\end{gather}
	and moreover for all $Q\in\DD_k$
	\begin{align}
		\mu_k(B(Q))&\le \hdc(Q)\le 2\ell(Q)^d,\label{eq:poly}\\
		\mu_k(B(Q))&\le C_0\,\mu_k(B(P))\quad\text{for all $P\in\Nei(Q)$},\label{eq:neigh}\\
		C_0^{-1}\mu_{k-1}(B(Q^1))&\le \mu_k(B(Q))\lesssim \mu_{k-1}(B(Q^1)) \quad\text{if $k\ge 1$}.\label{eq:child}
	\end{align}
\end{propo}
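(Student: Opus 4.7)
The plan is to give an explicit formula for $\mu_k$ and verify the five conditions directly. Motivated by the desire for $\mu_k(B(Q))$ to be proportional to $\hdc(Q)$ uniformly across cubes at level $k$, I would set
\[
	\mu_k := \tau_k \sum_{Q\in\DD_k} \frac{\hdc(Q)}{\mathcal L^n(B(Q))}\,\mathcal L^n|_{B(Q)},
	\qquad \tau_k := \frac{\hdc(E)}{\sum_{Q\in\DD_k}\hdc(Q)}.
\]
Using that the balls $\{B(Q) : Q \in \DD_k\}$ are pairwise disjoint (a property built into the Vitali-type construction of \cite{kaenmaki2012existence} underlying \lemref{theorem:christ}), this yields $\mu_k(B(Q)) = \tau_k\hdc(Q)$ exactly.

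The first three conditions are then immediate. Condition (\ref{e:D}) follows from the choice of $\tau_k$; for (\ref{eq:supp}), lower content regularity of $E$ gives $\hdc(Q)\ge c_1 c_0^d \ell(Q)^d > 0$, so every $B(Q)$ carries positive mass; and for (\ref{eq:poly}), subadditivity of Hausdorff content ($\hdc(E)\le \sum_{Q\in\DD_k}\hdc(Q)$) gives $\tau_k\le 1$, whence $\mu_k(B(Q))\le \hdc(Q)$, while $\hdc(Q)\le 2\ell(Q)^d$ follows by covering $Q$ with the enclosing ball $B_Q$.

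Condition (\ref{eq:neigh}) reduces to bounding $\hdc(Q)/\hdc(P)$: since $P,Q\in\DD_k$ have equal sidelengths, combining $\hdc(Q)\ge c_1c_0^d\ell(Q)^d$ with $\hdc(P)\le 2\ell(P)^d$ gives a ratio bounded by a constant depending only on $n,d,c_1$. For the parent-child estimate (\ref{eq:child}), I would factor $\mu_k(B(Q))/\mu_{k-1}(B(Q^1)) = (\tau_k/\tau_{k-1})(\hdc(Q)/\hdc(Q^1))$. The second factor is bounded above and below by constants times $\rho^d$. For $\tau_k/\tau_{k-1}$, subadditivity applied to the partition of each parent into its children gives $\sum_{Q\in\DD_k}\hdc(Q)\ge \sum_{P\in\DD_{k-1}}\hdc(P)$, so $\tau_k\le\tau_{k-1}$; conversely, the per-child bound $\hdc(Q)\le 2\rho^d\ell(Q^1)^d\le (2\rho^d/(c_1c_0^d))\hdc(Q^1)$ together with $\#\Child(Q^1)\le c_n$ yields $\sum_Q\hdc(Q)\le (2c_n\rho^d/(c_1c_0^d))\sum_P\hdc(P)$, hence $\tau_k/\tau_{k-1}\gtrsim_{n,d,c_1}\rho^{-d}$. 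Multiplying the two factors gives a two-sided bound for $\mu_k(B(Q))/\mu_{k-1}(B(Q^1))$ with a single constant $C_0=C_0(n,d,c_1)$ that simultaneously works for (\ref{eq:neigh}) and (\ref{eq:child}).

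\textbf{Main obstacle.} The most delicate technical point in the plan above is the disjointness of the balls $B(Q)$ at a fixed level, which must be extracted carefully from the construction of \cite{kaenmaki2012existence}. If one prefers a more self-contained approach, the alternative hinted at in the introduction to this appendix is to build $\mu_k$ inductively: first perform the naive splitting $\mu_k(B(Q)) = \mu_{k-1}(B(Q^1))\cdot \hdc(Q)/\sum_{Q'\in\Child(Q^1)}\hdc(Q')$, which already yields (\ref{eq:poly}) and (\ref{eq:child}) but only a version of (\ref{eq:neigh}) whose constant could degrade from level to level across different-parent neighbors; then iteratively transfer mass between neighboring balls violating (\ref{eq:neigh}), exploiting the slack $\hdc(P)-\mu_k(B(P))$ on the poorer cube as absorption capacity. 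The hard part of this variant is to orchestrate the transfers so that (\ref{eq:poly}) and the parent-child bound survive with a level-independent constant, which is where the "recursive redistribution of wealth" mentioned in the appendix introduction (and Orponen's suggestion) becomes essential.
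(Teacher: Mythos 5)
Your explicit formula does prove Proposition \ref{propo-frostman} as literally stated, and by a genuinely different and much shorter route than the paper's. The disjointness of the balls $\{B(Q)\}_{Q\in\DD_k}$ that you flag as the main obstacle is not really one: the paper's own proof defines the auxiliary measure $\eta_k$ by prescribing its restriction to each $B(Q)$ separately and asserts $\eta_k(B(Q))$ equals the prescribed amount, so it relies on exactly the same disjointness (which does hold in the construction of \cite{kaenmaki2012existence}, where the centers at level $k$ are separated by an amount much larger than $2c_0\ell(Q)=\ell(Q)/50$). Granting this, your verification is correct: $\mu_k(B(Q))=\tau_k\hdc(Q)$, subadditivity of content gives $\tau_k\le 1$ and $\tau_k\le\tau_{k-1}$, and the two-sided comparison $c_1c_0^d\ell(Q)^d\le\hdc(Q)\lesssim\ell(Q)^d$ controls all the ratios in \eqref{eq:neigh} and \eqref{eq:child} with a single constant $C_0(n,d,c_1)$.

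The important caveat is that your sequence, while satisfying the five displayed properties, is unusable for the purpose the proposition serves. The appendix does not use \eqref{e:D}--\eqref{eq:child} alone: Lemmas \ref{lem:massstays} and \ref{lem:density parent} are statements about \emph{the specific sequence built in the proof}, and Lemma \ref{lem:massstays} (mass essentially never leaves $2B_Q$ after step $k$) is what makes the weak limit $\mu$ have $\supp\mu=E$ and be doubling. Your global renormalization forces the density $\mu_k(B(Q))/\hdc(Q)=\tau_k$ to be identical on all cubes of generation $k$, so mass drains from the parts of $E$ where $\hdc$ is nearly additive toward the parts where it is maximally subadditive. Concretely, for $d=1$ and $E=L\cup F$ with $L$ a segment and $F$ a square, $\sum_{Q\in\DD_k,\,Q\subset F}\hdc(Q)\sim\rho^{-k}$ forces $\tau_k\sim\rho^k\to 0$, so for a fixed $Q\subset L$ one gets $\mu_j(2B_Q)\to 0$ as $j\to\infty$ while $\mu_k(B(Q))$ stays fixed; Lemma \ref{lem:massstays} fails and the weak limit is supported on $F$ only, breaking Theorem \ref{theorem:Frostman measure}(\ref{i:spt}). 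This drainage is exactly what the paper's local, inductive construction is designed to prevent. Your ``alternative'' sketch is in fact the paper's actual argument, but it stops at precisely the hard step: showing the rich-to-poor transfers can be performed in a single pass without cascading (in the paper this rests on Lemma \ref{lem:poorrichdisjoint}, which uses the doubling of $\mu_{k-1}$ to rule out a cube being simultaneously rich and poor) and that \eqref{eq:poly} and \eqref{eq:child} survive the transfer. So: as a proof of the proposition verbatim your first construction is fine, but it cannot be substituted into the appendix, and the route that can is left unexecuted.
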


Note that since for all $P\in\Nei(Q)$ we have $Q\in\Nei(P)$, if the property \eqref{eq:neigh} holds for all $Q\in\DD_k$, then
\begin{equation}\label{eq:doubl}
	C_0^{-1}\mu_k(B(P))\le \mu_k(B(Q))\le C_0\,\mu_k(B(P))\quad\text{for all $P\in\Nei(Q)$}.
\end{equation}

We construct $\mu_k$ inductively. Let $Q_0=E\in\DD_0$; that is, $Q_0$ is the top cube. We define
\begin{equation*}
	\mu_0\coloneqq \hdc(Q_0) \cdot \frac{\dL^n|_{B(Q_0)}}{\dL^n(B(Q_0))}.
\end{equation*} 
Clearly, $\mu_0$ satisfies all the properties from \propref{propo-frostman}. Now, assume that $\mu_{k-1}$ has already been defined and satisfies the properties \eqref{e:D}--\eqref{eq:child}.
\subsubsection{Auxiliary measure $\eta_{k}$}
For $R \in \dD_{k-1}$ and for each $Q \in \Child(R)$, set 
\begin{equation}\label{e:def-mul}
	\eta_{k}|_{B(Q)}\coloneqq \left( \frac{\hdc(Q)}{\sum_{P \in \Child(R)} \hdc(P)} \cdot \mu_{k-1}(B(R)) \right) \frac{\dL^n|_{B(Q)}}{\dL^n(B(Q))}.
\end{equation} 
Observe that
\begin{equation}\label{eq:etachildrencons}
\sum_{Q\in\Child(R)} \eta_{k}(B(Q)) = \mu_{k-1}(B(R)).
\end{equation}

\begin{lemma}\label{l:M-mul}
	We have $\eta_k(\R^n)=\mu_{k-1}(\R^n)=\hdc(E)$, so that condition \eqref{e:D} holds for $\eta_k$.
\end{lemma}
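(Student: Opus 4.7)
My plan is to compute $\eta_k(\mathbb{R}^n)$ directly from the definition \eqref{e:def-mul}. Each summand $\eta_k|_{B(Q)}$ is a constant multiple of the normalized Lebesgue measure on $B(Q)$ with total mass exactly
\begin{equation*}
C_Q \;:=\; \frac{\hdc(Q)}{\sum_{P \in \Child(R)} \hdc(P)} \, \mu_{k-1}(B(R)),
\end{equation*}
where $R \in \dD_{k-1}$ is the parent of $Q$. First I would regroup the sum over $Q \in \dD_k$ by parent $R$: the ratios $\hdc(Q)/\sum_{P \in \Child(R)} \hdc(P)$ sum to one over $Q \in \Child(R)$, so this collapses to
\begin{equation*}
\eta_k(\mathbb{R}^n) \;=\; \sum_{R \in \dD_{k-1}} \sum_{Q \in \Child(R)} C_Q \;=\; \sum_{R \in \dD_{k-1}} \mu_{k-1}(B(R)).
\end{equation*}

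The remaining step is to show $\sum_{R \in \dD_{k-1}} \mu_{k-1}(B(R)) = \mu_{k-1}(\mathbb{R}^n)$, after which the induction hypothesis \eqref{e:D} yields $\mu_{k-1}(\mathbb{R}^n) = \hdc(E)$ as required. I would deduce this identity from the support property \eqref{eq:supp}, namely $\supp \mu_{k-1} = \bigcup_{R \in \dD_{k-1}} B(R)$, together with the essential disjointness (with respect to $\mu_{k-1}$) of the Christ--David balls $\{B(R)\}_{R \in \dD_{k-1}}$ at a fixed scale, as built into the construction of the sequence $\mu_k$. The base case $k=1$ is trivial since $\dD_0 = \{Q_0\}$ consists of a single cube and $\mu_0$ is supported on $B(Q_0)$.

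The main obstacle will be formalizing this essential disjointness: although $B(R) \cap E$ and $B(R') \cap E$ are disjoint for distinct $R, R' \in \dD_{k-1}$ by property (3) of Lemma \ref{theorem:christ}, the balls themselves may in principle intersect outside $E$, and $\mu_{k-1}$ is absolutely continuous with respect to Lebesgue measure. I expect this to be handled as an additional inductive invariant tracked alongside \eqref{e:D}--\eqref{eq:child} throughout the construction of the sequence $\mu_k$, ensuring that each $\mu_{k-1}$ assigns zero mass to such overlaps, so that the ``partition of mass'' identity \eqref{eq:etachildrencons} remains consistent and can be summed over $R \in \dD_{k-1}$ to finish the proof.
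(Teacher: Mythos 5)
Your proof is correct and is essentially the paper's own argument: regroup the sum over $Q\in\dD_k$ by parent $R$, use \eqref{eq:etachildrencons}, and sum over $R\in\dD_{k-1}$. The ``main obstacle'' you flag is not actually one: the balls $B(R)=B(x_R,c_0\ell(R))$ of a fixed generation are pairwise disjoint because the centers of the Christ--David cubes of generation $k-1$ form an $\ell(R)$-separated net while $c_0=1/500$, and in any case the total-mass identity $\eta_k(\R^n)=\sum_Q C_Q$ holds by linearity since $\eta_k$ is defined as a superposition of pieces, so no additional inductive invariant is required.
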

\begin{proof}
	We have
	\begin{multline*}
		\eta_k(\R^n) = \sum_{Q \in \dD_k} \eta_k(B(Q)) = \sum_{R \in \dD_{k-1}} \sum_{Q \in \Child(R)} \eta_k(B(Q))\\
		\overset{\eqref{eq:etachildrencons}}{=} \sum_{R \in \dD_{k-1}} \mu_{k-1}(B(R))= \mu_{k-1}(\R^n) = \hdc(E).
	\end{multline*}
\end{proof}

\begin{lemma}\label{l:A-mul}
	The polynomial growth condition \eqref{eq:poly} holds for $\eta_k$.
\end{lemma}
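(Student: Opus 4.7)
The plan is to derive both inequalities in \eqref{eq:poly} directly from the definition \eqref{e:def-mul} of $\eta_k$, using the inductive hypothesis on $\mu_{k-1}$ together with basic properties of Hausdorff content. Setting $R = Q^1$ (the parent of $Q$ in $\dD_{k-1}$), the definition \eqref{e:def-mul} gives
\begin{equation*}
\eta_k(B(Q)) = \frac{\hdc(Q)}{\sum_{P \in \Child(R)} \hdc(P)} \cdot \mu_{k-1}(B(R)).
\end{equation*}

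Next I would apply the inductive hypothesis \eqref{eq:poly} to $\mu_{k-1}$ at the cube $R$, obtaining $\mu_{k-1}(B(R)) \le \hdc(R)$. Since the children of $R$ form a disjoint partition of $R$ by Lemma \ref{theorem:christ}(1)--(2), countable subadditivity of the $d$-dimensional Hausdorff content yields
\begin{equation*}
\hdc(R) \le \sum_{P \in \Child(R)} \hdc(P).
\end{equation*}
Substituting these two bounds into the expression for $\eta_k(B(Q))$ causes the ratio to collapse and one obtains $\eta_k(B(Q)) \le \hdc(Q)$, which is the first inequality.

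The second inequality $\hdc(Q) \le 2\ell(Q)^d$ is a standard fact about Hausdorff content: since $Q \subset B_Q = B(x_Q, \ell(Q))$ by Lemma \ref{theorem:christ}(3), covering $Q$ by the single set $B_Q$ gives the bound (up to the customary dimensional normalization absorbed into the constant). I see no real obstacle in this lemma; the argument reduces to a one-line computation, with the inductive hypothesis and subadditivity doing all the work. The only minor point worth noting is that the base case $k=0$ is handled separately by the direct verification $\mu_0(B(Q_0)) = \hdc(Q_0)$, so the induction that \eqref{eq:poly} holds for $\mu_{k-1}$ is legitimate when we invoke it here.
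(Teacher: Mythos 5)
Your argument is correct and is essentially identical to the paper's proof: both use the inductive bound $\mu_{k-1}(B(R)) \le \hdc(R)$ together with subadditivity $\hdc(R) \le \sum_{P \in \Child(R)} \hdc(P)$ to collapse the ratio in \eqref{e:def-mul}, and then bound $\hdc(Q)$ by $2\ell(Q)^d$ via $Q \subset B_Q$. Nothing further is needed.
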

\begin{proof}
	Let $R\in\DD_{k-1}$ and $Q\in\Child(R)$. Since \eqref{eq:poly} holds for $\mu_{k-1}$, we have $\mu_{k-1}(B(R)) \leq \hdc(R) \leq \sum_{P \in \Child(R)} \hdc(P)$, by subadditivity of $\hdc$. Then,
	\begin{equation}\label{e:A-mul}
		\eta_k(B(Q)) = \hdc(Q) \cdot \frac{\mu_{k-1} (B(R))}{\sum_{P \in \Child(R)} \hdc(P) } \leq \hdc(Q) \leq 2 \ell(Q)^d.
	\end{equation}
	
\end{proof}

\begin{lemma}\label{lem:childneigheta}
	The doubling condition \eqref{eq:neigh} holds for $\eta_k$ whenever both $Q$ and $P$ in $\dD_k$ have a common parent $R \in \dD_{k-1}$. Moreover, for all $Q\in\DD_k$
	\begin{equation}\label{eq:child2}
	\eta_k(B(Q))\ge c_2\,\mu_{k-1}(B(Q^1))
	\end{equation}
	for $0<c_2<1$ depending on $c_1,n,d$. In particular \eqref{eq:child} holds for $\eta_k$ assuming $C_0$ is big enough.
\end{lemma}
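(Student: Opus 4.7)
The proof is essentially a direct computation from the explicit formula \eqref{e:def-mul} defining $\eta_k$, combined with the lower content regularity of $E$ and the polynomial growth bound $\hdc(P) \le 2\ell(P)^d$ from \eqref{eq:poly}. There is no real obstacle; the only mildly subtle point is keeping track of which constants depend on what.

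For the first claim, fix $R \in \dD_{k-1}$ and two children $P, Q \in \Child(R)$. Since $\ell(P) = \ell(Q) = \rho\,\ell(R)$, the formula \eqref{e:def-mul} gives
\[
\frac{\eta_k(B(Q))}{\eta_k(B(P))} \;=\; \frac{\hdc(Q)}{\hdc(P)}.
\]
The upper bound \eqref{eq:poly} applied to $Q$ yields $\hdc(Q) \le 2\ell(Q)^d$, while the lower content $(d,c_1)$-regularity of $E$ applied in the ball $B(Q) \subset B_P$ (or equivalently the trivial bound $\hdc(P) \ge \hdc(E \cap B(P)) \ge c_1(c_0\ell(P))^d$) gives $\hdc(P) \gtrsim_{c_1} \ell(P)^d$. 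Combining and recalling $\ell(P)=\ell(Q)$ yields the doubling ratio bounded by a constant depending only on $c_1, n, d$, so any $C_0$ larger than this constant suffices for \eqref{eq:neigh} on children of a common parent.

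For the bound \eqref{eq:child2}, write $R = Q^1$ and observe that $\#\Child(R) \le c_n$ by \eqref{eq:childneicard}. Using again $\hdc(P) \le 2\ell(P)^d = 2\ell(Q)^d$ for each $P \in \Child(R)$, and the lower content regularity bound $\hdc(Q) \gtrsim_{c_1} \ell(Q)^d$, we deduce
\[
\frac{\hdc(Q)}{\sum_{P \in \Child(R)} \hdc(P)} \;\ge\; \frac{c\, c_1\,\ell(Q)^d}{2 c_n\,\ell(Q)^d} \;=\; c_2
\]
for some $c_2 = c_2(c_1,n,d) \in (0,1)$. Plugging this into the definition \eqref{e:def-mul} gives $\eta_k(B(Q)) \ge c_2\,\mu_{k-1}(B(R))$, as required.

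Finally, for \eqref{eq:child}: the lower bound is exactly \eqref{eq:child2} provided $C_0 \ge c_2^{-1}$. The upper bound is immediate from \eqref{e:def-mul}, since $\hdc(Q)$ appears as one of the terms in the denominator sum $\sum_{P \in \Child(R)} \hdc(P)$, so the fraction is at most $1$ and $\eta_k(B(Q)) \le \mu_{k-1}(B(R))$. Choosing $C_0 = \max(2/c_1,\, c_2^{-1})$ (which depends only on $n, d, c_1$) makes all the stated inequalities hold simultaneously.
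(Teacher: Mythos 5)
Your proof is correct and follows essentially the same route as the paper: a direct computation from the defining formula \eqref{e:def-mul}, using lower content regularity to bound $\hdc(Q)$ from below by $c_1(c_0\ell(Q))^d$, the trivial upper bound $\hdc(P)\lesssim\ell(P)^d$, and the cardinality bound $\#\Child(R)\le c_n$. The only (immaterial) difference is that the paper bounds the denominator by $c_n\hdc(R)$ rather than by $2c_n\ell(Q)^d$, and derives the sibling doubling via $\eta_k(B(P))\le\mu_{k-1}(B(R))\le c_2^{-1}\eta_k(B(Q))$ instead of your direct ratio $\hdc(Q)/\hdc(P)$.
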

\begin{proof}
	Let $R \in \dD_{k-1}$ and $Q,P \in \Child(R)$. Using lower content regularity of $E$ we have
	\begin{equation}\label{eq:lcrcube}
		\hdc(Q) \geq \hdc(B(Q)\cap E) \geq c_1 (c_0\ell(Q))^d = \fr{c_1 (c_0)^d}{2 \rho^d}\cdot 2 (\ell(R))^d\ge \fr{c_1 (c_0)^d}{2 \rho^d} \hdc(R).
	\end{equation}
	Recall that $\# \Child(R) \leq c_n$. We then compute
	\begin{multline*}	
		\eta_k (B(Q))  = \frac{\hdc(Q)}{\sum_{P' \in \Child(R)} \hdc(P')} \cdot \mu_{k-1}(B(R)) \nonumber \\
		 \geq \fr{c_1 (c_0)^d}{2 \rho^d} \cdot \frac{\hdc(R)}{c_n \hdc(R)} \cdot \mu_{k-1}(B(R)) \nonumber \
		= \fr{c_1 (c_0)^d}{2 c_n \rho^d} \mu_{k-1}(B(R)).
	\end{multline*}
	This shows \eqref{eq:child2}. To see \eqref{eq:neigh} note that
	\begin{equation*}
		\eta_k(B(P)) \leq \mu_{k-1}(B(R)) \le c_2 \eta_k(B(Q))\le C_0 \eta_k(B(Q)),
	\end{equation*}
	so \eqref{eq:neigh} holds true whenever $P$ is a sibling of $Q$. 
\end{proof}

	The lemmas above say that $\eta_k$ satisfies almost all the required properties, except that instead of the full doubling condition it only satisfies a ``dyadic doubling'' condition. We need to make some adjustments. 

\newcommand{\Poor}{\mathsf{Poor}}
\newcommand{\Rich}{\mathsf{Rich}}

\subsubsection{Definition of $\mu_k$}
We define families of cubes where the doubling condition \eqref{eq:doubl} fails. For each $Q\in\DD_k$ let
\begin{equation*}
\Rich(Q) = \{P\in\Nei(Q)\ :\ \eta_k(B(Q))< 4C_0^{-1}\, \eta_{k}(B(P)) \},
\end{equation*}
and similarly
\begin{equation*}
\Poor(Q) = \big\{P\in\Nei(Q)\ :\ \eta_k(B(Q))> \frac{C_0}{4}\, \eta_{k}(B(P)) \big\}.
\end{equation*}
Note that 
\begin{equation}\label{eq:poorcard}
\#\Poor(Q) +\#\Rich(Q)\le 2\, \#\Nei(Q)\le 2\,c_n.
\end{equation}
Set 
\begin{align*}
	\Poor_k &= \{Q\in\DD_k \ :\ \Rich(Q)\neq\varnothing\},\\
	\Rich_k &= \{Q\in\DD_k \ :\ \Poor(Q)\neq\varnothing\}.
\end{align*} 
\begin{lemma}\label{lem:poorrichdisjoint}
	We have $\Poor_k\cap\Rich_k=\varnothing$.
\end{lemma}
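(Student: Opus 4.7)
The plan is to argue by contradiction, exploiting the inductive doubling of $\mu_{k-1}$ together with the explicit form of $\eta_k$ from \eqref{e:def-mul}. Suppose towards a contradiction that $Q \in \Poor_k \cap \Rich_k$, and pick $P \in \Rich(Q)$ and $P' \in \Poor(Q)$. Unwinding the definitions,
\begin{equation*}
\eta_k(B(P)) > \tfrac{C_0}{4}\,\eta_k(B(Q)) > \tfrac{C_0^2}{16}\,\eta_k(B(P')),
\end{equation*}
so the ratio $\eta_k(B(P))/\eta_k(B(P'))$ strictly exceeds $C_0^2/16$. My strategy is to bound this same ratio from above by a quantity that is merely \emph{linear} in $C_0$. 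The comparison then forces $C_0 < 16/c_2$, contradicting a choice of $C_0 \geq 16/c_2$ fixed once and for all at the start of the construction.

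The geometric step — really the only content of the argument — is to observe that the parents $P^1, (P')^1 \in \DD_{k-1}$ are neighbors. Since $P, P' \in \Nei(Q)$ and $\diam(Q) \leq 2\ell(Q)$, the triangle inequality gives
\begin{equation*}
\dist(P, P') \leq \dist(P, Q) + \diam(Q) + \dist(Q, P') \leq 4\,\ell(Q).
\end{equation*}
As $\ell(P^1) = \ell(Q)/\rho = 1000\,\ell(Q)$, this yields $\dist(P^1, (P')^1) \leq \dist(P, P') < \ell(P^1)$, and hence $P^1 \in \Nei((P')^1)$ (the case $P^1 = (P')^1$ is trivial and harmless).

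Applying the inductive hypothesis \eqref{eq:neigh} for $\mu_{k-1}$ then gives $\mu_{k-1}(B(P^1)) \leq C_0\,\mu_{k-1}(B((P')^1))$. Combining the trivial upper bound $\eta_k(B(P)) \leq \mu_{k-1}(B(P^1))$ coming directly from \eqref{e:def-mul} with the lower bound $\eta_k(B(P')) \geq c_2\,\mu_{k-1}(B((P')^1))$ from \eqref{eq:child2}, one chains
\begin{equation*}
\frac{\eta_k(B(P))}{\eta_k(B(P'))} \leq \frac{\mu_{k-1}(B(P^1))}{c_2\,\mu_{k-1}(B((P')^1))} \leq \frac{C_0}{c_2}.
\end{equation*}
Comparing with the previously obtained lower bound $C_0^2/16$ yields $C_0 < 16/c_2$, contradicting the assumed choice of $C_0$, and thus $\Poor_k \cap \Rich_k = \varnothing$.

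I do not foresee a genuine obstacle here: the substance of the argument is the single line $\dist(P^1,(P')^1) \le 4\ell(Q) < \ell(P^1)$ combined with one application of the inductive doubling. The only bookkeeping required is to make sure that the constraint $C_0 \geq 16/c_2$ is compatible with every other appearance of $C_0$ in Proposition \ref{propo-frostman}; since those constraints only demand that $C_0$ dominate an absolute multiple of $1/c_2$, a single sufficiently large choice of $C_0$ (depending only on $c_1, n, d$) works throughout the inductive construction of the sequence $\{\mu_k\}$.
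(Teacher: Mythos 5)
Your proof is correct and follows essentially the same route as the paper's: chain the two defining inequalities to get a factor $C_0^2/16$ between the rich and poor neighbors, observe that their parents are neighbors in $\DD_{k-1}$, and then use $\eta_k(B(P))\le\mu_{k-1}(B(P^1))$, the lower bound \eqref{eq:child2}, and the inductive doubling \eqref{eq:neigh} to cap that factor by $C_0/c_2$, forcing $C_0<16/c_2$. The only difference is presentational (the paper states the contradiction as a violation of the lower doubling bound \eqref{eq:doubl}, and you additionally spell out the distance estimate showing $P^1\in\Nei((P')^1)$, which the paper asserts without proof).
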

\begin{proof}
	Suppose that $Q\in\Poor_k\cap\Rich_k$. Then, there exist $P_p, P_r\in\Nei(Q)$ such that
	\begin{equation*}
		\frac{C_0}{4}\,\eta_{k}(B(P_p))< \eta_k(B(Q)) < 4C_0^{-1}\, \eta_{k}(B(P_r)).
	\end{equation*}
	In particular, $\eta_{k}(B(P_p)) < 16C_0^{-2}\,  \eta_{k}(B(P_r))$. Let $R_p, R_r$ be the parents of $P_p, P_r$, respectively. Note that $R_p\in\Nei(R_r)$. Then, by \eqref{eq:child2}
	\begin{equation*}
		\mu_{k-1}(B(R_p))\le c_2^{-1}\,\eta_{k}(B(P_p))<16c_2^{-1}C_0^{-2}\eta_{k}(B(P_r))\le C_0^{-1}\mu_{k-1}(B(R_r)),
	\end{equation*}
	assuming $C_0$ big enough. This contradicts the doubling property \eqref{eq:doubl} for $\mu_{k-1}$. Hence, $\Poor_k\cap\Rich_k=\varnothing$.
\end{proof}

We define the measure $\mu_k$ with $\supp\mu_k = \supp\eta_k = \bigcup_{Q \in \dD_k} B(Q)$ in the following way. For each $Q\in\DD_k$ we set
\begin{equation*}
	\mu_k|_{B(Q)} = \begin{cases}
		\big(1-  C_0^{-1}\cdot \#\Poor(Q) \big) \eta_k|_{B(Q)} & \text{if $Q\in\Rich_k$,}\\
		\big(1 + C_0^{-1}\sum_{P\in\Rich(Q)} \frac{\eta_k(B(P))}{\eta_k(B(Q))}\big) \eta_k|_{B(Q)} & \text{if $Q\in\Poor_k$,}\\
		\eta_k|_{B(Q)} & \text{if $Q\notin\Rich_k\cup\Poor_k$.}
	\end{cases}
\end{equation*}

\begin{lemma}
	Property \eqref{eq:child} holds for $\mu_k$.
\end{lemma}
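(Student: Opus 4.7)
The plan is to read off \eqref{eq:child} for $\mu_k$ from the analogous bounds already available for $\eta_k$, by exploiting the fact that $\mu_k$ is obtained from $\eta_k$ by a bounded multiplicative perturbation on each ball $B(Q)$. Concretely, I would first record the two-sided bound
\begin{equation*}
c_2\,\mu_{k-1}(B(Q^1))\;\le\;\eta_k(B(Q))\;\le\;\mu_{k-1}(B(Q^1))\qquad\text{for every }Q\in\DD_k,
\end{equation*}
where the lower inequality is exactly \eqref{eq:child2} and the upper one follows immediately from the definition \eqref{e:def-mul}, since $\hd_\infty(Q)\le\sum_{P\in\Child(Q^1)}\hd_\infty(P)$. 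Once $C_0$ is chosen with $C_0\ge c_2^{-1}$, this already gives \eqref{eq:child} for $\eta_k$ with constant $C_0$ on the lower side and implicit constant $1$ on the upper side.

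Next, I would analyse the three cases in the definition of $\mu_k$. For $Q\in\Rich_k$, the density factor is $1-C_0^{-1}\#\Poor(Q)$, which by \eqref{eq:poorcard} lies in $[1-2c_n/C_0,\,1]$. Taking $C_0\ge 4c_n$ makes this factor at least $1/2$, so $\mu_k(B(Q))\in[\tfrac12\eta_k(B(Q)),\eta_k(B(Q))]$, and combining with the display above yields \eqref{eq:child} with constants $(2/c_2,1)$ (so in particular a lower bound $C_0^{-1}\mu_{k-1}(B(Q^1))$ once $C_0\ge 2/c_2$). For $Q\notin\Rich_k\cup\Poor_k$, $\mu_k=\eta_k$ on $B(Q)$ and there is nothing to prove.

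The only real work is in the Poor case, where I must prove the multiplicative factor
\begin{equation*}
1+C_0^{-1}\sum_{P\in\Rich(Q)}\frac{\eta_k(B(P))}{\eta_k(B(Q))}
\end{equation*}
is bounded by a constant depending only on $n,d,c_1$. For every $P\in\Rich(Q)\subset\Nei(Q)$ the parent $P^1$ is either equal to $Q^1$ or a neighbour of $Q^1$, since $\dist(P^1,Q^1)\le\dist(P,Q)\le\ell(Q)=\rho\,\ell(Q^1)$. Hence by the inductive doubling estimate \eqref{eq:doubl} for $\mu_{k-1}$, we have $\mu_{k-1}(B(P^1))\le C_0\mu_{k-1}(B(Q^1))$. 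Combining the upper bound $\eta_k(B(P))\le\mu_{k-1}(B(P^1))$ with the lower bound $\eta_k(B(Q))\ge c_2\,\mu_{k-1}(B(Q^1))$ and \eqref{eq:poorcard} gives
\begin{equation*}
\sum_{P\in\Rich(Q)}\frac{\eta_k(B(P))}{\eta_k(B(Q))}\;\le\;2c_n\cdot\frac{C_0}{c_2},
\end{equation*}
so the perturbation factor is at most $1+2c_n/c_2$, independent of $C_0$. Combined with the analogous bounds on $\eta_k(B(Q))$, this yields $\mu_k(B(Q))\lesssim_{n,d,c_1}\mu_{k-1}(B(Q^1))$ and $\mu_k(B(Q))\ge\eta_k(B(Q))\ge c_2\,\mu_{k-1}(B(Q^1))\ge C_0^{-1}\mu_{k-1}(B(Q^1))$, which is precisely \eqref{eq:child}. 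The only obstacle worth flagging is the bookkeeping of constants: $C_0$ must be fixed large enough at the very start (depending on $c_n$, $c_2$, and the upper-bound constant for $\eta_k$) so that the induction closes with the same $C_0$ it began with; this is why the key estimate in the Poor case is arranged to be independent of $C_0$.
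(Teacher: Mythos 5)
Your proof is correct and follows essentially the same route as the paper's: both arguments treat $\mu_k$ as a bounded multiplicative perturbation of $\eta_k$, use the two-sided comparison $c_2\,\mu_{k-1}(B(Q^1))\le\eta_k(B(Q))\le\mu_{k-1}(B(Q^1))$, and control the Poor-case correction term via $P^1\in\Nei(Q^1)$ together with the inductive doubling property of $\mu_{k-1}$. Your version is merely more explicit about the constant bookkeeping (e.g.\ isolating the factor $1+2c_n/c_2$ independent of $C_0$), which is a fine presentational choice but not a different argument.
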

\begin{proof}
	Observe that by \eqref{eq:poorcard}, as soon as $C_0$ is large enough depending on $n$, we have
	\begin{equation}\label{eq:muketak}
	\mu_k \ge \frac{1}{2} \eta_k.
	\end{equation}
	Thus, we immediately get from \eqref{eq:child2} that $\mu_k$ satisfies
	\begin{equation*}
	\mu_k(B(Q))\ge \fr{c_2}{2}\mu_{k-1}(B(Q^1))\ge C_0^{-1}\mu_{k-1}(B(Q^1)).
	\end{equation*}
	This shows one of the inequalities in \eqref{eq:child}. Now we prove that
	\begin{equation}
	\mu_k(B(Q))\lesssim \mu_{k-1}(B(Q^1)).
	\end{equation}
	For $Q\notin\Poor_k$ this is trivial, because then by the definition of $\mu_k$ and $\eta_k$ we have $\mu_k(B(Q))\le \eta_{k}(B(Q))\le \mu_{k-1}(B(Q^1))$. On the other hand, for $Q\in\Poor_k$
	\begin{multline*}
	\mu_k(B(Q)) = \eta_{k}(B(Q))+C_0^{-1}\sum_{P\in\Rich(Q)}\eta_{k}(B(P))\\
	\le \mu_{k-1}(B(Q^1)) + C_0^{-1}\sum_{P\in\Rich(Q)}\mu_{k-1}(B(P^1)).
	\end{multline*}
	For each $P\in\Rich(Q)$ we have $P^1\in\Nei(Q^1)$, and so by \eqref{eq:neigh} applied to $\mu_{k-1}$ we get
	\begin{multline*}
	\mu_{k-1}(B(Q^1)) + C_0^{-1}\sum_{P\in\Rich(Q)}\mu_{k-1}(B(P^1))\\
	\le \mu_{k-1}(B(Q^1)) + \#\Rich(Q)\mu_{k-1}(B(Q^1))\lesssim \mu_{k-1}(B(Q^1)).
	\end{multline*}
	This finishes the proof.
\end{proof}

\begin{lemma}
	We have $\mu_k(\R^n)=\eta_{k}(\R^n)=\hdc(E),$ so that condition \eqref{e:D} holds for $\mu_k$.
\end{lemma}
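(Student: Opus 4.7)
My plan is to show $\mu_k(\R^n)=\eta_k(\R^n)$ by a double-counting argument that tracks mass transfers between rich and poor cubes, then invoke Lemma \ref{l:M-mul} to identify this common value with $\hdc(E)$.

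First I would sum the definition of $\mu_k$ cube by cube and compare with $\eta_k$. Cubes in $\DD_k\setminus(\Rich_k\cup\Poor_k)$ contribute zero to the difference, so
\begin{equation*}
\mu_k(\R^n)-\eta_k(\R^n)
= -C_0^{-1}\sum_{Q\in\Rich_k}\#\Poor(Q)\,\eta_k(B(Q))
+ C_0^{-1}\sum_{Q\in\Poor_k}\sum_{P\in\Rich(Q)}\eta_k(B(P)).
\end{equation*}
The first sum collects the total mass given away by rich cubes (each $Q\in\Rich_k$ loses $C_0^{-1}\eta_k(B(Q))$ per poor neighbor), while the second collects the total mass received by poor cubes.

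The key step is a symmetry observation: unfolding the definitions of $\Rich(Q)$ and $\Poor(P)$ shows that, for $P\in\Nei(Q)=\Nei(P)\ni Q$, we have $P\in\Rich(Q)$ iff $\eta_k(B(Q))<4C_0^{-1}\eta_k(B(P))$ iff $Q\in\Poor(P)$. Using this duality to swap the order of summation in the gain term yields
\begin{equation*}
\sum_{Q\in\Poor_k}\sum_{P\in\Rich(Q)}\eta_k(B(P))
= \sum_{P\in\DD_k}\eta_k(B(P))\cdot\#\{Q\in\Poor_k:Q\in\Poor(P)\}
= \sum_{P\in\Rich_k}\#\Poor(P)\,\eta_k(B(P)),
\end{equation*}
where in the last equality only $P\in\Rich_k$ contribute since otherwise $\Poor(P)=\varnothing$. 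This cancels the loss term exactly, giving $\mu_k(\R^n)=\eta_k(\R^n)$, and then Lemma \ref{l:M-mul} identifies this with $\hdc(E)$.

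I don't foresee a real obstacle: the argument is pure bookkeeping. The only care-point is verifying the $\Rich/\Poor$ duality and that the restriction to $P\in\Rich_k$ in the reindexed sum is automatic from the vanishing of $\#\Poor(P)$ when $P\notin\Rich_k$. (Implicit in writing $\mu_k(\R^n)=\sum_{Q\in\DD_k}\mu_k(B(Q))$ is that the balls $\{B(Q)\}_{Q\in\DD_k}$ are treated as essentially disjoint, which is consistent with the way $\eta_k$ was defined piecewise on these balls in \eqref{e:def-mul}.)
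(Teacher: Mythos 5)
Your proof is correct and follows essentially the same route as the paper: decompose $\mu_k(\R^n)$ cube by cube, isolate the mass lost by rich cubes and gained by poor cubes, and cancel them. The paper leaves the cancellation as an unexplained equality in the displayed computation, whereas you justify it via the duality $P\in\Rich(Q)\Leftrightarrow Q\in\Poor(P)$ (which indeed follows directly from the definitions and the symmetry of $\Nei$), so your write-up is, if anything, slightly more complete.
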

\begin{proof}
	We have
	\begin{multline*}
		\mu_k(\R^n) = \sum_{Q\in\DD_k}\mu_k(B(Q)) = \sum_{Q\in\Rich_k} \big(1-  C_0^{-1}\cdot \#\Poor(Q) \big)\eta_{k}(B(Q))\\
		+ \sum_{Q\in\Poor_k} \big(\eta_k(B(Q)) + C_0^{-1}\sum_{P\in\Rich(Q)} \eta_k(B(P))\big) + \sum_{Q\notin\Rich_k\cup\Poor_k}\eta_k(B(Q))\\
		= \sum_{Q\in\DD_k} \eta_k(B(Q)) - C_0^{-1}\sum_{Q\in\Rich_k}\#\Poor(Q)\eta_{k}(B(Q)) + C_0^{-1}\sum_{Q\in\Poor_k} \sum_{P\in\Rich(Q)} \eta_k(B(P))\\
		 = \sum_{Q\in\DD_k} \eta_k(B(Q)) = \eta_k(\R^n)=\hdc(E).
	\end{multline*}
\end{proof}

\begin{lemma}
	The polynomial growth condition \eqref{eq:poly} holds for $\mu_k$.
\end{lemma}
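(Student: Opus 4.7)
The plan is to split into cases according to which branch of the definition of $\mu_k$ applies. The second inequality $\hdc(Q)\le 2\ell(Q)^d$ in \eqref{eq:poly} is already part of Lemma \ref{l:A-mul} (see \eqref{e:A-mul}), so I only need to establish $\mu_k(B(Q))\le \hdc(Q)$.

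If $Q\notin\Poor_k$, then either $\mu_k|_{B(Q)}=\eta_k|_{B(Q)}$ (when $Q\notin\Rich_k\cup\Poor_k$) or $\mu_k|_{B(Q)}=(1-C_0^{-1}\#\Poor(Q))\eta_k|_{B(Q)}$ (when $Q\in\Rich_k$). In both subcases the scaling factor is at most $1$, so $\mu_k(B(Q))\le\eta_k(B(Q))\le\hdc(Q)$ by Lemma \ref{l:A-mul}. These cases are immediate.

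The only nontrivial case is $Q\in\Poor_k$, where
\[ \mu_k(B(Q)) = \eta_k(B(Q)) + C_0^{-1}\sum_{P\in\Rich(Q)}\eta_k(B(P)). \]
I plan to show both terms are of order $C_0^{-1}\hdc(Q)$, so that the total is absorbed by $\hdc(Q)$ once $C_0$ is chosen large enough in terms of $n,d$ and $c_1$. For the first term I exploit the very definition of $\Poor_k$: picking any $P_0\in\Rich(Q)$, I obtain
\[ \eta_k(B(Q))<4C_0^{-1}\eta_k(B(P_0))\le 4C_0^{-1}\hdc(P_0)\le 8C_0^{-1}\ell(Q)^d, \]
using Lemma \ref{l:A-mul} together with $\ell(P_0)=\ell(Q)$ since $P_0\in\Nei(Q)\subset\DD_k$. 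For the second term I apply Lemma \ref{l:A-mul} termwise, together with $\#\Nei(Q)\le c_n$ from \eqref{eq:childneicard}, to get $\sum_{P\in\Rich(Q)}\eta_k(B(P))\le 2c_n\ell(Q)^d$. Finally, lower content regularity of $E$ combined with $B(Q)\cap E\subset Q$ yields $\hdc(Q)\ge c_1(c_0\ell(Q))^d$, which converts both $\ell(Q)^d$ factors into multiples of $\hdc(Q)$. Assembling the estimates gives
\[ \mu_k(B(Q))\le \frac{8+2c_n}{C_0\, c_1 c_0^d}\,\hdc(Q), \]
and choosing $C_0\ge C(n,d,c_1)$ sufficiently large closes the proof.

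The only real work is the case $Q\in\Poor_k$, and even there the estimate is essentially arithmetic. The key conceptual point to make precise is that the very hypothesis that $Q$ has a much richer neighbor forces $\eta_k(B(Q))$ itself to occupy at most a fraction $C_0^{-1}$ of the natural budget $\ell(Q)^d$, providing exactly the headroom needed to absorb the redistributed mass $C_0^{-1}\sum_{P\in\Rich(Q)}\eta_k(B(P))$ coming from its rich neighbors. The choice of $C_0$ here is compatible with the other places in the construction (such as Lemma \ref{lem:poorrichdisjoint} and the bound $\#\Poor(Q)\le c_n$ ensuring $\mu_k\ge 0$) where $C_0$ was required to be sufficiently large.
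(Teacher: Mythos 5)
Your proof is correct and follows essentially the same route as the paper: the trivial case $Q\notin\Poor_k$ via Lemma \ref{l:A-mul}, and for $Q\in\Poor_k$ the observation that having a rich neighbor forces $\eta_k(B(Q))\lesssim C_0^{-1}\ell(Q)^d$, combined with the bound $\#\Rich(Q)\le c_n$ on the redistributed mass, lower content regularity $\hdc(Q)\gtrsim c_1\ell(Q)^d$, and a sufficiently large choice of $C_0$. No gaps; the constants differ from the paper's only in inessential ways.
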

\begin{proof}
	Observe that for $Q\in\DD_k\setminus\Poor_k$ we have $\mu_k(B(Q))\le\eta_{k}(B(Q))$, so in this case \eqref{eq:poly} follows from \lemref{l:A-mul}. Assume that $Q\in\Poor_k$. Then, there exists $P\in\Nei(Q)$ such that
	\begin{equation*}
	\eta_k(B(Q))\le 2 C_0^{-1}\eta_k(B(P))\le 4 C_0^{-1}\ell(Q)^d,
	\end{equation*}
	where in the last inequality we used again \lemref{l:A-mul}. Then,
	\begin{multline*}
	\mu_k(B(Q)) = \eta_k(B(Q)) + C_0^{-1} \sum_{P\in\Rich(Q)} \eta_k(B(P))\\
	 \le 4C_0^{-1}\ell(Q)^d + \#\Rich(Q)\cdot 2C_0^{-1}\ell(Q)^d \overset{\eqref{eq:poorcard}}{\lesssim_n} C_0^{-1}\ell(Q)^d.
	\end{multline*}
	Recalling that $\hdc(Q)\gtrsim c_1\ell(Q)^d$ by \eqref{eq:lcrcube}, we get that $\mu_k(B(Q))\le \hdc(Q)\le 2\ell(Q)^d$ assuming $C_0$ large enough depending on $c_1,n,d$.
\end{proof}

\begin{lemma}
	The doubling condition \eqref{eq:neigh} holds for $\mu_k$.
\end{lemma}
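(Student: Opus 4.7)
The plan is to show $\mu_k(B(Q)) \le C_0\,\mu_k(B(P))$ for $Q\in\DD_k$ and $P\in\Nei(Q)$; the reverse inequality then follows by symmetry. I would split into three cases according to the ratio $\rho := \eta_k(B(Q))/\eta_k(B(P))$: the \emph{large-ratio} case $\rho > C_0/4$, the \emph{moderate-ratio} case $4/C_0 \le \rho \le C_0/4$, and the \emph{small-ratio} case $\rho < 4/C_0$.

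In the large-ratio case we have $P\in\Poor(Q)$ and $Q\in\Rich(P)$, so in particular $Q\in\Rich_k$ and $P\in\Poor_k$. The redistribution in the definition of $\mu_k$ was designed precisely for this situation: retaining only the contribution of $Q$ to the poor-neighbor sum on $B(P)$, one gets $\mu_k(B(P))\ge C_0^{-1}\eta_k(B(Q))$, and since $\mu_k(B(Q))\le\eta_k(B(Q))$ on $\Rich_k$, the bound $\mu_k(B(Q))\le C_0\,\mu_k(B(P))$ follows at once. The small-ratio case is easy: $\eta_k(B(Q))$ is already far smaller than $\eta_k(B(P))$, and since the redistribution perturbs masses only by bounded factors (upper bound $(1+C'')$ on poor cubes, lower bound $(1-c_n/C_0)$ on rich cubes), the ratio $\mu_k(B(Q))/\mu_k(B(P))$ stays bounded by an absolute constant, hence by $C_0$ for $C_0$ large.

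The main obstacle is the moderate-ratio case. Here $P$ is neither rich nor poor with respect to $Q$, but $Q$ and $P$ may still have been altered by the redistribution because of \emph{other} neighbors. The key step is to establish the quantitative upper bound $\mu_k(B(Q))\le (1 + C'')\eta_k(B(Q))$ for some constant $C'' = C''(c_1,n,d)$ that is crucially \emph{independent of $C_0$}. This comes from estimating each term $C_0^{-1}\eta_k(B(P'))$ for $P'\in\Rich(Q)\subseteq\Nei(Q)$ via the chain
\[
\eta_k(B(P')) \le \mu_{k-1}(B((P')^1)) \le C_0\,\mu_{k-1}(B(Q^1)) \le c_2^{-1}C_0\,\eta_k(B(Q)),
\]
using the upper bound in \eqref{eq:child}, the inductive doubling of $\mu_{k-1}$ applied to $(P')^1\in\Nei(Q^1)$ (the fact that parents of neighbors are neighbors was already exploited in Lemma \ref{lem:poorrichdisjoint}), and \eqref{eq:child2}; the $C_0^{-1}$ prefactor cancels the $C_0$ coming from the doubling, so summing over the at most $c_n$ members of $\Rich(Q)$ produces a $C_0$-independent $C''=c_nc_2^{-1}$. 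A symmetric computation gives $\mu_k(B(P)) \ge (1 - c_n/C_0)\eta_k(B(P))$. Combining these with $\rho \le C_0/4$ yields $\mu_k(B(Q))/\mu_k(B(P))\le (1+C'')C_0/3$, which is $\le C_0$ provided $C_0$ is chosen large enough depending on $c_1, n, d$. This is precisely what forces the constant $C_0$ in Proposition \ref{propo-frostman} to depend on the lower content regularity constant $c_1$.
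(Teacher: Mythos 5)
Your large-ratio and small-ratio cases are correct and match what the paper does (the large-ratio case is exactly the paper's treatment of $Q\in\Rich_k$ with $P\in\Poor(Q)$). The gap is in the moderate-ratio case. Your key bound $\mu_k(B(Q))\le(1+C'')\eta_k(B(Q))$ with $C''=c_nc_2^{-1}$ is correct, but the way you combine it does not close: from $\mu_k(B(Q))\le(1+C'')\eta_k(B(Q))$, $\eta_k(B(Q))\le\tfrac{C_0}{4}\eta_k(B(P))$ and $\mu_k(B(P))\gtrsim\eta_k(B(P))$ you get $\mu_k(B(Q))/\mu_k(B(P))\le(1+C'')C_0/3$, and the requirement $(1+C'')C_0/3\le C_0$ is equivalent to $C''\le 2$ --- an inequality in which $C_0$ has cancelled, so it \emph{cannot} be arranged by taking $C_0$ large. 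Since $c_2\sim c_1$ and $c_n>1$, the constant $C''=c_nc_2^{-1}$ is in general far larger than $2$, so the last step of your moderate case is false as written. The issue is that you convert the redistribution term into a \emph{multiplicative} perturbation of $\eta_k(B(Q))$ and then pay the full factor $C_0/4$ on it when passing to $\eta_k(B(P))$.

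The fix is to keep the redistribution term \emph{additive} and measure it against $\eta_k(B(P))$ rather than $\eta_k(B(Q))$: for $P'\in\Rich(Q)\subset\Nei(Q)$ one has $(P')^1\in\Nei(P^1)$ (parents of neighbors of $Q$ are neighbors of $P^1$ since $\rho$ is small), so the same chain gives
\begin{equation*}
C_0^{-1}\eta_k(B(P'))\le C_0^{-1}\mu_{k-1}(B((P')^1))\le \mu_{k-1}(B(P^1))\le c_2^{-1}\eta_k(B(P)),
\end{equation*}
whence $\mu_k(B(Q))\le \eta_k(B(Q))+c_nc_2^{-1}\eta_k(B(P))\le\big(\tfrac{C_0}{4}+c_nc_2^{-1}\big)\eta_k(B(P))\le\tfrac{C_0}{2}\eta_k(B(P))\le C_0\,\mu_k(B(P))$ once $C_0\ge 4c_nc_2^{-1}$ --- now the constraint on $C_0$ is genuinely one-sided and satisfiable. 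This is precisely how the paper handles the case $Q\in\Poor_k$ (its case split is by the status of $Q$ in $\Rich_k/\Poor_k$ rather than by the ratio, but the content is the same). With this correction your argument becomes a valid, essentially equivalent proof.
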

\begin{proof}
	We want to show that for any $Q\in\DD_k$ and $P\in\Nei(Q)$ we have $\mu_k(B(Q))\le C_0\,\mu_k(B(P))$. First, suppose that $Q\notin\Rich_k\cup\Poor_k$, so that $\mu_k(B(Q))=\eta_k(B(Q))$. Let $P\in\Nei(Q)$. We have $P\notin\Poor(Q)$, and so
	\begin{equation*}
	\mu_k(B(Q))=\eta_k(B(Q))\le\fr{C_0}{4}\eta_k(B(P))\overset{\eqref{eq:muketak}}{\le} C_0\,\mu_k(B(P)).
	\end{equation*}
	
	Assume now that $Q\in\Rich_k,$ and let $P\in\Nei(Q)$. There are two cases: either $P\in\Poor(Q)$, or $P\notin\Poor(Q)$. In the first case we have $Q\in\Rich(P)$, and so
	\begin{equation*}
	\mu_k(B(Q))\le \eta_{k}(B(Q))\le C_0\big(\eta_k(B(P)) + C_0^{-1}\sum_{Q'\in\Rich(P)}\eta_k(B(Q'))\big) = C_0\,\mu_k(B(P)).
	\end{equation*}
	If $P\notin\Poor(Q)$ then by the definition of $\Poor(Q)$
	\begin{equation*}
	\mu_k(B(Q))\le \eta_{k}(B(Q))\le \fr{C_0}{4}\eta_k(B(P))\overset{\eqref{eq:muketak}}{\le}C_0\, \mu_k(B(P)).
	\end{equation*}
	This establishes \eqref{eq:neigh} for $Q\in\Rich_k$.
	
	Finally, suppose that $Q\in\Poor_k$ and $P\in\Nei(Q)$. Since for any $R\in \Nei(Q)$ we have $R^1\in\Nei(P^1)$, we get
	\begin{multline*}
	\mu_k(B(Q)) = \eta_k(B(Q)) + C_0^{-1}\sum_{R\in\Rich(Q)}\eta_k(B(R))\\ 
	\le \eta_k(B(Q)) + C_0^{-1}\sum_{R\in\Nei(Q)}\eta_k(B(R))\le \eta_k(B(Q)) + C_0^{-1}\sum_{R'\in\Nei(P^1)}\sum_{R\in\Child(R')}\eta_k(B(R))\\
	\overset{\eqref{eq:etachildrencons}}{=} \eta_k(B(Q)) + C_0^{-1}\sum_{R'\in\Nei(P^1)}\mu_{k-1}(B(R')).
	\end{multline*}
	Concerning the first term on the right hand side, note that since $Q\in\Poor_k$, by \lemref{lem:poorrichdisjoint} we have $Q\notin\Rich_k$ and consequently $P\notin\Poor(Q)$, so that $\eta_k(B(Q))\le \frac{C_0}{4}\,\eta_k(B(P))$. To deal with the second term we use the doubling property \eqref{eq:neigh} for $\mu_{k-1}$ and the fact that $\#\Nei(P^1)\le c_n$:
	\begin{multline*}
	\mu_k(B(Q))\le \eta_k(B(Q)) + C_0^{-1}\sum_{R'\in\Nei(P^1)}\mu_{k-1}(B(R'))\\ \le \fr{C_0}{4}\eta_k(B(P)) + \#\Nei(P^1)\mu_{k-1}(B(P^1))
	\le \fr{C_0}{4}\eta_k(B(P)) + c_n \mu_{k-1}(B(P^1)) \\
	\overset{\eqref{eq:child2}}{\le} \fr{C_0}{4}\eta_k(B(P) + \fr{c_n}{c_2}\eta_{k}(B(P))\le \fr{C_0}{2}\eta_{k}(B(P))\overset{\eqref{eq:muketak}}{\le} C_0\,\mu_k(B(P)),
	\end{multline*}
	assuming $C_0$ large enough. This gives the desired inequality \eqref{eq:neigh} for $Q\in\Poor_k$.	
\end{proof}

We have checked that $\mu_k$ satisfies properties \eqref{e:D}--\eqref{eq:child}, and so the proof of \propref{propo-frostman} is complete.

We prove two more properties of $\mu_k$ that will be useful later on. Recall that the sequence of measures $\nu_k$ constructed in the classical Frostman lemma has the following pleasant property: if $Q\in\DD_k$ and $j\ge k$, then $\nu_j(Q)=\nu_k(Q)$ -- the mass never escapes $Q$ after step $k$ of the construction. Our modified sequence of measures $\mu_k$ doesn't satisfy this property, but it's not too far off. 

Recall that $B(Q)\cap E\subset Q\subset B_Q=B(x_Q,\ell(Q))$.
\begin{lemma}\label{lem:massstays}
	If $j\ge k\ge 0$ and $Q\in\DD_k$, then
	\begin{equation}\label{eq:massstays}
	\mu_j(2B_Q)\ge \mu_k(B(Q))
	\end{equation}
	and
	\begin{equation}\label{eq:massstays2}
	\mu_j(B_Q)\le \mu_k(10B_Q.)
	\end{equation}
\end{lemma}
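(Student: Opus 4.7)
The plan is to track how mass is moved by the iterative construction and show that it cannot travel far from its origin. The transition $\mu_j\to\mu_{j+1}$ has two substeps: the $\eta$-step redistributes the mass on each $B(R)$, $R\in\DD_j$, among $\{B(R'):R'\in\Child(R)\}$; the rich-to-poor step then transfers a fraction $C_0^{-1}$ of the $\eta_{j+1}$-mass on each rich $B(P)$, $P\in\DD_{j+1}$, to each $Q\in\Poor(P)\subset\Nei(P)$, which lies within distance $\le 4\ell(P)=4\rho\ell(R)$ of $B(P)$. A direct geometric computation (using $\Child(R)\subset R\subset B_R$ and $\dist(R',R)\le\rho\ell(R)$ for $R'\in\Nei(\Child(R))$) then shows that the mass on $B(R)$ at step $j$ is supported, at step $j+1$, in a set contained in $B(x_R,(1+3\rho+c_0\rho)\ell(R))$.

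Iterating, I set $\mathcal{G}_k^Q=\{Q\}$ and $\mathcal{G}_{j+1}^Q=\Child(\mathcal{G}_j^Q)\cup\bigcup_{S\in\Child(\mathcal{G}_j^Q)}\Nei(S)$. The mass originating on $B(Q)$ at step $k$ is supported at step $j$ in $\bigcup_{S\in\mathcal{G}_j^Q}B(S)$, and summing a geometric series in $\rho$ yields the inclusion $\bigcup_{S\in\mathcal{G}_j^Q}B(S)\subset B(x_Q,r_\infty\ell(Q))$ with $r_\infty\le c_0+(1-c_0+3\rho+c_0\rho)/(1-\rho)$. With the constants $\rho=1/1000$ and $c_0=1/500$ this gives $r_\infty<1.005$, so the support sits strictly inside $2B_Q$.

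To upgrade the support claim to a statement about $\mu_j$, I will define a transport of submeasures that mirrors the construction. Given $\nu\le\mu_j$ written as $\nu=\sum_{R\in\DD_j}\nu^R$ with each $\nu^R$ uniform on $B(R)$ and $\nu^R(\R^n)\le\mu_j(B(R))$, define $\nu'$ by replaying the same two substeps on $\nu$ using the same proportions $\hdc(R')/\sum_{S\in\Child(R)}\hdc(S)$ and $C_0^{-1}$. Since the construction acts linearly on measures that are uniform on each $B(R)$, a direct check yields $\nu'\le\mu_{j+1}$ piecewise uniform, $\nu'(\R^n)=\nu(\R^n)$, and $\supp\nu'\subset\bigcup_{R'\in\mathcal{G}'}B(R')$ for $\mathcal{G}'=\Child(\mathcal{G})\cup\bigcup_{S\in\Child(\mathcal{G})}\Nei(S)$ whenever $\supp\nu\subset\bigcup_{R\in\mathcal{G}}B(R)$.

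For \eqref{eq:massstays}, I iterate the transport starting from $\sigma_k=\mu_k|_{B(Q)}$ to obtain $\sigma_j\le\mu_j$ with $\sigma_j(\R^n)=\mu_k(B(Q))$ and $\supp\sigma_j\subset 2B_Q$; then $\mu_j(2B_Q)\ge\sigma_j(2B_Q)=\mu_k(B(Q))$. For \eqref{eq:massstays2}, decompose $\mu_k=\sum_{R\in\DD_k}\mu_k^R$ into its uniform pieces and transport each to $\sigma_j^R\le\mu_j$ with $\sigma_j^R(\R^n)=\mu_k(B(R))$, $\supp\sigma_j^R\subset 2B_R$, and $\mu_j=\sum_R\sigma_j^R$ by linearity of the transport. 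Only $R$ with $2B_R\cap B_Q\ne\varnothing$ contribute to $\mu_j(B_Q)$, and these satisfy $|x_R-x_Q|\le\ell(Q)+2\ell(R)=3\ell(Q)$, hence $B(R)\subset 10B_Q$; therefore $\mu_j(B_Q)\le\sum_{R:B(R)\subset 10B_Q}\mu_k(B(R))\le\mu_k(10B_Q)$. The main obstacle is the careful bookkeeping for the transport of submeasures, in particular verifying that uniform structure is preserved through both substeps and that the proportions $\nu^R(\R^n)/\mu_j(B(R))\le 1$ propagate so as to maintain $\nu'\le\mu_{j+1}$.
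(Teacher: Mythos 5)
Your proof is correct and rests on the same mechanism as the paper's: at each generation mass moves only from a cube to the neighbours of its children, and the resulting displacements form a geometric series in $\rho$, so the mass born in $B(Q)$ never leaves $2B_Q$ (the paper encodes this by summing the one-step inequalities over the families $\mathcal{A}_i(Q)$ and $\mathcal{B}_i(Q)$ rather than by an explicit transport of submeasures, and for \eqref{eq:massstays2} it iterates backward from level $j$ whereas you push the decomposition $\mu_k=\sum_R\mu_k^R$ forward, but these are dual bookkeepings of the same fact). The linearity check you flag as the main obstacle does go through, since both substeps of the construction are exactly linear in the piecewise-uniform input, so the transported pieces sum to $\mu_{j+1}$ and each is automatically dominated by it.
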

\begin{proof}
	In the definition of $\mu_{k+1}$ we transfer the mass of $\eta_{k+1}$ only between neighbors. It follows easily that for any family $\dF\subset\DD_k$
	\begin{equation}\label{eq:AF}
	\sum_{Q\in\dF}\mu_{k}(B(Q)) \overset{\eqref{eq:etachildrencons}}{=} \sum_{Q\in\dF}\sum_{P\in\Child(Q)}\eta_{k+1}(B(P))\le \sum_{R\in\mathcal{A}(\dF)}\mu_{k+1}(B(R)),
	\end{equation}
	where
	\begin{equation*}
	\mathcal{A}(\dF)\coloneqq \bigcup_{P\in\mathcal{F}}\bigcup_{R\in\Child(P)}\Nei(R)\subset\DD_{k+1}.
	\end{equation*}
	
	Let $Q\in\DD_k$. Set $\mathcal{A}_0(Q)\coloneqq\{Q\}$, and then inductively
	\begin{equation*}
	\mathcal{A}_i(Q) \coloneqq \mathcal{A}(\mathcal{A}_{i-1}(Q))= \bigcup_{P\in\mathcal{A}_{i-1}(Q)}\bigcup_{R\in\Child(P)}\Nei(R)\subset \DD_{k+i}.
	\end{equation*}
	Applying \eqref{eq:AF} $i$-times yields
	\begin{equation}\label{eq:AF2}
	\mu_k(B(Q))\le \sum_{P\in\dA_i(Q)}\mu_{k+i}(B(P)).
	\end{equation}
	
	Observe that if $R\in\DD$, then $\bigcup_{R'\in \Nei(R)}2B_{R'} \subset 5B_R$. It follows that for all $i\ge 0$
	\begin{equation*}
	\bigcup_{P\in\dA_i(Q)}2B_P \subset 2B_Q.
	\end{equation*}
	Indeed, this is clear for $i=0$, and then by induction
	\begin{multline*}
	\bigcup_{P\in\dA_i(Q)}2B_P = \bigcup_{P\in\mathcal{A}_{i-1}(Q)}\bigcup_{R\in\Child(P)}\bigcup_{R'\in \Nei(R)}2B_{R'} \subset \bigcup_{P\in\mathcal{A}_{i-1}(Q)}\bigcup_{R\in\Child(P)} 5B_R\\
	\subset \bigcup_{P\in\mathcal{A}_{i-1}(Q)} 2B_P \subset 2B_Q.
	\end{multline*}
	Together with \eqref{eq:AF2} this gives \eqref{eq:massstays}.
	
	Similarly, if for $\dF\subset\DD_j$ we define
	\begin{equation*}
	\mathcal{B}(\dF)\coloneqq \bigcup_{P\in\mathcal{F}}\Nei(P^1)\subset\DD_{j-1},
	\end{equation*}
	then by the definition of $\mu_j$
	\begin{multline}\label{eq:blablab}
	\sum_{Q\in\dF}\mu_j(B(Q))\le \sum_{P\in \bigcup_{Q\in\dF}\Nei(Q)} \eta_j(B(P)) \le \sum_{R\in\dB(\dF)} \sum_{P\in \Child(R)} \eta_j(B(P))\\
	 \overset{\eqref{eq:etachildrencons}}{=} \sum_{R\in\dB(\dF)} \mu_{j-1}(B(R)).
	\end{multline}
	Let $Q\in\DD_k$ and fix $j\ge k$. Set $\dB_0(Q) = \{ P\in\DD_j\ :\ B(P)\cap B_Q\neq\varnothing\}$, and for $0< i \le j-k$ we define inductively
	\begin{equation*}
	\dB_i(Q) \coloneqq \dB(\dB_{i-1}(Q)) = \bigcup_{P\in\dB_{i-1}(Q)}\Nei(P^1)\subset\DD_{j-i}.
	\end{equation*}
	By \eqref{eq:blablab}
	\begin{equation}\label{eq:blablabla}
	\mu_j(B_Q)\le \sum_{P\in\dB_0(Q)}\mu_j(B(P)) \le \sum_{P\in \dB_{j-k}(Q)}\mu_k(B(P)).
	\end{equation}
	Now, we claim that
	\begin{equation*}
	\bigcup_{P\in\dB_{j-k}(Q)}B(P) \subset 10B_Q.
	\end{equation*}
	To see this, note that each $P\in\dB_i(Q)$ has a neighbor $P'$ such that $\Child(P')\cap\dB_{i-1}(Q)\neq\varnothing.$ Hence, 
	\begin{equation*}
	\dist(P,\bigcup_{R\in \dB_{i-1}(Q)}R)\le \dist(P,P')+\diam(P')\le 3\ell(P)=15\rho^{j-i}.
	\end{equation*}
	Fix $P\in\dB_{j-k}(Q)$, and let $P=P_{j-k}, P_{j-k-1},\dots, P_{0}$ be such that for each $i$ we have $P_i\in\dB_i(Q)$ and $\dist(P_i,\bigcup_{R\in \dB_{i-1}(Q)}R)=\dist(P_i,P_{i-1})$. Then,
	\begin{multline*}
		\dist(P,\bigcup_{R\in \dB_{0}(Q)}R)\le \sum_{i=j-k}^{1} \dist(P_i,\bigcup_{R\in \dB_{i-1}(Q)}R) + \ell(P_{i-1})\\
		\le \sum_{i=j-k}^{1} 4\ell(P_{i})=\sum_{i=j-k}^{1} 20\rho^{j-i} \le 25\rho^{k} = 5\ell(Q).
	\end{multline*}
	It is easy to see that $\bigcup_{R\in \dB_{0}(Q)}R\subset 3B_Q$, and so it follows that
	\begin{equation*}
		B(P)\subset 10 B_Q.
	\end{equation*}
	Together with \eqref{eq:blablabla}, this gives \eqref{eq:massstays2}.
\end{proof}

Another nice property of $\mu_k$ we are going to need is related to the approximate monotonicity of densities.
\begin{lemma}\label{lem:density parent}
	Assume that $Q\in\dD_k$. Then, there exists $R\in\Nei(Q^1)\subset\DD_{k-1}$ with
	\begin{equation}\label{eq:density parent}
	\frac{\mu_k(B(Q))}{\dH^d_\infty(Q)}\le \frac{\mu_{k-1}(B(R))}{\dH^d_\infty(R)}.
	\end{equation}
\end{lemma}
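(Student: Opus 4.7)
The plan is to split into two cases according to whether $Q$ belongs to $\Poor_k$ or not, and in both cases produce an explicit $R \in \Nei(Q^1)$ that witnesses the density bound. The key computation uses the identity
\[
\eta_k(B(P)) \;=\; \hdc(P)\,\theta_P, \qquad \text{where } \theta_P \;:=\; \frac{\mu_{k-1}(B(P^1))}{\sum_{R\in\Child(P^1)}\hdc(R)},
\]
valid for every $P\in\DD_k$. Note that by the subadditivity $\hdc(P^1)\le\sum_{R\in\Child(P^1)}\hdc(R)$ we always have the crucial bound
\[
\theta_P \;\le\; \frac{\mu_{k-1}(B(P^1))}{\hdc(P^1)}.
\]

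\textbf{Case 1: $Q\notin\Poor_k$.} Here the construction of $\mu_k$ gives $\mu_k(B(Q))\le\eta_k(B(Q))=\hdc(Q)\,\theta_Q$, so dividing by $\hdc(Q)$ and applying the bound on $\theta_Q$ above yields
\[
\frac{\mu_k(B(Q))}{\hdc(Q)} \;\le\; \theta_Q \;\le\; \frac{\mu_{k-1}(B(Q^1))}{\hdc(Q^1)},
\]
so we take $R=Q^1\in\Nei(Q^1)$.

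\textbf{Case 2: $Q\in\Poor_k$.} By the definition of $\mu_k$,
\[
\frac{\mu_k(B(Q))}{\hdc(Q)} \;=\; \theta_Q \;+\; C_0^{-1}\sum_{P\in\Rich(Q)} \frac{\hdc(P)}{\hdc(Q)}\,\theta_P.
\]
Lower content regularity gives $\hdc(P)\le 2\ell(Q)^d$ and $\hdc(Q)\ge c_1c_0^d\ell(Q)^d$, so $\hdc(P)/\hdc(Q)\lesssim_{c_1,n,d} 1$. Combined with $\#\Rich(Q)\le 2c_n$ and an appropriately large choice of $C_0$ (depending on $c_1,n,d$), the second term is bounded by $\tfrac12 \max_{P\in\Rich(Q)}\theta_P$. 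Pick $P_0\in\Rich(Q)$ realising this maximum; since $P_0\in\Rich(Q)$ we have $\eta_k(B(P_0))>\tfrac{C_0}{4}\eta_k(B(Q))$, i.e.\ $\hdc(P_0)\theta_{P_0}>\tfrac{C_0}{4}\hdc(Q)\theta_Q$, and rearranging with the content bounds gives $\theta_{P_0}\gtrsim_{c_1,n,d} C_0\,\theta_Q$; enlarging $C_0$ once more ensures $\theta_Q\le\tfrac12\theta_{P_0}$. Adding the two estimates,
\[
\frac{\mu_k(B(Q))}{\hdc(Q)} \;\le\; \tfrac12\theta_{P_0}+\tfrac12\theta_{P_0} \;=\; \theta_{P_0} \;\le\; \frac{\mu_{k-1}(B(P_0^1))}{\hdc(P_0^1)},
\]
and since $P_0\in\Nei(Q)$ implies $P_0^1\in\Nei(Q^1)$, we take $R=P_0^1$.

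The only non-routine step is balancing the two terms in Case~2; the main obstacle is ensuring that the ``donation'' from rich neighbours does not inflate the density quotient past a neighbour's value. This is resolved by the fact that being in $\Rich(Q)$ forces $\theta_{P_0}$ to dominate $\theta_Q$ by a factor proportional to $C_0$, while lower content regularity keeps all the $\hdc$-ratios bounded --- so a sufficiently large (but universal, in the sense of depending only on $c_1,n,d$) choice of $C_0$ makes the argument close. The constant $C_0$ must, of course, be chosen consistently with the previous lemmas in the appendix.
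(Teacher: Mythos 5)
Your proof is correct and follows essentially the same route as the paper's: the same case split on $Q\in\Poor_k$, the same witnesses ($R=Q^1$ in the non-poor case, $R=P_0^1$ for a maximizing rich neighbour otherwise), and the same use of subadditivity of $\hdc$, lower content regularity, and a large choice of $C_0$. The only differences are cosmetic (maximizing $\theta_P$ rather than $\eta_k(B(P))$ and the half-and-half bookkeeping), plus one harmless misattribution: the upper bound $\hdc(P)\lesssim\ell(P)^d$ comes from $P\subset B_P$, not from lower content regularity.
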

\begin{proof}
	There are two cases to consider. First, if $\mu_k(B(Q))\le \eta_k(B(Q))$, then by the definition
	\begin{equation*}
	\mu_k(B(Q)) \leq \eta_k(B(Q)) = \frac{\mathcal{H}^{d}_{\infty}(Q)}{\sum_{P \in \Child(Q^1)} \mathcal{H}^{d}_{\infty}(P)} \mu_{k-1}(B(Q^1)) \leq \frac{\mathcal{H}^{d}_{\infty}(Q)}{\mathcal{H}^{d}_{\infty}(Q^1)} \cdot \mu_{k-1}(B(Q^1)).
	\end{equation*}
	Hence, taking $R=Q^1$ gives \eqref{eq:density parent}.
	
	Now suppose that $\mu_k(B(Q))> \eta_k(B(Q))$, i.e. $Q\in\Poor_k$. Let $P\in\Rich(Q)$ be the cube maximizing $\eta_{k}(B(P))$. Then,
	\begin{multline*}
	\mu_k(B(Q)) = \eta_{k}(B(Q)) + C_0^{-1}\sum_{S\in\Rich(Q)}\eta_{k}(B(S))\\
	\le 4C_0^{-1}\,\eta_{k}(B(P)) + \#\Rich(Q)C_0^{-1}\,\eta_{k}(B(P)) \le C_0^{-1}(4+c_n) \fr{\hdc(P)}{\hdc(P^1)}\mu_{k-1}(B(P^1)).
	\end{multline*}	
	Using lower content regularity of $E$ we have $\hdc(Q)\ge \hdc(E\cap B(Q))\ge c_1 (c_0\ell(Q))^d$, so that
	\begin{multline*}
	\fr{\mu_k(B(Q))}{\hdc(Q)}\le C_0^{-1}(4+c_n)\fr{\hdc(P)}{\hdc{(Q)}}\cdot \fr{\mu_{k-1}(B(P^1))}{\hdc(P^1)}\\
	\le C_0^{-1}(4+c_n)\fr{(2\ell(P))^d}{c_1 (c_0\ell(Q))^d}\cdot \fr{\mu_{k-1}(B(P^1))}{\hdc(P^1)}\\
	 = C_0^{-1}(4+c_n)2^d c_1^{-1}c_0^{-d}\fr{\mu_{k-1}(B(P^1))}{\hdc(P^1)}\le \fr{\mu_{k-1}(B(P^1))}{\hdc(P^1)},
	\end{multline*}
	assuming $C_0$ large enough depending on $c_1,n,d$. Since $P^1\in\Nei(Q^1)$, choosing $R=P^1$ gives \eqref{eq:density parent}.
\end{proof}

\subsection{The limit measure}
For each $k$ we have $\mu_k(\R^n) = \mu_k([0,1]^n)= \hdc(E)< \infty$, and so there exists a subsequence $\mu_{k_j}$ converging in the weak sense to a Radon measure $\mu$ which also satisfies $\mu(\R^n) = \hdc(E)$. This shows property \eqref{i:mu-diam} from \thmref{theorem:Frostman measure}. In the next few lemmas we prove that $\mu$ satisfies all the other required properties.

The following two basic facts about weak limits of measures will often be used without explicit mention: if $\nu_j \to \nu$ weakly, then for any open $U\subset \R^n$ and any compact $K \subset \R^n$
\begin{equation*}
	\nu (U) \leq \liminf_{j \to \infty} \nu_j(U) \,\, \mbox{ and } \,\, \limsup_{j \to \infty}\nu_j(K) \leq \nu(K). 
\end{equation*}
See \cite[Theorem 1.24]{mattila}.

\begin{lemma}
	We have $\supp\mu = E$, so that property (\ref{i:spt}) in Theorem \ref{theorem:Frostman measure} is satisfied.
\end{lemma}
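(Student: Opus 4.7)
The plan is to establish the two inclusions $\supp\mu\subset E$ and $E\subset \supp\mu$ separately, both by passing the information we have on the approximating measures $\mu_k$ to the weak limit $\mu$ via the Portmanteau theorem.

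For the inclusion $\supp\mu\subset E$, I would argue as follows. By \eqref{eq:supp} we have $\supp\mu_k\subset \bigcup_{Q\in\DD_k} B(Q)$, and since each $B(Q)$ is centered on a point of $E$ with radius $c_0\ell(Q) = 5c_0 \rho^k$, the support of $\mu_k$ is contained in the $5\rho^k$-neighborhood of $E$. Hence, for any compact set $K\subset\R^n\setminus E$ (which has positive distance from $E$, since $E$ is compact), $\mu_k(K)=0$ for all $k$ sufficiently large. Using weak convergence along the subsequence $\mu_{k_j}\to \mu$, we conclude $\mu(K)\le \limsup_j \mu_{k_j}(K)=0$, so $\mu(\R^n\setminus E)=0$ by inner regularity, and thus $\supp\mu\subset E$ since $E$ is closed.

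For $E\subset\supp\mu$, fix $x\in E$ and $r>0$. Since the cubes in $\DD_k$ partition $E$ and have diameters tending to zero, we may choose $k$ large enough that the cube $Q\in\DD_k$ containing $x$ satisfies $\overline{2B_Q}\subset B(x,r)$ (concretely, any $k$ with $15\rho^k<r$ works, since $|x-x_Q|<\ell(Q)$). Iterating \eqref{eq:child} along the chain of ancestors of $Q$ up to the top cube $Q_0=E$ gives
\begin{equation*}
\mu_k(B(Q))\;\ge\; C_0^{-k}\,\mu_0(B(Q_0))\;=\;C_0^{-k}\,\hdc(E)\;>\;0,
\end{equation*}
where positivity of $\hdc(E)$ is immediate from lower content regularity. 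Now Lemma~\ref{lem:massstays} gives $\mu_j(2B_Q)\ge \mu_k(B(Q))$ for every $j\ge k$, so in particular $\mu_{k_j}(\overline{2B_Q})\ge \mu_k(B(Q))$ for all $k_j\ge k$. Since $\overline{2B_Q}$ is compact, the Portmanteau theorem yields
\begin{equation*}
\mu(B(x,r))\;\ge\;\mu(\overline{2B_Q})\;\ge\;\limsup_j \mu_{k_j}(\overline{2B_Q})\;\ge\;\mu_k(B(Q))\;>\;0.
\end{equation*}
Since $r>0$ was arbitrary, $x\in\supp\mu$.

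There is no real obstacle here: the only subtle point is the direction of the Portmanteau inequality. The lower bound for $\mu_k$ we have is on the open ball $2B_Q$, but weak convergence only gives $\liminf \mu_k(U)\ge \mu(U)$ for open $U$, which is the wrong direction; this is circumvented by replacing $2B_Q$ with its closure $\overline{2B_Q}$ (a compact set, where the useful inequality $\limsup \mu_k(K)\le \mu(K)$ applies) and arranging in advance that $\overline{2B_Q}$ still lies inside $B(x,r)$.
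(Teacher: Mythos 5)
Your argument follows the paper's proof almost step for step: the inclusion $\supp\mu\subset E$ comes from the supports of the $\mu_k$ shrinking onto $E$, and $E\subset\supp\mu$ from the strict positivity $\mu_k(B(Q))>0$ (which you justify, more explicitly than the paper, by iterating \eqref{eq:child} down from the top cube), pushed to the limit via Lemma \ref{lem:massstays} and the closed-set form of the Portmanteau inequality. The second half is correct, and you rightly identify which direction of the Portmanteau inequality is needed for a lower bound on $\mu$.

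In the first half, however, you invoke the Portmanteau inequality in the wrong direction: for a compact set $K$, weak convergence gives $\limsup_j\mu_{k_j}(K)\le\mu(K)$, not $\mu(K)\le\limsup_j\mu_{k_j}(K)$. The latter is false in general (take $\mu_j=\delta_{1/j}\to\delta_0$ and $K=\{0\}$), so the step $\mu(K)\le\limsup_j\mu_{k_j}(K)=0$ is not justified as written. The fix is immediate in your setting: since $\dist(K,E)>0$, choose an open set $U\supset K$ still at positive distance from $E$; then $\mu_k(U)=0$ for all large $k$, and the open-set inequality $\mu(U)\le\liminf_j\mu_{k_j}(U)=0$ gives $\mu(K)=0$. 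Equivalently, argue directly with open balls $B(x,r)$ around points $x\notin E$, which is what the paper does. With that one correction the proof is complete.
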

\begin{proof}
	Let $x \in \R^n \setminus E$. Since $E$ is compact, for sufficiently small $r>0$ and sufficiently large $k \in \N$ we have $B(x,r) \cap \dN_{k}(E) = \varnothing$, where $\dN_{k}(E)$ is the open $5\rho^{k}$ neighborhood of $E$. Note that for any $Q \in \dD_{k}$ the ball $B(Q)$ is compactly contained in $\dN_{k}(E)$. It follows from \eqref{eq:supp} that $\mu_{k}(B(x,r))=0$ for sufficiently large $k$. Thus,
	\begin{equation*}
	\mu(B(x,r)) \leq \liminf_{j \to \infty} \mu_{k_j}(B(x,r))=0.
	\end{equation*}
	Thus, $x\notin\supp\mu$. This shows $\supp\mu \subset E$. 
	
	Now, let $x\in E$ and $r>0$. For $k\in \N$ large enough we have that if $Q\in\DD_k$ contains $x$, then $2B_Q\subset B(x,r)$. Fix such $k$ and $Q\in\DD_k$. By \eqref{eq:massstays} we get that for all $j\ge k$
	\begin{equation*}
	\mu_j(\overline{B(x,r)})\ge \mu_j(2B_Q)\ge \mu_k(B(Q))>0.
	\end{equation*}
	In consequence,
	\begin{equation*}
	\mu(\overline{B(x,r)}) \ge \limsup_{j \to \infty} \mu_{k_j}(\overline{B(x,r)})\ge \mu_k(B(Q))>0. 
	\end{equation*}
	So $x\in\supp\mu$.
\end{proof}

\begin{lemma}\label{l:frostmann}
	The measure $\mu$ satisfies $\mu(B(x,r)) \lesssim r^d$. In particular, property (\ref{i:poly-growth}) from Theorem \ref{theorem:Frostman measure} is satisfied.
\end{lemma}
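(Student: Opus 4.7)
The plan is to combine the weak convergence $\mu_{k_j} \rightharpoonup \mu$ with the two controls established for the inductive sequence: the polynomial growth bound \eqref{eq:poly} for $\mu_k$ on balls $B(Q)$, and the mass-escape bound \eqref{eq:massstays2} of Lemma \ref{lem:massstays}. The rough idea is that the target ball $B(x,r)$ can be covered by boundedly many $B_Q$'s at the correct scale, the mass of each $B_Q$ under $\mu$ is inherited (through weak convergence) from the mass under some $\mu_k$ at a slightly enlarged ball, and the latter is bounded by the $\mu_k$-mass of boundedly many small balls $B(P)$, each having mass at most $2\ell(P)^d \sim r^d$.

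First, I would fix $x \in E$ and $0 < r < \diam(E)$, choose the integer $k$ with $\rho^k \le r < \rho^{k-1}$, and set
\[
\dF = \{Q \in \DD_k : Q \cap B(x,r) \neq \varnothing\}.
\]
Since the centers $x_Q$ of cubes in $\DD_k$ are pairwise at distance at least $2c_0\rho^k$ (because the disjoint balls $B(x_Q, c_0\ell(Q))$ are all contained in their respective $Q$), and every $x_Q$ with $Q \in \dF$ lies within $r + \rho^k \le 2r$ of $x$, a standard packing argument gives $\#\dF \le N(n)$. Using the already-established property $\supp\mu = E$, we have $\mu(B(x,r)) = \mu(B(x,r)\cap E)$, and since $B(x,r)\cap E \subset \bigcup_{Q\in \dF} Q \subset \bigcup_{Q \in \dF} B_Q$,
\[
\mu(B(x,r)) \le \sum_{Q \in \dF} \mu(B_Q).
\]

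Next, for each individual ball $B_Q$, which is open, weak convergence yields $\mu(B_Q) \le \liminf_j \mu_{k_j}(B_Q)$. For every $k_j \ge k$, \eqref{eq:massstays2} gives $\mu_{k_j}(B_Q) \le \mu_k(10 B_Q)$, so $\mu(B_Q) \le \mu_k(10B_Q)$. To estimate the latter, I would use \eqref{eq:supp} to write
\[
\supp\mu_k \cap 10B_Q \subset \bigcup\{B(P) : P \in \DD_k,\ B(P) \cap 10 B_Q \neq \varnothing\};
\]
since the centers of such $P$ lie within distance $10\ell(Q)+c_0\ell(Q)$ of $x_Q$ and are pairwise at distance $\ge 2c_0\ell(Q)$, there are at most $M(n)$ of them, and each satisfies $\mu_k(B(P)) \le 2\ell(P)^d = 2\rho^{kd} \lesssim r^d$ by \eqref{eq:poly}. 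Hence $\mu_k(10B_Q) \lesssim_n r^d$, and summing over $\dF$ concludes the proof of property (\ref{i:poly-growth}).

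There is no real obstacle in this argument; the only subtlety is the proper choice of scale $k$ so that both counts, the outer cover of $B(x,r) \cap E$ by cubes in $\dF$ and the inner cover of $10 B_Q$ by small balls $B(P)$, are controlled by dimensional constants. One must also remember that $\mu$ may a priori assign mass to points off $E$, which is why the identity $\supp\mu = E$ from the previous lemma is used to restrict attention to the cube-centered balls $B_Q$ (rather than covering the whole ball $B(x,r)$ by such balls, which is not possible when $x$ is far from $E$—but that situation is excluded by assumption).
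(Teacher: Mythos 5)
Your argument is correct and follows essentially the same route as the paper: weak convergence to pass from $\mu$ to $\mu_{k_j}$, then \eqref{eq:massstays2} to descend to $\mu_k(10B_Q)$, then the bounded cover by balls $B(P)$ and the growth bound \eqref{eq:poly}; the only cosmetic differences are that you cover $B(x,r)\cap E$ by boundedly many cubes of sidelength $\sim r$ where the paper uses a single slightly larger cube containing the ball, and that you should also dispose of the trivial regime $r\gtrsim 1$ (where $k\le 0$) by the total-mass bound $\mu(\R^n)=\hdc(E)\lesssim 1\lesssim r^d$, as the paper does.
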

\begin{proof}
	Let $x \in E$ and $r>0$. If $r> 5\rho^2$, then we just use the fact that $\mu(\R^d)=\hdc(E)\lesssim 1$. Suppose that $r\le 5\rho^2$, so that there exists $k \in \N$ such that $5\rho^{k+2} \leq r \leq 5\rho^{k+1}$. Let $Q \in \dD_{k}$ be such that $x\in Q$. It follows that $B(x,r) \subset B_Q$, and then
	\begin{multline*}
		\mu(B(x,r)) \le \liminf_{j \to \infty} \mu_{k_j}(B(x,r)) \leq \liminf_{j \to \infty}\mu_{k_j}(B_Q) \stackrel{\eqref{eq:massstays2}}{\leq} \mu_k(10B_Q)\\
		\le \sum_{\substack{P\in\DD_k\\ B(P)\cap 10 B_Q\neq\varnothing}} \mu_k(B(P)) \overset{\eqref{eq:poly}}{\le} \sum_{\substack{P\in\DD_k\\ B(P)\cap 10 B_Q\neq\varnothing}} 2\ell(P)^d \lesssim \ell(Q)^d\sim r^d.
	\end{multline*}
\end{proof} 

\begin{lemma}\label{l:doubling}
	The measure $\mu$ satisfies $\mu(B(x,2r)) \lesssim_{C_0} \mu(B(x,r))$ for all $x \in E$ and $r>0$. In particular, property \eqref{i:doubling2} in Theorem \ref{theorem:Frostman measure} is satisfied.
\end{lemma}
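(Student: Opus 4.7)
My plan is to prove the doubling inequality by sandwiching both $\mu(B(x,r))$ and $\mu(B(x,2r))$ between comparable multiples of $\mu_k(B(Q))$ for a Christ-David cube $Q\in\DD_k$ containing $x$ with $\ell(Q)\sim r$. The sandwich relies on the portmanteau theorem for the weak limit $\mu_{k_j}\to\mu$, the mass-stability lemmas \eqref{eq:massstays}--\eqref{eq:massstays2}, and the uniform doubling of $\mu_k$ along neighbors \eqref{eq:doubl}. The case $r\ge\diam(E)$ is immediate since then $E\subset B(x,r)$ forces $\mu(B(x,2r))=\mu(B(x,r))=\mu(E)$, so I will assume $0<r<\diam(E)$ throughout.

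I choose the largest $k\ge 0$ satisfying $3\cdot 5\rho^k\le r$, so that $\ell(Q)=5\rho^k\sim r$, and fix $Q\in\DD_k$ with $x\in Q$. Since $|x-x_Q|\le\ell(Q)$ and $3\ell(Q)\le r$, the closed ball $\overline{2B_Q}$ is contained in $B(x,r)$. Applying the portmanteau inequality for closed sets together with \eqref{eq:massstays}, I obtain the lower bound
\begin{equation*}
\mu(B(x,r))\ge\mu(\overline{2B_Q})\ge\limsup_{j\to\infty}\mu_{k_j}(2B_Q)\ge\mu_k(B(Q)),
\end{equation*}
where the last inequality uses $\mu_{k_j}(2B_Q)\ge\mu_k(B(Q))$ for all $k_j\ge k$.

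For the upper bound I will cover $\supp\mu_{k_j}\cap B(x,2r)$ by a dimensionally bounded family of enlarged cube-balls. For $j\ge k$, each piece $B(P')$ of $\supp\mu_{k_j}$ lies in $2B_P$ where $P\in\DD_k$ is the level-$k$ ancestor of $P'$, because $x_{P'}\in P'\subset P\subset B_P$. Setting $\mathcal F(x,r)=\{P\in\DD_k:2B_P\cap B(x,2r)\neq\varnothing\}$, which has dimensionally bounded cardinality since $\ell(P)\sim r$ and $|x_P-x|\lesssim r$ for each $P\in\mathcal F(x,r)$, I obtain for every $j$ with $k_j\ge k$
\begin{equation*}
\mu_{k_j}(B(x,2r))\le\sum_{P\in\mathcal F(x,r)}\mu_{k_j}(2B_P).
\end{equation*}
Once I establish the uniform estimate $\mu_{k_j}(2B_P)\lesssim_{C_0,n,c_1}\mu_k(B(Q))$, taking $\liminf$ and applying the portmanteau inequality for open sets yields $\mu(B(x,2r))\lesssim\mu_k(B(Q))\lesssim\mu(B(x,r))$, as desired.

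The main obstacle is therefore the uniform estimate $\mu_{k_j}(2B_P)\lesssim\mu_k(B(Q))$, which I plan to obtain in three steps. First, a small modification of the proof of \eqref{eq:massstays2}, in which the initial collection $\mathcal B_0(Q)$ is replaced by $\{P''\in\DD_{k_j}:B(P'')\cap 2B_P\neq\varnothing\}$, runs through the same induction and yields $\mu_{k_j}(2B_P)\le\mu_k(C'B_P)$ for some dimensional constant $C'$. Second, the ball $C'B_P$ meets only a bounded number of pieces $B(P'')$ at level $k$, each with $|x_{P''}-x_P|\lesssim\ell(P)$; chaining \eqref{eq:doubl} through a bounded number of neighbors of $P$ gives $\mu_k(C'B_P)\lesssim\mu_k(B(P))$. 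Third, since $|x_P-x_Q|\lesssim\ell(P)=\ell(Q)$, one more bounded-length neighbor chain transfers the estimate from $P$ to $Q$, producing $\mu_k(B(P))\lesssim\mu_k(B(Q))$ and completing the proof. The only real subtlety is the first step, which amounts to bookkeeping built into the argument already present in the proof of \eqref{eq:massstays2}.
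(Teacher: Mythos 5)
Your overall architecture is the same as the paper's: sandwich $\mu(B(x,r))$ and $\mu(B(x,2r))$ around $\mu_k(B(Q))$ for a cube $Q\ni x$ with $\ell(Q)\sim r$, using the portmanteau inequalities, \eqref{eq:massstays}, \eqref{eq:massstays2} and the level-$k$ doubling \eqref{eq:doubl}. The lower bound and the bounded covering of $B(x,2r)$ by $\lesssim_n 1$ balls $2B_P$, $P\in\DD_k$, are fine (your variant of \eqref{eq:massstays2} with $2B_P$ in place of $B_P$ also goes through, and working with $\supp\mu_{k_j}$ before passing to the limit is a legitimate alternative to the paper's use of $\supp\mu=E$).

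The gap is in your Steps 2 and 3. You need to compare $\mu_k(B(P''))$ with $\mu_k(B(Q))$ for same-level cubes $P'',Q\in\DD_k$ satisfying only $\dist(P'',Q)\lesssim \ell(Q)$ with an implicit constant larger than $1$, and you propose to do this by ``chaining \eqref{eq:doubl} through a bounded number of neighbors.'' But $\Nei(Q)$ only contains cubes at distance $\le\ell(Q)$ from $Q$, and for a general lower content regular set $E$ (which may be totally disconnected, e.g.\ Cantor-like) there need not exist \emph{any} intermediate cubes of $\DD_k$ between $P''$ and $Q$: two level-$k$ cubes at distance, say, $5\ell(Q)$ can be separated by a gap in $E$, so they are not neighbors and no neighbor chain of any length connects them. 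The paper's proof avoids this by going up one generation: since $\rho=1/1000$, any two level-$k$ cubes at distance $\lesssim\ell(Q)=\rho\,\ell(Q^1)$ have parents that \emph{are} genuine neighbors at level $k-1$, so one applies \eqref{eq:doubl} once at level $k-1$ and uses both inequalities in \eqref{eq:child} to transfer the estimate down to level $k$ (this is also how the paper handles the large-$r$ regime, by iterating \eqref{eq:child} up to the root). Your argument needs this parent-level detour (or an equivalent device); as written, the ``bounded-length neighbor chain'' does not exist in general.
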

\begin{proof}
	Let $x\in E$ and $r>0$, and let $Q\in\bigcup_{k=0}^\infty\DD_k$ be the largest cube with $2B_Q\subset B(x,r)$. Let $k\in\N$ be such that $Q\in\DD_k$. Observe that
	\begin{equation}\label{eq:bla2}
		\mu(B(x,r))\ge\mu(\overline{2B_Q})\ge\limsup_{j \to \infty}\mu_{k_j}(\overline{2B_Q})\overset{\eqref{eq:massstays}}{\ge}\mu_k(B(Q)).
	\end{equation}
	If $r\ge 1$, then $\ell(Q)\sim 1$ and $k\lesssim 1$, so that using \eqref{eq:child} $k$-times yields
	\begin{equation*}
		\mu(B(x,r))\ge \mu_k(B(Q))	\gtrsim_{C_0} \mu_0(E)=\hdc(E) \ge \mu(B(x,2r)).
	\end{equation*}

	Now suppose that $0<r<1$, so that $\ell(Q)\sim r$. Let
	\begin{equation*}
		\mathcal{F} = \{P\in\DD_k\ :\ B_P\cap B(x,2r)\neq\varnothing\}.
	\end{equation*}
	Note that $\#\dF\lesssim 1$. Using the fact that for each $P\in\dF$ we have $10B_P\subset B(x,12r)$, and that $\{10B_P\}_{P\in\dF}$ have bounded overlap, we estimate
	\begin{equation}\label{eq:bla1}
		\mu(B(x,2r))\le \sum_{P\in\dF}\mu(B_P) \le \liminf_{j \to \infty} \sum_{P\in\dF} \mu_{k_j}(B_P) \overset{\eqref{eq:massstays2}}{\le} \sum_{P\in\dF} \mu_k(10 B_P)\lesssim \sum_{R\in\dG} \mu_k(B(R)),
	\end{equation}
	where
	\begin{equation*}
		\dG = \{R\in\DD_k\ :\ B(R)\cap B(x,12r)\neq\varnothing\}.
	\end{equation*}
	
	Note that for $R\in\dG$ we have $\ell(R)=\ell(Q)$ and $\dist(Q,R)\lesssim 1$. It follows that $Q^1\in\Nei(R^1)$, so that using \eqref{eq:child} and \eqref{eq:doubl} yields
	\begin{equation*}
		\mu_k(B(Q))\ge C_0^{-1}\mu_{k-1}(B(Q^1))\ge C_0^{-2}\mu_{k-1}(B(R^1))\gtrsim C_0^{-2}\mu_{k}(B(R)).
	\end{equation*}
	Together with \eqref{eq:bla2}, \eqref{eq:bla1}, and the fact that $\#\dG\lesssim 1$, this gives
	\begin{equation*}
	\mu(B(x,r))\ge \mu_k(B(Q)) \gtrsim_{C_0} \sum_{R\in\dG} \mu_k(B(R))\gtrsim \mu(B(x,2r)).
	\end{equation*}
\end{proof}

\newcommand{\calH}{\mathcal{H}}
\newcommand{\calK}{\mathcal{K}}

\begin{lemma}
	If $P,Q\in\DD$ and $P\subset Q$, then $\theta_\mu(P)\lesssim_{c_1,C_{db}}\theta_\mu(Q)$. In particular, $\mu$ satisfies property \eqref{it:density2} from Theorem \ref{theorem:Frostman measure}.
\end{lemma}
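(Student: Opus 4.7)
The plan is to compare the densities $\theta_\mu(P)$ and $\theta_\mu(Q)$ (where $P\in\DD_l$, $Q\in\DD_k$ with $l\geq k$; the case $l=k$ is trivial) by bridging the limit measure $\mu$ with the finite-stage measures $\mu_l,\mu_k$ and then exploiting the almost-monotonicity provided by Lemma~\ref{lem:density parent} iterated from generation $l$ down to generation $k$. Since $E$ is lower content regular, $\hdc(R)\sim\ell(R)^d$ for every $R\in\DD$, so it suffices to establish $\mu(P)/\hdc(P)\lesssim\mu(Q)/\hdc(Q)$.

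First I would bridge $\mu$ to the finite stages. For the upper bound, weak convergence on the open set $B_P$ combined with \eqref{eq:massstays2} gives $\mu(P)\leq\mu(B_P)\leq\liminf_j\mu_{k_j}(B_P)\leq\mu_l(10B_P)$; since only boundedly many cubes $P'\in\DD_l$ satisfy $B(P')\cap 10B_P\neq\varnothing$ and each is reachable from $P$ by a bounded number of \Nei-hops, iterating \eqref{eq:doubl} yields $\mu_l(10B_P)\lesssim\mu_l(B(P))$, hence $\mu(P)\lesssim\mu_l(B(P))$. For the lower bound, weak convergence on the compact set $\overline{2B_Q}$ together with \eqref{eq:massstays} gives $\mu(\overline{2B_Q})\geq\limsup_j\mu_{k_j}(2B_Q)\geq\mu_k(B(Q))$; on the other hand, doubling of $\mu$ (Lemma~\ref{l:doubling}) combined with $\supp\mu\subset E$ and $B(Q)\cap E\subset Q$ implies $\mu(\overline{2B_Q})\lesssim\mu(B(Q))\leq\mu(Q)$, so $\mu(Q)\gtrsim\mu_k(B(Q))$.

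Next I would iterate Lemma~\ref{lem:density parent}: setting $P_l:=P$ and selecting $P_{i-1}\in\Nei(P_i^1)\subset\DD_{i-1}$ at each step, after $l-k$ iterations one obtains a cube $P_k\in\DD_k$ satisfying
\[
\frac{\mu_l(B(P))}{\hdc(P)}\leq\frac{\mu_k(B(P_k))}{\hdc(P_k)}.
\]
The technical point is that $P_k$ need not be $Q$. However, an inductive estimate gives $|x_{P_{i-1}}-x_{P_i}|\lesssim\rho^{i-1}$, so summing the geometric series yields $|x_{P_k}-x_P|\lesssim\rho^k=\ell(Q)$; combined with $|x_P-x_Q|\leq\ell(Q)$ (since $x_P\in P\subset Q\subset B_Q$), this places $P_k$ within $O(\ell(Q))$ of $Q$. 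Since $P_k,Q$ lie in the same generation $\DD_k$ and only boundedly many scale-$k$ cubes lie within that distance, $P_k$ may be joined to $Q$ by a bounded number of \Nei-hops; iterating \eqref{eq:doubl} then gives $\mu_k(B(P_k))\sim\mu_k(B(Q))$, while lower content regularity gives $\hdc(P_k)\sim\hdc(Q)$. Chaining,
\[
\theta_\mu(P)\sim\frac{\mu(P)}{\hdc(P)}\lesssim\frac{\mu_l(B(P))}{\hdc(P)}\leq\frac{\mu_k(B(P_k))}{\hdc(P_k)}\sim\frac{\mu_k(B(Q))}{\hdc(Q)}\lesssim\frac{\mu(Q)}{\hdc(Q)}\sim\theta_\mu(Q).
\]

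The main obstacle is controlling the deviation of the chain $(P_i)$ from the ancestor chain of $P$, since Lemma~\ref{lem:density parent} only lets us jump to a \emph{neighbor} of the parent at each step. The saving grace is that $\rho=1/1000$ is very small, so the cumulative drift forms a rapidly convergent geometric series; this keeps $P_k$ within $O(\ell(Q))$ of $Q$, and the discrepancy is absorbed by a bounded number of applications of the finite-stage doubling \eqref{eq:doubl}.
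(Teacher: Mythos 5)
Your argument follows the paper's proof in all essentials: the bridge between the limit measure $\mu$ and the finite stages via \eqref{eq:massstays} and \eqref{eq:massstays2}, the iteration of Lemma \ref{lem:density parent} from generation $l$ down to generation $k$, the geometric-series bound showing the resulting cube $P_k$ stays within $O(\ell(Q))$ of $Q$, and the final use of the doubling of $\mu$. The only structural difference is that you collapse the sum over the scale-$l$ cubes meeting $10B_P$ at the outset, whereas the paper carries that (boundedly finite) sum through the whole chain and only compares with $\mu(\overline{CB_Q})$ at the very end.

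There is, however, one step whose stated justification would fail: in both places where you invoke ``a bounded number of $\Nei$-hops'' between same-generation cubes at distance $O(\ell)$, such a chain need not exist. $E$ is only lower content regular and may be badly disconnected at every scale, so two cubes $P',P\in\DD_l$ with $\dist(P',P)\approx 5\ell(P)$, say, can fail to lie in each other's $\Nei$ while having no cubes of $\DD_l$ in between to hop through. The inequalities you want, $\mu_l(10B_P)\lesssim\mu_l(B(P))$ and $\mu_k(B(P_k))\lesssim\mu_k(B(Q))$, are still true, but the correct route is through the parent generation: since $\rho=1/1000$, two cubes of $\DD_l$ at distance $\lesssim \ell$ have parents which \emph{are} genuine $\Nei$-neighbours, so one applies \eqref{eq:child} to go up one level, \eqref{eq:doubl} once at level $l-1$, and \eqref{eq:child} to come back down (for the second application, the case $k=0$ is vacuous since $\DD_0=\{E\}$). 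With that repair your proof is complete and matches the paper's.
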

\begin{proof}
	Let $P, Q \in \dD$ with $P \subset Q$. Then $P \in \dD_{j}$ and $Q \in \dD_{k}$ for some $j\ge k$. Let 
	\begin{equation*}
	\dF = \{ R\in\DD_j\ :\ B(R)\cap 10B_P\neq\varnothing \}.
	\end{equation*}
	Note that $\dF\lesssim 1$. For every $R\in\dF$ we apply \lemref{lem:density parent} $(j-k)$-many times to obtain a sequence of cubes $R=R_0, R_1, R_2,\dots, R_{j-k}$ such that $R_i\in \Nei(R_{i-1}^1)$, and 
	\begin{equation*}
	\frac{\mu_{j-i}(B(R_i))}{\calH^d_\infty(R_i)}\le \frac{\mu_{j-i-1}(B(R_{i+1}))}{\calH^d_\infty(R_{i+1})}.
	\end{equation*}
	Using this inequality $(j-k)$-many times, together with lower content regularity of $E$, and the fact that $\ell(R)=\ell(P),\, \ell(R_{j-k})=\ell(Q),$ yields
	\begin{equation*}
	\frac{\mu_{j}(B(R))}{\ell(P)^d}\lesssim \frac{\mu_{j}(B(R))}{\calH^d_\infty(R)} \le \frac{\mu_{k}(B(R_{j-k}))}{\calH^d_\infty(R_{j-k})}\sim_{c_1} \frac{\mu_{k}(B(R_{j-k}))}{\ell(Q)^d},
	\end{equation*}
	By \lemref{lem:massstays} for any $i\ge j\ge k$ we get
	\begin{equation*}
	\fr{\mu_i(B_P)}{\ell(P)^d}\le \fr{\mu_j(10 B_P)}{\ell(P)^d} \le \sum_{R\in\dF}\frac{\mu_{j}(B(R))}{\ell(P)^d} \lesssim_{c_1} \sum_{R\in\dF} \frac{\mu_{k}(B(R_{j-k}))}{\ell(Q)^d} \le \sum_{R\in\dF} \frac{\mu_{i}(2B_{R_{j-k}})}{\ell(Q)^d}.
	\end{equation*}	
	Note that $\dist(R_{j-k},Q)\lesssim \ell(Q) = \ell(R_{j-k})$ for every $R\in\dF$, so that $2B_{R_{j-k}}\subset CB_Q$ for some absolute $C$. Hence, for all $i\ge j$
	\begin{equation*}
	\fr{\mu_i(B_P)}{\ell(P)^d} \lesssim_{c_1} \frac{\mu_{i}(\overline{CB_{Q}})}{\ell(Q)^d}.
	\end{equation*}
	Passing to the limit we get
	\begin{equation*}
	\fr{\mu(B_P)}{\ell(P)^d} \le \liminf_{j \to \infty} \fr{\mu_{k_j}(B_P)}{\ell(P)^d} \lesssim_{c_1} \limsup_{j \to \infty} \frac{\mu_{k_j}(\overline{CB_{Q}})}{\ell(Q)^d} \le \frac{\mu(\overline{CB_{Q}})}{\ell(Q)^d}.
	\end{equation*}
	Using the doubling property of $\mu$ \eqref{i:doubling2} finishes the proof.
\end{proof}

\end{document}